\def\tsing{t_{\mathrm{c}}}
\def\ow{\overleftarrow{w}}
\def\oS{\overline{S}}
\def\vl{W} 
\def\vr{E} 
\def\xstart{x_{\mathrm{start}}} 
\def\ystart{y_{\mathrm{start}}}
\def\xend{x_{\mathrm{end}}}
\def\yend{y_{\mathrm{end}}}
\def\xmin{x_{\mathrm{min}}}
\def\ymin{y_{\mathrm{min}}}
\newcommand{\ns}{{\mathbb N}} 
\newcommand{\zs}{{\mathbb Z}} 
\newcommand{\qs}{{\mathbb Q}}  
\newcommand{\cs}{{\mathbb C}} 
\newcommand{\bx}{\bar x}
\newcommand{\bX}{\bar X}
\newcommand{\by}{\bar y}
\newcommand{\bz}{\bar z}
\def\cF{\mathcal{F}}
\def\cN{\mathcal{N}}
\def\cH{\mathcal{H}}
\def\cQ{\mathcal{Q}}
\newcommand{\GA}{\mathbb{A}}
\DeclareMathOperator{\Tpol}{T}
\newcommand{\gz}{\boldsymbol z}
\newcommand{\Wb}{{\sf{W}}}
\newcommand{\Ab}{{\sf{A}}}
\def\Ng{N_{\geq}}
\def\Nl{N_{<}}
\def\bxk{\hat{\mathbf{x}}}
\def\Sab{S^{(a,b)}}
\def\tiSab{\tilde{S}^{(a,b)}}
\def\Rab{R^{(a,b)}}
\def\Rpab{R_{\geq}^{(a,b)}}
\def\boldP{\mathbb{P}}
\def\dd{\mathrm{d}}
\def\tq{\widetilde{q}}
\newcommand{\cD}{\mathcal D}
\newcommand{\cE}{\mathcal E}
\newcommand{\cL}{\mathcal L}
\newcommand{\cW}{\mathcal W}
\newcommand{\cS}{\mathcal S}
\newcommand{\PP}{\mathcal P}
\def\cS{\mathcal{S}}
\def\bu{\bar{u}}
\def\bv{\bar{v}}
\def\bx{\bar{x}}
\def\bX{\bar{X}}
\def\by{\bar{y}}
\def\A{A}
\newtheorem{Theorem}{Theorem}[section]
\newtheorem*{Theorem*}{Theorem~\ref{thm:asympt} (with
  explicit constants)}
\newtheorem{Proposition}[Theorem]{Proposition}
\newtheorem{Corollary}[Theorem]{Corollary}
\newtheorem{Definition}[Theorem]{Definition}
\newtheorem{Lemma}[Theorem]{Lemma}
\theoremstyle{Definition}
\theoremstyle{remark}
\newtheorem*{remark}{Remark}
\newtheorem*{remarks}{Remarks}
\newcommand{\beq}{\begin{equation}}
\newcommand{\eeq}{\end{equation}}
\newcommand{\gf}{generating function}
\newcommand{\gfs}{generating functions}
\newcommand{\fps}{formal power series}
\def\EE{\mathbb{E}}
\def\PP{\mathbb{P}}
\def\emm#1,{{\em #1}}
\def\section{\@startsection{section}{1}%
 \z@{.7\linespacing\@plus\linespacing}{.5\linespacing}%
 {\normalfont\bfseries\scshape\centering}}
\def\subsection{\@startsection{subsection}{2}%
 \z@{.5\linespacing\@plus\linespacing}{.5\linespacing}%
 {\normalfont\bfseries\scshape}}
\def\subsubsection{\@startsection{subsubsection}{3}%
 \z@{.5\linespacing\@plus\linespacing}{-.5em}
 {\normalfont\bfseries\itshape}}
\def\qed{$\hfill{\vrule height 3pt width 5pt depth 2pt}$}
\def\qee{$\hfill{\Box}$}
\renewcommand {\le}{\leqslant}
\renewcommand {\ge}{\geqslant}
\renewcommand {\leq}{\leqslant}
\renewcommand {\geq}{\geqslant}
\begin{document}
\title{Plane bipolar orientations and quadrant walks}

\author{Mireille Bousquet-M\'elou} 
\address{CNRS, Laboratoire Bordelais de Recherche en Informatique, Universit\'e de Bordeaux, France} \email{mireille.bousquet-melou@u-bordeaux.fr}

\author{\'Eric Fusy} 
\address{CNRS, Laboratoire d'Informatique de l'\'Ecole Polytechnique, France} \email{fusy@lix.polytechnique.fr}

\author{Kilian Raschel} 
\address{CNRS, Institut Denis Poisson, Universit\'e de Tours et Universit\'e d'Orl\'eans, France} \email{raschel@math.cnrs.fr}

\thanks{MBM and \'EF were partially supported by the French ``Agence Nationale
de la Recherche'', respectively via grants GRAAL, ANR-14-CE25-0014 and GATO, ANR-16-CE40-0009-01. This project has received funding from the European Research Council (ERC) under the European Union's Horizon 2020 research and innovation programme under the Grant Agreement No 759702.}

   
\keywords{planar maps; bipolar orientations; generating functions;
  D-finite series; random walks in cones; exit times; discrete
  harmonic functions; local limit theorems}
\subjclass[2010]{05A15; 05A16; 60G50; 60F17; 60G40}

\begin{abstract}
Bipolar orientations of planar maps have recently attracted some interest in
combinatorics, probability theory and theoretical physics. Plane bipolar
orientations with $n$ edges are known to be counted by the $n$th
Baxter number $b(n)$, which can be defined by a linear recurrence
relation with polynomial coefficients. Equivalently, the associated \gf\
$\sum_n b(n) t^n$ is \emm D-finite,. In this
paper, we address a much refined enumeration problem, where we record
for every $r$ the number of faces of degree~$r$. When these degrees
are bounded, {we show that} the associated \gf\ is given as the constant term of a
multivariate rational series, and  thus is still D-finite. 
We also provide detailed asymptotic estimates for the corresponding
numbers.

The methods used earlier to count all plane bipolar orientations, regardless of their
face degrees, do not generalize easily to record face
degrees. Instead, we start from a recent bijection, due to Kenyon \emph{et
al.}, that sends bipolar orientations onto certain lattice walks confined to
the first quadrant. Due to this bijection, the study of bipolar
orientations meets the study of walks confined to a cone, which has
been extremely active in the past~15 years. Some of our proofs rely on
recent developments in this field, while others are purely
bijective. Our asymptotic results also involve probabilistic arguments.
\end{abstract}

\date{\today}
\maketitle

\begin{flushright} \emph{\`A Christian Krattenthaler \`a l'occasion de son
    $60^{{e}}$ anniversaire, \\avec admiration, reconnaissance et amiti\'e}\end{flushright}

\thispagestyle{myheadings}
\font\rms=cmr8 
\font\its=cmti8 
\font\bfs=cmbx8

\markright{\its S\'eminaire Lotharingien de
Combinatoire \bfs 81 \rms (2020), Article~B81l\hfill}
\def\thepage{}

\section{Introduction}
A \emm planar map, is a connected planar multigraph
embedded in the plane, and taken up to orientation preserving
homeomorphism (Figure~\ref{fig:defs}).  
The enumeration of planar maps is a venerable topic in combinatorics, which was born in the early sixties with the pioneering work of William
Tutte~\cite{tutte-triangulations,tutte-census-maps}. It is also
studied  in theoretical
physics, where planar maps are seen as a discrete model of \emm quantum
gravity,~\cite{BIPZ,BIZ}. The enumeration of maps also has connections with
factorizations of permutations, and hence representations of the
symmetric group~\cite{Jackson:Harer-Zagier,jackson-visentin}. Finally, 40 years after the first enumerative results of Tutte, planar maps  crossed the border between combinatorics and probability theory, where they are now studied as
 random metric spaces~\cite{angel-schramm,chassaing-schaeffer,le-gall-topological,marckert-mokkadem}. The limit behaviour of large
 random planar maps is now well understood, and gave birth to a
variety of  limiting objects, either continuous
\cite{curien-legall-plane,legall,legall-miermont-large-faces,miermont}, or
 discrete  
\cite{angel-schramm,chassaing-durhuus,curien-miermont,menard-same}.

\begin{figure}[ht]
  \centering
 \includegraphics[width=8cm]{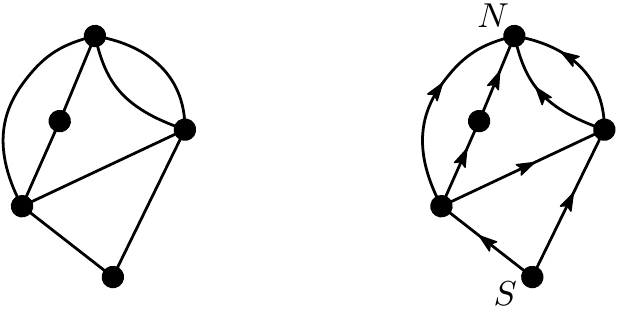}
  \caption{Left: a  planar map. Right: the same map, equipped with a bipolar
    orientation with source $S$ and sink $N$.}
  \label{fig:defs}
\end{figure}

The enumeration of maps equipped with some additional structure (\emph{e.g.}, a
spanning tree, a proper colouring, a self-avoiding-walk, a configuration of the Ising model \dots) has attracted the interest of both combinatorialists and theoretical physicists since the early days of this
study~\cite{DK88,Ka86,mullin-boisees,lambda12,tutte-dichromatic-sums}. 
This paper is devoted to the enumeration of planar maps equipped with
a \emm bipolar orientation,:  an acyclic orientation of its edges, having a unique source and a unique sink, 
both incident to the outer face (Figure~\ref{fig:defs}). 

The number of bipolar orientations of a given  multigraph  is an
important invariant in graph theory~\cite{FOR:95,OdM:1994}. Given a multigraph $G$ with a directed edge $(S,N)$, the number of bipolar orientations of $G$ with
source $S$ and sink $N$ is (up to a sign) the derivative of the chromatic polynomial $\chi_G(\lambda)$, evaluated at $\lambda=1$. It is also the
coefficient of $x^1y^0$ in the Tutte polynomial
$\Tpol_G(x,y)$~\cite{greene-zaslavsky,lass-orientations}. 

\pagenumbering{arabic}
\addtocounter{page}{1}
\markboth{\SMALL MIREILLE BOUSQUET-M\'ELOU, \'ERIC FUSY, AND
KILIAN RASCHEL}{\SMALL PLANE BIPOLAR ORIENTATIONS AND QUADRANT WALKS}

In fact, the first enumerative result on plane bipolar orientations,
due to Tutte in 1973, was stated in terms of the derivative of the chromatic
polynomial~\cite{lambda12} (the interpretation in terms of orientations was only
discovered 10 years later). One of Tutte's main results  gives the
number of bipolar orientations of 
triangulations of a digon having $k+2$ vertices  
(equivalently, $2k$ inner faces, or $3k+1$ edges), as 
\beq\label{Tutte-bip}
a(k)= \frac {2(3k)!}{k!\,(k+1)!\,(k+2)!}\sim \frac {\sqrt 3}\pi  27^k k^{-4}.
\eeq
For instance, the 5  oriented triangulations explaining Tutte's result for $k=2$
are the following ones, where all edges are implicitly oriented
upwards.
\smallskip
\begin{center}
\includegraphics[width=10cm]{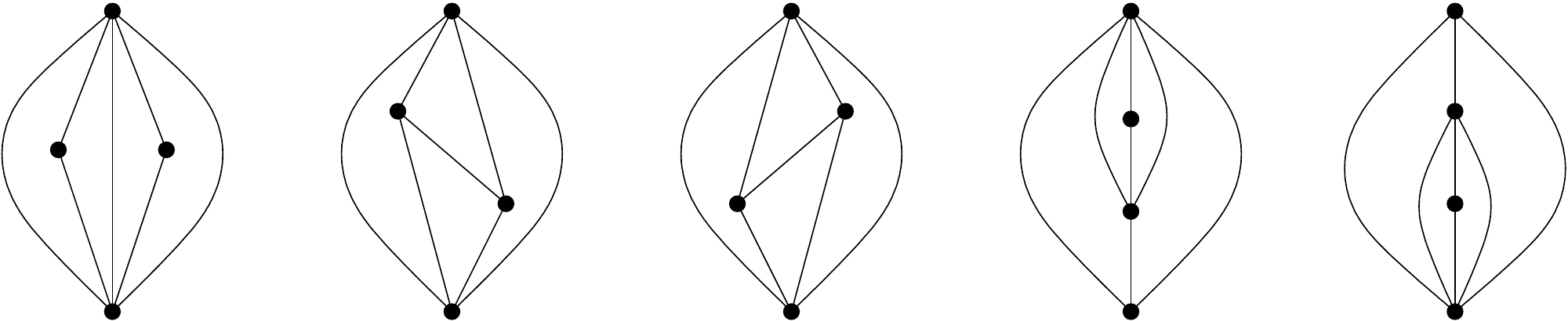}  
\end{center}
\smallskip

A more recent result, due to (Rodney) Baxter~\cite{baxter}, gives the number of
bipolar orientations of general planar maps with $n$ edges as
\beq\label{baxter-bip}
b(n)= \frac 2{n(n+1)^2}\sum_{m=1}^n\binom {n+1}{m-1}\binom {n+1}{ m}\binom {n+1
}{ m+1} \sim \frac{32}{\sqrt 3 \pi} 8^n n^{-4},
\eeq
where the asymptotic estimate can be obtained by standard
  techniques for the asymptotics of sums~\cite{odlyzko-handbook}.
For instance, the 6 bipolar orientations obtained  for $n=3$
are the following ones, where all edges are implicitly oriented
upwards.
\smallskip
\begin{center}
\includegraphics[width=10cm]{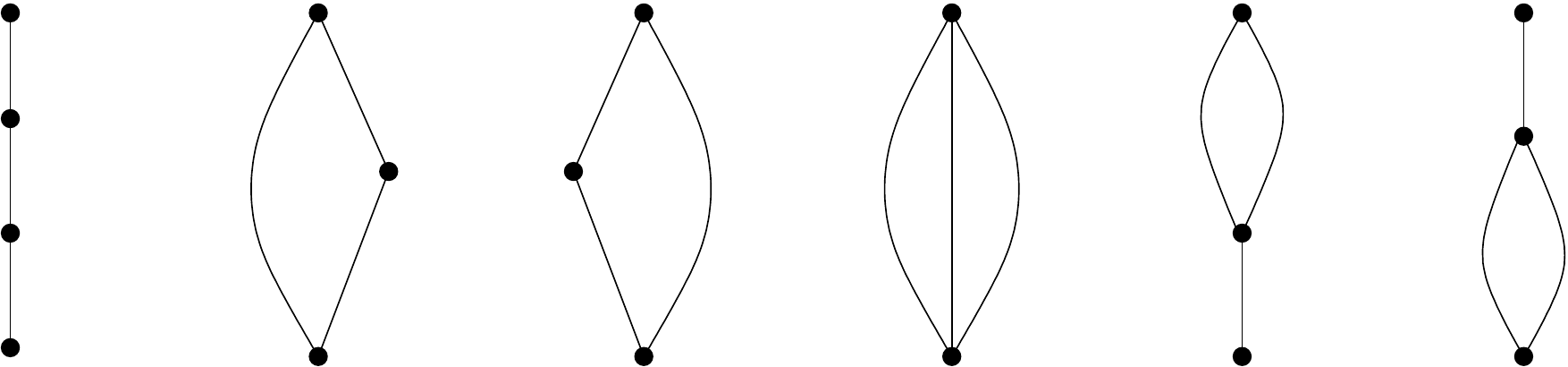}  
\end{center}
\smallskip
Baxter stated his result in terms of the Tutte polynomial, and  was apparently
unaware of its interpretation in terms of bipolar
orientations. Amusingly, he was also unaware of the fact that the
above numbers $b(n)$ were  known in the combinatorics literature as \dots 
Baxter numbers (after another Baxter, \emm Glen, Baxter). 

 Tutte's and Baxter's proofs both rely  on a  recursive
description of the chromatic (or Tutte) polynomial, which gives a
functional equation defining the \gf\ of maps equipped with a bipolar
orientation. Both solutions were  based on a guess-and-check approach, but these equations can now be solved in a more systematic
way~\cite{mbm-bcc,bousquet-motifs}. Moreover, several bijective proofs of Baxter's result
have been found, by constructing bijections between plane bipolar
orientations and various objects known to be counted by Baxter
numbers, like Baxter permutations, pairs of twin trees, or configurations
of three non-intersecting lattice
paths~\cite{AlPo15,bonichon-mbm-fusy,felsner2011bijections,fusy-bipolar}.

Tutte's and Baxter's results share some common features, for instance the
exponent~$-4$ occurring in the asymptotic estimate. Moreover, both
sequences $a(k)$ and $b(n)$ are \emm polynomially recursive,, that is,
 they satisfy a linear recurrence relation with polynomial
coefficients:
\begin{align*}
(k+1)(k+2)a(k)&= 3(3k-1)(3k-2)a(k-1),
\\
  (n+2)(n+3)b(n) &= (7n^2+7n-2)b(n-1) + 8(n-1)(n-2)b(n-2).
\end{align*}
Equivalently, the associated \gfs, namely $A(t)=\sum_{k\ge 0} a(k)
t^k$ and $B(t)=\sum_{n\ge 0} b(n) t^n$ are \emm D-finite,, meaning
that they satisfy a linear differential equation with polynomial
coefficients.

In this paper, we prove  universality of these features (detailed 
  statements will be   given in Section~\ref{sec:counting}): for any finite
  set $\Omega\subset\{2,3,\ldots\}$
    and for any integer~$e\geq 2$, the \gf\ of plane bipolar orientations such that all inner faces have their degree in $\Omega$ and the outer face has degree $e$ is a D-finite series, given as the
constant term of an explicit multivariate rational function (for
maps not carrying an orientation, the corresponding series are known
to be systematically
algebraic~\cite{bender-canfield,BDG-planaires}).  For
instance, if we consider bipolar orientations of quadrangulations of a
digon {(with the above notation, $\Omega=\{4\}$ and $e=2$)}, having $k+2$ vertices (equivalently, $k$  inner faces, or $2k+1$ edges), then the corresponding numbers $c(k)$ satisfy
\[
(k+2)(k+1)^2c(k)=4(2k-1)(k+1)(k-1)c(k-1)+12(2k-1)(2k-3)(k-1)c(k-2),
\]
their asymptotic behaviour is
\beq\label{asympt4}
c(k)\sim \frac 9{4\sqrt 3 \pi} 12^k k^{-4},
\eeq
and their \gf\ $\sum_{k\ge 0} c(k) t^{2k}$ is the constant term (in
$x$ and $z$) of the rational series
\beq\label{quad}
\frac{(1-\bx^2 z^2-2\bx z^3)(1+3\bx^4-\bx^2/t)}{1-t(x\bz +\bx^2+\bx z+z^2)},
\eeq
expanded as a series in $t$ whose coefficients are Laurent polynomials
in $x$ and $z$ (where we used the short notations $\bx:=1/x$ and $\bz:=1/z$). 
The  counterpart of the latter result for triangulations is 
\beq\label{tri}
\sum_{k\ge 0} a(k) t^{3k}= [x^0 z^0]
\frac{(1-\bx z^2)(1+2\bx^3-\bx^2/t)}{1-t(x\bz+\bx+z)},
\eeq
{where the operator $[x^0 z^0]$ extracts the constant term in $x$ and $z$.} Constant terms (or diagonals) of
multivariate rational functions form an important subclass of D-finite
series, for which specific methods have been developed, for instance
to determine their asymptotic
behaviour~\cite{MelczerMishna2016,pemantle2013analytic}, {possibly automatically~\cite{Melczer2017,melczer2016symbolic}},
or the recurrence relations satisfied by their coefficients~\cite{bostan-lairez-salvy-issac,bostan-lairez-salvy,lairez}.

\medskip
However, this paper is not only a paper on the enumerative properties of
(decorated) maps. It is also a paper on the enumerative and
probabilistic properties of lattice walks confined to a cone. The
reason for that is that 
the first ingredient in our approach is a recent beautiful bijection
by Kenyon, Miller, Sheffield and Wilson~\cite{kenyon2015bipolar} {(denoted KMSW)}, which encodes plane bipolar
orientations by  lattice walks confined to the first
quadrant of the plane. Among all known bijections that transform bipolar orientations into different objects~\cite{AlPo15,bonichon-mbm-fusy,felsner2011bijections,fusy-bipolar,kenyon2015bipolar}, the KMSW one seems to be the only one that naturally keeps track of the degree distribution of the faces. For instance, the above numbers $c(k)$ that count oriented quadrangulations also count quadrant walks  starting and ending at the origin,
and consisting of $2k$ steps taken in $\{(-2,0), (-1, 1), (0, 2), (1,
-1)\}$  (Figure~\ref{fig:ex-bijection}).

\begin{figure}[ht]
  \centering
 \includegraphics[width=8cm]{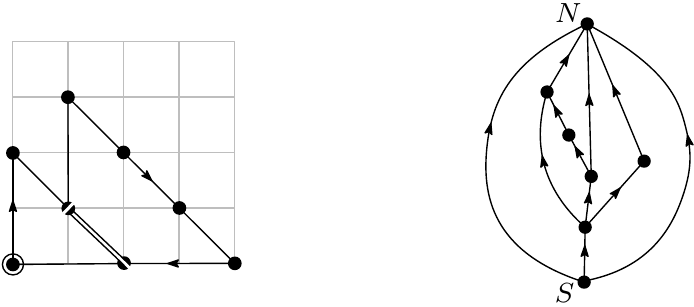}
  \caption{A walk in the quadrant and the corresponding bipolar
    orientation, through the {Kenyon--Miller--Sheffield--Wilson} (KMSW) bijection.}
  \label{fig:ex-bijection}
\end{figure}

As it happens, the enumeration of lattice walks confined to a cone is
at the moment a very active topic in enumerative
combinatorics~\cite{BeBMRa-17,BoKa08,BoRaSa14,mbm-mishna,DHRS-17,KuRa12,raschel-unified}.
These efforts have in the past 15 years led to a very good understanding of quadrant walk enumeration --- provided that all allowed steps are \emm small,,
that is, belong to $\{-1, 0, 1\}^2$. As shown by the above example of
quadrangulations, this is not the case for walks
coming from bipolar orientations (unless all faces have degree 2 or~3). 
It is only very recently that an
approach  was designed for arbitrary steps, by the first author and
two collaborators~\cite{BoBoMe18}. This approach will not work with \emm any,
collection of steps, but it \emm does, work for the well structured
step sets involved in the KMSW bijection, {which we call \emm tandem steps,}.
In fact, the enumeration of bipolar orientations provides a beautiful application, with arbitrarily large steps, of the method of~\cite{BoBoMe18}.
Hence this paper solves an enumerative problem on maps, \emm and, a
quadrant walk problem. Moreover, in
order to work out the asymptotic behaviour of the number of bipolar
orientations, we  go  through a probabilistic study of the
corresponding quadrant walks, for which we derive local limit theorems
and harmonic functions.

\begin{figure}[htb]
  \centering
  \scalebox{0.9}{\input{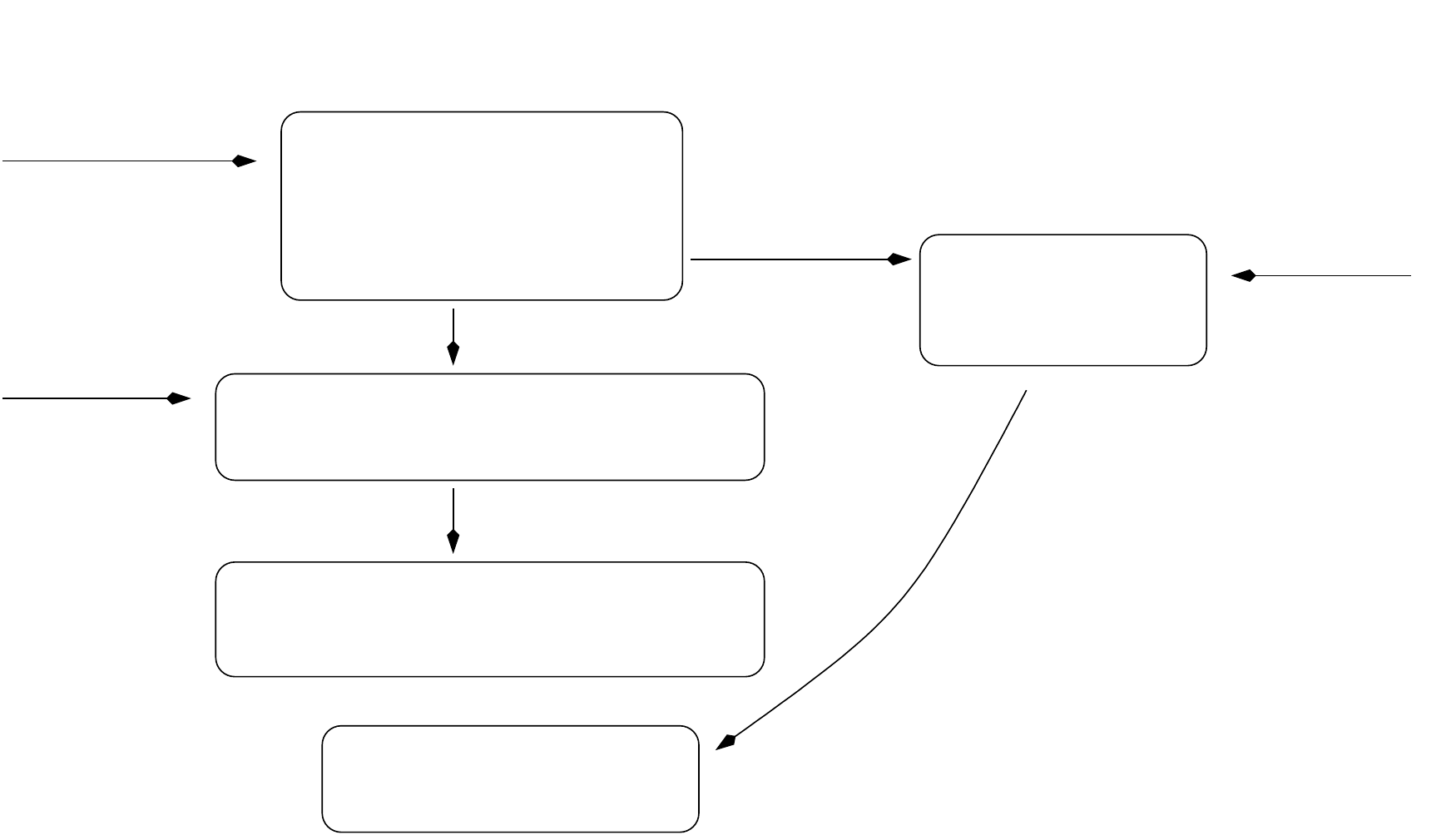_t}}
  \caption{Enumeration of tandem walks in the quadrant starting at $(a,b)$: our results and proofs.}
  \label{fig:roadmap}
\end{figure}

\medskip\noindent{\bf{Outline of the paper {(see also Figure~\ref{fig:roadmap}).}}}
Our main enumerative
results (both exact, and asymptotic) are stated in Section~\ref{sec:counting}, after a
preliminary section where we {describe the KMSW bijection and  recall its main
properties} (Section~\ref{sec:KMSW}). We prove our exact
results in Sections~\ref{sec:eqfunc} to~\ref{sec:algeq}, using the
general approach of~\cite{BoBoMe18}. Section~\ref{sec:bij_proofs} is a bijective \emm
intermezzo,, where we provide a combinatorial explanation of our
results in terms of bipolar orientations, using the KMSW
bijection. These combinatorial proofs are more elegant than the
algebraic approach used in the earlier sections, but they are also
completely \emm ad hoc,, while the approach of~\cite{BoBoMe18} is far more
robust. In
Section~\ref{sec:asymptotic} we are back to quadrant walks, this time in a
probabilistic setting. By combining our enumerative results and
probabilistic tools inspired by a recent paper of Denisov and
Wachtel~\cite{denisov2015random}, we obtain detailed global and local limit theorems for random walks (related to bipolar orientations) conditioned to stay
in the first quadrant. We also determine explicitly 
{the} associated discrete harmonic function. This allows us to prove the asymptotic results stated in Section~\ref{sec:counting}. We conclude in Section~\ref{sec:final} with some complements --- among others, a combinatorial proof of Baxter's result~\eqref{baxter-bip} based on the KMSW walks, and a
  discussion on random generation, which leads to the uniform random
  bipolar orientation of Figure~\ref{fig:random}. This figure suggests
  that drawing planar maps at random according to the number of their
  bipolar orientations creates a bias in favour of ``fatter''
  maps. This has been recently confirmed by Ding and
  Gwynne~\cite[Fig.~2]{ding}, who showed that the  number of points in
  a ball of radius $r$ of a large bipolar-oriented map grows like~$r^d$, with $2.8\le d\le 3.3$, instead of $r^{4}$ for uniform maps. One expects the corresponding diameter of maps of size $n$ to scale like  $n^{1/d}$.

\begin{figure}[htb]
  \centering
\includegraphics[scale=0.6]{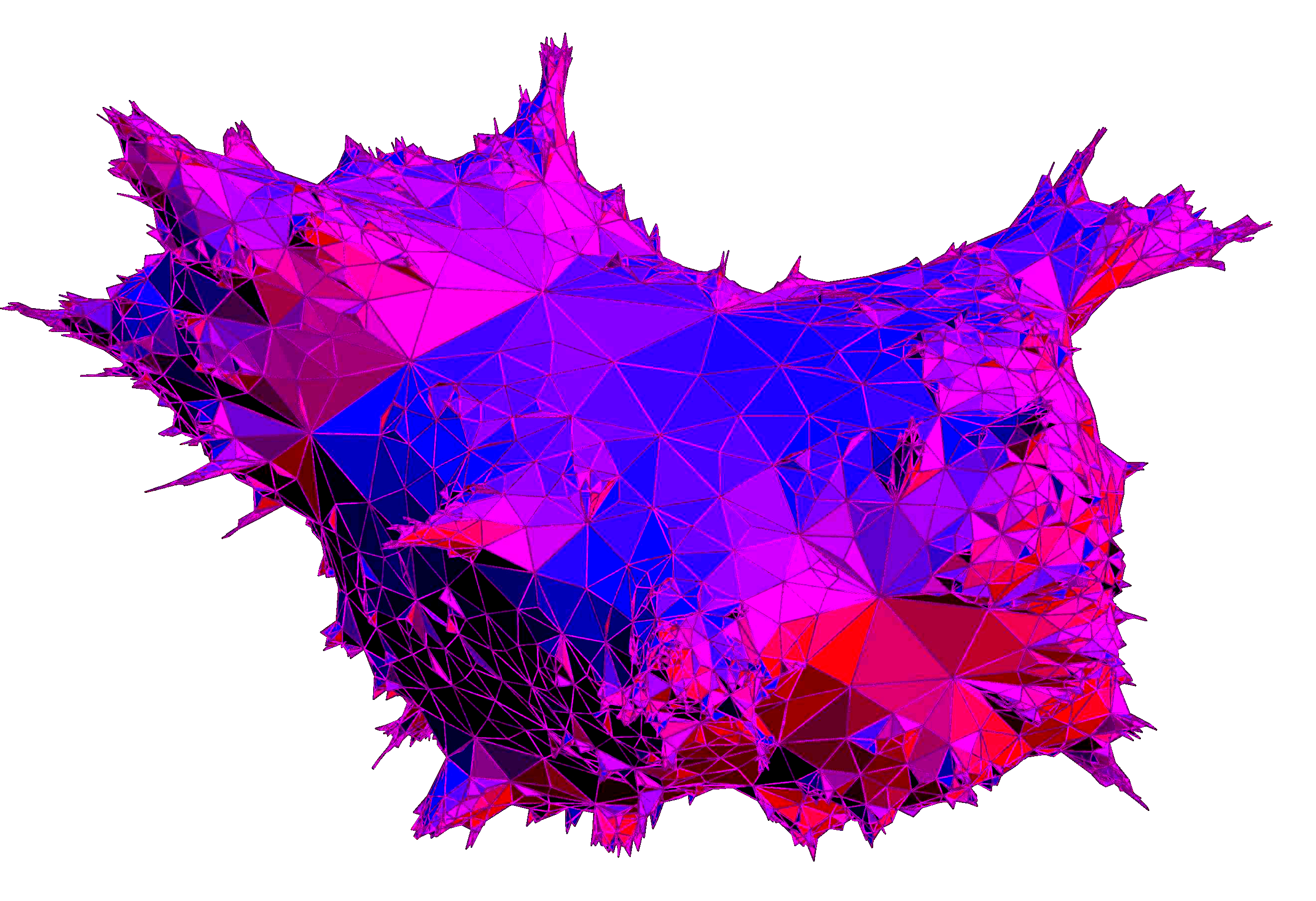}  
  \caption{A random bipolar orientation of {a} triangulation with 5000 faces (\copyright
    \ J\'er\'emie Bettinelli). The orientations of the edges are not shown.}
  \label{fig:random}
\end{figure}

\section{The Kenyon--Miller--Sheffield--Wilson bijection}
\label{sec:KMSW}
In this section, we recall a few definitions on planar maps, and
describe the KMSW bijection between bipolar orientations and certain
lattice walks.

A \emm planar map, is a proper embedding of a connected
multigraph in the plane, taken up to 
{orientation-preserving} 
homeomorphism. A (planar) map has naturally vertices and edges, but defines
also \emm faces,, which are
the connected components of the complement of the underlying
multigraph. One of the faces, surrounding the map, is unbounded. We
call it the \emm  outer, face. The other faces are called  \emm
inner, faces.  The \emph{degree} of a vertex or face is the number
of edges incident to it, counted with multiplicity. The degree of the
outer face is the \emm outer degree,.
A (plane) \emm bipolar orientation, 
is a planar map endowed with 
an acyclic orientation of its edges, having a unique source and a unique sink, 
both incident to the outer face.
We denote them by $S$ and $N$ respectively, as illustrated in
Figure~\ref{fig:example_bipolar}(a).
  We will usually draw the source $S$ at the bottom of the map, the sink $N$ at
the top, and orient all edges upwards. 

It is known~\cite{FOR:95} 
that bipolar orientations are characterized
by two local properties (see Figure~\ref{fig:example_bipolar}(b)):  
\begin{itemize}
\item 
the edges incident to a given
 vertex $v\notin\{S,N\}$ are partitioned
into a non-empty sequence of consecutive outgoing edges and a non-empty 
sequence of consecutive ingoing edges (in cyclic order around $v$), 
\item  in a dual way, the contour of 
each inner face $f$ is partitioned into a non-empty sequence 
of consecutive edges {oriented} clockwise around $f$ and a non-empty
sequence of consecutive edges {oriented}  counterclockwise; these are 
called the \emph{left boundary} and \emph{right boundary} of $f$, respectively. 
The edges of the outer face form two oriented paths going from $S$ to $N$,
called  \emm left, and \emm right outer boundaries, 
{following an obvious convention.} 
\end{itemize}
If an inner face $f$ has $i+1$  clockwise edges
 and $j+1$ counterclockwise edges, then $f$ is said to be \emph{of type $(i,j)$}.  

\begin{figure}
\begin{center}
\includegraphics[width=14cm]{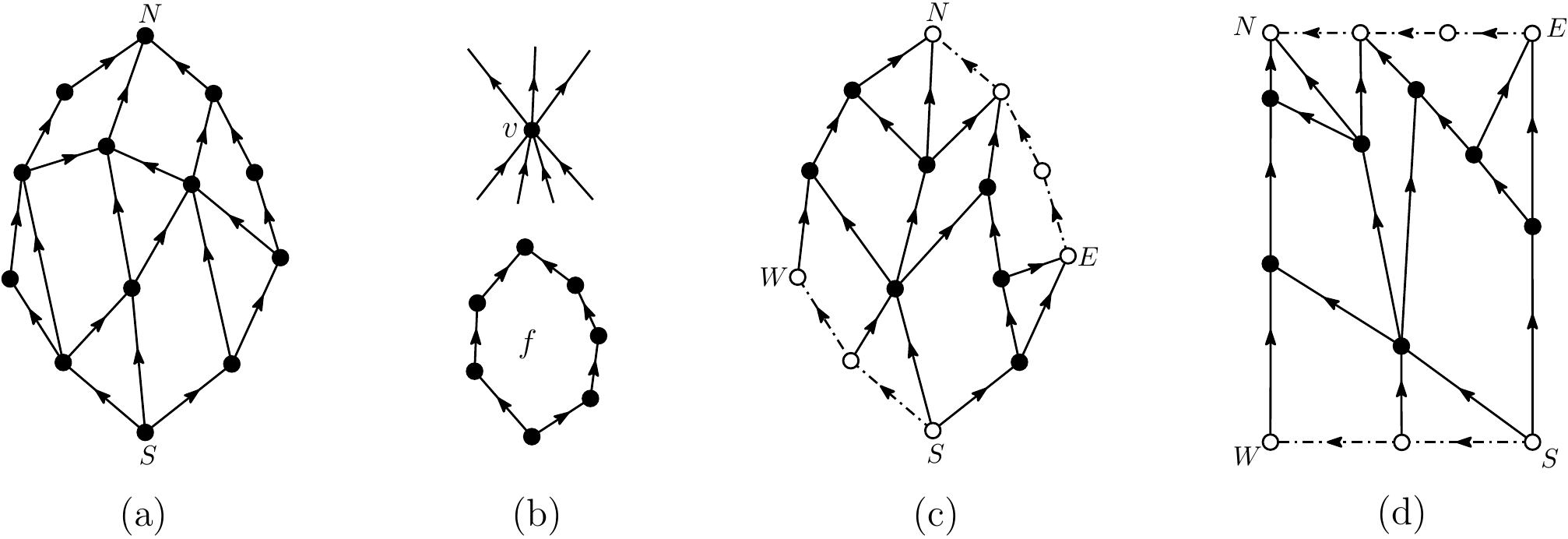}
\end{center}
\caption{(a) A plane bipolar orientation. (b)  Local 
properties of  plane bipolar orientations. (c)--(d) A marked 
bipolar orientation, drawn in two different ways.}
\label{fig:example_bipolar}
\end{figure}

A \emph{marked bipolar orientation} is a bipolar orientation
where the right (respectively left) outer boundary carries a distinguished vertex 
$\vr\neq S$  (respectively $\vl\neq N$), such that:
\begin{itemize}
\item 
 each vertex between $\vr$ and $N$
along the right outer boundary ($N$ excluded) has outdegree~$1$
 and the unique outgoing edge has an inner face on its left, 
\item similarly each vertex between $\vl$ and $S$
along the left outer boundary ($S$ excluded) has indegree $1$
and the unique ingoing edge has an inner face on its right.
\end{itemize}
 See Figure~\ref{fig:example_bipolar}(c) for an example.  
Note that a bipolar orientation can be identified with a marked bipolar
orientation, upon declaring $\vr$ to be $N$ and $\vl$ to be $S$.
The \emph{upper right boundary} (respectively \emph{lower right boundary})    
is the path from $\vr$ to $N$ (respectively from $S$ to $\vr$) along the right outer boundary; and similarly the \emph{lower left boundary} (respectively \emph{upper left boundary})  
is the path from $S$ to $\vl$ (respectively from $\vl$ to $N$) along the left outer boundary. Note that 
the upper right and lower left boundaries do not share any vertex.
A vertex or edge is called \emph{plain} if it does not belong  
to the upper right nor to the lower left boundary. In our figures,
plain vertices  are shown in black  and plain edges in solid
lines. The non-plain vertices are  white, and the non-plain edges are dashed.

At the end of 2015, Kenyon \emph{et al.}~\cite{kenyon2015bipolar} 
 introduced a bijection between certain 2-dimension\-al walks and marked bipolar orientations.
These walks, which we call \emph{tandem walks}\footnote{The name
  \emm tandem, originates from the case where the only   steps  are $(1,-1)$,
  $(-1,0)$ and $(0,1)$. Then it can be seen that such walks, starting at $(0,0)$ and restricted to the quadrant,  describe the evolution of two queues in series --- or in \emm tandem,.}, are defined as sequences  of steps of two types: {south-east steps  $(1,-1)$ (called SE steps for short)}  
and steps of the form
$(-i,j)$ with $i,j\geq 0$, which we  call \emph{face steps}; 
the \emph{level} of such a step is the integer $r=i+j$.
Note that a walk, being defined as a sequence of steps, has no prescribed starting point nor endpoint. In other words, it is defined up to translation. We often say that it is \emm non-embedded,.  When we will embed walks in the plane, we will specify a starting point explicitly.
Given a tandem walk~$w$ with successive steps $s_1,\ldots,s_n$ in~$\zs^2$, the bijection
builds a marked bipolar orientation as follows. We start with the marked bipolar orientation $O_0$ consisting of a single
edge $e=\{S,N\}$ with $\vr=N$ and $\vl=S$. The marked vertex $\vl$
will remain the same
all along the construction, but the source $S$ will move from vertex
to vertex. 
Then, for $k$ from $1$ to $n$, we construct a 
marked bipolar orientation $O_k$ from $O_{k-1}$
and the $k$th step $s_k$. Two cases may occur:
\begin{itemize}
\item
If $s_k$ is a SE step (Figure~\ref{fig:situationsb}), we push $\vr$ one step up; if $\vr\not= N$ in $O_{k-1}$,
this means that one dashed edge {of $O_{k-1}$} becomes plain in $O_k$; 
otherwise we also push $N=E$ one step up, thereby 
creating a new (plain) edge that is both on the left and right outer boundary
of the orientation. {In this case,} we still have $\vr=N$ in $O_k$.  
\item
If $s_k$ is a face step $(-i,j)$, we first glue a new inner face $f$ 
of type $(i,j)$ to the right outer boundary 
in  such a way {that} the  upper vertex  of $f$ is  $\vr$ and 
the lower vertex of $f$ lies on the right outer boundary of $O_k$
(Figure~\ref{fig:situationsa}); more precisely, if
$i+1$ does not exceed the length of the lower right boundary of $O_{k-1}$,
then the $i+1$ left edges of $f$ are identified with the top $i+1$
edges on this boundary; otherwise, the lowest left edges of $f$
are dashed and become
part of the lower left boundary in $O_k$, while the lower vertex of $f$
becomes the source of $O_k$. We finally choose 
$\vr$ to be the end of the first edge along the right boundary of $f$.
\end{itemize}

\begin{figure}[h!]
\begin{center}
\includegraphics[width=0.8\linewidth]{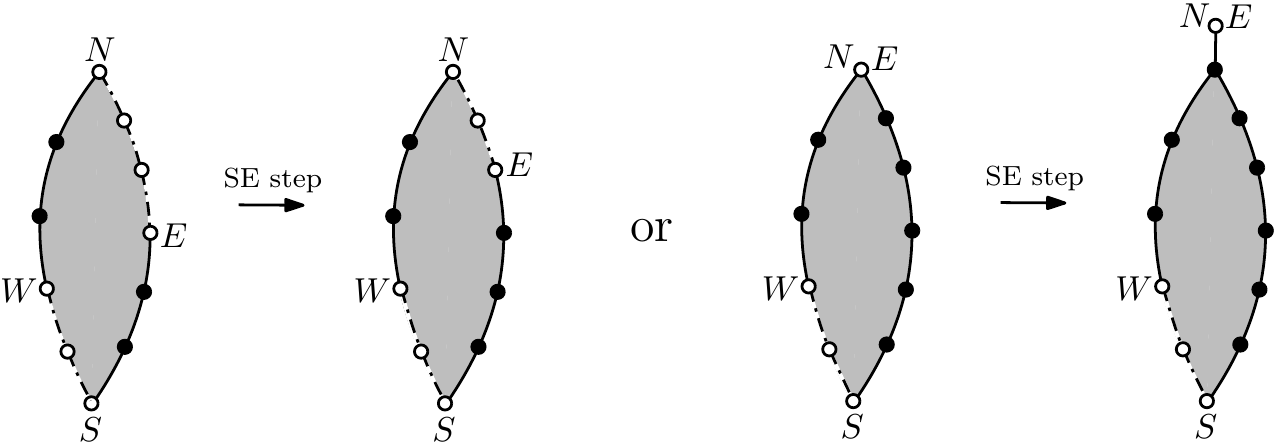}
\end{center}
\caption{Updating the marked bipolar orientation when a SE step is read.}
\label{fig:situationsb}
\end{figure}
\begin{figure}[h!]
\begin{center}
\includegraphics[width=0.8\linewidth]{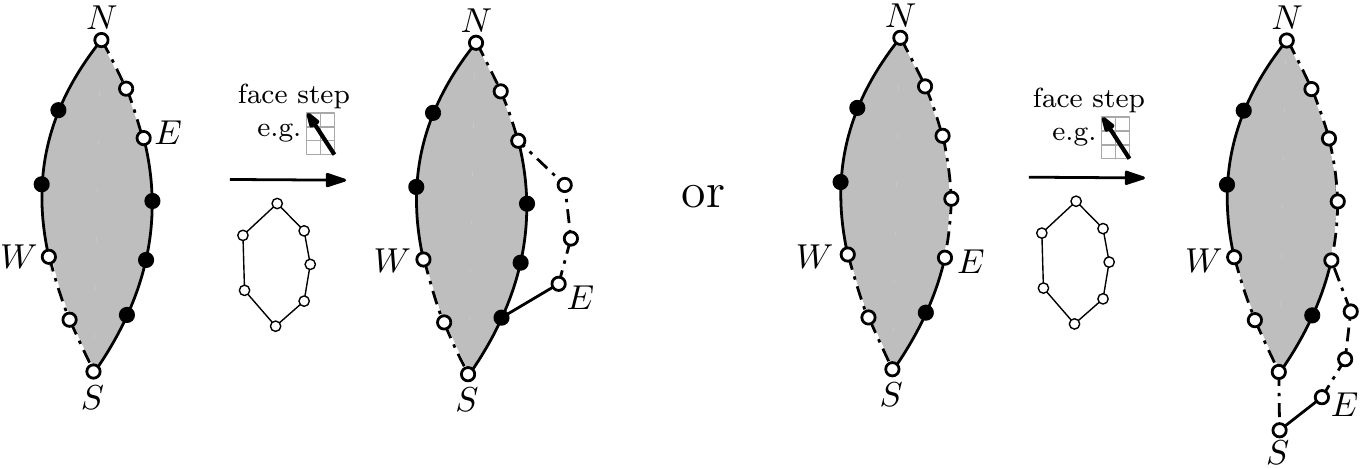}
\end{center}
\caption{Updating the marked bipolar orientation when a face step
  $(-2,3)$ is read.}
\label{fig:situationsa}
\end{figure}

We denote the marked bipolar orientation 
constructed from $w$ by $\Phi(w):=O_n$. 
A complete example is detailed in  Figure~\ref{fig:kenyon_bij}.

\begin{figure}[htb]
\begin{center}
\includegraphics[width=14cm]{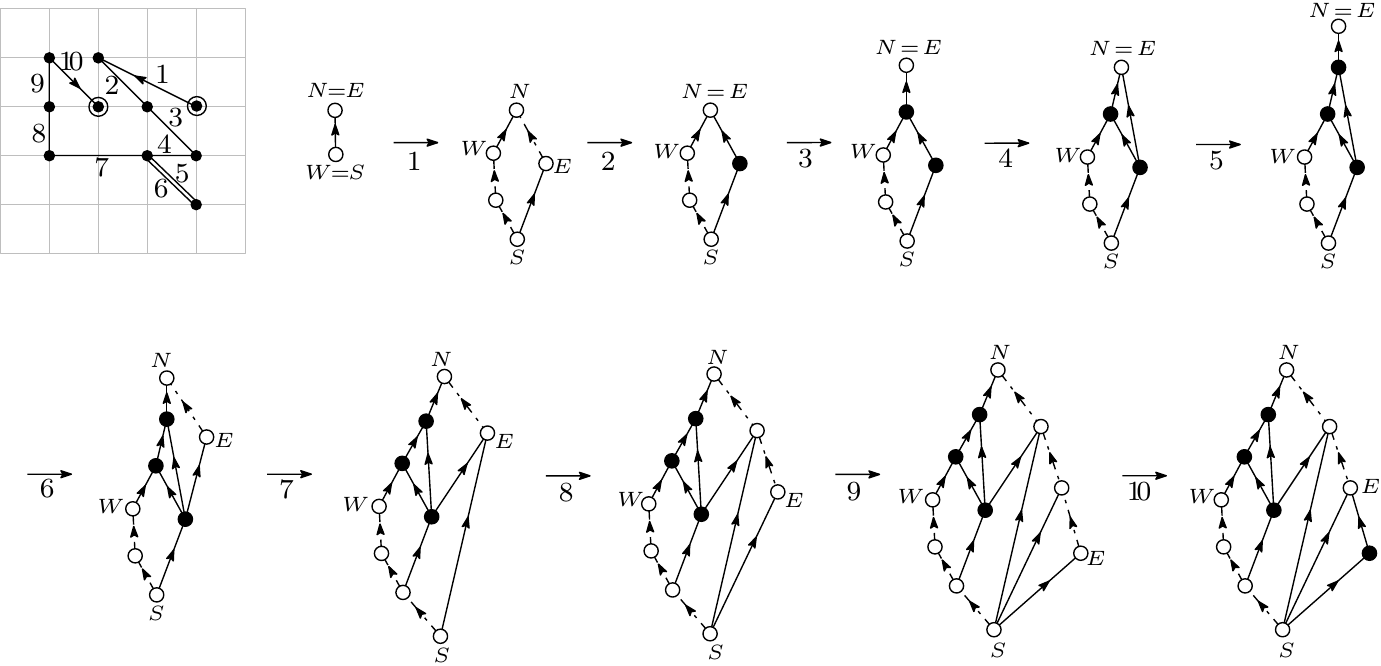}
\end{center}
\caption{A tandem walk of length $10$ and the associated
marked bipolar orientation, which is constructed step by step.}
\label{fig:kenyon_bij}
\end{figure}

\begin{Theorem}[\sc Kenyon et al.~\cite{kenyon2015bipolar}]\label{thm:KMSW}
The mapping $\Phi$ is a bijection between (non-embedded) tandem walks with~$n$ steps  and marked bipolar orientations with $n+1$ plain edges. It transforms
 SE steps into plain vertices, and face steps of level $r$ into inner
 faces of degree $r+2$.
\end{Theorem}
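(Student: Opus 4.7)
The strategy is to (i) verify that the iterative construction $\Phi$ is well-defined, producing at each step a valid marked bipolar orientation with the advertised statistics, and (ii) construct an explicit inverse $\Psi$ by describing a canonical peeling procedure that reads off the steps of the walk from right to left.

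\emph{Step 1: $\Phi$ is well-defined and preserves the statistics.} I would proceed by induction on $n$. The base case $O_0$ is a single plain edge from $S$ to $N$ with $\vr=N$, $\vl=S$, hence a marked bipolar orientation with $1$ plain edge. For the inductive step, I must check that each update preserves the local conditions: acyclicity, the existence of a unique source and sink on the outer face, the bipartition of edges at each non-pole vertex into consecutive ingoing/outgoing blocks, the bipartition of the contour of each inner face into left and right boundaries, and the extra conditions defining a \emph{marked} bipolar orientation along the upper right and lower left boundaries. The SE update only plainifies/creates an edge along the right outer boundary at $\vr$, which preserves all local conditions. The face-step update glues a new face $f$ of type $(i,j)$ so that its $i+1$ left edges coincide (from the top) with the upper part of the lower right boundary of $O_{k-1}$, creating dashed edges when the boundary is too short: the condition that all vertices strictly between $\vr$ and $N$ on the upper right boundary of $O_{k-1}$ have outdegree $1$ incident to an inner face on the left is exactly what is needed so that the new face $f$ can be glued consistently, and that the new $\vr$ (the end of the first right edge of $f$) satisfies the same marking condition in $O_k$. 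The statistics then follow immediately: an SE step plainifies/creates exactly one plain vertex (the new position of $\vr$, which becomes plain when $\vr=N$), and a face step of level $r=i+j$ creates exactly one new inner face of degree $(i+1)+(j+1)=r+2$; the number of plain edges grows by exactly $1$ in each case (a single edge is plainified or created).

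\emph{Step 2: Definition of the inverse $\Psi$.} Given a marked bipolar orientation $O$ with $n+1\ge 1$ plain edges, I define the \emph{last step} from $O$ as follows. If the top plain vertex of $O$ lies on the right outer boundary strictly above the second-to-top plain vertex, i.e.\ the top edge of the right outer boundary (equivalently, the edge arriving at $N$ from the right) is plain with no inner face on its left at $\vr$'s neighborhood, I output an SE step and obtain $O'$ by reversing the SE update: push $\vr$ one step down and, if $\vr=N$ before the reversal, retract the top edge into a dashed edge of both outer boundaries. Otherwise, $\vr$ is adjacent (just above) to an inner face $f$ whose right boundary starts at $\vr$; I read a face step of level $\deg(f)-2$ with parameters $(i,j)$ determined by the two boundaries of $f$, and define $O'$ by removing $f$: its right boundary becomes part of the new right outer boundary, its left boundary is ``undone'', with the lowest dashed edges (if any) of $f$ being transferred back to the lower left boundary. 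The heart of the argument is to check that exactly one of these two cases applies at each stage, that the peeled object $O'$ is again a marked bipolar orientation, and that this procedure strictly decreases the number of plain edges, hence terminates at $O_0$.

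\emph{Step 3: Mutual inverses.} Finally, I would verify $\Psi(\Phi(w))=w$ and $\Phi(\Psi(O))=O$ by induction on $n$. Both identities reduce to the local statement that the SE-update and face-update defined in $\Phi$ are exactly reversed by the two cases of $\Psi$ described above, which is a direct matching of pictures (Figures~\ref{fig:situationsb} and~\ref{fig:situationsa}). Combined with the statistic count from Step~1, this yields the bijection and the claimed correspondence between SE steps and plain vertices, and between face steps of level $r$ and inner faces of degree $r+2$.

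\emph{Main obstacle.} The delicate point is the face-step case in both directions: one has to track carefully what happens when the glued face $f$ is ``too tall'' on the left, so that new dashed edges enter the lower left boundary and the source $S$ moves; symmetrically, the inverse has to recognize from the geometry of $O$ whether the last step was SE or a face step, and, in the latter case, to correctly identify the face $f$ to be peeled together with its split into plain/dashed edges. Keeping the marking conditions on the upper right and lower left boundaries invariant along the iteration --- and showing that they uniquely determine which face is ``removable'' at each stage --- is the technical core of the proof.
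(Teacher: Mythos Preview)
The paper does not prove this theorem: it is stated as a result of Kenyon, Miller, Sheffield and Wilson and attributed to~\cite{kenyon2015bipolar}, with no proof given here. So there is no ``paper's own proof'' against which to compare your proposal.

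That said, your outline is the natural strategy for such a statement (well-definedness by induction, explicit inverse by peeling, mutual inverse check), and it is essentially how the original reference proceeds. A couple of points in your Step~2 are imprecise and would need tightening in an actual proof. First, your criterion for recognising an SE step (``the top edge of the right outer boundary is plain with no inner face on its left at $\vr$'s neighborhood'') is muddled: the relevant diagnostic is the edge immediately \emph{above} $\vr$ on the right outer boundary (the first dashed edge, or its absence if $\vr=N$), together with whether the face to its left is inner or outer --- this is what distinguishes the two pictures in Figures~\ref{fig:situationsb} and~\ref{fig:situationsa}. Second, your description of the SE-inverse (``retract the top edge into a dashed edge of both outer boundaries'') only applies when $\vr=N$ and the top edge is on both boundaries; otherwise you simply re-dash the edge below the old $\vr$. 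Finally, your accounting of plain vertices in Step~1 is loose: for an SE step with $\vr\neq N$, no new vertex is created, but a previously non-plain vertex (the old $\vr$) becomes plain; you should phrase the correspondence as ``each SE step turns exactly one non-plain vertex into a plain vertex, or creates one new plain vertex''. These are all fixable local issues rather than a flaw in the overall plan.
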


\begin{figure}[b]
\begin{center}
\includegraphics[width=0.8\linewidth]{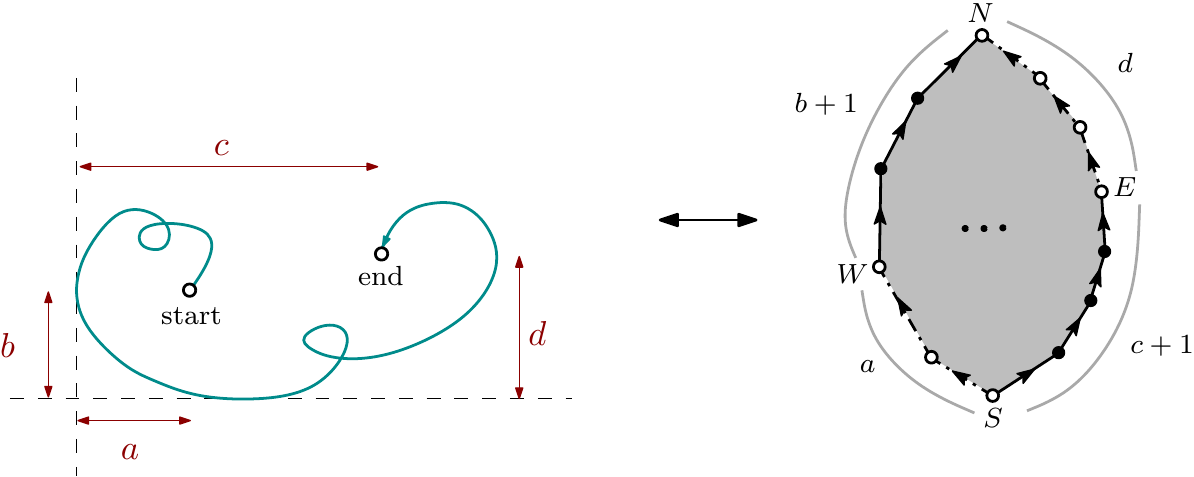}
\end{center}
\caption{The correspondence between the coordinates of the endpoints and 
the signature in the KMSW  bijection.}
\label{fig:corresp}
\end{figure}

The boundary lengths of the orientation $\Phi(w)$ are also conveniently translated through this
bijection. Let us denote by $a$ (respectively $b+1$, $c+1$, $d$) the
length of the lower left (respectively upper left, lower right, upper right)
boundary of $\Phi(w)$ (see Figure~\ref{fig:corresp}, right). We call the $4$-tuple $(a,b;c,d)$ the \emph{signature} of the marked bipolar orientation. 
Let us embed the walk $w$ in the plane so that it starts at some point
  $(\xstart,\ystart)$.  Let $\xmin$ and $\ymin$ be the
minimal $x$- and $y$-coordinates along the walk, and let $\xend$ and
$\yend$ be the $x$- and $y$-coordinates of the 
final point of $w$. Then one easily checks that 
\begin{align}
  a&=\xstart-\xmin, 
  & b&=\ystart-\ymin,
       \nonumber\\
c&=\xend-\xmin, & d&=\yend-\ymin, \label{sign}
\end{align}
as illustrated in Figure~\ref{fig:corresp}.
Indeed these quantities are initially all equal to $0$ when we
{start constructing} $\Phi(w)$ (that is, for the initial orientation $O_0$ and the empty walk),
and then {the parameters in each pair} (e.g., $a$ and $\xstart-\xmin)$
change in the same way at each step of the construction 
(see Figures~\ref{fig:situationsb} and~\ref{fig:situationsa}).  

If we embed $w$ in the plane so that $\xmin=\ymin=0$, then it becomes a tandem walk in the quadrant $\{x\geq 0, y\geq 0\}$
starting at $(a,b)$, ending at $(c,d)$, constrained to visit at least
once the $x$-axis and  the $y$-axis. 
For unmarked bipolar orientations ($a=d=0$), the constraint holds
automatically, and the following corollary~\cite[Thm.~2.2]{kenyon2015bipolar} is obtained.

\begin{Corollary}\label{cor:bij-bip}
  The mapping $\Phi$ specializes to  a bijection between tandem
  walks of length $n$   in the quadrant,
starting at $(0,b)$ and ending at $(c,0)$, and bipolar orientations
with $n+1$ edges, having $b+1$ edges on 
the left outer boundary and $c+1$ edges on the 
 right outer boundary.

Specializing further to \emm excursions,, that is, walks starting and ending
at $(0,0)$, we obtain, upon erasing the  two outer edges, a
bijection between excursions of length $n$ and bipolar orientations
with $n-1$ edges.
\end{Corollary}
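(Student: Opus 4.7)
The plan is to derive both statements directly from Theorem~\ref{thm:KMSW} by specialising the marking data. An unmarked bipolar orientation is naturally identified with a marked bipolar orientation in which $\vr = N$ and $\vl = S$; equivalently, its lower left boundary and its upper right boundary are both empty. In terms of the signature $(a, b; c, d)$ this reads $a = d = 0$, while $b+1$ (respectively $c+1$) becomes the length of the full left (respectively right) outer boundary.

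Next, I would embed the walk $w$ in the plane so that $\xmin = \ymin = 0$, turning it into a tandem walk confined to the quadrant $\{x \geq 0,\ y \geq 0\}$. The identities \eqref{sign} translate $a = d = 0$ into $\xstart = 0$ and $\yend = 0$, and give $\ystart = b$, $\xend = c$. The ``visit both axes'' constraint that appears for general signatures in the KMSW bijection becomes automatic here, since the walk already starts on the $y$-axis and ends on the $x$-axis. Combined with Theorem~\ref{thm:KMSW} --- which says that a tandem walk of length $n$ corresponds to an orientation with $n+1$ plain edges, all of which are edges of the map in the unmarked case --- this immediately yields the first claim.

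For the excursion case I would then set $b = c = 0$. The orientation produced by $\Phi$ then has exactly one edge on each outer boundary, so its outer face is a digon bordered by two parallel edges from $S$ to $N$. Erasing these two outer edges leaves a bipolar orientation on $n+1 - 2 = n - 1$ edges, and this erasure is bijective: from any bipolar orientation one canonically recovers an orientation whose outer face is a digon by adjoining a parallel edge from $S$ to $N$ on the right of the outer face.

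No genuine obstacle arises: once Theorem~\ref{thm:KMSW} and the signature identification \eqref{sign} are granted, both parts of the corollary follow by bookkeeping. The only small check is that the outer digon of an unmarked orientation with $b = c = 0$ can be added or removed unambiguously, which is immediate from the fact that the source and sink of a bipolar orientation are uniquely located on the outer face.
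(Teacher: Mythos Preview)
Your argument is correct and matches the paper's approach: the corollary is an immediate specialisation of Theorem~\ref{thm:KMSW} and~\eqref{sign} to $a=d=0$, and the paper does not spell out more than you do. One small slip in your treatment of excursions: to invert the erasure you must adjoin \emph{two} edges from $S$ to $N$, one on each side of the outer face, not just one on the right --- otherwise the outer face will not be a digon unless the left boundary of the original orientation already had length~$1$. With that correction, the edge count $n-1 \leftrightarrow n+1$ is consistent.
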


\begin{figure}[b]
\begin{center}
\includegraphics[width=\linewidth]{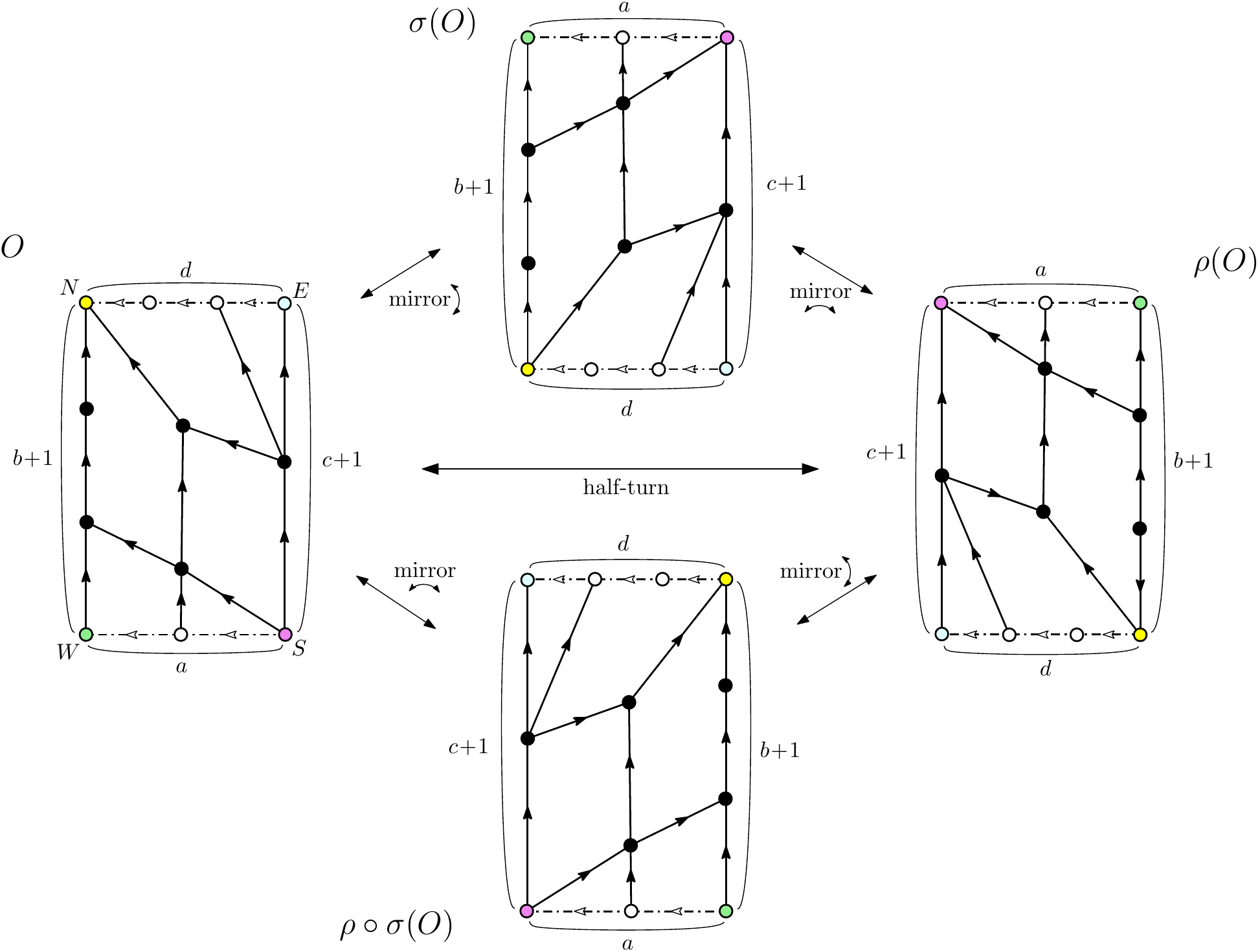}
\end{center}
\caption{The orbit of a marked bipolar orientation under the action of the two involutions $\sigma$ and $\rho$; the dashed edges are drawn as horizontal segments, which makes it easier to see the mirror-effect of $\sigma$ and
$\rho\circ\sigma$.}
\label{fig:involutions}
\end{figure}

We now define two involutions on marked bipolar orientations. 

\begin{Definition}\label{def:symmetries}
  Let $O$ be a marked bipolar orientation of signature $(a,b;c,d)$. We
  define $\rho(O)$ as the marked bipolar orientation obtained by
  reversing all edge directions in $O$. This exchanges the roles of $N$ and $S$ on the one hand, of $E$ and $W$ on the other hand. The signature of $\rho(0)$ is   $(d,c;b,a)$.

  We     define $\sigma(O)$ by first reflecting $O$ in a
  mirror, then reversing the edge directions of plain edges only.
 The new points $S', N', W'$ and $E'$ in
$\sigma(O)$ correspond to $E, W, N$ and $S$, respectively. 
The signature of  $\sigma(O)$ is $(d,b;c,a)$. 
\end{Definition}
This description clearly shows that $\rho$ and $\sigma$ are
involutions. Moreover, the marked bipolar orientations $(\rho \circ \sigma)(O)$ and $(\sigma \circ
\rho) (O)$ are both obtained by reflecting $O$ in a mirror and reversing
the directions of all dashed edges, and thus they coincide. Hence
  $\rho$ and $\sigma$ generate  a dihedral group of
order $4$. Their effect is perhaps better seen if we draw  marked
orientations with the rectangular convention adopted on the right of
Figure~\ref{fig:example_bipolar}: all plain edges go upward, while dashed edges go left. Then we can forget edge directions,  $\rho$ corresponds to a half-turn rotation, $\sigma$ to a reflection in a horizontal mirror,  and $\rho\circ\sigma$ to a reflection in a vertical mirror. This is illustrated in Figure~\ref{fig:involutions}. 

It is easy to describe the involution on tandem walks induced by
$\rho$. This description (which we will not exploit)
is used in the proof of Theorem~2.2 in~\cite{kenyon2015bipolar}.

\begin{Proposition}
  Let $w=s_1, \ldots, s_n$ be a tandem walk, and $O=\Phi(w)$ the
  corresponding marked bipolar orientation. Let $\widetilde s_k$ be
  $(-j,i)$ if $s_k=(-i,j)$, for any $i, j \in \zs^2$, and define $\widetilde
  w=\widetilde s_n , \ldots, \widetilde s_1$. Then $\Phi(\widetilde w)= \rho(O)$.
\end{Proposition}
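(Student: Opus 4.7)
The plan is to induct on $n$, the length of $w$. The base case $n = 0$ is immediate: both $w$ and $\widetilde w$ are empty, $\Phi(\emptyset) = O_0$, and $\rho(O_0) = O_0$ since reversing the unique edge of $O_0$ and swapping $S \leftrightarrow N$, $\vl \leftrightarrow \vr$ (which all coincide with the two endpoints of that edge) returns the same marked bipolar orientation.

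For the inductive step, the key observation is a local symmetry between how $\Phi$ attaches the last step $s_n$ to the top-right of $O_{n-1} := \Phi(s_1, \ldots, s_{n-1})$ and how $\rho$ maps this top-right addition to a bottom-left structure of $\rho(O)$, where $O := \Phi(w)$. Concretely: if $s_n$ is the SE step $(1,-1)$, the plain edge created near $\vr$ in $O$ is sent by $\rho$ to a plain edge near the new $\vl$ of $\rho(O)$; this is exactly what the $\Phi$-rule produces when processing $\widetilde s_n = (1,-1)$ as the \emph{first} step of $\widetilde w$ from $O_0$. If $s_n = (-i,j)$ is a face step, then the inner face of type $(i,j)$ added near $\vr$ in $O$ becomes, under edge reversal by $\rho$, an inner face of type $(j,i)$ (clockwise and counterclockwise boundaries being swapped) at the bottom-left of $\rho(O)$, which is precisely the face produced by processing the face step $\widetilde s_n = (-j,i)$ as the first step of $\widetilde w$ from $O_0$. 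In both cases, the endpoint and boundary-length bookkeeping is consistent with the signature transformation $(a,b;c,d) \mapsto (d,c;b,a)$ already established in Definition~\ref{def:symmetries}.

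The main obstacle is that the $\Phi$-construction is defined as a left-to-right process starting from the fixed orientation $O_0$, so the inductive hypothesis applied to $w' = (s_1, \ldots, s_{n-1})$ does not directly describe the continuation of $\widetilde w = (\widetilde s_n, \widetilde s_{n-1}, \ldots, \widetilde s_1)$ beyond its first step $\widetilde s_n$. The cleanest way to reconcile this is to extend the $\Phi$-rule to a procedure $\Phi_{O^*}$ that may start from any marked bipolar orientation $O^*$, and to also consider its mirror-image $\Phi^L$ that attaches steps to the \emph{left} outer boundary rather than the right one. One then establishes the single-step compatibility $\rho \circ \Phi_{O^*} = \Phi^L_{\rho(O^*)} \circ \widetilde{(\cdot)}$ by the case analysis sketched above; iteration yields $\rho(\Phi(w)) = \Phi^L_{O_0}(\widetilde s_1, \widetilde s_2, \ldots, \widetilde s_n)$. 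Finally, one verifies that building a marked bipolar orientation by left-adding steps in forward order gives the same result as building it by right-adding those steps in reverse order (a symmetry inherent in the fact that both constructions place $\widetilde s_k$ at the same spatial position in the final orientation), so that $\Phi^L_{O_0}(\widetilde s_1, \ldots, \widetilde s_n) = \Phi(\widetilde s_n, \ldots, \widetilde s_1) = \Phi(\widetilde w)$, completing the induction.
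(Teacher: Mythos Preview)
The paper does not actually prove this proposition: it states the result (calling it ``easy'') and refers to the proof of Theorem~2.2 in the KMSW paper~\cite{kenyon2015bipolar}. So there is no proof in the paper to compare against, and the evaluation below concerns your argument on its own merits.

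Your reduction via a left-sided construction $\Phi^L$ is a natural idea, and the single-step compatibility (your step~(2)) is indeed a routine case analysis. The gap is in the final step: the identity
\[
\Phi^L_{O_0}(\widetilde s_1,\ldots,\widetilde s_n)=\Phi(\widetilde s_n,\ldots,\widetilde s_1).
\]
Your justification---``both constructions place $\widetilde s_k$ at the same spatial position in the final orientation''---presupposes that the two constructions land on the same object, which is exactly what is in question. Worse, once step~(2) is granted, this last identity is \emph{equivalent} to the proposition itself: step~(2) iterated gives $\rho\circ\Phi=\Phi^L\circ(\text{apply }\widetilde{\ }\text{ stepwise})$ on sequences, so asking that $\Phi^L=\Phi\circ\mathrm{rev}$ is the same as asking that $\rho\circ\Phi=\Phi\circ\mathrm{rev}\circ(\text{apply }\widetilde{\ })$. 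Thus nothing has been gained unless this commutation has an independent proof. One might hope to prove it by showing that a single $\Phi$-step on the right commutes with a single $\Phi^L$-step on the left, but these operations are not obviously local to disjoint pieces of the boundary (a face step on the right can extend the lower left boundary, and dually for~$\Phi^L$), so this too requires real work.

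A cleaner route, which is presumably what underlies the argument in~\cite{kenyon2015bipolar}, is to use the non-sequential description of $\Phi$: the walk $w$ is the interface path between two canonical spanning trees of the bipolar orientation, and the half-turn $\rho$ swaps these two trees while reversing the interface and exchanging the roles of the two coordinates---which is precisely the transformation $w\mapsto\widetilde w$. From that viewpoint the proposition is essentially immediate; your inductive scheme can be completed, but only by importing a comparable structural fact.
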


It seems more difficult to directly describe the involution on tandem walks induced by~$\sigma$. This involution will be used  
in Section~\ref{sec:prop_alg_combi} to prove bijectively some of our
enumerative results.

\section{Counting tandem walks in the quadrant}
\label{sec:counting}

The KMSW bijection described in the  previous section relates two
topics that are actively studied at the moment in combinatorics and
probability theory: planar maps, here equipped with a bipolar orientation,
and walks confined to a cone, here the first quadrant. In this
section, we state our main results on the   enumeration of these
objects. 

The enumeration of walks confined to the quadrant is 
 well understood when the walk consists of \emm small steps,, that is, 
when the steps are   taken in $\{-1, 0, 1\}^2$. This is not the case here, unless we only consider
orientations with inner faces of degree $2$ and $3$. 
 Recently, the first author and two of her collaborators developed an approach to count quadrant walks with larger steps, generalizing in particular the
definition of a certain  group that plays a key role in the small step
case~\cite{BoBoMe18}. This approach does not apply to all possible
step sets; in particular, it requires  that the group (or what has
replaced it for large steps, namely a certain \emm orbit,) is
finite. This is the case for tandem walks, and  we will count them
using the approach of~\cite{BoBoMe18}. 

Given two points $(a,b)$ and $(c,d)$ in the first quadrant, we denote
by 
$$Q_{c,d}^{a,b} \equiv Q_{c,d}^{a,b}(t,z_0, z_1, \ldots)$$ 
the
\gf \ of tandem walks going from $(a,b)$ to $(c,d)$ in the
quadrant, where every edge is weighted
by $t$, and every face step of level $r$ by $z_r$ (which we take as an
indeterminate). For instance, the
walk of Figure~\ref{fig:kenyon_bij}, once translated so that it becomes a quadrant
walk visiting both coordinate axes, contributes 
$t^{10}z_1^3 z_2^2z_3$  to the series $Q^{3,2}_{1,2}$.

Returning to bipolar orientations, it follows from Corollary~\ref{cor:bij-bip} that $tQ^{0,b}_{c,0}$ counts bipolar
orientations with left (respectively right) outer boundary of length $b+1$
(respectively $c+1$)  with a weight~$t$ per edge, and $z_r$ per inner face
of degree $r+2$.  Also, $\frac 1 t (Q^{0,0}_{0,0}-1)$, specialized to
$z_r=1$ for all~$r$, simply counts bipolar
orientations by edges. As recalled in the introduction,  the number of
bipolar orientations having $n$ edges is 
the $n$th Baxter number $b(n)$, given by~\eqref{baxter-bip}.
We will recover this result using the bijection with tandem walks in
Section~\ref{sec:Baxter}. 

If we want to count \emm marked, bipolar orientations of signature
$(a,b;c,d)$, we must recall that they are in bijection with tandem
walks in the quadrant, {joining $(a,b)$ to $(c,d)$ and}  constrained to visit both coordinate axes. An
inclusion-exclusion argument gives their \gf\ as
\[
t \left(Q_{c,d}^{a,b}-Q_{c,d-1}^{a,b-1}
-Q_{c-1,d}^{a-1,b}+Q_{c-1,d-1}^{a-1,b-1}\right).
\]
Here, every \emm plain, edge is weighted by $t$, and every 
inner  face of degree $r+2$ by $z_r$.

Keeping $a$ and $b$
fixed, we group all the $ Q_{c,d}^{a,b}$ into a bigger \gf \ that
counts quadrant tandem walks starting at $(a,b)$:
\[
Q^{(a,b)}(x,y):=\sum_{c,d\geq 0}Q_{c,d}^{a,b}\, x^c y^d.
\] 
 By Corollary~\ref{cor:bij-bip},  we are especially interested in the
series $Q^{(0,b)}(x,0)$, since $txQ^{(0,b)}(x,0)$ counts 
bipolar orientations with a left boundary  of length $b+1$, by
edges ($t$),  face degrees ($z_r$ for each inner face of degree
$r+2$) and length of the lower right boundary ($x$).

\subsection{Preliminaries}

\subsubsection{Walk generating functions.} It may be a bit unusual to
involve  infinitely many variables in \gfs, as we do with the
$z_r$'s. Hence let us clarify in which  ring these series  live. 

{Many of the series that we consider count (sometimes implicitly) embedded tandem walks, not necessarily confined
to the quadrant, and record with variables $x$ and $y$ the coordinates
of their endpoint.} Then a natural option is to work with formal power series
in infinitely  many variables $t, z_0, z_1, \ldots$ with coefficients
in $\qs[x,1/x,y,1/y]$, the ring of Laurent polynomials in $x$ and~$y$. However, it will sometimes be convenient to handle a \emm
finite, collection of steps, and moreover to assign real values to
the $z_r$'s. This is why we usually assume that $z_r=0$ for 
$r>p$, for
some arbitrary $p$, and take our series in the ring of \fps\ in $t$
with coefficients in $\qs[x,1/x,y,1/y,z_0, z_1, \ldots, z_p]$. We call
this specialization the \emm $p$-specialization,, and the
corresponding walks, \emm $p$-tandem walks,. Both points of view
can be reconciled by letting $p\rightarrow \infty$. Indeed, if a walk
starting at $(a,b)$ and ending at $(c,d)$ uses a face step of level~$r$, then $(d-c)-(b-a) \ge r-2(n-1)$ (look at the projection of the
walk on a line of slope~$-1$). That is, $r \le
(d-c)-(b-a)+2(n-1)$. Hence, when $a,b,c,d,n$ are fixed, 
a walk of length $n$ going from $(a,b)$ to
$(c,d)$, cannot use arbitrarily large
steps. This means that for~$p$ large enough, the coefficient of
$t^nx^cy^d$ in any walk \gf\ (with a fixed starting point) is a
polynomial in the $z_r$'s which is independent of $p$.

\subsubsection{Periodicities.}
Throughout the paper, we will meet periodicity conditions, describing which points can be reached from say, the origin, in a fixed
number of steps. So let us clarify this right now. For a step set
$\cS$, we call a walk consisting of steps taken
in~$\cS$ an \emm $\cS$-walk,. {The following terminology is borrowed from
  Spitzer~\cite[Chap.~1.5]{Spitzer}. Take a finite step set
  $\cS\subset \zs^2$,
  and denote the lattice spanned by $\cS$ in $\zs^2$ by  $\Lambda$. We
  say that~$\cS$ is \emm strongly aperiodic, if,
  for any $(i,j)\in \Lambda$, the lattice generated by $(i,j)+\cS$
  coincides with $\Lambda$. In this case, for  $(i,j)\in \Lambda$,
  there exists $N_0\in \ns$ such that for all $n>N_0$, there exists an
  $\cS$-walk of length $n$ going from $(0,0)$ to $(i,j)$. We say that $\cS$ has \emm period, 1. Otherwise,
  there exists an integer $p>1$ (the \emm period,\/), such that
  for all $(i,j) \in \Lambda$ there exists $r\in \llbracket
  0,p-1\rrbracket$  such that for $n$ large enough, there exists
  an $\cS$-walk of length $n$ from $(0,0)$ to $(i,j)$ if and only if
  $n$ equals $r$ modulo $p$.}

\begin{Lemma}\label{lem:per}
  Let $D$ be a non-empty finite subset of $\ns$, not reduced to $\{0\}$, and define 
\[ 
\iota:=\gcd(r+2,r\in D).
\] 
Let $\cS_D$ be the set of steps
\[
\cS_D=\{(1,-1)\} \cup \bigcup_{r \in D} \{(-r,0), (-r+1, 1), \ldots,
(0,r)\}.
\]
Then the lattice $\Lambda_D$ spanned by $\cS_D$ is $\zs^2$ if $\iota$ is
odd, and $\{(i,j): i+j \ \hbox{even}\}$ otherwise. 

If there exists an $n$-step walk from $(0,0)$ to
$(i,j)$ with steps in $\cS_D$, then $i-j\equiv 2n$~mod~$\iota$. 
Conversely, if $n$ satisfies this condition and is large
enough, there exists an $\cS_D$-walk from $(0,0)$ to
$(i,j)$. {This means that the step set $\cS_D$ has period $\iota$ if $\iota$ is odd, $\iota/2$  otherwise.  In particular,  $\cS_D$ is  strongly aperiodic if and only if $\iota\in \{1,2\}$.}
\end{Lemma}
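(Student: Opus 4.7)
The first thing I would establish is the description of the lattice $\Lambda_D$ by an elementary computation. The SE step $(1,-1)$ lies in $\cS_D$, and by assumption $D$ contains some $r\ge 1$, so both $(-r,0)$ and $(-r+1,1)$ lie in $\cS_D$; their difference is $(1,1)$. Hence $\Lambda_D \supseteq \langle (1,1),(1,-1)\rangle = \{(i,j):i+j\text{ even}\}$. Conversely, a face step $(-r+k,k)$ has coordinate sum $-r+2k\equiv r\pmod 2$, so if every $r\in D$ is even then $\Lambda_D\subseteq\{i+j\text{ even}\}$, giving equality, while if some $r\in D$ is odd then $(-r,0)\in\Lambda_D$ has odd coordinate sum and therefore $\Lambda_D=\zs^2$. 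Since $\iota=\gcd(r+2:r\in D)$ is odd if and only if some $r\in D$ is odd, this gives the first claim.

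The necessary parity condition comes from a direct counting argument. If $n_{SE}$ and $n_r$ denote the numbers of SE steps and of face steps of level $r$ used in an $n$-step walk from $(0,0)$ to $(i,j)$, then $i-j = 2n_{SE}-\sum_{r\in D} r\,n_r = 2n - \sum_{r\in D}(r+2)n_r$, which immediately gives $i-j\equiv 2n\pmod\iota$.

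The main technical part, and the expected obstacle, is the converse: for $n$ large satisfying the congruence, the walk must be constructed. My plan is a standard ``loop padding'' argument. First, I would exhibit loops at the origin of controlled length: for any $r\ge 1$ in $D$, two face steps of level $r$ whose $k$-coordinates sum to $r$, followed by $r$ SE steps, produce a loop of length $r+2$; and for any even $r\in D$, a single face step $(-r/2,r/2)$ followed by $r/2$ SE steps produces a loop of length $(r+2)/2$. The gcd of these available lengths is $\iota$ when $\iota$ is odd (using the loops of length $r+2$) and $\iota/2$ when $\iota$ is even (using the shorter loops, valid since then every $r\in D$ is even). By the Sylvester--Frobenius (Chicken McNugget) theorem, every sufficiently large multiple of this gcd can be realized as a concatenation of such loops.

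It remains to show that every $(i,j)\in\Lambda_D$ is reachable by \emph{some} walk $w_0$ from the origin; the target $n$ will differ from the length $n_0$ of $w_0$ by a large multiple of $\iota$ or $\iota/2$ (thanks to the necessary condition), and we insert the appropriate loops into $w_0$. Reachability of $(i,j)$ follows by observing that the set $\mathcal R$ of walk-reachable points from the origin is a subsemigroup of $\Lambda_D$ containing $(1,-1)$, $(0,r)$ and $(-r,0)$ for every $r\in D$, as well as $(0,-r)=(-r,0)+r(1,-1)$; any element of $\Lambda_D$ can then be written as a non-negative combination of these (after possibly adding a suitable multiple of a loop to clear the negative coefficients), which yields a walk. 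Combining the reachability and padding arguments gives the sufficient direction, so the period of $\cS_D$ is exactly $\iota$ (resp.\ $\iota/2$) in the odd (resp.\ even) case. The final ``strongly aperiodic'' assertion is then immediate: strong aperiodicity amounts to period $1$, which holds iff $\iota=1$ or $\iota/2=1$, i.e.\ $\iota\in\{1,2\}$.
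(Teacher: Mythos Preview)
Your proposal is correct and follows essentially the same route as the paper's proof: the same loops of length $r+2$ and (for even $r$) $(r+2)/2$, the same congruence computation, and a standard padding/Chicken McNugget argument. The only cosmetic differences are that for the lattice you produce $(1,1)$ as the difference of two consecutive face steps rather than combining a single face step with SE steps, and that you handle the converse for general $(i,j)$ directly via reachability-plus-padding whereas the paper first treats excursions at the origin and then invokes a standard extension; both are equivalent.
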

\begin{proof}
  If $\iota$ is odd, then there exists an odd $r$ in $D$, say $r=2s+1$
  with $s\ge0$. Then $(-s,s+1)$ belongs to $\cS_D$, and
  \[
    (-s,s+1)+ s(1,-1)=(0,1).
  \]
  Hence $\Lambda_D$ contains the vectors $(0,1)$ and $(1,-1)$ (which is
  always in $\cS_D$), and thus coincides with~$\zs^2$.

  If $\iota$ is even, then every $r\in D$ is even, and for every step
  $(i,j) \in \cS_D$ the difference $i-j$ is even: equal to $2$ for a
  SE step, to $-r$ for a step $(-i,r-i)$. Hence the same holds
  necessarily for any point  $(i,j)$ of $\Lambda_D$, which is thus included in
  $\{(i,j): i+j \ \hbox{even}\}$. Now take $r=2s\in D$ with $s\ge 1$. Then
  $(-s-1,s-1)\in \cS_D$, and
  \[
    (-s-1,s-1)+ s(1,-1)= (-1,-1).
  \]
  Hence $\Lambda_D$ contains $(-1,-1)$ and $(1,-1)$, and thus all
  points $(i,j)$ such that $i+j$ is even. We have thus  proved the first
    statement of the lemma.

\medskip
  Now consider an $\cS_D$-walk of length $n$ going from $(0,0)$ to $(i,j)$, and let $(i_k,j_k)$  be the point reached after $k$ steps.  Then 
$(i_k-j_k)- (i_{k-1}-j_{k-1})$ equals $2$ mod $\iota$ 
for every $k$.
Hence after $n$ steps, we find $i-j \equiv 2n$~mod~$\iota$.

\medskip
Let us now prove the  next result for $(i,j)=(0,0)$.  The
set $G$ of lengths $n$ such that there exists an $n$-step walk starting and ending at  $(0,0)$ (we
call such walks \emm excursions,) is an additive semi-group of~$\ns$. The
structure of semi-groups of~$\ns$ is well understood: there exists an integer
$p$ (the  period), such that $G\subset p \ns$ and $mp\in G$
for all large enough~$m$. Clearly $p=\gcd (G)$. By the
previous result, all elements $n$ of $G$ satisfy $2n\equiv 0$~mod~$\iota$, that is, $\iota \mid2n$.  Hence
the period $p$ is a multiple of $\iota$ if $\iota$ is odd, and of
$\iota/2$ otherwise. Now, saying that for any large enough $n$  such that $\iota \mid 2n$ there exists an $n$-step excursion, is equivalent to
  saying  that $p$ \emm  equals, $\iota$ if  $\iota$ is odd, and   $\iota/2$
  otherwise.  So let us first prove that $p\mid \iota$.
For each $r\in D$, there exists  an
excursion of length $r+2$ (consisting of the steps $(0,r)$ and $(-r,0)$
followed by~$r$ SE steps). Hence $D+2 \subset G$, and thus $p:=\gcd (G)$
divides~$\iota:=\gcd (D+2)$. This proves that $p=\iota$ if $\iota$ is odd, but if
$\iota$ is even, we can still have $p=\iota$ or $p=\iota/2$. So assume
 that $\iota$ is even. Then each $r\in D$ is even, and there exists
an excursion of length $1+r/2$ (consisting of the step $(-r/2,r/2)$
followed by $r/2$ SE steps). Hence $1+D/2 \subset G$, and thus
$p=\gcd(G)$ divides $\iota/2= \gcd(1+D/2)$. This concludes the proof
when $(i,j)=(0,0)$. 

\medskip
Once the period $p$ is determined, the extension to general points
$(i,j)$  is standard. See for instance the proof
of~\cite[Prop.~9]{BoBoMe18}, and references therein.
\end{proof}

\begin{remark}
 The period was already determined in the
original paper~\cite[Thm.~2.6]{kenyon2015bipolar}, where it is 
described as
\[
\gcd\left( \{ r+1: 2r\in D\}\cup \{2r+3: 2r+1 \in D\}\right).
\]
Both descriptions are of course equivalent. The reason why we prefer
to introduce $\iota$ is that this is the quantity that naturally
arises in asymptotic estimates (see for instance Corollary~\ref{coro:bipolar}).
\end{remark}

\subsubsection{Some definitions and notation on \fps.}

Let $\GA$ be a commutative ring and let $x$ be an indeterminate. We denote by
$\GA[x]$ (respectively $\GA[[x]]$) the ring of polynomials (respectively \fps) in $x$
with coefficients in $\GA$. If $\GA$ is a field,  then $\GA(x)$ denotes the field
of rational functions in $x$, and $\GA((x))$ the
set of Laurent series in $x$, that is, series of the form
\[ \sum_{n \ge n_0} a_n x^n,
\]
with $n_0\in \zs$ and $a_n\in \GA$.  The coefficient of $x^n$ in a   series $F(x)$ is denoted by
$[x^n]F(x)$. 

This notation is generalized to polynomials, fractions
and series in several indeterminates. For instance, the  \gf\ of
bipolar orientations, counted by edges (variable $t$) and faces
(variable $z$)  belongs to
$\qs[z][[t]]$. For a multivariate series, say $F(x,y) \in \qs[[x,y]]$,
the notation $[x^i]F(x,y)$ stands for the \emm series, $F_i(y)$ such
that $F(x,y)=\sum_i F_i(y) x^i$. It should not be mixed up with the
coefficient of $x^iy^0$ in $F(x,y)$, which we denote by $[x^i
y^0]F(x,y)$.
If $F(x,x_1, \ldots, x_d)$ is a series in the $x_i$'s whose
coefficients are Laurent series in $x$, say
\[
F(x,x_1, \ldots, x_d)= \sum_{i_1, \ldots, i_d} x_1^{i_1} \cdots
x_d^{id}
\sum_{n \ge n_0(i_1, \ldots, i_d)} a(n, i_1, \ldots, i_d) x^n,
\]
then the \emm nonnegative part of $F$ in, $x$ is the
following \fps\ in $x, x_1, \ldots, x_d$:
\[
[x^{\ge }]F(x,x_1, \ldots, x_d)= \sum_{i_1, \ldots, i_d} x_1^{i_1} \cdots
x_d^{id}
\sum_{n \ge 0} a(n, i_1, \ldots, i_d) x^n.
\]
We denote the reciprocals of variables using bars: to be precise, $\bx=1/x$,
so that $\GA[x,\bx]$ is the ring of Laurent polynomials in $x$ with
coefficients in $\GA$.

If $\GA$ is a field,  a power series $F(x) \in \GA[[x]]$
  is \emm algebraic, (over $\GA(x)$) if it satisfies a
non-trivial polynomial equation $P(x, F(x))=0$ with coefficients in
$\GA$. It is \emm differentially finite, (or \emm D-finite,) if it satisfies a non-trivial linear
differential equation with coefficients  in $\GA(x)$. For
multivariate series, D-finiteness requires the
existence of a differential equation \emm in each variable,.  {We
  refer to~\cite{Li88,lipshitz-df} for general results on D-finite series.}

For a series $F$ in several variables, we denote the derivative of $F$ with 
respect to the $i$th variable by $F'_i$.

\medskip
 In the next three subsections we state our main enumerative results, both exact and asymptotic.

\subsection{Quadrant tandem walks with prescribed endpoints}
\label{sec:main_result_start0b}
We give here an explicit expression for the generating function 
$Q^{(0,b)}(x,y)$ that counts tandem walks starting at height $b$ on the $y$-axis.
We define the \emph{step \gf} $S(x,y)$, {which counts all tandem steps}, as 
\beq\label{S:def}
S(x,y):=x\by+\sum_{r\geq 0}z_r\sum_{i=0}^r\bx^{r-i}y^i,
\eeq
and we let $K(x,y):=1-tS(x,y)$. In the $p$-specialization, $S(x,y)$ is
a (Laurent) polynomial.

We let $Y_1\equiv Y_1(x)$ be the unique power series in $t$ satisfying
$K(x,Y_1)=0$, that is,
\beq \label{eq:Y}
Y_1=t\Bigl(x+Y_1\sum_{r\geq 0}z_r\sum_{i=0}^r\bx^{r-i}Y_1^{i}\Bigr).
\eeq
This series has coefficients in $\qs[x, \bx, z_0, z_1, \ldots]$, and starts by
\[
  Y_1=tx + t^2x \sum_{r\geq 0}z_r\bx^{r}+ O(t^3).
\]
In the $p$-specialization, this series is algebraic. We observe that $H(x):=\frac{Y_1(x)}{tx}$ is the generating function of tandem walks starting at the origin, 
ending on the $x$-axis and \emph{staying in the upper half-plane $\{y\geq 0\}$}, 
where as usual $t$ marks the length, $x$ marks the final abscissa
and~$z_r$ marks the number of {face} steps of level $r$. Indeed, upon
considering the first step, say $(-r+i, i)$, of such a walk, and the
first time it comes back to the $x$-axis, it is
standard~\cite[Ch.~11]{lothaire1983combinatorics} to establish
\[
H= 1+ \sum_{r\ge0} \sum_{i=0}^r  (t z_r \bx ^{r-i}) (tx)^i H^{i+1},
\]
which is equivalent to~\eqref{eq:Y} with $txH=Y_1$.

\begin{Theorem}\label{thm:Q0}
Let $Y_1\equiv Y_1(x)$ and $K(x,y)$ be defined as above. The generating function
$Q^{(0,b)}(x,y)$ can be expressed as the nonnegative part in $x$ of
an explicit series\footnote{Throughout the paper, we 
 use the notation $u^a+\cdots+v^a$ for $\sum_{k=0}^a u^{k}v^{a-k}$.}:
\beq\label{eq:Qby}
Q^{(0,b)}(x,y)=[x^\geq ] \frac{-Y_1}{yK(x,y)}(Y_1^b+\cdots+\bx^b)\Bigl(1-\frac1{tx^2}+\sum_{r\geq 0}z_r(r+1)\bx^{r+2}\Bigr),
\eeq
where the argument of $[x^{\geq}]$ is expanded as an element of $\qs[x, \bx, z_0, z_1, \ldots]((t))[[y]]$.
In particular, the \gf\ of bipolar orientations of  left 
boundary length $b+1$ is $txQ^{(0,b)}(x,0)$, where
\beq\label{eq:Qb0}
Q^{(0,b)}(x,0)=[x^\geq ] \frac{Y_1}{tx}(Y_1^b+\cdots+\bx^b)\Bigl(1-\frac1{tx^2}+\sum_{r\geq 0}z_r(r+1)\bx^{r+2}\Bigr).
\eeq
In the $p$-specialization, these series are D-finite in all their variables.
\end{Theorem}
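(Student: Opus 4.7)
The strategy is to start from a ``preliminary expression'' for $Q^{(0,b)}(x,y)$, obtained from a functional equation along the lines of~\cite{BoBoMe18}, and then simplify it to the compact closed form stated in~\eqref{eq:Qby}.

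\emph{Preliminary expression.} A step-by-step decomposition (appending a step at the end of a walk from $(0,b)$) yields a functional equation
\begin{equation*}
K(x,y)\,Q^{(0,b)}(x,y) \;=\; y^b \;-\; \frac{tx}{y}\, Q^{(0,b)}(x,0) \;-\; B(x,y),
\end{equation*}
where the second right-hand term corrects for SE steps attempted from the $x$-axis, and $B(x,y)$ is a Laurent polynomial in $\bx$ encoding the face steps $(-r+i,i)$ that would carry the walk to the left of the $y$-axis. The series $B(x,y)$ depends linearly on finitely many sections $[x^i]Q^{(0,b)}(x,y)$, with $i$ bounded by the maximal level appearing in $S$. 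Solving formally expresses $Q^{(0,b)}(x,y)$ as $1/K(x,y)$ times a combination of the boundary unknowns $Q^{(0,b)}(x,0)$ and these sections.

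\emph{Closed form.} Two algebraic facts drive the simplification. First, $Y_1(x)$ is the unique power-series root of $K$ in $y$, so substituting $y=Y_1$ in the functional equation cancels its left-hand side and produces a relation among the boundary unknowns. Second, the step polynomial is invariant under the involution $\phi:(x,y)\mapsto(\by,\bx)$, i.e.\ $S(x,y)=S(\by,\bx)$; consequently the companion point $y=\bx$ plays a symmetric role to $y=Y_1$ and supplies a second equation. Combining the two eliminates the unknown sections, and the resulting skew-symmetric combination is precisely the orbit-sum $Y_1^b+Y_1^{b-1}\bx+\cdots+\bx^b=(Y_1^{b+1}-\bx^{b+1})/(Y_1-\bx)$ appearing in~\eqref{eq:Qby}. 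The residual boundary factor collapses via the identity
\begin{equation*}
-\frac{K(x,\bx)}{tx^2} \;=\; 1-\frac{1}{tx^2}+\sum_{r\geq 0}z_r(r+1)\bx^{r+2},
\end{equation*}
which follows by direct computation from $S(x,\bx)=x^2+\sum_r z_r(r+1)\bx^r$; it is thus nothing but the kernel evaluated at the companion orbit point, divided by $-tx^2$. The extraction $[x^\geq]$ finally removes the algebraic but spurious contributions with negative powers of $x$ introduced by the manipulation, leaving exactly the walks that stay in the right half-plane. Setting $y=0$ and multiplying by $tx$ then gives the formula~\eqref{eq:Qb0} and the combinatorial interpretation for bipolar orientations.

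\emph{D-finiteness.} In the $p$-specialization the step set is finite, so $K(x,y)$ is a Laurent polynomial and $Y_1(x)$ is algebraic over $\qs(t,x,z_0,\ldots,z_p)$ (as a root of the polynomial obtained by clearing denominators in~\eqref{eq:Y}). Hence the entire expression inside $[x^\geq]$ in~\eqref{eq:Qby} is algebraic, and in particular D-finite, in each of its variables. Since the positive-part operator can be realized as a diagonal of a D-finite series, it preserves D-finiteness in each variable~\cite{Li88,lipshitz-df}, and $Q^{(0,b)}(x,y)$ is D-finite in all of its variables, as claimed.

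The main obstacle will be giving a tractable description of $B(x,y)$ and carrying out the orbit-based elimination in closed form, since the tandem step set has arbitrarily large leftward components; the compactness of the final formula rests crucially on the kernel identity above, which packages the whole $x=0$ boundary contribution into a single evaluation $K(x,\bx)$.
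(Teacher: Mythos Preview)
Your proposal has a genuine gap in the elimination step. You correctly note that the functional equation involves, besides $Q^{(0,b)}(x,0)$, a collection of $p$ boundary sections (the coefficients $[x^i]Q^{(0,b)}(x,y)$ for $0\le i\le p-1$, packaged in the paper as $G_1(y),\dots,G_p(y)$). But your plan is to eliminate all of them using only two substitutions, $y=Y_1(x)$ and $y=\bar x$. This cannot work for $p\ge 2$: the two resulting equations involve the \emph{different} evaluations $G_j(Y_1(x))$ and $G_j(\bar x)$, which are $2p$ distinct unknown univariate series, and no skew-symmetrization in $Y_1$ and $\bar x$ makes them cancel. The orbit method of~\cite{BoBoMe18} for large steps exists precisely to handle this: the paper first eliminates the $p$ sections $G_j(y)$ by a linear combination over \emph{all} $p+1$ roots $x_0=x,x_1,\dots,x_p$ of $S(X,y)=S(x,y)$ (Lemma~\ref{lem:Lagrange} ensures the cancellation), and only then combines three such $G$-free identities, taken at $y\in\{y,\,y_1,\,\bar x\}$, to kill the remaining $Q(x_i,0)$ terms (Lemma~\ref{lem:section-free}). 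This produces the preliminary expression of Proposition~\ref{prop:DF}, which is afterwards simplified to~\eqref{eq:Qb0} in Section~\ref{sec:Qxy-simple} via a partial-fraction argument over the $p+1$ roots $Y_1,\dots,Y_{p+1}$ of $K(x,Y)=0$.

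Your kernel identity $-K(x,\bar x)/(tx^2)=1-1/(tx^2)+\sum_{r\ge 0} z_r(r+1)\bar x^{r+2}$ is correct and pleasant, and that factor does indeed arise from the orbit point $y=\bar x$ in the paper's argument; but recognizing this factor is the easy part. The hard part---getting rid of the $p$ left-boundary sections---requires the full width of the orbit, not a single companion point. If you want a route that avoids the orbit machinery altogether, the paper also supplies a bijective proof of~\eqref{eq:Qb0} in Section~\ref{sec:bij_proofs_0b}, based on local moves on marked bipolar orientations; that argument is elementary but quite different from what you sketch.
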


We will provide two different proofs of~\eqref{eq:Qb0} (which easily implies~\eqref{eq:Qby} as explained in Remark~2 below): first in
Sections~\ref{sec:eqfunc} and~\ref{sec:Qxy-simple} using the 
 method developed in~\cite{BoBoMe18} for quadrant walks with large steps,
 and then in Section~\ref{sec:bij_proofs_0b} using the KMSW bijection and local 
operations on  marked bipolar orientations. This second approach  explains
 combinatorially why the enumeration of tandem walks in the quadrant
is related to
 the enumeration of {tandem} walks in the upper half-plane, that is, to the
 series~$Y_1(x)$.

\vspace{.2cm} 

\begin{remarks}
{\bf 1.}
We will give another D-finite expression for $Q^{(0,b)}(x,y)$,
and more generally of $Q^{(a,b)}(x,y)$,
in Section~\ref{sec:Qab} (Propositions~\ref{prop:DF} and~\ref{prop:Qab}),   
again as the positive part of an algebraic generating function. In
this alternative expression, the expansion has to be
done (more classically)  in~$t$ first. 

 \medskip
\noindent
{\bf 2.} Since $yK(x,y)$ is a formal power series in $y$,
with constant term $(-tx)$, the expression~\eqref{eq:Qb0} is clearly
the special case $y=0$ of~\eqref{eq:Qby}. Conversely, a simple
argument involving factorizations of walks allows us to
derive~\eqref{eq:Qby} from~\eqref{eq:Qb0}. Indeed,  upon expanding the
right-hand side of~\eqref{eq:Qby} in $y$, what we want to prove is
that, for all $d\geq 0$,
\beq\label{eq:equivQby}
Q^b_d(x):=[y^d]Q^{(0,b)}(x,y)=[x^\geq ] \frac{Y_1}{tx}(Y_1^b+\cdots+\bx^b)\Big(1-\frac1{tx^2}+\sum_{r\geq 0}z_r(r+1)\bx^{r+2}\Big)P_d,
\eeq
where 
\[
P_d=[y^d]\frac{-tx}{yK(x,y)}=[y^d]\frac1{1-y(\bx/t-\sum_{r\geq 0}z_r\sum_{i=0}^r\bx^{r-i+1}y^i)}.
\]
Note that $P_d$ is a polynomial in $1/t$, $\bx$, and the $z_r$'s,
which can alternatively be described 
by the recurrence relation
\beq\label{P-rec}
P_0=1,\qquad  P_{d+1}=\frac{\bx}{t}P_d-\sum_{r\geq 0}z_r\sum_{i=0}^r\bx^{r-i+1}P_{d-i}\ \ \mathrm{for\ all}\ d\geq 0,
\eeq
where $P_d=0$ for $d<0$. We now prove~\eqref{eq:equivQby} by induction
on $d\ge 0$.  The case $d=0$  is
precisely~\eqref{eq:Qb0}. Assume that~\eqref{eq:equivQby} holds for $Q^b_0,
\ldots, Q^b_d$, and let us prove it for $Q^b_{d+1}$. A last step
decomposition of quadrant {tandem} walks ending at height $d$ gives
\[
Q^b_d = \mathbbm{1}_{d=b} + txQ^b_{d+1}+t [x^\geq] \sum_{r\geq
  0}z_r\sum_{i=0}^r  \bx^{r-i}Q^b_{d-i}, 
\]
with $Q^b_d=0$ for $d<0$. Extracting $Q^b_{d+1}$, and observing that
$[x^\geq](\bx[x^{\geq}] G(x))=[x^{\geq}](\bx G(x))$,  we see that
\[
Q^b_{d+1}=[x^{\geq}]\Big(\frac{\bx}{t}Q^b_d-\sum_{r\geq
  0}z_r\sum_{i=0}^r\bx^{r-i+1}Q^b_{d-i}\Big) .
\]
We now use the induction hypothesis~\eqref{eq:equivQby} to replace $Q^b_0, \ldots, Q^b_d$ by their respective expressions in
terms of $P_0, \ldots, P_d$. Observe that for $e\geq 0$ we have  
$[x^\geq](\bx^e[x^{\geq}] G(x))
=[x^{\geq}](\bx^e G(x))$. Finally we use the recurrence
relation~\eqref{P-rec} to conclude that~\eqref{eq:equivQby} holds for $Q^b_{d+1}$. 
\end{remarks}

It is well known that algebraic series, in particular  $Y_1$ and its powers, can
be expressed as constant terms of rational functions. Hence we can also express
$Q^{(0,b)}(x,y)$ in terms of a  rational function,
this time in three variables $x, y, z$. The following result is proved
in Section~\ref{sec:Qxy-simple}.

\begin{Corollary}\label{cor:Qthird}
As above, let $S(x,y)$ be defined
by~\eqref{S:def}, and let $K(x,y)=1-tS(x,y)$.
The series $Q^{(0,b)}(x,y)$ can alternatively be expressed as
\[
Q^{(0,b)}(x,y)=[x^{\geq }][z^0]\frac{tz^2}{y} \frac{S'_2(x,z)}{K(x,y)K(x,z)}(z^b+\cdots+\bx^b)\Big(1-\frac{\bx^2}{t}+\sum_{r\geq
  0}z_r(r+1)\bx^{r+2}\Big),
\]
where the argument of $[x^{\geq}][z^0]$ is expanded as a series in $\qs[x, \bx, z, \bz, z_0, z_1, \ldots]((t))[[y]]$.

In particular, 
\beq\label{eq:Qx0-rat}
Q^{(0,b)}(x,0)=-[x^{\geq}][z^0]\frac{z^2}{x}\frac{S'_2(x,z)}{K(x,z)}(z^b+\cdots+\bx^b)\Big(1-\frac{\bx^2}{t}+\sum_{r\geq 0}z_r(r+1)\bx^{r+2}\Big).
\eeq
\end{Corollary}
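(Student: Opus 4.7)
The plan is to translate the expressions~\eqref{eq:Qby} and~\eqref{eq:Qb0} of Theorem~\ref{thm:Q0} into the announced constant-term-in-$z$ form, by expressing the algebraic quantity $Y_1\cdot G(Y_1)$, where I abbreviate $G(z):=z^b+z^{b-1}\bx+\cdots+\bx^b$, as $[z^0]$ of a rational function of $(x,z)$. The target identity is
\[
Y_1\,G(Y_1) \;=\; [z^0]\,\frac{z^2\,K'_2(x,z)\,G(z)}{K(x,z)} \;=\; -[z^0]\,\frac{t\,z^2\,S'_2(x,z)\,G(z)}{K(x,z)},
\]
with the $z$-expansion performed consistently with the formal expansion $1/K(x,z)=\sum_{n\ge 0}t^n S(x,z)^n$ used throughout the paper. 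Once this is established, substituting into~\eqref{eq:Qb0} and absorbing the factor $1/(tx)$ yields~\eqref{eq:Qx0-rat} immediately. For the general-$y$ formula, the only difference between~\eqref{eq:Qby} and~\eqref{eq:Qb0} (beyond the overall $1/(tx)$) is a prefactor $-1/(yK(x,y))$ which is independent of $z$; pulling it inside $[z^0]$ produces precisely the claimed $tz^2/(y\,K(x,y)K(x,z))$ in the numerator.

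To prove the identity I would use partial fractions in~$z$. In the $p$-specialization, $zK(x,z)$ is a polynomial of degree $p+1$ in~$z$ whose roots are the small root $Y_1=tx+O(t^2)$ and the large roots $Y_2,\ldots,Y_{p+1}$, which diverge as $t\to 0$. Logarithmic differentiation of the factorization $zK(x,z)=-tz_p\prod_j(z-Y_j)$ gives
\[
\frac{K'_2(x,z)}{K(x,z)}\;=\;\sum_{j=1}^{p+1}\frac{1}{z-Y_j}\;-\;\frac{1}{z}.
\]
Multiplying by $z^2G(z)$ and extracting $[z^0]$, the $-zG(z)$ term contributes nothing since $G$ has no negative powers of $z$. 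For the small root, consistency with the formal series in $t$ dictates the expansion $1/(z-Y_1)=\sum_{k\ge 0}Y_1^k z^{-k-1}$, so
\[
[z^0]\,\frac{z^2G(z)}{z-Y_1}\;=\;\sum_{k\ge 0}Y_1^k\,[z^{k-1}]G(z)\;=\;\sum_{k=1}^{b+1}Y_1^k\,\bx^{b-k+1}\;=\;Y_1\,G(Y_1).
\]
For each large root the consistent expansion is $1/(z-Y_j)=-\sum_{k\ge 0}z^k/Y_j^{k+1}$, so $z^2G(z)/(z-Y_j)$ only involves powers $z^{\ge 2}$ and contributes nothing to $[z^0]$. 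Summing yields $[z^0](z^2K'_2G/K)=Y_1G(Y_1)$, as claimed.

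The main subtle point I expect is justifying that the two different conventions---Laurent-in-$1/z$ for the small root and Taylor-in-$z$ for the large ones---assemble coefficient by coefficient in $t$ to exactly reproduce the ambient formal expansion $1/K=\sum_n t^nS^n$. This rests on the $t$-adic orders $Y_1=O(t)$ versus $Y_j=O(t^{-1/p})$ for $j\ge 2$ (obtained by balancing the dominant terms $z$ and $-tz_p z^{p+1}$ of $zK(x,z)$), which ensure that both expansions can be reinterpreted as genuine formal power series in $t$ with Laurent-polynomial coefficients in $x$ and $z$. Once this compatibility is checked, the remaining steps are mechanical substitution and the collection of constants.
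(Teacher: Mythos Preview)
Your proposal is correct and follows essentially the same route as the paper: both arguments reduce Theorem~\ref{thm:Q0} to Corollary~\ref{cor:Qthird} by factoring $K(x,z)$ through its roots $Y_1,\ldots,Y_{p+1}$ and observing that, under the $t$-adic expansion convention, only the small root $Y_1$ contributes to $[z^0]$ (the paper packages this as Lemma~\ref{lem:constK} and then evaluates $S'_2(x,Y_1)$ explicitly). Your use of the logarithmic derivative $K'_2/K=\sum_j 1/(z-Y_j)-1/z$ is a slightly slicker organization, since the residues are already~$1$ and the separate computation of $S'_2(x,Y_1)\prod_{j\neq1}(Y_1-Y_j)^{-1}$ is bypassed; the ``subtle point'' you flag about assembling the two expansion conventions is exactly what the paper addresses in the proof of Lemma~\ref{lem:constK}.
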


This result, specialized to $p$-tandem walks, yields  an expression for
$[x^c]Q^{(0,b)}(x,0)$ as the constant term in $x$ and $z$ of a rational expression of
$t$, $x$, $z$ and the $z_r$'s. 
From expressions of this form,  recent algorithms based on ``creative
telescoping'' can efficiently construct polynomial recurrences satisfied
by the
coefficients~\cite{bostan-lairez-salvy-issac,bostan-lairez-salvy,lairez}. For
instance, let us specialize~\eqref{eq:Qx0-rat} to $x=0$, $b=0$, $z_p=1$ and $z_r=0$ if $r\not =p$. We obtain 
\beq\label{Q0000}
Q^{(0,0)}(0,0)=-[x^0][z^0]\frac{z^2}{x}\frac{S'_2(x,z)}{K(x,z)}\Big(1-\frac{\bx^2}{t}+(p+1)\bx^{p+2}\Big).
\eeq
By Corollary~\ref{cor:bij-bip}, the series $tQ^{(0,0)}(0,0)$ counts (by edges) bipolar orientations of outer degree~$2$ with all inner faces
of degree $p+2$. By Lemma~\ref{lem:per}, such  orientations have $n+1$
edges, where $(p+2)$ divides $2n$. By counting adjacencies between edges and faces, it is easy to see  that these orientations have $\frac{2n}{p+2}$ inner faces.  If $p$ is odd, this number is necessarily even.
Retaining only non-zero coefficients in $Q^{(0,0)}(0,0)$, we write
\[
Q^{(0,0)}(0,0) = \sum_{k \ge 0} a(k) t^{c k(p+2)/2}, 
\]
where $c=2$ if $p$ is odd, and $c=1$ otherwise. In this way, $a(k)$
counts orientations with $ck$ 
inner  faces. In particular, when $p=3$ and $p=4$, we recover from~\eqref{Q0000} the  expressions~\eqref{tri} and~\eqref{quad} given in the
introduction. One can also derive the following recurrence relations, 
which were computed for
us by Pierre Lairez (in all cases,  $a(0)=1$), from the above expression.
\begin{itemize}
\item
For $p=1$ (triangulations):
\[
(k+3)(k+2)a(k+1)=3(3k+2)(3k+1)a(k).
\]
This gives the number of bipolar
triangulations with outer degree 2 and $2k$ inner 
 faces (equivalently, $k+2$ vertices) as 
\[
a(k)= \frac {2(3k)!}{k!\,(k+1)!\,(k+2)!},
\]
which is Tutte's result~\eqref{Tutte-bip}.

\item
For $p=2$ (quadrangulations), $a(k)$ gives the number of bipolar
orientations of a quadrangulated digon with $k$ inner faces (denoted
  $c(k)$ in the introduction), and
\[
(k+4)(k+3)^2a(k+2)=4(2k+3)(k+3)(k+1)a(k+1)+12(2k+3)(2k+1)(k+1)a(k),
\]
{as announced in the introduction.} Using Petkov\v{s}ek's algorithm~\cite{AB},
we proved that this recurrence relation has no hypergeometric solution. Still, one can derive from~\eqref{Q0000}, specialized to $p=2$, an expression for $a(k)$ as a single sum involving multinomial coefficients.

\item
For $p=3$ (pentagulations), $a(k)$ gives the number of bipolar
orientations of a pentagulated  digon with $2k$ inner faces, and
\begin{multline*}
27(3k\!+\!8)(3k\!+\!4)(5k\!+\!3)(3k\!+\!5)^2(3k\!+\!7)^2(k\!+\!2)^2a(k\!+\!2)=\\
60(5k\!+\!7)(3k\!+\!5)(5k\!+\!9)(5k\!+\!6)(3k\!+\!4)(8\!+\!5k)(145k^3\!+\!532k^2\!+\!626k\!+\!233)a(k\!+\!1)\\
-800(5k+6)(5k+1)(5k+7)(5k+2)(5k+3)(5k+9)(5k+4)(8+5k)^2 a(k).
\end{multline*}
Again, there is no hypergeometric solution.
\end{itemize}
Starting from~\eqref{eq:Qx0-rat}, similar constructions can be performed for a prescribed starting point $(0,b)$
and a prescribed endpoint $(c,0)$, in order to count bipolar
orientations of signature $(0,b+1; c+1, 0)$.

\subsection{Quadrant tandem walks ending anywhere}
\label{sec:anywhere}

We now consider  the specialization $Q^{(a,b)}(1,1)$, which counts
tandem walks in the quadrant starting at $(a,b)$, and 
 records the length (variable $t$),  the number
of face steps of each level $r$ (variable $z_r$), but not the
coordinates of the endpoint. For this problem, we can either consider
$Q^{(a,b)}(1,1)$ as a series in infinitely many variables $t, z_0,
z_1, \ldots$, or apply the $p$-specialization and count $p$-tandem
walks only.

Let $W$ be the unique formal power series in $t$ satisfying 
\beq\label{W-eq}
W=t\,\Big(1+\sum_{r\ge0}z_r\left(W+\cdots+W^{r+1}\right)\Big).
\eeq
Note that $W=Y_1(1)$, where $Y_1\equiv Y_1(x)$ is given by~\eqref{eq:Y}.
In the $p$-specialization,  this series  is algebraic.

\begin{Theorem}\label{thm:alg} 
For $a,b\geq 0$, {the \gf\ of quadrant tandem walks starting at $(a,b)$
  and ending anywhere in the quadrant is}
\[
Q^{(a,b)}(1,1)=\frac{W}{t}\cdot\sum_{i=0}^a\A_i\cdot\sum_{j=0}^bW^j,
\]
where $\A_i$ is a  series in $W$ and the $z_r$'s:
\begin{align}
\A_i&= [u^i]\,\frac 1 W\,  \frac{\bu-1}{S(\bu,W)-S(1,W)} \nonumber\\
    &=[u^i]\,\frac1
      {1-uW \sum_{i,k\ge 0} u^i W^k \sum_{r>i+k}z_r},\label{Ai-def}
 \end{align}
with $S(x,y)$ given by~\eqref{S:def}. In particular $Q^{(0,0)}(1,1)=W/t$.

In the $p$-specialization, each $A_i$, and thus the whole series $tQ^{(a,b)}(1,1)$, becomes a polynomial in $W$ and $z_0, \ldots, z_p$.
\end{Theorem}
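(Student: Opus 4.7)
The plan is to exploit the KMSW bijection to reduce the statement to a counting identity on marked bipolar orientations, which I then prove by specializing earlier results. By the signature formula~\eqref{sign}, a quadrant tandem walk started at $(a,b)$ with endpoint $(c,d)$ corresponds under $\Phi$ to a marked bipolar orientation of signature $(a-\xmin,b-\ymin;c-\xmin,d-\ymin)$, for a shift $(\xmin,\ymin)\in\{0,\ldots,a\}\times\{0,\ldots,b\}$. Summing over endpoints and shifts yields
\[
Q^{(a,b)}(1,1)\;=\;\sum_{0\le a'\le a}\sum_{0\le b'\le b} N(a',b'),
\]
where $N(a',b')$ enumerates marked bipolar orientations whose signature has first two components $(a',b')$. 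Since the claimed formula factorizes as $\frac{W}{t}f(a)g(b)$ with $f(a)=\sum_{i=0}^{a}A_i$ and $g(b)=\sum_{j=0}^{b}W^j$, its double first-difference in $(a,b)$ reduces Theorem~\ref{thm:alg} to the single product identity
\[
t\,N(a,b)\;=\;A_a\,W^{b+1}.
\]

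To prove this identity, I would first reconcile the two expressions for $A_i$ in~\eqref{Ai-def} by direct manipulation: using~\eqref{S:def} and the identity $(u^r-1)/(u-1)=1+u+\cdots+u^{r-1}$, the difference $S(\bu,W)-S(1,W)$ can be rewritten so as to turn the first formula for $A_i$ into the geometric series of the second. The identity $tN(a,b)=A_aW^{b+1}$ itself I would derive by specializing the algebraic expression for $Q^{(a,b)}(x,y)$ from Proposition~\ref{prop:Qab} to $x=y=1$: setting $y=1$ is immediate since $Y_1(1)=W$, so the factor $(Y_1^b+\cdots+\bx^b)$ of~\eqref{eq:Qby} contributes $\sum_{j=0}^{b}W^j$ at $x=1$, while the remaining $x$-dependent contributions gather into $\sum_{i=0}^{a}A_i$ after computing the nonnegative-part-in-$x$ extraction by a contour integral and residues at the roots of the kernel $K(x,1)$. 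The polynomiality in the $p$-specialization follows directly from~\eqref{Ai-def}, since for fixed $i$ only summands with $j+k<r$ contribute and the denominator truncates whenever $z_r=0$ for $r>p$.

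The main obstacle lies in the positive-part extraction at $x=1$: substitution does not commute with $[x^\ge]$, so one must carefully separate the contour into the part capturing nonnegative Laurent coefficients and verify that the residue accumulation assembles precisely into the $A_a$ factor. An alternative, purely bijective strategy (presumably the one pursued in Section~\ref{sec:prop_alg_combi}) would decompose a marked bipolar orientation of signature $(a,b;*,*)$ into $b+1$ upper-half-plane tandem walks, peeled off along the left outer boundary and each contributing a factor $W$, together with a sequence of face-step ``atoms'' along a distinguished boundary whose generating function is exactly the second expression for $A_a$. The technical heart of both approaches is to identify these atoms and the corresponding boundary decomposition, and to show that they match termwise the geometric series~\eqref{Ai-def}.
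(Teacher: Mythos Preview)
Your decomposition $Q^{(a,b)}(1,1)=\sum_{a'\le a}\sum_{b'\le b}N(a',b')$ is correct and is exactly the starting point of the paper's bijective proof in Section~\ref{sec:prop_alg_combi}. You also correctly identify that the theorem reduces to $N(a,b)=\tfrac{W}{t}A_aW^b$. However, both routes you sketch to this identity have genuine gaps.

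On the bijective side, the key insight you are missing is the involution $\sigma$ of Definition~\ref{def:symmetries}. The series $N(a,b)$ counts marked bipolar orientations of signature $(a,b;*,*)$; on the other hand, $\tfrac{W}{t}A_aW^b$ is shown in Lemma~\ref{lem:interpretation} to be the generating function of the half-plane family $\mathcal{H}^{b\to a}$, which corresponds to signatures of the form $(*,b;*,a)$. These are genuinely different constraints: in the first, $a$ is $\xstart-\xmin$; in the second, $a$ is the final ordinate $\yend-\ymin$. Your proposed ``peeling along the left outer boundary'' cannot produce $A_a$ directly, because the $a$ dashed lower-left edges carry no simple $y$-height information. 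What makes the identity go through is that $\Phi^{-1}\circ\sigma\circ\Phi$ swaps the first and fourth signature entries while preserving the second and third (Proposition~\ref{prop:invol}); only after this symmetry is applied does the standard half-plane factorization into $W^b\cdot(\text{walk in }\mathcal{H}^{0\to a})$ become available.

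On the analytic side, your plan conflates two different objects: Equation~\eqref{eq:Qby} holds only for $a=0$, and the series $Y_1$ therein is a function of $x$ (so $Y_1(1)=W$ comes from setting $x=1$, not $y=1$), whereas the $y_1$ in Proposition~\ref{prop:Qab} is a Laurent series in $\by$. More importantly, the paper's algebraic proof (Section~\ref{sec:algeq}) does not try to push a substitution through $[x^\ge y^\ge]$ at all. It builds a \emph{different} linear combination of the functional equation at orbit points $(x_i,y)$ and $(x_i,\bx)$, then specializes $y\mapsto Y_1(x)$ so that the kernel vanishes and the entire left-hand side drops out, yielding a closed form for $K(x,\bx)Q(x,\bx)$; only afterwards is $x=1$ taken. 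Your contour-integral approach would have to reproduce this cancellation by hand, and you have not indicated how the residues at the non-trivial kernel roots would assemble into the $A_a$ factor.
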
 

We will provide a first proof in Section~\ref{sec:algeq} using functional equations and algebraic manipulations. 
A bijective proof will then be given in Section~\ref{sec:prop_alg_combi}. It involves  
the KMSW  bijection and the involution $\sigma$, both described in
Section~\ref{sec:KMSW}.    

\medskip

\begin{remarks}
{\bf 1.} In our combinatorial proof, the term
$\frac{W}{t}\A_iW^j$ will be interpreted as the generating
function of  tandem walks that start at $(0,j)$, remain
in the upper half-plane $\{y\geq 0\}$, touch the $x$-axis at least
once and end on the line $\{y=i\}$ (see Lemma~\ref{lem:interpretation}).
In  particular, for $a=b=0$,  this  proof gives a length preserving bijection 
between tandem walks in the quadrant that start at the origin,
and tandem walks in the upper half-plane that start at the origin
and end on the $x$-axis. Moreover, this bijection 
preserves the number of SE steps.

In the case where
$ z_p=1$ and $z_r=0$ for $r\not = p$, three such bijections already appear in the literature. The first two are only valid for $p=1$:
 one is due to Gouyou-Beauchamps~\cite{gouyou-tableaux},
and uses a simple correspondence between  1-tandem walks
and standard Young tableaux with at most $3$ rows, and then the Robinson--Schensted correspondence; the second,  more recent one is due to Eu~\cite{eu10} (generalized in~\cite{eu13} to Young tableaux with at most $k$ rows). 
The third  bijection, due to Chyzak and Yeats~\cite{chyzak-yeats}, is very recent and holds for any $p$. It relies on certain automata rules to 
build (step by step) a half-plane walk ending on the $x$-axis from a quarter plane excursion.
These three constructions do not seem to be equivalent to the
correspondence presented in Section~\ref{sec:prop_alg_combi}. 

\medskip

\noindent{\bf 2.}  Let us define \emm double-tandem walks, as walks
with steps $N,W,SE,E,S,NW$: these are  the three steps involved in
1-tandem walks, and their reverses. With these steps too, it is known that
walks in the quadrant that start at the origin are
equinumerous with  walks (of the same length) in the upper half-plane that start at the origin and end on the  $x$-axis~\cite[Prop.~10]{mbm-mishna}; see~\cite{mortimer} for an intriguing refinement involving walks confined to a triangle.  A bijection between these two families of walks was recently given 
 by Yeats~\cite{yeats2014bijection}, and then reformulated using automata 
 in~\cite{chyzak-yeats}. We do not know of any bijection for these walks that would generalize the KMSW map,
   but we conjecture that there exists an involution  on double-tandem walks having the same properties as the involution $\sigma$ of Section~\ref{sec:KMSW} (once defined on 1-tandem walks).
    See the remark at the end of Section~\ref{sec:prop_alg_combi} for details. 
\end{remarks}

   \subsection{Asymptotic number of quadrant
     walks with prescribed  endpoints}\label{sec:main_asymp}
We now fix $p\ge 1$, and focus on the asymptotic enumeration of 
 $p$-tandem walks with prescribed endpoints confined to the quadrant. To be precise, we aim at finding an asymptotic estimate of
 the coefficients $[t^n]Q_{c,d}^{a,b}(t,z_0, \ldots, z_p)$ as $n\to\infty$, for any prescribed $a,b,c,d$ and nonnegative weights $z_0,\ldots,z_p$ with $z_p>0$. 
As it turns out, a detailed estimate can be derived by combining
 recent asymptotic results by Denisov and Wachtel~\cite{denisov2015random} (or
rather, a variant of these results that applies to our \emm periodic,
walks) and the algebraic expression for $Q^{(a,b)}(1,1)$ given in
Theorem~\ref{thm:alg}.

Let 
\beq\label{def:iota}
D:=\{r\in\llbracket 0,p\rrbracket,\ z_r>0\},\qquad \hbox{ and } \qquad \iota:=\mathrm{gcd}(r+2,r\in D).
\eeq
 It follows from Lemma~\ref{lem:per} that there can only exist a walk
 of length $n$ from $(a,b)$ to $(c,d)$ if $ c-d\equiv a-b +2n$~mod~$\iota$ 
(and $n$ is large enough).
Our main asymptotic result is the following.

\begin{Theorem}\label{thm:asympt}
Fix $p\ge 1$. Let $a,b,c,d$ be nonnegative integers and let $z_0,\ldots,z_p$ be
nonnegative weights with $z_p>0$.
Let $\iota$ be defined by~\eqref{def:iota}.  
Then, as $n\to\infty$ conditioned on $c-d\equiv a-b +2n$~{\em mod}~$\iota$, we have 
\[
[t^n]Q_{c,d}^{a,b}\sim \kappa\ \!\gamma^nn^{-4},
\] 
where the growth rate $\gamma$ is explicit and depends only on  the
weights $z_r$, while the multiplicative constant $\kappa$, also
explicit, depends on these weights and on $a,b,c,d$ as well.
\end{Theorem}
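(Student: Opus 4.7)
The plan is to combine the algebraic content of Theorem~\ref{thm:alg} with a local limit theorem for random walks in cones in the spirit of Denisov and Wachtel~\cite{denisov2015random}. First I would reduce to a zero-drift random walk by a Cram\'er (exponential) change of measure. In the $p$-specialization the step Laurent polynomial $S(x,y)$ of~\eqref{S:def} is a positive combination of monomials, convex in $(\log x,\log y)$ and tending to $+\infty$ on the boundary of the positive orthant, hence it attains a unique interior minimum at some point $(x_0,y_0)$; I expect $\gamma:=S(x_0,y_0)$ to be the claimed growth rate (and this is consistent with the dominant singularity of the algebraic series $W$ from Theorem~\ref{thm:alg}). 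Reweighting each step $(i,j)$ by $x_0^iy_0^j/\gamma$ yields a centered probability distribution on tandem steps with positive-definite covariance $\Sigma$. Letting $\hat{\boldP}_{(a,b)}$ denote the associated random walk law and $\tau$ its exit time from the quadrant, one has the identity
\[
[t^n]Q_{c,d}^{a,b}=\gamma^n\,x_0^{a-c}\,y_0^{b-d}\,\hat{\boldP}_{(a,b)}\bigl(S_n=(c,d),\,\tau>n\bigr),
\]
which reduces the asymptotic problem to an estimate of the right-hand survival-and-return probability.

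Second, I would apply a periodic variant of the Denisov--Wachtel local limit theorem to this tilted walk. Its moment and non-degeneracy hypotheses hold because our step set is finite, and the periodicity of Lemma~\ref{lem:per} is handled by restricting $n$ to the admissible residue class modulo $\iota$. The expected output is
\[
\hat{\boldP}_{(a,b)}\bigl(S_n=(c,d),\,\tau>n\bigr)\sim \kappa_0\,\iota\,V(a,b)\,V'(c,d)\,n^{-1-\pi/\theta}
\]
along that residue class, where $V$ is the positive discrete harmonic function for the tilted walk killed on exit from the quadrant, $V'$ is its analogue for the time-reversed walk, $\theta$ is the opening angle of the transformed cone $\Sigma^{-1/2}(\{x,y\geq 0\})$, and $\kappa_0$ is an explicit constant depending only on $\Sigma$ and $\theta$.

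Third, I would establish that $\theta=\pi/3$ (and hence the exponent $-1-\pi/\theta=-4$) uniformly over all admissible tandem weights. The key observation is that, writing $u=1/x$ and $v=y$, the step polynomial factorises as $S=1/(uv)+F(u,v)$ with $F$ symmetric under $(u,v)\mapsto(v,u)$. The unique positive critical point therefore lies on the diagonal $u_0=v_0$, i.e.\ $x_0y_0=1$, and a short computation using this symmetry yields the universal identities $x_0^2\,\partial_x^2 S=y_0^2\,\partial_y^2 S$ and $-2y_0\,\partial_{xy}^2 S=x_0\,\partial_x^2 S$ at $(x_0,y_0)$. These imply that $\Sigma$ is proportional to $\left(\begin{smallmatrix}\phantom{-}2&-1\\-1&\phantom{-}2\end{smallmatrix}\right)$ (as is directly visible in the $1$-tandem case), and diagonalising this matrix shows that $\Sigma^{-1/2}$ sends the positive $x$- and $y$-axes to rays making an angle of exactly $\pi/3$. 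Finally, to make $\kappa$ explicit, I would determine $V$ (and, via the edge-reversal involution $\rho$ of Definition~\ref{def:symmetries}, also $V'$) in closed form, guided by the leading spherical harmonic $r^3\sin(3\phi)$ on the transformed cone, which suggests a polynomial ansatz of degree $3$ in $(a,b)$.

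The main obstacle is the explicit determination of $V$: the discrete harmonicity equation $V(a,b)=\sum_s p_s\,V((a,b)+s)$ involves arbitrarily large face steps, so the ansatz must be verified against an infinite family of neighbours, which requires exploiting the global structure of $S(x,y)$ rather than a finite case analysis. A secondary technical point is that Denisov--Wachtel's theorem is typically stated for aperiodic walks started at interior points, so adapting it to the periodic setting and to boundary start/endpoints---where $V$ and $V'$ vanish, and one must instead use the first or last visit to the interior---requires additional care in order to produce the explicit constant $\kappa$.
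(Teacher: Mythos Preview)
Your overall architecture coincides with the paper's: Cram\'er tilt to a zero-drift tandem walk (the paper takes $x_0=\alpha$, $y_0=1/\alpha$, so indeed $x_0y_0=1$), the covariance matrix proportional to $\left(\begin{smallmatrix}\phantom{-}2&-1\\-1&\phantom{-}2\end{smallmatrix}\right)$ and hence cone-angle $\pi/3$ giving the exponent $-4$, then a periodic adaptation of Denisov--Wachtel, with the time-reversal symmetry $(-X,-Y)\stackrel{d}{=}(Y,X)$ turning $V'$ into $V$ with swapped arguments. All of this is correct and matches the paper.

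The genuine gap is in your plan to compute $V$. A degree-$3$ polynomial ansatz in $(a,b)$ is the right guess only for $p=1$; already for $p=2$ the paper's explicit formula~\eqref{eq:expression_V(a,b)_p=2} contains a term $(-1/3)^{a+1}$, and in general the generating function $\mathcal V(u,v)$ has the degree-$(p-1)$ polynomial $\Lambda(u)$ in its denominator, so $V(a,b)$ is polynomial-exponential in $a$, not polynomial. Trying to solve the discrete harmonicity equation directly with large steps is exactly the obstacle you flag, and the paper sidesteps it entirely: it does not solve for $V$ via harmonicity at all. Instead it reads $V(a,b)$ off from the algebraic expression for $Q^{(a,b)}(1,1)$ in Theorem~\ref{thm:alg} by singularity analysis. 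Since $\mathbb P(\tau^{(a,b)}>n)=[t^n]Q^{(a,b)}(1,1)$ after the tilt, and $Q^{(a,b)}(1,1)$ is a polynomial in the explicit series $W$ satisfying $W=t\phi(W)$, a square-root singularity at $t=1$ yields $\mathbb P(\tau^{(a,b)}>n)\sim \tfrac{1}{4\sqrt\pi}V(a,b)n^{-3/2}$ with $V$ (and hence its rational generating function~\eqref{eq:expression_H}) fully explicit. You mention Theorem~\ref{thm:alg} in your opening sentence but never actually use it; this is precisely where it enters.

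A minor point: $V$ does \emph{not} vanish on the boundary of the quadrant in the paper's convention (the walk is killed on exit, not on contact), so starting or ending on an axis requires no extra work; $V(0,b)$ is computed directly and is nonzero.
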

The explicit values of $\kappa$ and $\gamma$ are  given in
Section~\ref{sec:weighted}, together with the proof of the theorem.
When specialized to bipolar orientations ($a=d=0$), this theorem
will give (once the constants are explicit) the following detailed asymptotic estimate for the number of
orientations with prescribed face degrees and boundary lengths.

\begin{Corollary}[{\sc Bipolar orientations with prescribed  face degrees}]\label{coro:bipolar}
Let $\Omega\subset\{2,3,4,\ldots\}$ be a finite set such that $\max (\Omega) \ge 3$, 
and let 
$\iota$ be the gcd of all elements in $\Omega$.   
Let $\alpha$ be the unique positive solution of the equation
\[
1=\sum_{s\in\Omega}\binom{s-1}{2}\alpha^{-s},
\]
 and let 
\[
\gamma=\sum_{s\in\Omega}\binom{s}{2}\alpha^{-s+2}.
\]
Then, for $2n\equiv b+c$~{\em mod}~$\iota$, the number $B_n^{(\Omega)}(b,c)$
of bipolar orientations with $n+1$ edges, left 
 boundary length $b+1$, right 
 boundary length $c+1$, and all
inner face degrees in $\Omega$,  satisfies 
\[
B_n^{(\Omega)}(b,c)\sim \kappa {\gamma^{n}}{n^{-4}}
\qquad \hbox{ as } n\to\infty,
\]  
where the constant $\kappa$ is 
\[
\kappa:=\frac{\iota \gamma^2}{4\sqrt{3}\pi\alpha^4 \sigma^4}(b+1)(b+2)(c+1)(c+2)\alpha^{-b-c}, 
\]
with  
$\sigma^2=\frac{\alpha^2}{\gamma}\sum_{s\in\Omega}\binom{s}{3}\alpha^{-s}.$
\end{Corollary}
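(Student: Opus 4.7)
The plan is to derive this corollary by combining Corollary~\ref{cor:bij-bip} with Theorem~\ref{thm:asympt}, and then matching the abstract constants of the theorem against the explicit ones claimed in the corollary. By Corollary~\ref{cor:bij-bip}, $B_n^{(\Omega)}(b,c) = [t^n]Q_{c,0}^{0,b}$ under the weights $z_r = \mathbbm{1}_{r+2\in\Omega}$. For these weights, the set $D$ of Section~\ref{sec:main_asymp} is $\{s-2:s\in\Omega\}$ and the integer $\iota$ of~\eqref{def:iota} coincides with $\gcd(\Omega)$, so the parity condition $c-d\equiv a-b+2n\bmod\iota$ of Theorem~\ref{thm:asympt} specializes at $a=d=0$ to the announced $b+c\equiv 2n\bmod\iota$. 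Theorem~\ref{thm:asympt} then yields an asymptotic of the form $\kappa\gamma^n n^{-4}$, and what remains is to make $\gamma$ and $\kappa$ explicit.

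For $\gamma$, following the Cramér approach that drives Section~\ref{sec:asymptotic}, the growth rate equals the minimum of $S(x,y)$ on the positive orthant, attained at its unique interior critical point. I would test the ansatz $(x,y) = (\alpha,1/\alpha)$: substituting into $\partial_x S$ and $\partial_y S$, the dependence on the summation index collapses and one obtains
\[
\partial_x S(\alpha,1/\alpha) = \alpha\Bigl(1-\sum_{s\in\Omega}\tbinom{s-1}{2}\alpha^{-s}\Bigr),\qquad
\partial_y S(\alpha,1/\alpha) = -\alpha^3\Bigl(1-\sum_{s\in\Omega}\tbinom{s-1}{2}\alpha^{-s}\Bigr),
\]
both vanishing precisely when $\alpha$ is the constant defined in the corollary. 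Evaluating $S(\alpha,1/\alpha) = \alpha^2 + \sum_s(s-1)\alpha^{-s+2}$ and using the defining equation to rewrite $\alpha^2 = \sum_s\binom{s-1}{2}\alpha^{-s+2}$, the identity $\binom{s-1}{2}+(s-1) = \binom{s}{2}$ yields $\gamma = \sum_{s\in\Omega}\binom{s}{2}\alpha^{-s+2}$.

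To identify $\kappa$, I would Cramér-shift by $(\alpha,1/\alpha)$, rewriting
\[
B_n^{(\Omega)}(b,c) = \alpha^{-(b+c)}\gamma^n\,\PP_{(0,b)}\bigl(\widetilde W_n=(c,0),\ \widetilde W_{[0,n]}\subset\mathbb{Z}_{\geq 0}^2\bigr),
\]
where $\widetilde W$ is the centered random walk assigning probability $\alpha^2/\gamma$ to the SE step and $\alpha^{2-s}/\gamma$ to each face step of level $s-2$. A short moment computation shows that its covariance matrix is diagonal in the basis $(u,v) = (x-y,x+y)$, with $\E[v^2]/2 = \frac{\alpha^2}{\gamma}\sum_s\binom{s}{3}\alpha^{-s} = \sigma^2$ and, crucially, $\E[u^2] = 3\E[v^2]$; the latter is a direct consequence of the defining equation for~$\alpha$. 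This $1\!:\!3$ covariance ratio means that after normalization the quadrant becomes a cone of opening $\pi/3$, which explains both the exponent $n^{-4}$ (via Denisov--Wachtel) and the $\sqrt 3$ appearing in $\kappa$. Feeding this into the local limit theorem for conditioned walks developed in Section~\ref{sec:asymptotic}, together with the ansatz $H(x,y) = (x+1)(y+1)(x+y+2)$ for the discrete harmonic function (which gives $H(0,b) = (b+1)(b+2)$ and $H(c,0) = (c+1)(c+2)$), then produces the stated $\kappa$ after combining with the Cramér prefactor $\alpha^{-(b+c)}$.

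The main obstacle is this last step: constructing explicitly the discrete harmonic function for $\widetilde W$ killed on leaving the quadrant, verifying that it factors on the axes into the universal quadratic expressions $(b+1)(b+2)$ and $(c+1)(c+2)$, and extracting the precise multiplicative constant $\iota\gamma^2/(4\sqrt 3\pi\alpha^4\sigma^4)$ from the local limit theorem. The $1\!:\!3$ covariance identity makes it very plausible that the continuous harmonic function $r^3\sin(3\phi)$ on the $\pi/3$-wedge factors into three linear forms whose discrete analogues are precisely $(x+1)$, $(y+1)$ and $(x+y+2)$, but converting this heuristic into a rigorous discrete-harmonic argument, and tracking all prefactors through the change of variables, is the really technical part of the proof, and is exactly what Section~\ref{sec:asymptotic} of the paper is designed to supply.
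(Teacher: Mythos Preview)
Your overall strategy---specialize Theorem~\ref{thm:asympt} at $a=d=0$ via the KMSW bijection, then make the constants explicit through a Cram\'er shift---is exactly the paper's, and your computations of the critical point $(\alpha,1/\alpha)$, of $\gamma=S(\alpha,1/\alpha)$, and of the covariance ratio $\EE[u^2]=3\,\EE[v^2]$ are correct.

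The genuine gap is your ansatz for the discrete harmonic function. The polynomial $(x+1)(y+1)(x+y+2)$ is \emph{not} the discrete harmonic function of the shifted walk except when $p=1$. Already for $p=2$ (with $z_0=z_1=0$), the paper's explicit formula~\eqref{eq:expression_V(a,b)_p=2} shows that $V(a,b)$ contains an oscillatory term proportional to $(-1/3)^{a+1}$ and is not a polynomial in $a,b$. Your heuristic---that the continuous harmonic function $r^3\sin(3\phi)=xy(x+y)$ should discretize to a product of three affine forms---fails because the large steps in the negative-$x$ direction destroy the product structure; only the asymptotics~\eqref{eq:asymptotic_V(a,b)} survive.

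What saves the corollary is that one only needs the \emph{boundary} values $V(0,b)$ and $V(0,c)$, and these do equal $\frac{\gamma}{\sigma\alpha^2}(i+1)(i+2)$ for every $p$. But the paper does not obtain this by guessing a global formula: it first derives the full generating function $\mathcal V(u,v)$ of Proposition~\ref{prop:asympz_enum} by singularity analysis of the \emph{algebraic} expression for $Q^{(a,b)}(1,1)$ given in Theorem~\ref{thm:alg}, and then reads off
\[
\mathcal V(0,v)=\frac{2}{\sigma\,\Lambda(0)}\cdot\frac{1}{(1-v)^3},\qquad \Lambda(0)=z=\frac{\alpha^2}{\gamma}.
\]
It is this factor $\Lambda(0)^{-2}=\gamma^2/\alpha^4$ (one copy from $V(0,b)$, one from $V(0,c)$) that produces the $\gamma^2/\alpha^4$ in $\kappa$. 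So the route to the boundary values runs through the enumerative input of Theorem~\ref{thm:alg}, not through a conjectural product form, and the ``technical part'' you defer to Section~\ref{sec:asymptotic} is a replacement for your ansatz rather than a verification of it.
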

(The above quantity $\sigma$, which will be seen as a variance in Section~\ref{sec:asymptotic}, should not be mixed up with the transformation $\sigma$ of Section~\ref{sec:KMSW}. We hope that this will not cause any inconvenience.)
The proof is given in Section~\ref{sec:weighted}. Specializing further to the case of bipolar $d$-angulations
($\Omega=\{d\}$, with $d\ge 3$),
 we have
\[
\iota=d,\qquad \alpha=\binom{d-1}{2}^{1/d}, \qquad
\gamma=\frac{d}{d-2}\binom{d-1}{2}^{2/d},
\]
 so that  $\sigma^2=({d-2})/3$.
Hence the number of bipolar orientations having $n+1$ edges (for
$2n-b-c$ divisible by $d$),  left (respectively right)
boundary length $b+1$ (respectively $c+1$), satisfies
\[
B_n^{(d)}(b,c)\sim \frac {9(b+1)(b+2)(c+1)(c+2)} {4\sqrt 3 \pi
  d}\left( \frac d{d-2}\right)^{n+4} {\binom {d-1}{ 2}}^f n^{-4},
\]
where $f= (2n-b-c)/d$ is the number of
inner faces.  When $b=c=0$ and $d=3$ or $d=4$, this estimate is in
agreement with~\eqref{Tutte-bip} and~\eqref{asympt4}.

\section{A functional equation approach}
\label{sec:eqfunc}
Let $p\ge 1$. In this section, we apply  the general approach to 
quadrant walk enumeration described in~\cite{BoBoMe18} to $p$-tandem walks confined to the quadrant. This approach consists of four steps, detailed here in Sections~\ref{sec:ef} to~\ref{sec:extract}. Let us recall that it is not systematic, and does not  work for all sets of steps. Moreover, the fact that our step set depends on~$p$ adds another difficulty.  The first  step of the approach (write a functional equation) is however simple and systematic. The second (compute the so-called \emm orbit, of the problem) is also systematic as long as the orbit is finite, and can even be performed automatically for a fixed (small) value of $p$~\cite[Sec.~3.2]{BoBoMe18}. The last two steps (construction of a \emm section-free, equation and extraction of the \gf) definitely require more inventiveness.

  At the end, the approach yields an
 expression for $Q^{(a,b)}(x,y)$ as the nonnegative part (in $x$ and
 $y$) of an algebraic series. We state it in the first subsection below. This expression is \emm not,\/ the one of  Theorem~\ref{thm:Q0}, which will be derived later in  Section~\ref{sec:Qxy-simple}.

 \subsection{First expression for $\boldsymbol{Q^{(a,b)}(x,y)}$}

 The algebraic ingredient in this  expression is a new
 series $x_1$, {involving the indeterminates $x, y$ and~$z_r$}, and is defined as follows.

\begin{Lemma}\label{lem:xi}
Recall the definition~\eqref{S:def} of $S(x,y)$.  The equation $S(x,y)=S(X,y)$,
when solved for $X$, admits $p+1$ roots $x_0=x, x_1, \ldots, x_p$,
which can be taken as Laurent  series in $\by:=1/y$ with coefficients in
$\cs[z_1,  \ldots, z_{p}, 1/z_p, x, \bx]$.
Exactly one of these roots, say $x_1$,
contains {some} positive powers of $y$ in its series expansion. It has
coefficients in $\qs[
z_1,  \ldots, z_{p}, x, \bx]$ and reads $x_1=z_p \bx
y^{p}(1+O(\by))$. The other roots are \fps\ in $\by$ with no constant term.
\end{Lemma}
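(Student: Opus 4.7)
My plan is to view $S(x,y)=S(X,y)$ as a polynomial equation in $X$ and to classify its roots by a Newton--polygon analysis in the variable~$\by$. Starting from the identity
\[
S(X,y)-S(x,y)=(X-x)\by+\sum_{k=1}^p(X^{-k}-\bx^k)T_k(y),\qquad T_k(y):=\sum_{r=k}^p z_r\,y^{r-k},
\]
I note that $z_0$ cancels, so after multiplying by $yX^p$ we obtain a polynomial $G(X,y)\in\qs[x,\bx,z_1,\ldots,z_p][X,y]$ of degree $p+1$ in $X$, monic as a polynomial in $X$. Hence there are exactly $p+1$ roots counted with multiplicity, and $X=x$ is visibly one of them; call it $x_0$.

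For the remaining $p$ roots I would take the ansatz $X\sim c\,y^\beta$ and read off the admissible pairs $(c,\beta)$ from the upper convex hull of the exponent support of~$G$. A quick inspection shows that the monomials of maximal $y$-degree at each $X^m$ are $z_p\,y^{m+1}X^m$ for $0\le m\le p-1$, $-\bx z_p\,y^p X^p$ at $X^p$, and $X^{p+1}$ at $X^{p+1}$; their upper hull consists of three edges, producing:
\begin{itemize}
\item $\beta=-1$ (edge from $(0,1)$ to $(p-1,p)$, covering all the points $(m,m+1)$), with balance $\sum_{m=0}^{p-1}c^m=0$; this yields $p-1$ roots whose leading term is $c\,\by$ with $c$ a nontrivial $p$-th root of unity;
\item $\beta=0$ (edge from $(p-1,p)$ to $(p,p)$), with balance $c=x$, recovering $x_0=x$;
\item $\beta=p$ (edge from $(p,p)$ to $(p+1,0)$), with balance $c=\bx z_p$; this gives a single root $x_1$ with leading term $z_p\bx\,y^p$.
\end{itemize}
Since $1+1+(p-1)=p+1$, these exhaust all roots; in particular $x_1$ is the only one whose expansion contains positive powers of~$y$, while the $p-1$ roots of the first bullet start at $\by^1$ and hence have no constant term.

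To promote each leading behavior to a full Laurent series in~$\by$, I would apply the implicit function theorem to $F(X,y):=S(X,y)-S(x,y)$, using the fact that
\[
F'_X(X,y)=\by-\sum_{k\ge 1}k\,X^{-k-1}T_k(y)
\]
has a nonzero leading $\by$-order on each of the above ansatzes, so that Hensel-style lifting determines the subsequent coefficients uniquely. The main bookkeeping is to track what is inverted at each step. Along $x_1$ the leading term of $F'_X$ is simply $\by$, so only powers of $y$ are inverted and the coefficients of $x_1$ live in $\qs[z_1,\ldots,z_p,x,\bx]$. Along each of the other roots, the leading term of $F'_X$ is proportional to $z_p$ (with a non-zero prefactor depending on the chosen root of unity $c$), so the recursion forces the coefficients into $\cs[z_1,\ldots,z_p,1/z_p,x,\bx]$. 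I expect the main subtlety will be confirming that the three edges above indeed form the whole upper hull, and that the balance equations along each edge are nondegenerate --- a finite verification using the explicit formulas for the coefficients of~$G$.
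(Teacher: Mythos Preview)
Your approach is correct and is essentially the paper's own argument dressed in Newton--polygon language: the paper first divides out the trivial root $x_0=x$, then passes to $\bX=1/X$ and expands in $\by$ to isolate the single root $x_1$ (your edge of slope $p$), and finally substitutes $X=\by\,U$ to find the $p-1$ remaining roots whose leading terms are the nontrivial $p$th roots of unity (your edge of slope $-1$). These are exactly the rescalings dictated by your three edges, so the two proofs coincide in content.

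One caveat on the coefficient ring for $x_1$: your justification ``the leading term of $F'_X$ is $\by$, so only powers of $y$ are inverted'' overlooks that the iteration feeds $X^{-k}$ back into $F$, and $X^{-1}\sim z_p^{-1}x\by^{p}$ introduces $1/z_p$. Indeed the paper's own $p=2$ example displays a $z_1/z_2$ term in $x_1$, so the claimed ring $\qs[z_1,\ldots,z_p,x,\bx]$ for $x_1$ appears to be a slip in the lemma statement; the substantive point (which your argument does establish) is that $x_1$ needs only rational coefficients, whereas $x_2,\ldots,x_p$ require the $p$th roots of unity.
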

\noindent{\bf Examples.}  For $p=1$ we have 
\[
S(x,y)=x\by + z_0+ z_1 (\bx+y),
\]
and the equation $S(X,y)=S(x,y)$ has two solutions, $x_0=x$ and
$x_1=z_1 \bx y$. 

For $p=2$ we have
\[
S(x,y)=x\by + z_0+ z_1 (\bx+y) + z_2(\bx^2+\bx y +y^2),
\]
and the equation $S(X,y)=S(x,y)$ has three solutions. One of them is $x_0=x$ and
the other two satisfy a quadratic equation:
\[
x^2X^2-yX(xz_1+z_2(1+xy))-xyz_2=0.
\]
Hence
\[
x_{1,2}=\frac{xz_1+z_2(1+xy)\pm \sqrt{(xz_1+z_2(1+xy))^2+4x^3\by z_2}}{2x^2\by}.
\]
(We take $x_1$ to correspond to the $+$ sign.) We expand both
solutions as Laurent series in $\by$ (not $y$!), and find:
\begin{align*}
  x_1&=z_2  \bx y^2+ \bx(z_1+\bx z_2) y + \by - (z_1/z_2+\bx) \by ^2+ O(\by^3),
\\
x_2&= \phantom{z_2  \bx y^2+ \bx(z_1+\bx z_2) y}- \by +(z_1/z_2+\bx)
     \by ^2+ O(\by^3). 
\end{align*} \qee

We prove  Lemma~\ref{lem:xi} in Section~\ref{sec:orbit}. It implies that $y_1:=\bx_1=1/x_1$ is a power series in~$\by$ whose coefficients
lie in $\qs[ z_1,\ldots, z_p, 1/z_p, x, \bx]$
(this comes from the
monomial form of the first coefficient of $x_1$). We can now give our
first expression for $Q^{(a,b)}(x,y)$, which we first state in the case $a=0$. 

\begin{Proposition}\label{prop:DF}
  Fix $p\ge 1$, and let $y_1=1/x_1$, where $x_1$ is defined in Lemma~\ref{lem:xi}.  The \gf\ of $p$-tandem walks confined to the first quadrant
  and starting at $(0,b)$ is the nonnegative part (in $x$ and $y$)
  of an algebraic function:
\[
Q^{(0,b)}(x,y)=[x^\ge y^\ge ]{\frac {\left(1-\bx \by \right) S'_1(x,y)
  }{1-tS(x,y)}}\sum_{k=0}^b\left(y^{k+1}- y_1^{k+1} \right)\bx^{b-k}
,
\]
where the argument of $[x^{\geq}y^{\geq}]$ is expanded as a series of\/ $\qs[x, \bx, z_0, \ldots, z_p, 1/z_p]((\by))[[t]]$.

In particular, $Q^{(0,b)}(x, y)$ is D-finite.
\end{Proposition}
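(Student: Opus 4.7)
I will apply the four-step method of~\cite{BoBoMe18} to $p$-tandem walks.

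\emph{Step 1 (functional equation).} A last-step decomposition of a quadrant walk starting at $(0,b)$ yields
\[
K(x,y)\,Q^{(0,b)}(x,y)=y^b - tx\by\,Q^{(0,b)}(x,0) - t\sum_{r=0}^p z_r\sum_{i=0}^r y^i\sum_{c=0}^{r-i-1}\bx^{r-i-c}\,Q^{(0,b)}_c(y),
\]
where $Q^{(0,b)}_c(y):=[x^c]Q^{(0,b)}(x,y)$. The right-hand side involves only $Q^{(0,b)}(x,0)$ and the $p$ finite $y$-sections $Q^{(0,b)}_0(y),\ldots,Q^{(0,b)}_{p-1}(y)$.

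\emph{Step 2 (orbit).} By Lemma~\ref{lem:xi}, the equation $S(X,y)=S(x,y)$ has $p+1$ solutions $x_0=x,x_1,\ldots,x_p$ in $X$, and $K(x_i,y)=K(x,y)$ for all $i$. Substituting $x\mapsto x_i$ in the functional equation produces $p+1$ equations sharing the same left-hand kernel; a linear combination with weights $\omega_i$ that are Vieta-type symmetric functions of the $x_j$'s cancels the $p$ unknown $y$-sections $Q^{(0,b)}_c(y)$. This yields a section-free equation expressing $\sum_i\omega_i\,Q^{(0,b)}(x_i,y)$ in closed form modulo $Q^{(0,b)}(x,0)$ and the initial datum $y^b$. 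A further algebraic manipulation (coupling the $x$-orbit with the $Q^{(0,b)}(x,0)$ boundary term) eliminates $Q^{(0,b)}(x,0)$ and introduces $y_1:=1/x_1$ into the closed form.

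\emph{Step 3 (extraction).} By Lemma~\ref{lem:xi}, for $i\ge 1$ the series $x_i$ is a Laurent series in $\by$ whose monomials each carry either negative $x$-degree or negative $y$-degree (the leading monomial of $x_1$ is $z_p\bx y^p$, and $x_i$ for $i\ge 2$ is a series in $\by$ with no constant term). Consequently the composed series $Q^{(0,b)}(x_i,y)$ for $i\ge 1$ contributes nothing under $[x^\ge y^\ge]$. Applying $[x^\ge y^\ge]$ to the section-free equation therefore isolates $Q^{(0,b)}(x,y)$ on the left, and the right-hand side simplifies to the announced formula. The derivative $S'_1(x,y)$ arises specifically from the identity $x^p\,S'_1(x,y)=\by\prod_{i=1}^p(x-x_i)$, obtained by differentiating the polynomial factorization $X^p\bigl(S(X,y)-S(x,y)\bigr)=\by\prod_{i=0}^p(X-x_i)$ with respect to $X$ and evaluating at $X=x$; the factor $(1-\bx\by)$ comes from the simplification of the SE-step boundary contribution $tx\by\,Q^{(0,b)}(x,0)$; and the polynomial $\sum_{k=0}^b(y^{k+1}-y_1^{k+1})\bx^{b-k}$ encodes the initial datum $y^b$ after propagation through the orbit sum and elimination of $Q^{(0,b)}(x,0)$.

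\emph{Main obstacle and D-finiteness.} The principal technical obstacle is the explicit algebraic simplification of the orbit sum into the compact form stated: carrying out the Vieta bookkeeping, handling the $x$-axis boundary term, and identifying the natural role of $y_1=1/x_1$. Once this is done, D-finiteness is immediate: in the $p$-specialization, $x_1$ is algebraic (as a root of a polynomial of degree $p+1$, by Lemma~\ref{lem:xi}), hence so is $y_1=1/x_1$; the expression inside $[x^\ge y^\ge]$ is therefore algebraic over $\qs(t,x,y,z_0,\ldots,z_p)$, and by Lipshitz's theorem on nonnegative parts of multivariate algebraic series~\cite{lipshitz-df}, $Q^{(0,b)}(x,y)$ is D-finite in each of its variables.
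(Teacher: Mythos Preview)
Your framework (functional equation, orbit, section-free combination, extraction) matches the paper, but Steps~2 and~3 gloss over the core difficulty. Using only the $x$-orbit $(x_i,y)_{0\le i\le p}$ with weights chosen to kill the $p$ sections $Q_c(y)$ still leaves all $p+1$ boundary terms $Q(x_i,0)$ on the right-hand side; see~\eqref{pos}. To eliminate those, the paper combines orbit equations at \emph{three} values of the second coordinate, namely $y$, $y_1=1/x_1$, and $y_0=\bx$, with the weights~\eqref{weights} (Lemma~\ref{lem:section-free}). This is not a manipulation within the $x$-orbit equation: it introduces on the left-hand side two further blocks of unknowns, $Q(x_i,y_1)$ for $i\ne 1$ and $Q(x_i,\bx)$ for $i\ne 0$. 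Your ``further algebraic manipulation'' is exactly this step, and it is the heart of the construction, not a routine simplification.

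Consequently your extraction argument in Step~3 is incomplete: it addresses only the $Q(x_i,y)$ terms, whereas one must also show that the two extra blocks vanish under $[x^\ge y^\ge]$. The $Q(x_i,y_1)$ block is $y$-negative (apply Lemma~\ref{lem:sym-ei} with $y$ replaced by $y_1$), but the $Q(x_i,\bx)$ block is delicate: its $y$-nonnegative part need not vanish, and the paper shows instead that every $y$-nonnegative monomial there is $x$-negative, via a dedicated property of $x_1$ (Lemma~\ref{lem:x1}: each monomial $x^k\by^\ell$ occurring in $x_1$ satisfies $k\le\ell/p$). Your monomial-by-monomial claim about individual $x_i$'s is not sufficient; the arguments rely on symmetry among the $x_i$'s together with Lemmas~\ref{lem:sym-ei} and~\ref{lem:x1}. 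Incidentally, the factor $(1-\bx\by)$ arises from the Vandermonde factor $(y_0-y_{p+1})=(\bx-y)$ in~\eqref{final}, not from the SE-step boundary term.
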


The general  case is more involved.

\begin{Proposition}\label{prop:Qab}
 Fix $p\ge 1$, and let $y_1=1/x_1$, where $x_1$ is defined in Lemma~\ref{lem:xi}.  The \gf\ of $p$-tandem walks confined to the first quadrant
  and starting at $(a,b)$ is the nonnegative part (in $x$ and $y$)
  of an algebraic function:
\beq\label{ex:Qab}
Q^{(a,b)}(x,y)=[x^\ge y^\ge ]\frac{(1-\bx\by)S'_1(x,y)}{1-tS(x,y)}\big(\eta_b\zeta_a-\eta_{b-1}\zeta_{a-1}\big),
\eeq
where $\eta_b:=\sum_{k=0}^b(y^{k+1}-y_1^{k+1})\bx^{b-k}$ and $\zeta_a$ is the Laurent polynomial in $x$ and $y$ (with coefficients that are polynomial in $z_1,\ldots,z_p$) defined by 
\begin{align*}
 \sum_{a\ge 0} \zeta_au^a&=\frac1{uy(1-u\by)(S(\bu,y)-S(x,y))}
  \\
 & =\frac1{(1-ux)(1-u\by)(1-u \bx y\sum_{i+j+k<p} z_{i+j+k+1}u^i \bx^j y^k)}.
\end{align*}
The  argument of $[x^\ge y^\ge ]$ in~\eqref{ex:Qab} is meant
as a series of $\qs[x, \bx, z_0, \ldots, z_p, 1/z_p]((\by))[[t]]$.
 \end{Proposition}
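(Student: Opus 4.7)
My approach is to extend the four-step argument of~\cite{BoBoMe18} already used to establish Proposition~\ref{prop:DF} (which is the case $a=0$), carefully tracking the new factor $x^a$ in the source. A standard last-step decomposition of $p$-tandem walks in the quadrant gives the functional equation
\[
K(x,y)\,Q^{(a,b)}(x,y)=x^a y^b - tx\by\, Q^{(a,b)}(x,0) - t\sum_{r\in D}z_r\sum_{s=1}^{r}\bx^{\,s}y^{r-s}\,R_{r,s}(x,y),
\]
where $R_{r,s}(x,y):=\sum_{j=0}^{s-1}x^j[x^j]Q^{(a,b)}(x,y)$ collects the low-$x$ sections killed by a long leftward step. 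The only difference with the functional equation used for Proposition~\ref{prop:DF} is that the source $y^b$ is replaced by $x^a y^b$. The orbit sum and the construction of a section-free equation (using in particular the $y$-root $Y_1$ of the kernel and the conjugate $x$-roots $x_0,\dots,x_p$ of Lemma~\ref{lem:xi}) can therefore be reused verbatim: the unknowns $Q^{(a,b)}(x,0)$ and $R_{r,s}(x,y)$ get eliminated, and the only survivor on the right-hand side is the orbit transform of the source $x^a y^b$.

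The heart of the proof is to identify this transform in closed form. The $b$-dependence reproduces the series $\eta_b$ already computed for Proposition~\ref{prop:DF}. For the $a$-dependence, one obtains a sum of the shape $\sum_{k=0}^{p} c_k(x,y)\,x_k^{\,a}$ over the conjugate roots. Since the reciprocals $\bx_k$ are precisely the zeros in $u$ of $S(\bu,y)-S(x,y)$ (and since a direct calculation yields the factorization $S(\bu,y)-S(x,y)=(1-ux)\,\Phi(u,x,y)$ with $\Phi$ an explicit Laurent polynomial), a partial-fraction / residue computation rewrites this sum as a single $[u^a]$-extraction. The factor $1/(1-u\by)$ appears naturally when one folds the SE contribution of the functional equation into the residue formula, and one recognises the resulting expression as $\zeta_a$; the equivalent rational form displayed in the statement then follows directly from the above factorisation of $S(\bu,y)-S(x,y)$.

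Putting the pieces together, extracting the nonnegative part in $x$ and $y$, and repackaging the result via the identity
\[
\sum_{a,b\ge 0}u^a v^b\bigl(\eta_b\zeta_a-\eta_{b-1}\zeta_{a-1}\bigr)=(1-uv)\Bigl(\sum_{b\ge 0}v^b\eta_b\Bigr)\Bigl(\sum_{a\ge 0}u^a\zeta_a\Bigr),
\]
which matches the prefactor $(1-\bx\by)$ already present in Proposition~\ref{prop:DF}, yields the announced formula. As a sanity check, $\zeta_0=1$ and $\zeta_{-1}=0$, so setting $a=0$ recovers~\eqref{eq:Qby} exactly. The principal technical difficulty lies in the closed-form evaluation of the middle paragraph: one must set up the partial-fraction identity in the right completion of the coefficient ring (so that each $x_k$ appears as a genuine Laurent series in $\by$ with coefficients in $\qs[x,\bx,z_0,\dots,z_p,1/z_p]$, per Lemma~\ref{lem:xi}), and verify that the final nonnegative-part extraction is well defined as an element of $\qs[x,\bx,z_0,\dots,z_p,1/z_p]((\by))[[t]]$. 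Keeping track of which variable is expanded first at each stage is the most delicate bookkeeping.
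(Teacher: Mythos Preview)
Your overall strategy matches the paper's: redo the section-free orbit combination of Section~\ref{sec:sec-free} with the source $x^ay^b$ in place of $y^b$, then extract the nonnegative part. The paper's proof in Section~\ref{sec:Qab} confirms that the left-hand side of the section-free equation is unchanged, so the extraction argument of Section~\ref{sec:extract} carries over verbatim, as you say.

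Where your outline goes wrong is in the middle paragraph. First, the factor $1/(1-u\by)$ does not come from ``folding the SE contribution''; the SE step produces the factor $(1-ux)$ in the denominator of $C(u)$ (via $(\bu-x)\by$), while $1/(1-u\by)$ arises because the orbit includes the extra point $x_{p+1}:=\by$ in addition to the $p+1$ roots $x_0,\ldots,x_p$ of $S(X,y)=S(x,y)$. Thus $\zeta_a=h_a(x_0,\ldots,x_p,\by)$, not $h_a(x_0,\ldots,x_p)$, and your description of $\zeta_a$ as coming from the zeros of $S(\bu,y)-S(x,y)$ alone would miss this factor.

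Second, and more importantly, the $a$- and $b$-dependences do not separate in the way you suggest. What the three-term combination of Section~\ref{sec:sec-free} actually produces on the right is not $\eta_b$ times some $a$-dependent factor, but rather
\[
\sum_{k\in\{0,1,p+1\}} (\text{weight}_k)\, h_a(\bxk_k)\, y_k^{b+1},
\]
with three \emph{different} symmetric functions $h_a(\bxk_k)$, one for each chosen $y_k$. The key identity that makes this tractable is
\[
h_a(\bxk_k)=[u^a]\,(1-ux_k)\,C(u),\qquad C(u):=\prod_{i=0}^{p+1}\frac{1}{1-ux_i},
\]
which splits each term into a piece independent of $k$ (from the $1$) and a piece linear in $x_k$ (from $-ux_k$). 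Only then does the three-term combination collapse, via the same Vandermonde manipulation as in~\eqref{final}, to $(1-\bx\by)(x-x_1)\bigl(\eta_b\zeta_a-\eta_{b-1}\zeta_{a-1}\bigr)$. Your ``partial-fraction / residue computation'' does not capture this: without the $(1-ux_k)$ trick you cannot explain why the cross-term $-\eta_{b-1}\zeta_{a-1}$ appears, and your final $(1-uv)$ repackaging is a tautology that presupposes the answer rather than deriving it.
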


Propositions~\ref{prop:DF} and~\ref{prop:Qab} will be proved in Sections~\ref{sec:extract} and~\ref{sec:Qab}, respectively.

\subsection{A functional equation}
\label{sec:ef}

The starting point of our approach is 
a functional equation that characterizes the series $Q(x,y):=Q^{(a,b)}(x,y)$, and
simply relies on a step-by-step construction of quadrant tandem walks. 
It  reads
\[ 
Q(x,y)=x^ay^b+ t S(x,y)Q(x,y) -tx\by Q(x,0)- t\sum_{r=1}^p z_r \sum_{i=1}^r \bx ^i
y^{r-i}\bigl(Q_0(y)+\cdots + x^{i-1} Q_{i-1}(y)\bigr),
\] 
where $S(x,y)$ is the step polynomial given by~\eqref{S:def},
 and $Q_i(y)$ counts quadrant tandem walks starting at $(a,b)$ and ending at abscissa~$i$. We call $Q(x,0)$ and the series $Q_i(y)$ \emm sections, of $Q(x,y)$.
Equivalently,
\beq\label{eqfunc}
K(x,y) Q(x,y) =x^ay^b- tx\by Q(x,0)- \sum_{j=1}^p \bx^j G_j(y),
\eeq
where $K(x,y)=1-tS(x,y)$ and 
\[
G_j(y)= t \sum_{r=j}^p z_r \bigl(Q_0(y) y^{r-j}+ Q_1(y) y^{r-j-1}+\cdots + Q_{r-j}(y) y^0\bigr).
\]

\subsection{The orbit of $\boldsymbol p$-tandem walks}
\label{sec:orbit}
The aim of this subsection is to prove the following result.
\begin{Proposition}\label{prop:inv}
Let $x_0, \ldots, x_p$ be defined as the roots of $S(X,y)=S(x,y)$, as in Lemma~\ref{lem:xi}. Let us write  $x_{p+1}=\by$. For
$0\le i \le p+1$, denote moreover $y_i=\bx_i:=1/x_i$, so that  in particular,
$y_{p+1}=y$. Then, for $0\le i, j \le p+1$ and $i\not = j$, we have 
\[
S(x_i,y_j)=S(x,y).\]
\end{Proposition}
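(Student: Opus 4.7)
The plan is to unearth a hidden symmetry of the step polynomial $S(x,y)$ that places all the $x_i$, including the ``extra'' point $x_{p+1}=\bar y$, on an equal footing. To this end I would introduce the auxiliary one-variable Laurent polynomial
\[
k(u):=u^{-1}-\sum_{r=0}^p z_r\, u^{r+1}
\]
and check the identity
\[
S(x,y)=\frac{k(\bar x)-k(y)}{y-\bar x}
\]
as an equality of rational functions. This is a three-line verification: each inner sum $\sum_{i=0}^r \bar x^{r-i}y^i$ telescopes into $(y^{r+1}-\bar x^{r+1})/(y-\bar x)$ and the SE contribution satisfies $x\bar y = (x-\bar y)/(y-\bar x)$; summing these reassembles exactly the right-hand side.

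Next, fix $c:=S(x,y)$ and set $\ell(u):=k(u)+cu$. For $\bar U\neq V$, the identity above rephrases
\[
S(U,V)=c\iff \ell(\bar U)=\ell(V).
\]
Write $d:=\ell(y)$, so that plugging in $(U,V)=(x,y)$ already gives $\ell(\bar x)=d$. Multiplying through by $w$, the equation $\ell(w)=d$ becomes a polynomial equation in $w$ of degree exactly $p+2$ (leading term $-z_p w^{p+2}$), hence has at most $p+2$ roots. I claim the $p+2$ quantities $y_i$, $i=0,\dots,p+1$, are precisely those roots. Indeed $\ell(y_0)=\ell(\bar x)=d$ and $\ell(y_{p+1})=\ell(y)=d$ by construction; and for $1\le i\le p$, the very definition of $x_i$ as a root of $S(X,y)=c$ translates, via the equivalence above, into $\ell(\bar x_i)=\ell(y_i)=d$. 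Lemma~\ref{lem:xi} makes the series $x_0,\dots,x_p$, hence $y_0,\dots,y_p$, pairwise distinct, and a direct substitution shows $\bar y$ does not satisfy $S(X,y)=c$, which separates $y_{p+1}=y$ from the other $y_i$'s.

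Having identified the $y_i$ as all $p+2$ roots of $\ell(w)=d$, the proposition follows immediately: for any distinct $i,j\in\{0,\dots,p+1\}$, one has $\bar x_i=y_i\neq y_j$, and $\ell(\bar x_i)=d=\ell(y_j)$, so the equivalence applies with $(U,V)=(x_i,y_j)$ and gives $S(x_i,y_j)=c=S(x,y)$. The main obstacle is guessing the correct ``symmetrizer'' $k(u)$; once it is found, the ``interior'' cases $i,j\in\{0,\dots,p\}$ and the ``boundary'' cases involving the extra index $p+1$ are handled by exactly the same argument, which is the real advantage of the approach.
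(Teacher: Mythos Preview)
Your proof is correct and takes a genuinely different route from the paper's.

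The paper proceeds via an auxiliary Lemma~\ref{lem:symm}, which records two separate symmetry identities of $S$: the involution $S(x,y)=S(\bar y,\bar x)$, and a more opaque quotient identity relating $\frac{S(x,y)-S(X,y)}{x-X}$ to $\frac{S(x,y)-S(x,\bar X)}{y-\bar X}$. It then treats the cases $j=p+1$, $i=p+1$, and $0\le i\neq j\le p$ one at a time, the last case by specializing the quotient identity at $(x_i,x_j)$. Your approach is more conceptual: you recognise $S(x,y)$ as the divided difference $(k(\bar x)-k(y))/(y-\bar x)$ of a single one-variable Laurent polynomial $k$, so that $S(U,V)=c$ becomes the symmetric condition $\ell(\bar U)=\ell(V)$ with $\ell=k+c\cdot\mathrm{id}$. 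The orbit then collapses to ``the $p{+}2$ roots of a degree-$(p{+}2)$ polynomial'', and the index $p{+}1$ needs no special handling. This buys uniformity and makes clear \emph{why} the orbit has exactly $(p{+}2)(p{+}1)$ elements; it would also transfer verbatim to any step polynomial admitting such a divided-difference form. The paper's argument, by contrast, is more pedestrian but requires no inspired guess --- and in fact the divided-difference identity you use is essentially the first display in its proof of Lemma~\ref{lem:symm}, just not exploited to the same effect.
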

In the terminology of~\cite{BoBoMe18}, the pairs $(x_i,y_j)$ with $i\not = j$
form the \emm orbit, of $(x,y)$ for the step set $\cS_p:=\{(1,-1)\}
  \cup \{(-i,j): i, j \ge 0, i+j \le p\}$.

\medskip

Our first task is to prove Lemma~\ref{lem:xi}.

\begin{proof}[Proof of Lemma~\ref{lem:xi}.]
Recall the expression~\eqref{S:def} of the step polynomial
$S(x,y)$.  We refer the reader to~\cite[Ch.~6]{stanley-vol2} for generalities on
algebraic series. Clearly one of the roots of $S(X,y)=S(x,y)$ (solved for $X$)
is $x_0=x$. The others satisfy
\beq\label{Xi-eq}
0=\frac{S(x,y)-S(X,y)}{x-X}=\by-\bx \bX\sum_{\substack{i,j,k\geq 0\\ i+j+k<p}}z_{i+j+k+1}\bx^i\bX^jy^k.
\eeq
This expression is a polynomial in $\bX$, and is thus well suited to
determine the
roots $x_i$ such that $\bx_i=1/x_i$ is a formal power
series in $\by$ (or in a positive power of $\by$). 
The other roots~$\bx_i$ will involve positive powers of
$y$, hence their reciprocals will be formal power series in (a positive power of) $\by$. More precisely,    upon multiplying the above
identity by $x\by^{p-1}$ {and expanding in powers of $\by$}, we have
\beq\label{eq-x1}
0=x\by^p -z_p\bX- \sum_{\ell =1}^{p-1}\by ^\ell \sum_{i+j \le \ell}
z_{i+j+p-\ell}\,\bx^i \bX^{j+1}.
\eeq
The coefficient of $\by^0$ is $-z_p \bX$. It has degree $1$ in $\bX$, hence there is a unique
$\bx_i$, say $\bx_1$,
that only involves nonnegative powers of $\by$. This series can be
computed iteratively from the above equation. It is a power series in
$\by$, starts by
\[
{\bx_1}= \frac{x\by^p}{z_p} + O(\by^{p+1}),
\]
and its coefficients belong to $\qs[z_1, \ldots, z_p, 1/z_p, x,
\bx]$. This proves the claimed properties of~$x_1$.

To understand the nature of the other roots $x_2, \ldots, x_p$, we now
write $X=\by U$, and multiply~\eqref{Xi-eq} by $\by^p U^p$. Then
\beq\label{U-eq}
0= \by^{p+1} U^p -{\bx} \sum_{i+j+k<p} z_{i+j+k+1} \bx^i U^{p-1-j}
\by^{p-1-j-k}.
\eeq
The coefficient of $\by^0$ is 
\[
- {\bx}z_p \sum_{k=0}^{p-1} U^k.
\]
It has degree $p-1$ in $U$, hence~\eqref{U-eq} admits $p-1$ solutions
$u_2, \ldots, u_p$ that
expand in nonnegative powers of $\by$ only. Their constant terms
are the $p$th roots of unity distinct from~$1$. All of them are power
series in $\by$, and their expansions can be computed {recursively}
using~\eqref{U-eq}. Their coefficients lie in $\cs[z_1, \ldots, z_p,
1/z_p, x, \bx]$ (in fact, in the lemma, we could replace  $\cs$ by the extension of $\qs$
generated by $p$th roots of unity). 
\end{proof}

We then need the following  symmetry properties of $S(x,y)$.
\begin{Lemma}\label{lem:symm}
  The step polynomial $S(x,y)$, defined by~\eqref{S:def}, satisfies $S(x,y)=S(\by,\bx)$ and
\[
x\, \frac{S(x,y)-S(X,y)}{x-X}= -\bX \, \frac{S(x,y)-S(x,\bX)}{y-\bX}.
\]
\end{Lemma}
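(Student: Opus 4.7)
The plan is to prove the two identities separately: first the manifest symmetry $S(x,y) = S(\by, \bx)$, then leverage it to derive the divided-difference identity.

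For the first identity, I would split the step polynomial into $S(x,y) = x\by + \sum_{r \geq 0} z_r \sum_{i=0}^r \bx^{r-i} y^i$. The $x\by$ piece is manifestly invariant under $(x,y) \mapsto (\by, \bx)$; under the same substitution the second piece becomes $\sum_{r \geq 0} z_r \sum_{i=0}^r y^{r-i} \bx^i$, which coincides with the original upon the reindexing $i \mapsto r-i$ of the inner sum. So this identity reduces to pure relabeling.

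For the second identity, the key input is the formula already obtained in the proof of Lemma~\ref{lem:xi} (equation~\eqref{Xi-eq}):
\[
\frac{S(x,y) - S(X,y)}{x-X} = \by - \bx\,\bX\, T(\bx, \bX, y),
\]
where I abbreviate $T(u,v,w) := \sum_{i+j+k<p} z_{i+j+k+1}\, u^i v^j w^k$. The crucial observation is that $T$ is symmetric under $u \leftrightarrow w$, since swapping the dummy indices $i \leftrightarrow k$ preserves both the summation range and the coefficients. Multiplying by $x$, the left-hand side of the desired identity becomes $x\by - \bX\, T(\bx,\bX,y)$. For the right-hand side, I would invoke the first identity to rewrite $S(x,y) - S(x,\bX) = S(\by,\bx) - S(X,\bx)$, which is now a difference in the first argument of $S$. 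Applying the divided-difference formula a second time, with the triple $(\by, X, \bx)$ playing the role of $(x, X, y)$, yields an expression for $\frac{S(\by,\bx) - S(X,\bx)}{\by - X}$ involving $T(y, \bX, \bx)$.

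To finish, I would convert the denominator via the algebraic identity $\by - X = -(X/y)(y - \bX)$, multiply by $-\bX$, and simplify using $X\bX = 1$; the result is $x\by - \bX\, T(y, \bX, \bx)$, which matches the left-hand side by the $u \leftrightarrow w$ symmetry of $T$. The one delicate point is tracking the conjugations (the overlines) when applying~\eqref{Xi-eq} to the second substitution; the cleanest way to avoid sign errors is to state~\eqref{Xi-eq} once with generic dummy variables and then specialize to each of the two cases.
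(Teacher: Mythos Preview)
Your proof is correct. The paper takes a slightly more direct route for the second identity: rather than using the first symmetry $S(x,y)=S(\by,\bx)$ to rewrite $S(x,y)-S(x,\bX)$ as a first-argument difference and then reapplying~\eqref{Xi-eq} with substituted variables, it simply computes the divided difference $\frac{S(x,y)-S(x,\bX)}{y-\bX}$ from scratch out of the definition~\eqref{S:def}, obtaining
\[
\frac{S(x,y)-S(x,\bX)}{y-\bX}=-xX\by+\sum_{i+j+k<p}z_{i+j+k+1}\bx^i\bX^jy^k,
\]
and then compares this line with~\eqref{Xi-eq} term by term. Your approach is a touch more conceptual (one computation reused via the symmetry, plus the observation that $T$ is symmetric in its first and third arguments), while the paper's is a touch more hands-on (two independent computations); both arrive at exactly the same monomial comparison, so neither buys a real advantage.
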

\begin{proof}
  The first point is easy, using
\[
S(x,y)= x\by + \sum _r z_r \frac{\bx^{r+1}-y^{r+1}}{\bx-y}.
\]
For the second, we recall from~\eqref{Xi-eq} that
\beq\label{Xi-eq-bis}
\frac{S(x,y)-S(X,y)}{x-X}=\by -\bx\bX\sum_{\substack{i,j,k\geq 0\\
    i+j+k<p}}z_{i+j+k+1} \bx^i\bX^jy^k,
\eeq
and we compute from~\eqref{S:def} that
\beq\label{Sdiff}
\frac{S(x,y)-S(x,\bX)}{y-\bX}=-xX\by +\sum_{\substack{i,j,k\geq 0\\ i+j+k<p}}z_{i+j+k+1}\bx^i\bX^jy^k.
\eeq
The result follows by comparing these two expressions.
\end{proof}

We can now prove Proposition~\ref{prop:inv}.

\begin{proof}[Proof of Proposition~\ref{prop:inv}.] By
  definition of the series $x_i$, for $0\le i \le p$, we have
  $S(x,y)=S(x_i,y)$. So the claimed identity holds for $j=p+1$. The
  first identity in Lemma~\ref{lem:symm} then gives
\[
S(x_i,y)=S(\by, {\bx_i}),
\]
hence the claimed identity holds as well for $i=p+1$.

Now we specialize the second identity in Lemma~\ref{lem:symm} to $x=x_i$,
$X=x_j$, with $0\le i \not = j \le p$. This reads
\[
x_i \frac{S(x_i,y)-S(x_j,y)}{x_i-x_j}=-y_j
\frac{S(x_i,y)-S(x_i,y_j)}{y-y_j}.
\]
Since the left-hand side is zero, we conclude that
\[
S(x_i,y_j)=S(x_i,y)=S(x,y)
\]
for $0\le i \not = j \le p$, which concludes the proof of Proposition~\ref{prop:inv}.  
\end{proof}

\subsection{A section-free functional equation}
\label{sec:sec-free}
In the functional equation~\eqref{eqfunc}, we can replace the pair
$(x,y)$ by any element $(x_i,y_j)$ of the orbit. The series that occur
in the resulting equation are series in $t$ with algebraic
coefficients in $x$ and $y$ (and the $z_r$'s). By Proposition~\ref{prop:inv}, the kernel $K(x,y)=1-tS(x,y)$ takes
the same value at all points of the orbit. We thus obtain
$(p+1)(p+2)$ equations. Our aim is to form a linear combination of
these equations 
in which the right-hand side does not contain any section
$Q(x_i,0)$ nor $G_k(y_j)$. As soon as $p>1$, the vector space of such linear
combinations has dimension larger than 1. We choose here a
section-free combination that only involves the pairs $(x_i,y_j)$ for
$j\in \{0, 1, p+1\}$. That is, $y_j$ will be either $y$, or $\bx$, or
$y_1:=1/x_1$. We focus on the case $a=0$ until Section~\ref{sec:Qab}.

\begin{Lemma}\label{lem:section-free}
Let $x_0, \ldots, x_{p}$ be defined by Lemma~\ref{lem:xi}, and take
$x_{p+1}=\by$ as before. For $ Q(x,y)\equiv Q^{(0,b)}(x,y)$,  the following identity holds:
\begin{multline*}
   Q(x,y)  
- \bx \sum_{i=1}^p \left({x_i^p \, Q(x_i,y)}
\prod_{j \not = 0, i, p+1}\frac{1-\bx x_j}{x_i-x_j}\right)
\\
- y_1 \by   \prod_{i=2}^{p+1}(1-\bx x_i) \sum_{i\not = 1}
\frac{x_i^p\, Q(x_i,y_1)}{\prod_{j \not = i, 1}(x_i-x_j)}
\\
+\bx^{2} \by (x_1-\by) \prod_{i=2}^{p}(1-\bx x_i) \sum_{i\not = 0}
\frac{x_i^p\, Q(x_i,\bx)}{\prod_{j \not = i, 0}(x_i-x_j)}
=
{\frac { \left(1-\bx \by \right) S'_1(x,y)
  }{1-tS(x,y)}} \sum_{k=0}^b (y^{k+1}-y_1^ {k+1})  \bx^{b-k}.
\end{multline*}
\end{Lemma}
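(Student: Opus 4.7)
The strategy follows the orbit-sum method of~\cite{BoBoMe18}. By Proposition~\ref{prop:inv} and the first identity of Lemma~\ref{lem:symm}, for every pair $(i,j)\in\{0,\ldots,p+1\}^2$ with $i\neq j$, and also for $(i,j)=(p+1,p+1)$, the substitution $(x,y)\to(x_i,y_j)$ preserves $S(x,y)$ and hence the kernel $K(x,y)$. Applied to~\eqref{eqfunc} with $a=0$, and using $\bar{y}_j=x_j$, this yields
\[
K(x,y)\,Q(x_i,y_j)=y_j^b-t\,x_i x_j\,Q(x_i,0)-\sum_{k=1}^{p}\bx_i^{\,k}\,G_k(y_j).
\]
The plan is to form a linear combination of these equations, restricted to pairs whose second coordinate lies in $\{y,y_1,\bx\}$ (i.e.\ $j\in\{p+1,1,0\}$), with weights $\mu_{i,j}$ chosen so that every unknown section $Q(x_i,0)$ and $G_k(y_j)$ cancels in the right-hand side. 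After dividing by $K(x,y)$, the surviving combination of $Q$'s on the left matches the claim, while the remaining combination of $y_j^b$'s on the right must simplify to the stated closed form.

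For each $j\in\{p+1,1,0\}$, annihilating the $p$ unknowns $G_1(y_j),\ldots,G_p(y_j)$ imposes $p$ linear conditions on the $p+1$ available weights $\mu_{i,j}$. These are solved by the Lagrange ansatz $\mu_{i,j}\propto x_i^{\,p}/\prod_{i'\neq i}(x_i-x_{i'})$, with the product ranging over an appropriate set of $p+1$ indices: the classical identity $\sum_{i}\xi_i^{\,m}/\prod_{i'\neq i}(\xi_i-\xi_{i'})=0$ for $m$ strictly below the number of nodes minus one then kills $G_k(y_j)$ for $k=1,\ldots,p$. This determines each row up to a single overall scalar. The three remaining scalars are fixed by demanding the simultaneous cancellation of the $Q(x_i,0)$ sections: for each $i$, the combined coefficient of $Q(x_i,0)$ is a linear combination of at most three $\mu_{i,\cdot}$ weighted by $x$, $x_1$, $\by$ (coming from $x_0$, $x_1$, $x_{p+1}$ respectively), and forcing it to vanish (together with the normalisation $\mu_{0,p+1}=1$ coming from the original equation) yields the three proportionality constants. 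The products $\prod (1-\bx\,x_{j'})$ visible in the claim encode exactly these constants: the vanishing factor at $x_{j'}=x=x_0$ reflects that the pair $(x,y_j)$ is precisely the one omitted from each row, and explicit substitution confirms that the formulas given in the statement satisfy all the required relations (for instance, the ratio $\mu_{1,0}/\mu_{1,p+1}$ reduces cleanly to $-\bx\by$).

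Once the weights are fixed, the RHS of the combined equation, divided by $K(x,y)$, is a $b$-independent linear combination $\alpha(x,y)\,y^b+\beta(x,y)\,y_1^b+\gamma(x,y)\,\bx^b$. To show that this equals the claimed expression $\frac{(1-\bx\by)\,S'_1(x,y)}{K(x,y)}\sum_{k=0}^{b}(y^{k+1}-y_1^{k+1})\bx^{b-k}$, one forms the generating function in $b$ with a dummy variable $u$; matching both sides reduces to three polynomial identities in $u$, namely the vanishing of the $u^0$ coefficient ($\alpha+\beta+\gamma=0$), the vanishing of the $u^2$ coefficient ($\alpha y_1\bx+\beta y\bx+\gamma yy_1=0$), and the matching of the $u^1$ coefficient with $(1-\bx\by)S'_1(x,y)(y-y_1)/K(x,y)$. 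The emergence of $S'_1(x,y)$ in the latter is natural: it is the limit at $X=x$ of the rational function $\bigl(S(x,y)-S(X,y)\bigr)/(x-X)$, which is exactly the Vandermonde-type quantity picked up by the leading Lagrange coefficient.

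The \textbf{main obstacle} is the bookkeeping: tracking signs, Lagrange weights, and the products $\prod(1-\bx x_{j'})$ through the three rows, and then carrying out the final algebraic simplification. Verifying the three polynomial identities above, in particular the appearance of $S'_1(x,y)$ and the telescoping form $\sum_k(y^{k+1}-y_1^{k+1})\bx^{b-k}$, requires careful use of the defining polynomial relations $S(x_i,y)=S(x,y)$ satisfied by the roots $x_0,\ldots,x_p$ together with the Lagrange--Vandermonde identities already invoked for the $G_k$-cancellation.
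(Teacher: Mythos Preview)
Your plan is the paper's own: restrict to the three rows $y_j\in\{y,y_1,\bx\}$, apply Lagrange weights $x_i^p/\prod(x_i-x_{i'})$ within each row to kill the $G_k(y_j)$, then choose three overall scalars to kill the $Q(x_i,0)$. What you have is an outline, though, and the two places you flag as ``bookkeeping'' are exactly where the paper does something clean that you have not:

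\emph{Cancelling $Q(x_i,0)$.} You assert that forcing the vanishing of all $p+2$ coefficients of $Q(x_i,0)$ is consistent and is achieved by the weights in the statement, checking one ratio by hand. The paper instead multiplies each row by $y_k$ and rewrites it as in~\eqref{eq:ykK-bis}, so that the coefficient of $Q(x_i,0)$ in row $k$ becomes a \emph{common} factor times $(x_i-x_k)$. The $p+2$ conditions then collapse to two (linearity in $x_i$), and the scalars $(x_0-x_1),(x_{p+1}-x_0),(x_1-x_{p+1})$ are visibly the solution.

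\emph{The right-hand side.} Your generating-function-in-$b$ trick works, but the three identities are mislabelled: writing the raw RHS as $\alpha\,y^{b+1}+\beta\,y_1^{b+1}+\gamma\,\bx^{b+1}$, one has $\alpha+\beta+\gamma=0$, and after clearing the denominators $(1-uy)(1-uy_1)(1-u\bx)$ it is the $u^1$ and $u^2$ coefficients that vanish, while the constant term $\alpha y+\beta y_1+\gamma\bx$ is the nontrivial one (equal to $(1-\bx\by)(x-x_1)(y-y_1)$). The paper avoids this entirely: it recognises the three-term sum as $-x_0x_1x_{p+1}\Delta(y_0,y_1,y_{p+1})\,h_b(y_0,y_1,y_{p+1})$ via the Schur-function identity~\eqref{eq:ha}, and the telescoping form $\sum_k(y^{k+1}-y_1^{k+1})\bx^{b-k}$ drops out immediately. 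The factor $S'_1(x,y)$ then appears not from a limit of $(S(x,y)-S(X,y))/(x-X)$ as you suggest, but from dividing by the coefficient of $Q(x,y)$, which is $K(x,y)(x-x_1)/S'_1(x,y)$ by~\eqref{S-diff}.
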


In order to prove this lemma,
we  need two other lemmas that involve classical symmetric functions. We
 recall the definition of complete  and elementary homogeneous symmetric
 functions of degree $k$ in 
 {$m$} variables $u_1, \ldots, u_m$: 
\[
h_k(u_1, \ldots, u_m) =\sum_{1\le i_1\le \cdots \le i_k \le m}u_{i_1}
\cdots u_{i_k}, \qquad
e_k(u_1, \ldots, u_m) =\sum_{1\le i_1< \cdots <i_k \le m}u_{i_1}
\cdots u_{i_k}.
\]
In this subsection, we only apply the   following lemma  to polynomials 
  $P(u,v_1, \ldots, v_m)\in \qs[u]$, but we
   use it in full generality in the next subsection. This lemma extends Lemma~13 in~\cite{mbm-chapuy-preville}.
\begin{Lemma}\label{lem:Lagrange}
Let $P(u, v_1, \ldots, v_m)  \in \qs[u, v_1,
\ldots, v_m]$ be a polynomial, symmetric in the $v_i$'s. 
Take $m+1$  variables  $u_0, u_1, \dots , u_m$,
and define
\beq\label{CL}
E(u_0, \ldots, u_m):= \sum_{i=0}^m \frac{P(u_i, u_0, \ldots, u_{i-1},
  u_{i+1}, \ldots, u_m)}{\prod_{j\neq i}{(u_i-u_j)}}.
\eeq
Then $E(u_0, \ldots, u_m)$ is a symmetric polynomial in $u_0, \ldots, u_m$, of degree at most
$\deg(P)-m$. In particular, $E(u_0, \ldots, u_m)=0$ 
if $P$ has degree less than $m$.

If  $P(u, v_1, \ldots,v_m)=u^{m+a}$ with $a\geq 0$, then
\beq\label{eq:ha}
E(u_0, \ldots, u_m)=\sum_{i=0}^m \frac{{u_i}^{m+a}}{\prod_{j\neq i}{(u_i-u_j)}} = h_a(u_0,\ldots,u_m),
\eeq
with $h_a$ the complete homogeneous symmetric function. 

Finally, for $a\geq 0$ we have
\[ 
\sum_{i=0}^m \frac{{u_i}^{-a-1}}{\prod_{j\neq i}{(u_i-u_j)}} = \frac{(-1)^m}{\prod_{i=0}^mu_i}h_{a}(1/u_0,\ldots,1/u_m).
\] 
\end{Lemma}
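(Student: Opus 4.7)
The plan is to prove the three assertions in sequence, with the first (symmetry-plus-polynomiality) being the main technical point and the other two following by short generating-function manipulations.

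First I would establish that $E(u_0,\ldots,u_m)$ is symmetric in the $u_i$'s. This is immediate from the hypothesis that $P$ is symmetric in its last $m$ arguments: transposing $u_i$ and $u_k$ in the formula~\eqref{CL} permutes the summands (the $i$-th and $k$-th swap, while for $\ell \notin \{i,k\}$ the arguments $(u_0,\ldots,\widehat{u_\ell},\ldots,u_m)$ of $P$ are permuted among themselves, which leaves $P$ invariant), and the denominators transform accordingly.

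Next I would show that $E$ has no poles along the hyperplanes $u_i = u_k$, so it is a polynomial. By symmetry it suffices to look at $u_0 = u_1$. The only summands producing a pole there are those indexed by $i \in \{0,1\}$. Combining them,
\[
\frac{P(u_0,u_1,u_2,\ldots,u_m)}{(u_0-u_1)\prod_{j\ge 2}(u_0-u_j)} + \frac{P(u_1,u_0,u_2,\ldots,u_m)}{(u_1-u_0)\prod_{j\ge 2}(u_1-u_j)},
\]
and using that $P(u_0,u_1,u_2,\ldots)=P(u_1,u_0,u_2,\ldots)$ by symmetry of $P$ in the $v_i$'s, one checks directly that the apparent pole at $u_0=u_1$ cancels (it is a standard divided-difference computation). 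The degree bound $\deg E \le \deg P - m$ then follows from a weight count: the numerator of each summand has total degree $\deg P$ and the denominator has total degree $m$, so $E$ is a rational function of degree $\le \deg P - m$, which being polynomial must have this degree bound. In particular $E=0$ when $\deg P < m$.

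For formula~\eqref{eq:ha}, I would use the standard partial-fraction expansion
\[
\prod_{i=0}^{m}\frac{1}{1-u_i T} = \sum_{i=0}^{m}\frac{1}{\prod_{j\ne i}(1-u_j/u_i)}\cdot\frac{1}{1-u_i T} = \sum_{i=0}^{m}\frac{u_i^{m}}{\prod_{j\ne i}(u_i-u_j)}\cdot\frac{1}{1-u_i T}.
\]
Expanding $1/(1-u_i T) = \sum_{a\ge 0} u_i^a T^a$ on the right and comparing with the left-hand side, which equals $\sum_{a\ge 0} h_a(u_0,\ldots,u_m) T^a$, yields~\eqref{eq:ha} by reading off the coefficient of $T^a$. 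Finally, the last identity follows from~\eqref{eq:ha} by substituting $u_i \mapsto 1/u_i$: using
\[
\prod_{j\ne i}\bigl(1/u_i-1/u_j\bigr) = \frac{(-1)^m}{u_i^{m}\prod_{j\ne i}u_j}\,\prod_{j\ne i}(u_i-u_j),
\]
each summand $(1/u_i)^{m+a}/\prod_{j\ne i}(1/u_i-1/u_j)$ simplifies to $(-1)^m u_i^{-a-1}\cdot\prod_{k=0}^{m}u_k \,\big/\prod_{j\ne i}(u_i-u_j)$, and dividing through by $(-1)^m\prod_k u_k$ gives the claimed formula. The main obstacle is the cancellation-of-poles step, but as indicated it reduces to a one-line verification once the symmetry of $P$ in the $v_i$'s is used.
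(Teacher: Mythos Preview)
Your overall structure is sound and parts~2 and~3 are correct (and, for part~2, arguably more elementary than the paper's Schur-function/determinant argument). But there is a genuine slip in your polynomiality argument for part~1.

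You write ``using that $P(u_0,u_1,u_2,\ldots,u_m)=P(u_1,u_0,u_2,\ldots,u_m)$ by symmetry of $P$ in the $v_i$'s''. That equality is \emph{not} a consequence of the hypothesis: symmetry in the $v_i$'s lets you permute the last $m$ arguments, but it does not let you swap the distinguished first argument $u$ with a $v_i$. For instance $P(u,v_1)=u^2$ is (vacuously) symmetric in the $v_i$'s, yet $P(u_0,u_1)=u_0^2\neq u_1^2=P(u_1,u_0)$. So your divided-difference reduction, as written, does not go through.

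The fix is easy. Writing the two offending summands over a common denominator gives
\[
\frac{P(u_0,u_1,\ldots)\,D_1-P(u_1,u_0,\ldots)\,D_0}{(u_0-u_1)\,D_0D_1},
\qquad D_i=\prod_{j\ge2}(u_i-u_j),
\]
and at $u_0=u_1$ the numerator becomes $P(u_1,u_1,\ldots)(D_1-D_0)=0$, so the apparent pole is removable. (Equivalently, the residues are $\pm P(u_1,u_1,\ldots)/\prod_{j\ge2}(u_1-u_j)$ and cancel.) The rest of your argument then stands.

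The paper handles part~1 differently and more uniformly: it observes that $E\cdot\Delta$, with $\Delta$ the Vandermonde, is a polynomial that is antisymmetric in $u_0,\ldots,u_m$ (since $E$ is symmetric and $\Delta$ antisymmetric), hence divisible by $\Delta$; therefore $E$ is a polynomial. This avoids any case analysis on pairs of indices. For~\eqref{eq:ha} the paper expands the Jacobi--Trudi determinant for $h_a$ along a column, whereas your partial-fraction/generating-function argument is a perfectly good alternative. Part~3 is essentially the same in both.
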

\begin{proof}
  Let $E(u)$ denote the expression~\eqref{CL}, where we use
the shorthand notation $u$ for the $(m+1)$-tuple $(u_0,\ldots,u_m)$.  
Multiplication of $E(u)$ by the   Vandermonde determinant
\[
  \Delta(u):= \prod_{0\le i <j \le m}(u_i-u_j)
\]
gives a  polynomial in the $u_i$'s
  which is antisymmetric in the $u_i$'s (that is, swapping $u_i$ and
  $u_j$ changes the sign of the expression): this comes from the fact
  that $E$ is symmetric, while $\Delta$ is antisymmetric. Hence $E(u)\Delta(u)$,
  as a polynomial,
  must be divisible by the Vandermonde determinant, and $E(u)$ itself is a polynomial. Its degree is obviously $\deg(P)-m$ (at most).

Next, in order to prove~\eqref{eq:ha}, note that $h_a$  
is the Schur function of the Ferrers diagram consisting
of a single row of length $a$, and therefore, by definition of Schur functions~\cite[Ch.~4]{sagan-book}, we have
\[
  h_a(u_0,\ldots,u_m)=\frac{\det\left(u_i^{a\cdot\delta_{j,0}+m-j}\right)_{0\leq i,j\leq m}}{\Delta(u_0,\ldots,u_m)}.
\]
Upon expanding the determinant according to the first column ($j=0$), we find 
\[
h_a(u)\Delta(u)=\det\left(u_i^{a\cdot\delta_{j,0}+m-j}\right)_{0\leq i,j\leq m}
=\sum_{i=0}^m(-1)^i u_i^{a+m}\Delta(u_0,\ldots,u_{i-1},u_{i+1},\ldots,u_m).
\]
But for any $0\leq i\leq m$ we have  
\[\Delta(u_0,\ldots,u_{i-1},u_{i+1},\ldots,u_m)=\frac{\Delta(u_0,\ldots,u_m)}{(-1)^i\prod_{j\neq i}(u_i-u_j)},\]
which yields~\eqref{eq:ha}. 

To prove the last statement, we let $v_i=1/u_i$ and note that
$\frac{1}{u_i-u_j}=-\frac{v_iv_j}{v_i-v_j}$, hence
\begin{align*}
\sum_{i=0}^mu_i^{-a-1}\prod_{j\neq i}\frac{1}{u_i-u_j}&=(-1)^m(v_0\cdots v_m)\sum_{i=0}^mv_i^{m+a}\prod_{j\neq i}\frac{1}{v_i-v_j}\\
&=(-1)^m(v_0\cdots v_m)h_{a}(v_0,\ldots,v_m) \qquad\qquad \hbox{ by \eqref{eq:ha}}.
\end{align*}
\end{proof}

\begin{Lemma}\label{lem:sym-ei}
 Let $x_1, \ldots, x_p$ be the series defined in Lemma~\ref{lem:xi}. Their elementary symmetric functions are:
\[ 
 e_\ell(x_1,\ldots,x_p)=
 \begin{cases}
   1, & \hbox{if } \ell=0,\\
 \displaystyle  (-1)^{\ell-1}\bx y
   \sum_{\substack{i, k \ge 0\\ i+k\le p-\ell}} z_{i+k+\ell} \,\bx^i y^k,
   & \hbox{for }  1\le \ell \le p.
 \end{cases}
\] 
In particular, they are $x$-nonpositive  and $y$-nonnegative (meaning that, in every monomial that they contain, $x$ has a nonpositive exponent and $y$ a nonnegative one). Moreover, every monomial $\bx^i y^j$ occurring in them satisfies $i\ge j/p$.

Finally, 
\beq\label{S-diff}
\prod_{i=1}^p (1-\bx x_i)=yS'_1(x,y).
\eeq
\end{Lemma}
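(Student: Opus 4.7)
The plan is to obtain the $e_\ell(x_1,\ldots,x_p)$ directly by Vieta's formulas, reading them off from a polynomial of degree $p$ whose roots are exactly $x_1,\ldots,x_p$. Such a polynomial is already at hand in the proof of Lemma~\ref{lem:xi}: starting from~\eqref{Xi-eq}, which holds for $X=x_1,\ldots,x_p$, I would multiply both sides by $X^p$ to clear all negative powers of $X$, obtaining
\[
0 = \by X^p - \bx \sum_{i+j+k<p} z_{i+j+k+1}\, \bx^i\, X^{p-1-j}\, y^k.
\]
After dividing by $\by$, this is a monic polynomial in $X$ of degree $p$ whose roots are $x_1,\ldots,x_p$. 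The coefficient of $X^{p-\ell}$, for $1\le \ell \le p$, is read off by setting $j=\ell-1$, and equals $-y\bx\sum_{i+k\le p-\ell} z_{i+k+\ell}\,\bx^i y^k$.

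Vieta's formulas then give
\[
e_\ell(x_1,\ldots,x_p) = (-1)^{\ell-1}\, \bx y \sum_{\substack{i,k\ge 0\\ i+k\le p-\ell}} z_{i+k+\ell}\,\bx^i y^k
\]
for $1\le \ell\le p$, which is the stated formula. The $x$-nonpositivity and $y$-nonnegativity are then immediate from the prefactor $\bx y$ and the nonnegative exponents $i,k$. For the bound $i\ge j/p$ on monomials $\bx^i y^j$ appearing in $e_\ell$, I note that each such monomial satisfies $i\ge 1$ and $j\le (p-\ell-i')+1 \le p$ (where $i'=i-1$), so $pi\ge p\ge j$.

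For the product identity~\eqref{S-diff}, I would expand
\[
\prod_{i=1}^p(1-\bx x_i) = \sum_{\ell=0}^p (-\bx)^\ell\, e_\ell(x_1,\ldots,x_p)
\]
using the formula just proved. The factors $(-\bx)^\ell(-1)^{\ell-1}=-\bx^\ell$ conspire to give, after setting $j=\ell+i$ and summing over $\ell\in\{1,\ldots,j\}$ (which contributes a factor $j$),
\[
\prod_{i=1}^p(1-\bx x_i) = 1 - y\sum_{j=1}^p j\, \bx^{j+1}\sum_{k=0}^{p-j} z_{j+k}\, y^k.
\]
On the other hand, differentiating $S(x,y)=x\by+\sum_{r\ge 0}z_r\sum_{i=0}^r \bx^{r-i}y^i$ with respect to $x$ and multiplying by $y$ yields, after the change of index $j=r-i$, exactly the same expression. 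Comparison gives~\eqref{S-diff}.

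The only mild obstacle is the careful index bookkeeping in the two re-summations (Vieta and the product expansion); no new idea is required, since the polynomial identifying the $x_i$'s as roots was essentially produced in the proof of Lemma~\ref{lem:xi}.
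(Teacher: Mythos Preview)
Your proof is correct. For the formula for $e_\ell(x_1,\ldots,x_p)$ you do exactly what the paper does: read off Vieta coefficients from the polynomial~\eqref{Xi-eq} whose roots are $x_1,\ldots,x_p$ (the paper factors the equivalent expression~\eqref{Xi-eq-bis} in $\bX$ instead of $X$, but the content is identical). Your verification of the monomial bound $i\ge j/p$ is fine as well.

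Where you diverge is in the proof of~\eqref{S-diff}. You expand both sides explicitly and match the two resulting double sums after a change of indices; this is correct but laborious. The paper's argument is a one-liner: since
\[
\frac{S(x,y)-S(X,y)}{x-X}=\by\prod_{i=1}^p(1-x_i\bX),
\]
letting $X\to x$ gives $S'_1(x,y)=\by\prod_{i=1}^p(1-\bx x_i)$ directly, with no resummation needed. Both arguments are valid; the paper's saves the bookkeeping you flagged as the ``only mild obstacle''.
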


\begin{proof}
It follows from~\eqref{Xi-eq-bis} that $(S(x,y)-S(X,y))/(x-X)$ is a polynomial in $\bX$ with constant term $\by$. Hence
\[
\frac{S(x,y)-S(X,y)}{x-X}= \by
\prod_{i=1}^p(1-x_i\bX)=\by\sum_{\ell=0}^p(-1)^\ell e_\ell(x_1,\ldots,x_p)\bX^{\ell}.
\]
Comparison with~\eqref{Xi-eq-bis} gives the expression for $e_\ell(x_1,\ldots,
x_p)$. The next statement is then obvious.
 Letting $X$ tend to $x$ in the
above identity finally gives~\eqref{S-diff}.
\end{proof}

\begin{proof}[Proof of Lemma~\ref{lem:section-free}.]
As already noted, in the basic functional
equation~\eqref{eqfunc} we can replace the pair $(x,y)$ by any element $(x_i,y_j)$ of its
orbit. By Proposition~\ref{prop:inv}, this does not change the value of
$K(x,y)=1-tS(x,y)$. Recall that for the moment, we take $a=0$.

Using the elements
$(x_i,y)$ of the orbit, with $0\le i \le p$, we first construct  a linear
combination avoiding all series $G_j(y)$:
\beq\label{pos}
K(x,y) \sum_{i=0}^p \frac{x_i^p \, Q(x_i,y)}{\prod_{j \not = i, p+1}(x_i-x_j)}
=
y^b- t\by \sum_{i=0}^p \frac{x_i^{p+1} \, Q(x_i,0)}{\prod_{j \not = i,
    p+1}(x_i-x_j)}.
\eeq
Lemma~\ref{lem:Lagrange} explains the simplicity of the right-hand
side, namely the fact that all sections $G_j(y)$ disappear, and that the constant term is just $y^b$. In the former case the lemma is applied with $P(u)=u^{p-j}$ and $m=p$, in the latter case with $P(u)=u^p$ and $m=p$ again.

More generally, for $0\le k\le p+1$, we can similarly eliminate all series $G_j(y_k)$ in
 the equations obtained from the elements $(x_i,y_k)$ of the
orbit, for $i\not = k$. After multiplying by~$y_k$, this gives
\[ 
y_k K(x,y) \sum_{i\not = k} \frac{x_i^p\, Q(x_i,y_k)}{\prod_{j \not = i, k}(x_i-x_j)}
=
y_k^{b+1} - t \sum_{i\not = k } \frac{x_i^{p+1} \, Q(x_i,0)}{\prod_{j \not = i,
    k}(x_i-x_j)},
\] 
where the natural range of the indices $i$ and $j$ is $0, \ldots, p+1$.
Note that the equation can be rewritten as
\beq\label{eq:ykK-bis}
y_k K(x,y) \sum_{i\not = k} \frac{x_i^p\, Q(x_i,y_k)}{\prod_{j \not = i, k}(x_i-x_j)}
=
y_k^{b+1} - t \sum_{i=0}^{p+1} \frac{x_i^{p+1} \, Q(x_i,0)}{\prod_{j \not = i}(x_i-x_j)}(x_i-x_k).
\eeq
We now take an appropriate linear combination of three of these $p+2$
equations, namely those obtained for $k=p+1$, $k=1$ and $k=0$, with respective weights
\beq\label{weights}
x_0-x_1, \qquad x_{p+1} -x_0, \qquad x_1-x_{p+1}.
\eeq
(Of course these weights can be written in a simpler way as $x-x_1,  \by -x$ and  $x_1-\by$, but the above notation makes the symmetry clearer.)
Then, writing the {three} equations as in~\eqref{eq:ykK-bis}, it is easy to show that  all terms involving $Q(x_i,0)$ vanish from
the right-hand side.
Hence the only remaining  term on the right-hand side is
\begin{multline*}
  (x_0-x_1)y_{p+1}^{b+1}+(x_{p+1}-x_0)y_1^{b+1}+(x_1-x_{p+1})y_0^{b+1}
  \\=x_0x_1x_{p+1}\left( (y_1-y_0) y_{p+1}^{b+2} +(y_0-y_{p+1})y_1^{b+2} +(y_{p+1}-y_1) y_0^{b+2}\right).
\end{multline*}
With the notation~\eqref{CL} and $P(u, v_1, v_2)= u^{b+2}$, this can be rewritten as
\[   -x_0x_1x_{p+1}\Delta(y_0,y_1,y_{p+1}) E(y_0, y_1, y_{p+1}) \hskip 74mm \
  \]
\begin{align}  \hskip 51mm \  &= -x_0x_1x_{p+1}\Delta(y_0,y_1,y_{p+1}) h_b( y_0,y_1, y_{p+1})
  \qquad \hbox{ by \eqref{eq:ha}} \nonumber
 \\   
  &= x_0x_1x_{p+1}(y_0-y_1)(y_0-y_{p+1}) \sum_{k=0}^b(y_{p+1}^{k+1}-y_1^{k+1})y_0^{b-k}\nonumber
  \\
  &= (1-\bx\by)(x-x_1)\sum_{k=0}^b(y^{k+1}-y_1^{k+1})\bx^{b-k}.\label {final}
\end{align}
The left-hand side in our linear combination is
\begin{multline*}
  K(x,y) \left( (x-x_1)y
\sum_{i\not = p+1} \frac{x_i^p \, Q(x_i,y)}{\prod_{j \not = i, p+1}(x_i-x_j)}
+
(\by-x) y_1 \sum_{i\not = 1} \frac{x_i^p\, Q(x_i,y_1)}{\prod_{j \not = i, 1}(x_i-x_j)}
\right.\\
\left. +
(x_1-\by) \bx \sum_{i\not = 0} \frac{x_i^p\,Q(x_i,\bx) }{\prod_{j \not = i, 0}(x_i-x_j)}
\right).
\end{multline*}
The term $Q(x,y)$ only  occurs in the first sum, with coefficient
\beq\label{div}
x^p y \frac{K(x,y)(x-x_1)}{\prod_{j=1}^p(x-x_j)}=\frac{K(x,y)(x-x_1)}{S'_1(x,y)},
\eeq
by~\eqref{S-diff}. Division of our linear combination by this expression gives Lemma~\ref{lem:section-free}.\end{proof}

\subsection{Extracting $\boldsymbol{Q^{(0,b)}(x,y)}$:
  proof of Proposition~\ref{prop:DF}}
\label{sec:extract}
We now derive the expression for $Q^{(0,b)}(x,y)$ given in  Proposition~\ref{prop:DF} from Lemma~\ref{lem:section-free}. In the identity of the lemma,
both
sides are  power series in $t$ whose coefficients are algebraic
functions of $x$ and $y$ (and the $z_r$'s). More precisely, these coefficients are
written as \emm polynomials, in $x$,
$\bx$, $y$, $\by$, $y_1$ and in  $x_1, \ldots, x_p$ (due to
Lemma~\ref{lem:Lagrange}), symmetric in $x_2, \ldots,
x_p$ (but not $x_1$).
We think of  them as Laurent series in
$\cs[z_0, \ldots, z_p, 1/z_p, x, \bx]((\by))$
(Lemma~\ref{lem:xi}). We extract from each coefficient the monomials
that are nonnegative in $x$ and $y$, and  show that $Q(x,y)$ is the only contribution on the left-hand side --- this is exactly what Proposition~\ref{prop:DF} says.  We proceed line by line
in the identity of Lemma~\ref{lem:section-free}.

In the first line, the term $Q(x,y)$ is clearly nonnegative in $x$ and $y$. Then the coefficient of $t^n$ in the sum over~$i$ is a polynomial in $y$, $\bx$ and $x_1, \ldots, x_p$, symmetric in the $x_i$'s. Since the symmetric functions of the $x_i$'s are $x$-nonpositive (Lemma~\ref{lem:sym-ei}), the second term of the first line only involves negative powers
of $x$ (because of  the factor $\bx$ before the sum), and thus the contribution of the first line reduces to $Q(x,y)$. 

 Let us show that the second line only involves negative  powers
of $y$. The coefficient of~$t^n$ in it is, up to a factor $\by$, a
polynomial in~$y_1$, $\bx$,  and $x_0=x, x_2, \ldots, x_p, x_{p+1}=\by$, symmetric in the latter $p$ variables $x_2, \ldots, x_p, x_{p+1}$. By Proposition~\ref{prop:inv}, the series $x_0=x, x_2, \ldots, x_p, x_{p+1}=\by$ are the solutions of the equation $S(X,y_1)=S(x, y_1)$ (solved for~$X$), $x_0=x$ being the trivial solution. By Lemma~\ref{lem:sym-ei}, applied with~$y$ replaced by $y_1$, the symmetric functions of $x_2, \ldots, x_{p+1}$ are polynomials in $\bx$
and $y_1$. In particular, they are $y$-nonpositive as~$y_1$ itself, and so is the whole second line.
It is even $y$-negative due to the factor $\by$. Hence the second line  does not contribute in the extraction.

Let us finally consider the last term on the left-hand side.
Things are a bit more delicate here: we are going to prove that every
$y$-nonnegative monomial that occurs there is $x$-negative. 
We need the following lemma.

\begin{Lemma}\label{lem:x1}
Let $\GA=\qs(z_0, \ldots, z_p)$. For a series $G(x, \by) \in \GA[x,
\bx]((\by))$, we say that $G$ satisfies  property $\mathcal P$ if
  all monomials  $x^k \by^\ell$ (with $k, \ell \in \zs$) that
  occur in $G(x, \by)$  satisfy $k\le \ell/p$. 
  Equivalently, $G(\bx^p, x\by) \in \GA [x]((\by))$.
  
Then the series~$x_1$ defined in Lemma~\ref{lem:xi} satisfies
$\mathcal P$, as well as all its 
 (positive or negative) powers.
\end{Lemma}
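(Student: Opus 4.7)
The plan is to use the equivalent formulation of property~$\mathcal{P}$ stated in the lemma itself: a series $G\in\GA[x,\bx]((\by))$ satisfies~$\mathcal{P}$ iff $\phi(G):=G(\bx^p,x\by)$ lies in $\GA[x]((\by))$. The substitution $x\mapsto\bx^p$, $\by\mapsto x\by$ (and hence $\bx\mapsto x^p$, $y\mapsto \bx y$) defines a ring homomorphism on Laurent polynomials, which extends to Laurent series in~$\by$ by acting coefficient-wise. Under this map, the monomial $x^k\by^\ell$ becomes $x^{\ell-pk}\by^\ell$, so $\mathcal{P}$ amounts to nonnegativity of the exponent of~$x$ in~$\phi(G)$.

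Then I would apply $\phi$ to the defining equation~\eqref{eq-x1} of $v:=\bx_1$. Since $\phi(x\by^p)=\by^p$, $\phi(\bx^i)=x^{pi}$ and $\phi(\by^\ell)=x^\ell\by^\ell$, this yields a fixed-point equation of the form
\[
z_p\,\phi(v)=\by^p-\sum_{\ell=1}^{p-1}x^\ell\by^\ell\sum_{i+j\leq \ell}z_{i+j+p-\ell}\,x^{pi}\,\phi(v)^{j+1},
\]
in which every power of~$x$ is nonnegative. The key point is that this equation can be solved by $\by$-adic iteration starting from $\phi(v)^{(0)}=\by^p/z_p\in\GA[x][[\by]]$: every correction term has $\by$-valuation at least $\ell+p(j+1)\geq p+1$, so the map is a $\by$-adic contraction, and each iterate remains in $\GA[x][[\by]]$. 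The limit therefore lies in $\GA[x][[\by]]$, which is precisely property~$\mathcal{P}$ for $v=\bx_1$.

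For arbitrary integer powers of~$x_1$, I would use that $\phi$ commutes with multiplication and with taking reciprocals on $\GA[x,\bx]((\by))$, so that $\phi(x_1^n)=\phi(v)^{-n}$ for every $n\in\zs$. Since $\phi(v)$ has leading term $\by^p/z_p$, it is a unit in $\GA[x]((\by))$, and every power $\phi(v)^{-n}$ still belongs to $\GA[x]((\by))$, giving~$\mathcal{P}$ for $x_1^n$.

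The main subtlety I expect is the step that identifies $\phi(v)$ with the iterate limit: one must show that the fixed-point equation has a \emph{unique} solution with leading monomial $\by^p/z_p$ in the larger ring $\GA[x,\bx][[\by]]$, so that the a priori $\GA[x,\bx][[\by]]$-valued series $\phi(v)$ indeed coincides with the $\GA[x][[\by]]$-valued limit of the iteration. This reduces to the same $\by$-adic contraction argument applied to the difference of two hypothetical solutions, which forces them to agree order by order in~$\by$.
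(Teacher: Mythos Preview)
Your proposal is correct and follows essentially the same route as the paper's proof: both apply the substitution $x\mapsto\bx^p$, $\by\mapsto x\by$ to the defining equation~\eqref{eq-x1} for $\bx_1$, observe that the transformed equation has only nonnegative powers of~$x$, and conclude by an inductive ($\by$-adic) argument that the transformed series lies in $\GA[x][[\by]]$. The paper is terser about the uniqueness point you flag and about the extension to arbitrary powers (it only notes that the leading coefficient $1/z_p$ is independent of~$x$, so the reciprocal inherits $\mathcal P$, leaving higher powers implicit via closure under products), but your more explicit treatment of both is sound and adds no essential novelty.
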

Note that Lemma~\ref{lem:sym-ei} implies that the symmetric functions of $x_1,
\ldots, x_p$ satisfy $\mathcal P$. Also, any sum or product of series
satisfying $\mathcal P$ still satisfies $\mathcal P$, and the
``series'' $\bx$ and $\by$ satisfy~$\mathcal P$. We delay the proof of  Lemma~\ref{lem:x1} to complete the proof of Proposition~\ref{prop:DF}. We get back to the identity of Lemma~\ref{lem:section-free}. The coefficient of $t^n$ in the sum
\[
   \sum_{i\not = 0}
   \frac{x_i^p\, Q(x_i,\bx)}{\prod_{j \not = i, 0}(x_i-x_j)}
\]
is a polynomial in $\bx$ and $x_1, x_2, \ldots, x_{p+1}=\by$, symmetric
in the latter $p+1$ variables. By  the
above observations, it satisfies $\mathcal P$.
Now consider the product $\prod_{i=2}^p(1-\bx x_i)$: it is a polynomial in $\bx$ and $x_2, \ldots, x_p$,
symmetric in the latter $p-1$ variables. If $p_k$ is the $k$th power
sum, we have of course
\[
p_k(x_2, \ldots, x_p)= p_k(x_1, \ldots, x_p)-x_1^k.
\]
We recall that power sums generate (as an algebra) all symmetric
polynomials. Hence the above product is a polynomial in $\bx, x_1$,
and the elementary functions of $x_1, \ldots, x_p$. By the above
observations and Lemma~\ref{lem:x1}, it satisfies $\mathcal P$. So
does the factor $(x_1-\by)$. Hence every monomial occurring in the last part
of the left-hand side in Lemma~\ref{lem:section-free} is of the form  $\bx^2 \by x^k \by^\ell$ with $k\le \ell/p$. If it is nonnegative in $y$, it is $x$-negative, and thus cannot contribute in the extraction.
\qed

\begin{proof}[Proof of Lemma~\ref{lem:x1}]
Recall that $X=x_1= z_p \bx y^p (1+O(\by))$ satisfies $S(x,y)=S(X,y)$, which we write
as~\eqref{eq-x1}. Then we see that  $G(x, \by):=\bX=1/x_1$
satisfies $\mathcal P$. Indeed, denoting $\tilde G= G(\bx^{p} , x\by)$, we have
\[
  z_p \tilde G= \by^{p}+\sum_{\ell=1}^p \sum_{\substack{i, j \ge 0 \\ i+j\le \ell } }z_{i+j+p-\ell}\, x^{ip+\ell} \by^{\ell} \tilde G ^{j+1},
\]
from which, by an inductive argument, it is clear that $\tilde G= \by^{p}/z_p(1+O(\by))$ is a series in $\by$ with polynomial coefficients in~$x$. Moreover, since the first coefficient of $\tilde G$, being $1/z_p$, does not depend on~$x$,
 property $\mathcal P$  holds as well for the
reciprocal of $G$, which is $X=x_1$.
\end{proof}

\subsection{Quadrant tandem walks starting at $\boldsymbol{(a,b)}$: the  series $\boldsymbol{Q^{(a,b)}(x,y)}$}
\label{sec:Qab}

We finally generalize the expression for $Q^{(0,b)}(x,y)$ given in 
Proposition~\ref{prop:DF} to quadrant tandem walks starting at an arbitrary
position $(a,b)$.

\begin{proof}[Proof of Proposition~\ref{prop:Qab}]
We start from the functional equation~\eqref{eqfunc}, and  adapt  
the solution presented earlier in this section
to the case where $a$ is not necessarily zero. 

First, we generalize the section-free equation of Lemma~\ref{lem:section-free}. We follow step by step the proof of this lemma,  given in
Section~\ref{sec:sec-free}.
By Lemma~\ref{lem:Lagrange}, the linear combination~\eqref{pos} becomes
\beq\label{eq:orbit_Qab}
K(x,y) \sum_{i=0}^p \frac{x_i^p \, Q(x_i,y)}{\prod_{j \not = i,
    p+1}(x_i-x_j)}=h_a(x_0,\ldots,x_p)y^b- t\by \sum_{i=0}^p
\frac{x_i^{p+1} \, Q(x_i,0)}{\prod_{j \not = i,    p+1}(x_i-x_j)}.
\eeq
Hence, denoting  $\bxk_k=(x_0,\ldots,x_{k-1},x_{k+1},\ldots,x_{p+1})$
for $0\le k \le p+1$, the counterpart of~\eqref{eq:ykK-bis} is
obtained by replacing $y_k^{b+1} $ by $h_a(\bxk_k)y_k^{b+1} $. 
We now rewrite the homogeneous symmetric functions $h_a(\bxk_k)$ in a simpler fashion.
Recall that $x_{p+1}$ is defined to be $\by$, while  $x_0, x_1,
\ldots, x_p$ are the roots of $S(X,y)-S(x,y)$
(Lemma~\ref{lem:xi}). In particular,
\[
S(\bu,y)-S(x,y)=\bu\by \prod_{i=0}^p (1-ux_i),
\]
so that, for $k=0, \ldots, p+1$, we have
\beq\label{eq:bxk}
h_a(\bxk_k)=[u^a]\frac{1-ux_k}{\prod_{0\le i \le p+1} (1-ux_i)}
=[u^a](1-ux_k)C(u),
\eeq
where we have defined
\begin{align}
  C(u):=\prod_{i=0}^{p+1}\frac1{1-ux_i}&=\frac1{uy(1-u\by)(S(\bu,y)-S(x,y))}
                                         \label{Cu}
\\
                                       & =\frac1{(1-ux)(1-u\by)(1-u \bx y\sum_{i+j+k<p} z_{i+j+k+1}u^i \bx^j y^k)},
                                         \nonumber
   \end{align}
by~\eqref{Xi-eq}.

Then we take the same linear combination of three equations as
 in the case $a=0$, with weights given by~\eqref{weights}. The
 left-hand side keeps the same form, while the
 right-hand side reads 
\[
  (x_0-x_1)h_a(\bxk_{p+1}  )           y_{p+1}^{b+1}+(x_{p+1}-x_0)h_a(\bxk_1  )y_1^{b+1}+(x_1-x_{p+1})h_a(\bxk_0  )y_0^{b+1}
           \hskip 30mm \
 \]
 \begin{align*}
             &=[u^a]\left[\bigl((x_0-x_1)(1-ux_{p+1})y_{p+1}^{b+1}+(x_{p+1}-x_0)(1-ux_1)y_1^{b+1}\right.\\
     &\hskip 80mm     \left.           +(x_1-x_{p+1})(1-ux_0)y_0^{b+1}\bigr)C(u)\right]\\
&=\bigl((x_0-x_1)y_{p+1}^{b+1}+(x_{p+1}-x_0)y_1^{b+1}+(x_1-x_{p+1})y_0^{b+1}\bigr)[u^a]C(u)\\
&\hskip 30mm
 -\bigl((x_0-x_1)y_{p+1}^{b}+(x_{p+1}-x_0)y_1^{b}+(x_1-x_{p+1})y_0^{b}\bigr)[u^{a-1}]C(u).
           \end{align*}
Let us write $\zeta_a:=[u^a]C(u)$. We now return to the derivation~\eqref{final} and  conclude that the above expression is
\[
           (1-\bx\by)(x-x_1)\bigl(\eta_b\zeta_a-\eta_{b-1}\zeta_{a-1}\bigr),
 \]
where $\eta_b:=\sum_{k=0}^b(y^{k+1}-y_1^{k+1})\bx^{b-k}$ as defined
in Proposition~\ref{prop:Qab} (note that $\eta_{-1}=0$).

We then isolate $Q(x,y)$ by dividing the whole equation
by~\eqref{div}, and thus obtain the counterpart of
Lemma~\ref{lem:section-free}: the left-hand side is unchanged,
  while the right-hand side is
\[
\frac{(1-\bx\by)S'_1(x,y)}{K(x,y)}\big(\eta_b\zeta_a-\eta_{b-1}\zeta_{a-1}\big).
\]

\smallskip

It remains to apply the operator $[x^{\geq}y^{\geq}]$. 
Again,  the only term that survives
on the left-hand side is $Q(x,y)$, and this concludes the proof.
\end{proof}

\section{Final  expressions for $\boldsymbol{Q^{(0,b)}(x,y)}$}
\label{sec:Qxy-simple}
We still fix $p\ge 1$. Our aim now is to derive the expressions for $Q^{(0,b)}(x,y)$ given in
Theorem~\ref{thm:Q0} and Corollary~\ref{cor:Qthird} from Proposition~\ref{prop:DF}. 
We begin with Theorem~\ref{thm:Q0}. As explained in the second remark following
this proposition, it suffices to prove  the case $y=0$. By
linearity (and  Proposition~\ref{prop:DF}), it is enough to prove the following lemma.

\begin{Lemma}\label{lem:toprove}
  For $k\ge 0$, we have
\[
[y^0]  \frac{(1-\bx\by)S'_1(x,y)}{K(x,y)}(y^{k+1}-y_1^{k+1})= 
\frac {Y_1^{k+1}}{tx} \left(1- \frac 1 {tx^2} +\sum_{r= 0}^p
    z_r(r+1)\bx^{r+2}\right) +\frac 1{tx^2} \mathbbm{1}_{k=0}.
\]
\end{Lemma}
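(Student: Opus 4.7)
My plan is to prove Lemma~\ref{lem:toprove} by a direct $[y^0]$-coefficient extraction, using partial fractions in $y$ built from the roots of $K(x,\cdot)$. The key structural fact is that $yK(x,y)$ is a polynomial in $y$ of degree $p+1$, with roots $Y_1,Y_2,\ldots,Y_{p+1}$: here $Y_1=tx+O(t^2)$ is the unique power-series root appearing in the RHS, while $Y_2,\ldots,Y_{p+1}$ are ``large'' algebraic Laurent series in $t$ (with poles at $t=0$, scaling like $t^{-1/p}$). One obtains $yK(x,y)=-tz_p\prod_{j=1}^{p+1}(y-Y_j)$ and hence
\[
\frac{1}{K(x,y)} \;=\; \sum_{j=1}^{p+1}\frac{-Y_j/(tz_p)}{(y-Y_j)\prod_{\ell\neq j}(Y_j-Y_\ell)}.
\]
I would first split the integrand via the decomposition $y^{k+1}-y_1^{k+1}$ into a rational-in-$y$ piece (Part A, with $y^{k+1}$) and an algebraic piece (Part B, with $y_1^{k+1}$, where $y_1=1/x_1$ is a power series in $\by$ starting at $\by^p$ by Lemma~\ref{lem:xi}).

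For Part~A, I expand each $1/(y-Y_j)$ as a Laurent series in $\by$, with the expansion dictated by whether $Y_j$ is small or large: for $j=1$ use $1/(y-Y_1)=\sum_{n\geq0}Y_1^n\by^{n+1}$, while for $j\geq2$ use $1/(y-Y_j)=-\sum_{n\geq0}y^n/Y_j^{n+1}$. Extracting $[y^0]$ is then mechanical for each term. The $Y_1$-contribution is expected to produce the main term $\frac{Y_1^{k+1}}{tx}\bigl(1-\frac{1}{tx^2}+\sum_r z_r(r+1)\bx^{r+2}\bigr)$; the factor in parentheses should emerge from the defining equation $Y_1=t(x+Y_1\sum_r z_r\sum_{i=0}^r\bx^{r-i}Y_1^i)$, i.e.\ $tS(x,Y_1)=1$, combined with the multiplier $(1-\bx\by)S'_1(x,y)$. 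For Part~B, the identity $S(x,y_1)=S(x,y)$ (whence $K(x,y_1)=K(x,y)$) from Proposition~\ref{prop:inv} allows a parallel treatment; the contributions of the large roots $Y_j$ ($j\geq2$) in Part~A then partially cancel Part~B, leaving only the residual correction $\frac{\mathbbm{1}_{k=0}}{tx^2}$.

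The main obstacle is precisely this last cancellation: individually the $Y_2,\ldots,Y_{p+1}$ involve $p$-th-root-of-unity extensions, but their symmetric combinations must collapse to the polynomial expression on the RHS. Controlling this requires Vieta's formulas relating elementary symmetric functions of the $Y_j$ to the coefficients of $yK(x,y)$, together with Newton's identities to convert these to the relevant power sums. Coordinating this with Part~B, which is governed by a different algebraic structure (roots of $S(x,\cdot)=S(x,y)$ rather than roots of $K(x,\cdot)=0$), is the crux of the argument: one must show that the specific combination of $Y_j$-symmetric functions arising in Part~A and the $\by$-series structure of $y_1^{k+1}$ conspire to leave only the simple surviving terms claimed in the lemma.
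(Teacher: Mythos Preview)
Your skeleton matches the paper's: partial fractions of $1/K(x,y)$ via the roots $Y_1,\ldots,Y_{p+1}$, splitting the small root $Y_1$ from the large ones, and closing with symmetric-function identities (the paper packages these as Lemma~\ref{lem:Lagrange} together with a couple of explicit elementary symmetric functions of the $1/Y_j$). But the decisive step you are missing is isolated by the paper as Lemma~\ref{lem:Yi}: for $i\geq 2$ one has $y_1(Y_i)=Y_1$. When you extract $[y^0]$ from Part~B --- and since $y_1^{k+1}(1-\bx\by)S'_1(x,y)$ is a Laurent series in $\by$, the extraction forces evaluation at the large roots --- you must substitute $y=Y_i$ into the $y$-dependent series $y_1(y)$, and the point is that the answer is simply $Y_1$. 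Once you have this, \emph{all} summands (the $Y_1$-term from Part~A and the $Y_i$-terms, $i\geq 2$, from Part~B) carry the common factor $Y_1^{k+1}$ and assemble into a single symmetric sum $\sum_{i=1}^{p+1}(\cdots)/\prod_{j\neq i}(Y_i-Y_j)$, dispatched by Lemma~\ref{lem:Lagrange}.

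Your description of the mechanism --- ``the large-root contributions in Part~A partially cancel Part~B'' --- is off. For $k\geq 1$, Part~A's numerator $(1-\bx\by)S'_1(x,y)\,y^{k+1}$ has valuation $k-1\geq 0$ in $y$, so only $Y_1$ contributes there; the large roots enter \emph{only} through Part~B (plus a single boundary term when $k=0$, which is exactly where the $\mathbbm{1}_{k=0}/(tx^2)$ comes from). There is no cancellation between A and B: rather, the identity $y_1(Y_i)=Y_1$ makes the Part~B terms look structurally identical to the $Y_1$-piece of Part~A, and they \emph{add}. The proof of $y_1(Y_i)=Y_1$ is short: since $S(x,y_1(y))=S(x,y)$, setting $y=Y_i$ gives $K(x,y_1(Y_i))=0$, so $y_1(Y_i)\in\{Y_1,\ldots,Y_{p+1}\}$; but $y_1$ is a power series in $\by$ and $1/Y_i=O(t^{1/p})$, so $y_1(Y_i)$ is a power series in $t^{1/p}$ and must therefore be the unique such root, namely $Y_1$. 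Without this identity, your Vieta/Newton step has nothing clean to act on.
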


The proof that we will give is closely related to the proof of the
equivalence of Propositions~18 and~19 in~\cite{BoBoMe18}.

Recall that  $Y_1=xt+O(t^2)$ is the unique power series in $t$ that
cancels $K(x,Y)$. We will also need to handle the other roots of $K(x,Y)=1-tS(x,Y)$.

\begin{Lemma}\label{lem:Y}
The equation $tS(x,Y)=1$, when solved for $Y$, admits $p+1$ roots
$Y_1, Y_2, \ldots,\break Y_{p+1}$, taken as Puiseux series in $t$. Only $Y_1$ is a power series in $t$.
The other roots are Laurent series in $t^{1/p}$ that contain
some negative powers in $t$. They have coefficients  
in $\cs[z_0, \ldots, z_{p-1}, z_p^{1/p}, 1/z_p,x, \bx]$. 
\end{Lemma}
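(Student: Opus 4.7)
The plan is to parallel the proof of Lemma~\ref{lem:xi}, but now regarding $tS(x,Y)=1$ as a polynomial equation in~$Y$ after clearing the $1/Y$ term. Multiplying through by $Y$ and expanding $S$, I would rewrite the equation as
\[
P(Y,t) := Y - tx - t\sum_{r=0}^p z_r \sum_{i=0}^r \bx^{r-i}\, Y^{i+1} = 0,
\]
a polynomial of degree exactly $p+1$ in $Y$ with leading term $-tz_pY^{p+1}$. It therefore admits exactly $p+1$ Puiseux-series roots (with multiplicity) over the algebraic closure of $\qs(x,z_0,\ldots,z_p)((t))$. The distinguished series $Y_1=tx+O(t^2)$ from~\eqref{eq:Y} satisfies $P(Y_1,t)=0$, lies in $\qs[x,\bx,z_0,\ldots,z_p][[t]]$, and is (by the fixed-point presentation of~\eqref{eq:Y}) the unique root that is a formal power series in~$t$.

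For the remaining $p$ roots I would carry out a Newton-polygon analysis. The $(Y\text{-degree},\,t\text{-valuation})$-diagram of $P$ has vertices at $(0,1)$, $(1,0)$ and $(p+1,1)$, so its lower boundary has slopes $-1$ (recovering $Y_1$) and $1/p$ (giving $p$ roots of $t$-valuation $-1/p$). Setting $s:=t^{1/p}$ and $Y=Z/s$, and multiplying the equation $P=0$ by~$s$, transforms it into
\[
f(Z) := Z - z_p Z^{p+1} \;=\; s^{p+1}\, x \;+\! \sum_{\substack{0\le i\le r\le p\\ i<p}}\! z_r\,\bx^{r-i}\, Z^{i+1}\, s^{\,p-i},
\]
a polynomial identity in $\qs[x,\bx,z_0,\ldots,z_p][Z,s]$. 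At $s=0$ the nonzero solutions of $f(Z)=0$ are the $p$ simple points $c_k := \omega_k\, z_p^{-1/p}$, where $\omega_k$ ranges over the $p$th roots of unity. Since $f'(c_k)=1-(p+1)z_p c_k^{\,p}=-p\neq 0$, Hensel's lemma (equivalently, the implicit function theorem for formal series) lifts each $c_k$ uniquely to a power series $Z_k(s)=c_k+\sum_{n\ge 1}\alpha_{k,n}s^n$. Going back, $Y_k = Z_k(s)/s = c_k\, t^{-1/p} + \alpha_{k,1} + \alpha_{k,2}\, t^{1/p}+\cdots$ is a Laurent series in $t^{1/p}$ starting with a negative power of~$t$, and the $p$ series so obtained exhaust the remaining roots.

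It then remains to verify the ring of coefficients. The Newton iteration yielding the~$\alpha_{k,n}$ involves only division by the nonzero rational constant $f'(c_k)=-p$ and multiplication by polynomials in $x,\bx,z_0,\ldots,z_p$ and by integer powers of~$c_k$. Writing $c_k = \omega_k\cdot(1/z_p)\cdot(z_p^{1/p})^{p-1}\in\cs[z_p^{1/p},1/z_p]$ (the factor $1/z_p$ is necessary because $\cs[z_p^{1/p}]$ alone does not contain $z_p^{-1/p}$), we conclude that each $\alpha_{k,n}$ lies in $\cs[x,\bx,z_0,\ldots,z_{p-1},z_p^{1/p},1/z_p]$, as claimed; the field $\cs$ is needed only to house the roots of unity. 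The only point requiring care in the whole proof is this bookkeeping of denominators through the Newton--Puiseux recursion; the remaining ingredients (Newton polygon, simplicity of the nonzero roots of $f$, Hensel's lemma) are entirely standard.
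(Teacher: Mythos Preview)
Your proof is correct and follows essentially the same approach as the paper: both identify $Y_1$ as the unique power-series root from the fixed-point form~\eqref{eq:Y}, and both apply the Newton polygon method to locate the remaining $p$ roots with valuation $-1/p$. The only cosmetic difference is that the paper substitutes $V=1/Y$ (obtaining $V_j=\xi^j z_p^{1/p}t^{1/p}(1+o(1))$), whereas you rescale via $Y=Z/t^{1/p}$ and apply Hensel's lemma at the simple roots $c_k=\omega_k z_p^{-1/p}$ of $Z-z_pZ^{p+1}$; your version makes the bookkeeping of the coefficient ring slightly more explicit, but the substance is identical.
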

\begin{proof}
The equation $tS(x,Y)=1$ reads 
\[
Y=t\left( x+ \sum_{i+j\le p} z_{i+j} \bx^i Y^{j+1}\right).
\]
If $t=0$ this reduces to $Y=0$, hence  $Y_1$ is the unique power series solution. Its  expansion in $t$ can be computed iteratively from the equation, and its coefficients lie in $\qs[z_0, \ldots, z_p, x, \bx]$.
The other roots $Y_2, \ldots, Y_{p+1}$ thus involve negative powers of
$t$. Writing $V=1/Y$, the equation $tS(x,Y)=1$, once multiplied by $V^{p+1}$, reads
\beq\label{V-eq}
V^p= tx V^{p+1}+ tz_p +{t} \sum_{i+j \le p, j<p} z_{i+j} \bx^i V^{p-j}.
\eeq
The Newton polygon method~\cite[pp.~498--500]{flajolet-sedgewick} allows us to conclude that the $p$ solutions
$V_2, \ldots, V_{p+1}$ read 
\[ 
V_j= \xi^j z_p^{1/p} t^{1/p}\left(1+o(1)\right),
\] 
where $\xi$ is a primitive  $p$th root of unity,
and have coefficients in $\cs[z_0, \ldots, z_{p-1}, z_p^{1/p}, {1/z_p},\break x,
\bx]$.  The claimed properties of $Y_j=1/V_j$ follow.  
\end{proof}

Going back to Lemma~\ref{lem:toprove}, we need to extract the constant
term in $y$ from a series of the form $N(x,y)/K(x,y)$, where $N(x,y)$ is a Laurent series in $\by$. In our case
\beq\label{N-expr}
N(x,y)= (1-\bx\by)S'_1(x,y)(y^{k+1}-y_1^{k+1}),
\eeq
but, {in the following lemma}, we first  focus on the case where $N$ is a monomial in $y$.

\begin{Lemma}\label{lem:constK}
Upon expanding $1/K(x,y)$ as a power series in  $t$ with coefficients
in the ring $\qs[z_0, \ldots, z_p, x, \bx, y, \by]$, for $k\geq 0$ we have
\[
[y^0]\frac{y^k}{K(x,y)}=-\frac1{tz_p}Y_1^k\prod_{j\neq 1}\frac1{Y_1-Y_j},
\]
while for $k<0$ we have
\[
[y^0]\frac{y^k}{K(x,y)}=\frac1{tz_p}\sum_{i=2}^{p+1}Y_i^{k}\prod_{j\neq i}\frac1{Y_i-Y_j},
\]
where the natural  range of $j$ is $\llbracket 1,p+1\rrbracket$. 
\end{Lemma}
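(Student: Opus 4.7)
The plan is to derive both formulas by partial fraction decomposition of $y^k/K(x,y)$ in the variable $y$, combined with the interpolation identities of Lemma~\ref{lem:Lagrange}.

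First I would note that $yK(x,y)=y-ty\,S(x,y)$ is a polynomial in $y$ of degree $p+1$, with leading coefficient $-tz_p$ (coming from $-ty\cdot z_p y^p$) and constant term $-tx$ (coming from $-ty\cdot x\by$). Its $p+1$ roots are precisely the series $Y_1,\ldots,Y_{p+1}$ of Lemma~\ref{lem:Y}, so
\[
yK(x,y)=-tz_p\prod_{i=1}^{p+1}(y-Y_i),\qquad \hbox{hence}\qquad \frac{y^k}{K(x,y)}=-\frac{1}{tz_p}\cdot\frac{y^{k+1}}{\prod_i(y-Y_i)}.
\]
I would then write the partial fraction decomposition in $y$,
\[
\frac{y^{k+1}}{\prod_i(y-Y_i)}=Q(y)+\sum_{m=1}^{\max(0,-k-1)}\frac{C_m}{y^m}+\sum_{i=1}^{p+1}\frac{Y_i^{k+1}}{\prod_{j\neq i}(Y_i-Y_j)}\cdot\frac{1}{y-Y_i},
\]
in which the polynomial $Q(y)$ is nonzero only for $k\ge p$ and the terms $C_m/y^m$ (from the pole of $y^{k+1}$ at $y=0$) appear only for $k\le -2$.

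The crucial step is extracting $[y^0]$ in a manner compatible with the stipulated $t$-expansion of $1/K(x,y)$. Since $Y_1=O(t)$ by Lemma~\ref{lem:Y}, I would expand
\[
\frac{1}{y-Y_1}=\sum_{n\ge 0}\frac{Y_1^n}{y^{n+1}}\qquad(\hbox{only negative powers of }y),
\]
contributing $0$ to $[y^0]$. For $i\ge 2$, since $1/Y_i=O(t^{1/p})$, I would expand
\[
\frac{1}{y-Y_i}=-\sum_{n\ge 0}\frac{y^n}{Y_i^{n+1}}\qquad(\hbox{only nonnegative powers of }y),
\]
contributing $-1/Y_i$ to $[y^0]$. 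The terms $C_m/y^m$ contribute $0$, while the polynomial part contributes $Q(0)$. Summing up,
\[
[y^0]\frac{y^{k+1}}{\prod_i(y-Y_i)}=Q(0)-\sum_{i=2}^{p+1}\frac{Y_i^k}{\prod_{j\neq i}(Y_i-Y_j)}.
\]

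For $k<0$, $Q=0$ and the second formula of the lemma follows at once after multiplication by $-1/(tz_p)$. For $k\ge 0$, I would apply Lemma~\ref{lem:Lagrange} with $P(u)=u^k$ and $m=p$: the full sum $\sum_{i=1}^{p+1}Y_i^k/\prod_{j\neq i}(Y_i-Y_j)$ equals $h_{k-p}(Y_1,\ldots,Y_{p+1})$ for $k\ge p$ and $0$ for $k<p$. A parallel short computation (plug $y=0$ into the Euclidean division $y^{k+1}=\prod_i(y-Y_i)Q(y)+R(y)$, then re-express $R(0)$ via the explicit Lagrange formula $R(y)=\sum_i Y_i^{k+1}\prod_{j\neq i}(y-Y_j)/\prod_{j\neq i}(Y_i-Y_j)$) yields $Q(0)=h_{k-p}(Y_1,\ldots,Y_{p+1})$ when $k\ge p$, and $Q(0)=0$ otherwise. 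The two occurrences of $h_{k-p}$ thus cancel, leaving
\[
[y^0]\frac{y^{k+1}}{\prod_i(y-Y_i)}=\frac{Y_1^k}{\prod_{j\neq 1}(Y_1-Y_j)},
\]
and dividing by $-tz_p$ gives the first formula. The main (mild) obstacle will be to justify that the chosen expansions of the simple fractions $1/(y-Y_i)$ indeed recombine into the canonical $t$-expansion of $1/K$: individually the terms for $i\ge 2$ involve fractional powers of $t$ (since $1/Y_i$ does), but these must cancel in the symmetric sum over $i=2,\ldots,p+1$, leaving only integer powers of~$t$, as required.
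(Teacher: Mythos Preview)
Your proof is correct and rests on the same idea as the paper's: partial fractions in $y$ with respect to the roots $Y_1,\ldots,Y_{p+1}$, then expanding $1/(y-Y_1)$ in powers of $\by$ (since $Y_1=O(t)$) and $1/(y-Y_i)$ for $i\ge2$ in powers of $y$ (since $1/Y_i=O(t^{1/p})$). The paper's organisation is a little more economical: rather than decomposing $y^{k+1}/\prod_i(y-Y_i)$ for each $k$, it decomposes $1/K(x,y)=-\frac{1}{tz_p}\sum_i\frac{Y_i}{y-Y_i}\prod_{j\neq i}\frac{1}{Y_i-Y_j}$ once (no polynomial part, since the numerator $y$ has degree $1<p+1$, and no pole at $0$), expands, and then simply reads off $[\by^k](1/K)$ for all $k$; this bypasses your polynomial part $Q(y)$ and the computation of $Q(0)$ altogether, invoking Lemma~\ref{lem:Lagrange} only at the single overlap value $k=0$.
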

\begin{proof}
The partial fraction decomposition of ${1}/{K(x,y)}$ reads
\begin{align*}
\frac{1}{K(x,y)}=-\frac{y}{tz_p}\prod_j\frac1{y-Y_j}&=-\frac1{tz_p}\sum_i\frac{Y_i}{y-Y_i}\prod_{j\neq i}\frac1{Y_i-Y_j}\\
&=-\frac1{tz_p}\frac{\by Y_1}{1-\by Y_1}\prod_{j\neq 1}\frac1{Y_1-Y_j}+\frac1{tz_p}\sum_{i=2}^{p+1}\frac{1}{1-yY_i^{-1}}\prod_{j\neq i}\frac1{Y_i-Y_j}.
\end{align*}
Recall that $Y_1=O(t)$ while, for $i\ge 2$, $1/Y_i=O(t^{1/p})$. Hence, in the ring of series in $t^{1/p}$,
\[
\frac1{K(x,y)}=-\frac{1}{tz_p}\sum_{k\geq 1}Y_1^k\by^k\prod_{j\not = 1}\frac1{Y_1-Y_j}+\frac1{tz_p}\sum_{k\geq 0}\sum_{i= 2}^{p+1}Y_i^{-k}y^{k}\prod_{j\neq i}\frac1{Y_i-Y_j}.
\]
For $k\neq 0$ this gives the claimed expression 
for $[y^0]({y^k}/{K})=[\by^{k}](1/{K})$.  
For $k=0$ it gives
\[
[y^0]\frac{1}{K}=\frac1{tz_p}\sum_{i= 2}^{p+1}\prod_{j\neq i}\frac1{Y_i-Y_j}.
\]
However, by Lemma~\ref{lem:Lagrange}, 
we have $\sum_{i=1}^{p+1}\prod_{j\neq i}\frac1{Y_i-Y_j}=0$, hence the claimed expression also holds for $k=0$.  
\end{proof}

\begin{proof}[Proof of Lemma~\ref{lem:toprove}]
  For $N(x,y)$ a Laurent series in $\by$, we denote by  $\Nl(x,y):=\break [y^<]N(x,y)$
 the negative part of $N$ in $y$, and by $\Ng(x,y):=[y^\ge]N(x,y)$
 the nonnegative part. Then Lemma~\ref{lem:constK} gives
\beq\label{eq:const_in_N}
[y^0]\frac{N(x,y)}{K(x,y)}=-\frac1{tz_p}\Ng(x,Y_1)\prod_{j\not = 1}\frac1{Y_1-Y_j}+\frac1{tz_p}\sum_{i= 2}^{p+1}\Nl(x,Y_i)\prod_{j\neq i}\frac1{Y_i-Y_j}.
\eeq
For $N(x,y)$ given by~\eqref{N-expr}, it is easy to express $N_<$ and
$N_\ge$. Recall that $S'_1(x,y)$ has valuation $-1$ and
degree $p-1$  in $y$, while  $y_1= x\by^p/z_p(1+O(\by))$ by Lemma~\ref{lem:xi}. This gives
\begin{align*}
  N_\ge(x,y)&=y^{k+1}(1-\bx\by)S'_1(x,y)+\mathbbm{1}_{k=0}\, \bx\by,\\
N_<(x,y)&=-y_1^{k+1}(1-\bx\by)S'_1(x,y)-\mathbbm{1}_{k=0}\, \bx\by.
\end{align*}
According to~\eqref{eq:const_in_N}, we have to evaluate $\Nl(x,y)$ at
$y=Y_i$, for $i\ge 2$, and hence to evaluate the series
  $y_1=1/x_1$ at $y=Y_i$. In the following lemma, we emphasize the fact that $y_1$ depends on $y$ (it is a power series in $\by$) with the notation $y_1(y)$. 
\begin{Lemma}\label{lem:Yi}
Fix $i\in\llbracket 2,p+1\rrbracket$. The series $1/Y_i$ is a power
series in $t^{1/p}$ with no constant term. Hence $y_1(Y_i)$ is a
formal power series in $t^{1/p}$, which in fact  equals $Y_1$.
\end{Lemma}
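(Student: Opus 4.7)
The first assertion is an immediate restatement of Lemma~\ref{lem:Y}, which gives $V_i := 1/Y_i = \xi^i z_p^{1/p} t^{1/p}(1+o(1))$ with $\xi$ a primitive $p$th root of unity; in particular $V_i$ is a formal power series in $t^{1/p}$ with no constant term. Consequently, since Lemma~\ref{lem:xi} places $x_1(y)$ in $\cs[z_1,\ldots,z_p,1/z_p,x,\bx][[\by]]$ with leading monomial $z_p\bx y^p$, the reciprocal $y_1(y) = 1/x_1(y)$ lies in the same ring and is a series in $\by$ with zero constant term. The substitution $\by \leftarrow V_i$ is therefore a legitimate formal composition and yields a well-defined power series $y_1(Y_i)$ in $t^{1/p}$.

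To identify $y_1(Y_i)$ with $Y_1$, my plan is to apply Proposition~\ref{prop:inv} with index pair $(i,j)=(0,1)$. Since $x_0=x$, the proposition gives the identity $S(x,y_1(y))=S(x,y)$, valid as an equality in $\cs[z_0,\ldots,z_p,1/z_p,x,\bx][[\by]]$. Evaluating at $\by\leftarrow V_i$ and using that $tS(x,Y_i)=1$ by the very definition of $Y_i$, I obtain $tS(x,y_1(Y_i))=1$. Hence $y_1(Y_i)$ is necessarily one of the $p+1$ roots $Y_1,\ldots,Y_{p+1}$ of the kernel $K(x,Y)$.

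To single out the correct root, a simple valuation check suffices. From $y_1(y)=(x/z_p)\by^p(1+O(\by))$ and $V_i^p=\xi^{ip}z_pt(1+o(1))=z_pt(1+o(1))$ (using $\xi^p=1$), the substitution yields $y_1(Y_i)=xt(1+o(1))$, which exhibits $y_1(Y_i)$ as a power series in $t^{1/p}$ with no negative powers of $t$. By Lemma~\ref{lem:Y}, only $Y_1$ has this property (the roots $Y_2,\ldots,Y_{p+1}$ contain genuinely negative powers of $t$), forcing $y_1(Y_i)=Y_1$. The only subtle point in this argument is the legitimacy of the substitution in the previous paragraph, but this is precisely what the positive $t^{1/p}$-valuation of $V_i$ guarantees; once this is noted, Proposition~\ref{prop:inv} applied to the trivial orbit pair $(x_0,y_1)$ does essentially all the work.
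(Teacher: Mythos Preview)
Your proof is correct and follows essentially the same route as the paper's: substitute $y\leftarrow Y_i$ into the identity $S(x,y_1(y))=S(x,y)$ (which you cite via Proposition~\ref{prop:inv}), deduce that $y_1(Y_i)$ is a root of the kernel, and invoke Lemma~\ref{lem:Y} to identify it as the unique root $Y_1$ without negative powers of $t$. Your explicit leading-term computation $y_1(Y_i)=xt(1+o(1))$ is a small bonus the paper omits, but it is not strictly needed for the argument.
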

\begin{proof}
  The first statement follows from Lemma~\ref{lem:Y}, so it remains to identify $y_1(Y_i)$. Recall that $S(x,y_1(y))=S(x,y)$. Replacing 
  $y$ by $Y_i$ gives $S(x,y_1(Y_i))=S(x,Y_i)$.   
Since $Y_i$ is a root (in $y$) of $K(x,Y)=1-tS(x,Y)$, it follows that
$K(x,y_1(Y_i))=1-tS(x,y_1(Y_i))=0$ as well. Hence $y_1(Y_i)$ is one of
the $Y_j$'s. But $Y_1$ is the only $Y_j$ that does not contain
negative powers of $t$ (Lemma~\ref{lem:Y}), and we conclude that $y_1(Y_i)=Y_1$.  
\end{proof}
We can now apply~\eqref{eq:const_in_N}. This gives
\[
[y ^0] \frac{N(x,y)}{K(x,y)} = - \frac{1}{tz_p} 
  \sum_{i=1}^{p+1} \left(Y_1^{k+1}(1-\bx/Y_i) S'_1(x,Y_i) 
+ \mathbbm{1}_{k=0}\, \bx /{Y_i} \right)\prod_{j\not = i} \frac
1{Y_i-Y_j}.
\]
We will  evaluate this sum using Lemma~\ref{lem:Lagrange}. The Laurent
polynomial $P(y):=(1-\bx/y) S'_1(x,y)$ has degree $p-1$ in $y$, and
valuation $-2$. Moreover,
\[
P_{-2}:=[\by^2]P(y)=-\bx, \qquad \hbox{and} \qquad P_{-1}:=[\by]P(y)=1+\sum_{r=0}^p
z_rr\bx^{r+2}.
\]
Hence, by Lemma~\ref{lem:Lagrange},
\[
[y ^0] \frac{N(x,y)}{K(x,y)} =- \frac 1 {tz_p}\frac{(-1)^p}{{\prod_i Y_i}}
\left(  Y_1^{k+1}P_{-1} +\bx  \mathbbm{1}_{k=0}    +
 Y_1^{k+1}P_{-2}  { h_1(1/Y_1, \ldots,
  1/Y_{p+1})}\right).
\]
The elementary symmetric functions of $1/Y_1, \ldots,  1/Y_{p+1}$ are easily computed using the fact that each of them is a
  root $V$ of~\eqref{V-eq}. One finds 
\[
  e_{p+1}(1/Y_1, \ldots,  1/Y_{p+1})= \frac 1 {\prod_{i}Y_i}= (-1)^{p-1} \bx z_p
\]
and
\[
  e_1(1/Y_1, \ldots,  1/Y_{p+1})=\frac 1 {tx}\left(
  1-t \sum_{r=0}^p z_r \bx^r\right).
\]
Since $e_1=h_1$, this gives the expression of Lemma~\ref{lem:toprove}.
\end{proof} 

\begin{proof}[Proof of Corollary~\ref{cor:Qthird}] We start from the
  expression for $Q^{(0,b)}(x,y)$ given in Theorem~\ref{thm:Q0}. By linearity, it suffices to prove that,   for $0\le k \le b$,
\beq\label{eq11}
- Y_1^{k+1}= [z^0] \frac{tz^{k+2} S'_2(x,z)}{K(x,z)}.
\eeq
The numerator occurring on the right-hand side is a polynomial in $z$, because
$S'_2(x,z)$ has valuation $-2$ in~$z$. Hence  the first part of
Lemma~\ref{lem:constK} tells us that
\beq\label{eq12}
  [z^0] \frac{tz^{k+2} S'_2(x,z)}{K(x,z)}= -\frac1 {z_p} Y_1^{k+2}
  S'_2(x,Y_1) \prod_{j\not = 1} \frac 1 {Y_1-Y_j}
.
\eeq
Upon writing
\[
S(x,z)-1/t= S(x,z)-S(x,Y_1) = -\frac 1 t K(x,z) =z_p\bz \prod_{i=1}^{p+1}(z-Y_i),
\]
we can compute
\[
S'_2(x,Y_1)= \frac{z_p}{Y_1} \prod_{i=2}^{p+1}(Y_1-Y_i),
\]
which, combined with~\eqref{eq12}, gives~\eqref{eq11}. An alternative, purely combinatorial proof of~\eqref{eq11} in terms of one-dimensional lattice walks is given in Section~\ref{sec:proof_eq_const}.
\end{proof}

\section{Quadrant
  walks with arbitrary endpoint: algebraic solution}
\label{sec:algeq}

We are now going to prove Theorem~\ref{thm:alg}, which gives an explicit
algebraic expression for the series $Q^{(a,b)}(1,1)$. We still work in
the $p$-specialization, for $p\ge 1$. We  write  $Q(x,y)\equiv Q^{(a,b)}(x,y)$.

The proof that we give generalizes the proof given for $p=1$ in~\cite[Sec.~5.2]{mbm-mishna}. In our basic functional equation~\eqref{eqfunc}, let us replace the
pair $(x,y)$ by  $(x_i,\bx)$, with $1\le i \le p$. By
Proposition~\ref{prop:inv},  $K(x_i, \bx)=K(x,y)$. We thus obtain
\beq\label{eqxi}
K(x,y)Q(x_i,\bx)=x_i^a\bx^b- t x_ix Q(x_i,0) - \sum_{k=1}^p \bx_i^k G_k(\bx).
\eeq
We now consider  the linear combination~\eqref{eq:orbit_Qab}.
It involves, on the
right-hand side, the series $Q(x,0)$, and then $Q(x_i,0)$ for $1\le i
\le p$. By taking a linear combination with~\eqref{eqxi}, for $1\le
i\le p$, we can eliminate the latter $p$ series --- upon introducing
the series $G_k(\bx)$. More precisely,
\begin{multline}\label{to-alg}
  K(x,y) \left( y \sum_{i=0}^p \frac{x_i^p \, Q(x_i,y)}{\prod_{j\not =
      i, p+1} (x_i-x_j)} - \bx \sum_{i=1}^p \frac{x_i^p \, Q(x_i,\bx)}{\prod_{j\not =
      i, p+1} (x_i-x_j)}\right)\\
=
h_a(x_0,\ldots,x_p)y^{b+1} -t\, \frac{x^{p+1} Q(x,0)}{\prod_{j\not =0,
       p+1} (x-x_j)}\\
 - \bx \sum_{i=1}^p \frac{x_i^p}{\prod_{j\not =
      i, p+1} (x_i-x_j)}\left(x_i^a\bx^b-  \sum_{k=1}^p \bx_i^k G_k(\bx)\right).
\end{multline}
We now simplify the right-hand side. The coefficient of $Q(x,0)$
can be rewritten in terms of $S'_1(x,y)$ due to~\eqref{S-diff}. The
sums over $i$ can be evaluated in closed form using~\eqref{S-diff}
again, and Lemma~\ref{lem:Lagrange}. First, since $a\ge 0$,
\begin{align*}
  \sum_{i=1}^{p} \frac{x_i^{p+a}}{\prod_{j\not = i, p+1} (x_i-x_j)}
&=
\sum_{i=0}^p \frac{x_i^{p+a}}{\prod_{j\not = i, p+1} (x_i-x_j)}-
\frac{x^{p+a}}{\prod_{j=1}^p (x-x_j)}\\
&=
h_a(x_0, x_1, \ldots, x_p)- \frac{x^{p+a}}{\prod_{j=1}^p (x-x_j)}  \qquad \hbox{by Lemma~\ref{lem:Lagrange}}
\\
&=  h_a(x_0, x_1, \ldots, x_p)- \frac{x^a\by}{S'_1(x,y)} \qquad  \quad
\qquad     \hbox{by~\eqref{S-diff}}. 
\end{align*}
Similarly, for $1\le k \le p$,
\begin{align*}
  \sum_{i=1}^p \frac{x_i^{p-k}}{\prod_{j\not = i, p+1} (x_i-x_j)}
&=
\sum_{i=0}^{p} \frac{x_i^{p-k}}{\prod_{j\not = i, p+1} (x_i-x_j)}-
\frac{x^{p-k}}{\prod_{j=1}^p (x-x_j)}\\
&=
- \frac{x^{p-k}}{\prod_{j=1}^p (x-x_j)}  \qquad \hbox{by Lemma~\ref{lem:Lagrange}}
\\
&= - \frac{\bx^k\by}{S'_1(x,y)} \qquad \qquad \quad \hbox{by~\eqref{S-diff}}. 
\end{align*}
Hence the right-hand side of~\eqref{to-alg}  simplifies to
\[
h_a(x_0,\ldots,x_p)\left( y^{b+1}-\bx^{b+1}\right) +\frac{\bx\by}{S'_1(x,y)}\left(x^{a-b}-tx^2Q(x,0)-\sum_{k=1}^p\bx^kG_k(\bx)\right).
\]
The rightmost term can be expressed in terms of $Q(x,\bx)$. Indeed,
specializing the main functional equation~\eqref{eqfunc} to the case $y=\bx$,
we obtain
\[
  K(x,\bx) Q(x,\bx)=x^{a-b}- tx^2Q(x,0) -\sum_{k=1}^p \bx^k G_k(\bx).
\] 
We can thus  rewrite the right-hand side of~\eqref{to-alg} as
\beq\label{rhs-new}
h_a(x_0,\ldots,x_p)
\left(y^{b+1}-\bx^{b+1}\right )+\frac{\bx\by}{S'_1(x,y)}
K(x,\bx)Q(x,\bx).
\eeq
Recall from~\eqref{eq:bxk} and~\eqref{Cu} that
\begin{align}
  h_a(x_0, \ldots, x_p)=h_a(\hat{\mathbf{x}}_{p+1}) &
        =[u^a]\frac1{uy(S(\bu,y)-S(x,y))} \nonumber
\\
  & =[u^a]\frac1{(1-ux)(1-u \bx y\sum_{i+j+k<p} z_{i+j+k+1}u^i \bx^j y^k)}\nonumber
  \\
 & :=D_a(x,y). \label{Da}
\end{align}

Equation~\eqref{to-alg}, with its right-hand side written as~\eqref{rhs-new}, holds for indeterminates $x$ and $y$, where
$x_1, \ldots, x_p$ are the roots of $S(X,y)=S(x,y)$ distinct from
$x$. By Lemma~\ref{prop:inv}, the $x_i$'s can also be described as the roots $X$ of $S(x, 1/X)=S(x,y)$ distinct from $\by$. Observe that we have not used the fact that we usually take them as Puiseux series in $\by$. We can choose them in any   algebraic  closure of $\qs(z_1, \ldots, z_p, x, y)$, and~\eqref{to-alg}  still holds. We now specialize  $y$ to $Y_1$, which is  the unique power series in $t$ satisfying
  $K(x,y)=1-tS(x,y)=0$. Then the corresponding values $x_1, \ldots, x_p$ are
  the roots $X$ of $S(x,1/X)=S(x,Y_1)$ distinct from $1/Y_1$, or
  equivalently the roots $X$ of $K(x,1/X)=0$ distinct from $1/Y_1$. We choose to take them as Puiseux series in $t$, hence
  they  are in fact the series  $1/Y_2, \ldots, 1/Y_{p+1}$, with the $Y_i$'s   defined in   Lemma~\ref{lem:Y}. Note that each $1/Y_i$, for $i\ge 2$, is a formal power series in $t^{1/p}$ with no constant term.

  With these values of $x_1, \ldots, x_p$, 
  the series $Q(x_i,Y_1)$ and $Q(x_i,\bx)$ occurring
in~\eqref{to-alg} (specialized to $y=Y_1$) are
well defined power series in $t^{1/p}$.  But since $K(x,Y_1)=0$, the
left-hand side of~\eqref{to-alg} vanishes, and so does its right-hand
side, which we have simplified to~\eqref{rhs-new}. 
We thus obtain
\beq\label{Qxbx}
{K(x,\bx)}Q(x,\bx)={D_a(x,Y_1)\left(\bx^{b+1}-Y_1^{b+1}\right)xY_1S'_1 (x,Y_1)}.
\eeq
We now want to express $S_1'(x,Y_1)$. Since the series $x_1, \ldots, x_p$ are $1/Y_2, \ldots, 1/Y_{p+1}$ when $y=Y_1$, the specialization of~\eqref{S-diff} at
$y=Y_1$ reads
\[
  Y_1  S'_1(x,Y_1)= \prod_{i=2}^{p+1} (1-\bx /Y_i).
\]
On the other hand, $K(x,y)$ factors as
\[
  K(x,y)= \frac{tx}{Y_1} (1-\by Y_1) \prod_{i=2}^{p+1}(1-y/Y_i),
\]
so that
\[
  K(x,\bx)= \frac{tx}{Y_1} (1-x Y_1) \prod_{i=2}^{p+1}(1-\bx/Y_i).
\]
With these two identities, Equation \eqref{Qxbx} gives
\[
  Q(x,\bx) = \frac{Y_1}{tx} D_a(x,Y_1) \frac{\bx^{b+1}-Y_1^{b+1}}{\bx -Y_1}.
\]
If $x=1$, the series $Y_1$ specializes to $W$, and we obtain
\[
  Q(1,1) = \frac{W}{t} D_a(1,W) \frac{1-W^{b+1}}{1 -W},
\]
where $D_a(x,y)$ is defined by~\eqref{Da}. This gives the expression
of  Theorem~\ref{thm:alg}.
\qed

\begin{remark}
We have first obtained an expression for the series $Q(x,\bx)$,
which counts tandem walks starting at $(a,b)$  
with a weight $x^{i-j}$ for walks ending at $(i,j)$. Then we have
specialized this expression to $x=1$.  There is no loss of information in 
this specialization. Indeed, since
every SE step lets $i-j$ increase by $2$ 
and each face step of level $r$ lets $i-j$ decrease by $r$, the series
 $x^{b-a}Q(x,\bx)$ is equal to $Q(1,1)$ where $t$ is replaced 
 by $tx^2$ and $z_r$  by $z_rx^{-r-2}$. 
\end{remark}

\section{Bijective proofs}\label{sec:bij_proofs}
In this section we give combinatorial proofs of the expression for $Q^{(0,b)}(x,0)$ given in {Theorem~\ref{thm:Q0}} and of the expression for $Q^{(a,b)}(1,1)$
given in {Theorem~\ref{thm:alg}}.
In both cases we use the KMSW bijection to interpret our series as
\gfs\ of marked bipolar orientations, and perform simple
transformations on these  orientations to establish the identities.

\subsection{The expression for $\boldsymbol{Q^{(0,b)}(x,0)}$}\label{sec:bij_proofs_0b}

For $i, b\geq 0$ let us denote by $Q_i^{(b)}:=[x^i]Q^{(0,b)}(x,0)$ the \gf\ of tandem walks
starting at $(0,b)$, staying in the quadrant, and ending at $(i,0)$.   The variable~$t$ records the number of steps,~$x$  the final $x$-coordinate,
and~$z_r$  the number of face steps of level~$r$.  
Similarly, let $H_i^{(b)}$ be the series that counts tandem walks 
starting at $(0,b)$, staying in the half-plane $\{y\geq 0\}$, and
ending at $(i,0)$. As explained after the definition~\eqref{eq:Y} of
$Y_1$, we have
  \[
    Y_1= tx \sum_{i\in \zs} x^i H_i^{(0)},
  \]
  so that $Y_1$  counts tandem walks starting at the origin,
ending on the line $\{y=-1\}$, but staying on or above the $x$-axis
until this last step. By concatenating $b+1$ such walks, and
translating the resulting walk $b$ steps up, we see that  $Y_1^{b+1}$
counts tandem walks starting at $(0,b)$, ending on the line $\{y=-1\}$
and staying in $\{y\geq 0\}$ up to the last step. In other words, 
\beq\label{HY}
   Y_1^{b+1}= tx\sum_{i\in \zs} x^i H_i^{(b)}=: tx H^{(b)}(x).
\eeq
Through the KMSW bijection described in Section~\ref{sec:KMSW} (see in
particular Figure~\ref{fig:corresp}), the
series $H_i^{(b)}$ counts  marked bipolar orientations of signature  $(a, b; a+i, 0)$,
for some  $a\geq 0$, where~$t$ records the number
of plain edges minus $1$ --- called the \emph{size} of the orientation --- 
and~$z_r$  the number of inner faces of degree $r+2$.   These
orientations  have no dashed edge on their right boundary. The series $Q_i^{(b)}$ counts
those that, in addition, have no dashed edge on the left boundary.  That is,
those for which $a=0$.

We now consider an orientation $O$ counted by $H_i^{(b)}$. Let $e$
 be the bottom edge of the right outer boundary
of $O$, directed from $S$ to a vertex $v$. Then $e$ is necessarily plain. Let~$f$ be the face on its left. Three cases occur:
\begin{enumerate}[label=(\arabic{*}),ref={\arabic{*}}]
\setcounter{enumi}{-1}
\item\label{0-type}$f$ is the outer face,
\item\label{1-type}$f$ is an inner face and   $e$  is the unique ingoing edge at $v$,
\item\label{2-type}$f$ is an inner face and  there are several ingoing edges at $v$.
\end{enumerate}
 Accordingly, $O$ will be said to be of type \ref{0-type}, \ref{1-type} or \ref{2-type}. The associated
 \gfs\ are denoted $H_{0,i}^{(b)}$, $H_{1,i}^{(b)}$ and $H_{2,i}^{(b)}$. Clearly,
\beq\label{Hsum}
H_i^{(b)}=H_{0,i}^{(b)}+H_{1,i}^{(b)}+H_{2,i}^{(b)}.
\eeq

\begin{Lemma}\label{lem:UV}
For $i\geq 0$, the above defined series satisfy:
\begin{align}
H^{(b)}_{0,i}&=\mathbbm{1}_{i=b=0}+tQ_{i-1}^{(b-1)},  \label{H0}
\\
H^{(b)}_{1,i+2}&=t\bigl(H_i^{(b)}-Q_i^{(b)}\bigr),\label{H1}\\
H^{(b)}_{2,i}&=t \sum_{r\geq   0}(r+1)z_r H_{i+r}^{(b)}.\label{H2}
\end{align}
\end{Lemma}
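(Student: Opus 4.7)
The proof proceeds by a local analysis at the source of the marked bipolar orientation $O$, treating each of the three types (0), (1), (2) via an explicit bijection (or, equivalently, a transformation of the associated KMSW tandem walk).

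For (H0), requiring that $f$ be the outer face forces $S$ to have $e$ as its unique outgoing edge. This in turn forces $a=0$: if $a\geq 1$, the first edge of the lower left boundary (going up from $S$) would coincide with $e$, yet the marked-orientation axioms require that its right-incident face be \emph{inner}, contradicting that $e$'s right face is the outer face. Consequently either $O=O_0$ (yielding the $\mathbbm{1}_{i=b=0}$ term), or we ``pop off'' $e$: remove $e$ and promote $v$ to be both the new source and the new $\vl$, obtaining an orientation of signature $(0,b-1;i-1,0)$ with one fewer plain edge. This supplies the $tQ^{(b-1)}_{i-1}$ term.

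For (H2), a local analysis shows that if $v$ is interior to the right boundary of $f$, then $v$ has degree two (its only edges being $e$ and the next right-boundary edge of $f$), so its indegree equals one; hence type~(2) forces $v=t_f$, and in turn $f$ must be of type $(r,0)$ for some $r\geq 0$. The bijection removes from the KMSW walk of $O$ the face step $(-r,0)$ creating $f$---giving a walk in $H^{(b)}_{i+r}$, since the endpoint shifts by $(r,0)$---and records one of $r+1$ \emph{admissible positions} at which this face step could be reinserted to recover $O$. A position is admissible precisely when no subsequent face step in the remaining walk absorbs the inserted face. The key combinatorial claim, to be verified by carefully tracking the KMSW gluing rules, is that the number of admissible positions in any $H^{(b)}_{i+r}$ walk is exactly $r+1$.

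For (H1), $v$ is interior to $f$'s right boundary, so $f$ is of type $(j,k)$ with $k\geq 1$. The bijection modifies the KMSW walk by (i) replacing the face step $(-j,k)$ creating $f$ with $(-j-1,k-1)$, which preserves the level $j+k$ (and hence the face's degree and $z$-weight) but transfers one edge of $f$ from its right to its left boundary; and (ii) deleting the SE step responsible for making plain the second-from-bottom edge of $f$'s right boundary. The combined effect shifts the walk endpoint by $(-2,0)$, decreases the walk length by one (the $t$ factor), and lengthens the lower left boundary by one unit, so $a\geq 1$ in the resulting orientation, which therefore lies in $H^{(b)}_i - Q^{(b)}_i$. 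The inverse map simply reverses both operations. The most delicate part of the argument will be verifying the $(r+1)$ count in (H2); the corresponding ``swap + SE-removal'' bijection in (H1) is more straightforward once the local structure near $S$ is correctly identified.
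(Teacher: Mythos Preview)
Your argument for~\eqref{H0} is essentially the paper's; the point is simply that $f$ being outer forces $e$ to be the unique outgoing edge at $S$, so $a=0$, and removing $e$ drops $b$ and the right boundary length each by~$1$.

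For~\eqref{H1} and~\eqref{H2}, however, your approach diverges from the paper's and contains genuine gaps. The paper does \emph{not} manipulate the KMSW walk; it performs local surgery directly on the marked orientation. For~\eqref{H1} the bijection is a one-move ``slide'': reverse $e$ and make it dashed, so the source becomes $v$ and the signature becomes $(a{+}1,b;a{+}1{+}i,0)$ with one fewer plain edge. This is immediately seen to biject type~(1) orientations with marked orientations having at least one dashed left-boundary edge, i.e., those counted by $H_i^{(b)}-Q_i^{(b)}$. For~\eqref{H2}, the paper observes that type~(2) forces $v$ to be the top of $f$, so $f$ has type $(r,0)$, and the left boundary of $f$ has $r{+}1$ edges of which some number $h\in\{0,\dots,r\}$ are dashed (the top one is never dashed). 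Erasing $e$ and the bottom path $P$ of $h$ dashed edges gives an orientation of signature $(a{-}h,b;a{-}h{+}i{+}r,0)$; the inverse glues a path of $h$ dashed edges below $S$ and then a single edge $e$ enclosing a degree $r{+}2$ face. The factor $r{+}1$ is exactly the range of~$h$.

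Your walk-level bijections are problematic on several counts. First, the face step creating $f$ can occur anywhere in the walk, and altering or deleting a step in the middle of the KMSW sequence changes the entire subsequent construction; you give no argument that the result is the orientation you claim, nor that the modified walk stays in the half-plane with the correct $y_{\min}$ and $b$-value. Second, you explicitly leave the crucial ``$r{+}1$ admissible positions'' claim unproven; it has no evident meaning at the walk level, whereas at the orientation level it is simply the range of~$h$. Third, your local claim that ``$v$ has degree two'' when $v$ is interior to the right boundary of $f$ is false: $v$ can have further outgoing edges into other inner faces. The correct argument is that the ingoing edges at $v$ are consecutive in cyclic order, $e$ is flanked clockwise by the outer face, and counterclockwise the next edge is the outgoing right-boundary edge of $f$ unless $v$ is the top of $f$; hence type~(2) forces $v=t_f$.

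In short, for~\eqref{H1} and~\eqref{H2} you should work at the orientation level, where the bijections are one-line constructions, rather than trying to track them through the KMSW encoding.
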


\begin{proof}
We adopt the notation ($e$, $f$) used above in the
definition of the types.

Let $O$ be  an orientation of type \ref{0-type},  counted by $H^{(b)}_{0,i}$.  By definition $e$ is plain, so that
$a=0$. Erasing $e$ leaves a bipolar orientation of signature $(0, b-1;i-1,0)$, or just a
single point if $O$ is reduced to the edge $e$ (in which case
$b=i=0$). This gives the first identity.

\begin{figure}
\begin{center}
\includegraphics[width=\linewidth]{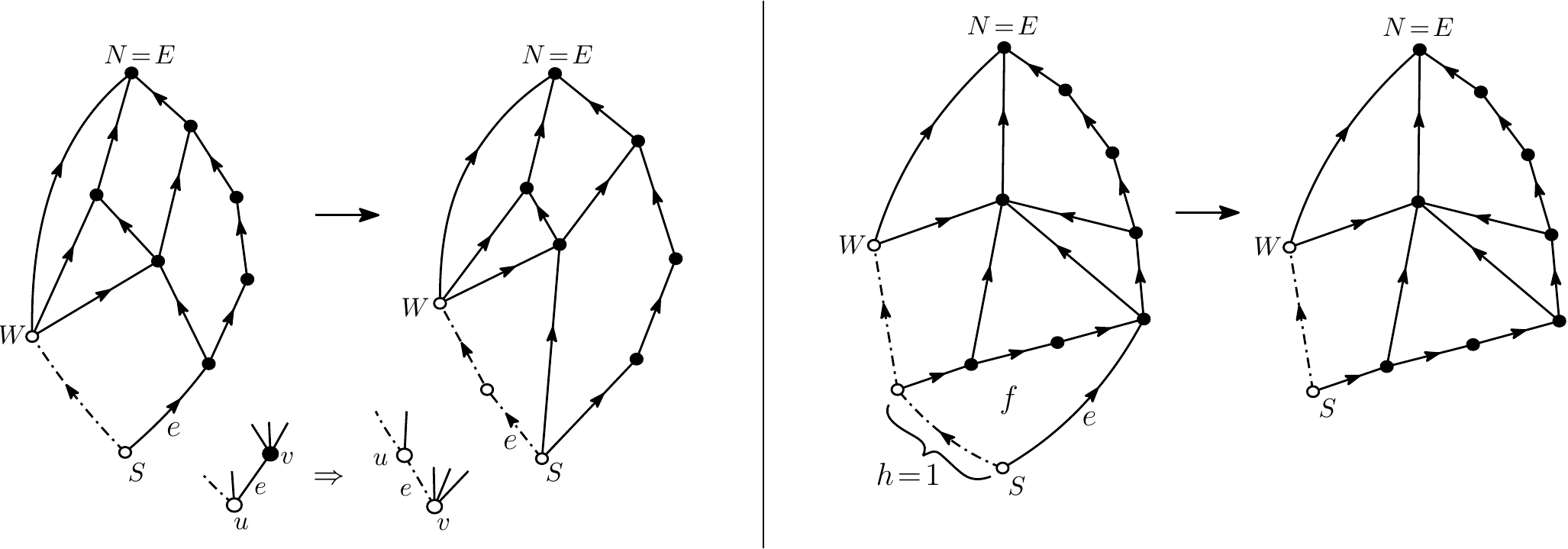}
\end{center}
\caption{Left: sliding the bottom-right  boundary edge from right to left. Right: erasing the inner face incident to the bottom-right edge.}
\label{fig:local}
\end{figure}

\medskip
Now let $O$ be a marked bipolar orientation of type \ref{1-type} counted by $H^{(b)}_{1,i+2}$.
Its signature is of the form  $(a,b; a+i+2,0)$ for some $a\geq 0$.  
Let $O'$ be obtained by reversing the orientation of $e$, and transforming
it into a dashed edge (Figure~\ref{fig:local}, left). Graphically, we
``slide'' the bottom edge $e$ from the right to the left boundary.  It is easy to see that  we thus obtain a marked bipolar orientation with source $v$, of signature $(a+1, b; a+1+i,0)$,
containing at least one dashed  edge (on the left boundary).
Conversely, let us start from  a marked 
bipolar orientation $O'$ of  signature  $(\alpha,b ; \alpha+i,0)$ for
 some $\alpha\geq 1$. Such orientations are counted by $H_i^{(b)}-Q_i^{(b)}$. Since the vertex lying just above the source on the left boundary has indegree $1$, we can reverse the direction of the bottom dashed edge, and make it plain: this gives a
marked bipolar orientation $O$  of type 1, of  
signature $(a,b;a+i+2,0)$ with  $a=\alpha-1$, and thus counted by $H^{(b)}_{1,i+2}$. 
This correspondence yields the second identity of the lemma.

\medskip
Finally let $O$ be a marked bipolar orientation of type \ref{2-type} counted by $H^{(b)}_{2,i}$, of signature  $(a,b;i+a,0)$ for some $a\geq 0$. 
  Let $r+2$ be the degree of the face $f$ and let $h\ge 0$ be the number
of dashed edges on the left boundary of $f$. Since $v$ has
  indegree at least $2$, the top edge of the left boundary of $f$
  cannot be dashed, by definition of marked
bipolar orientations. Hence $h\le r$. If $h>0$, then
 the $h$ dashed edges of $f$ form a path $P$ of length~$h$
on the left boundary of $f$, starting from~$S$; we call $P$
the \emph{bottom-left path} of $f$.     

Let $O'$ be obtained by  erasing  $e$ and $P$,
thereby choosing the top vertex of $P$
as the new source (Figure~\ref{fig:local}, right).  The size has decreased
by one,  and the signature of $O'$ is $(a-h, b; a-h+i+r,0)$. Thus $O'$
is {a marked bipolar orientation} counted by $H_{i+r}^{(b)}$.

Conversely, consider a marked bipolar orientation $O'$ counted 
by $H_{i+r}^{(b)}$, and let $(\alpha,b;\alpha+i+r,0)$ be its signature. 
For $0\leq h\leq r$ consider the operation of attaching a 
path $P$ of $h$ dashed edges  below the source  $S$ of $O'$, choosing
the bottom vertex of $P$ as the new source,
and then adding a new edge $e$ so that it becomes the bottom-edge
of the right outer boundary and encloses an inner face of degree $r+2$.  
We obtain a marked bipolar orientation $O$ counted by $H^{(b)}_{2,i}$, of  
 signature  $(a,b; a+i,0)$, where $a=\alpha+h$. The resulting
correspondence thus gives
\[
H^{(b)}_{2,i}=t\sum_{r\geq 0}\sum_{h=0}^{r}z_r H^{(b)}_{i+r}=t\sum_{r\geq 0}(r+1)z_r H^{(b)}_{i+r},
\]
which concludes the proof of the lemma.
\end{proof}

\begin{proof}[Proof of {Theorem~\ref{thm:Q0}}]
We now establish the expression~\eqref{eq:Qb0} of
$Q^{(0,b)}(x,0)$. As explained below {Theorem~\ref{thm:Q0}}, this
suffices to prove the entire proposition. For $i, b \ge0$,
\begin{align*}
Q_i^{(b)}=H_i^{(b)}-(H_i^{(b)}-Q_i^{(b)})&=
H_i^{(b)}-\frac1{t}H_{1,i+2}^{(b)}
  \hskip 65mm \hbox{by~\eqref{H1}}\\
&=H_i^{(b)}-\frac1{t}\Big(H_{i+2}^{(b)}-tQ_{i+1}^{(b-1)}-H_{2,i+2}^{(b)}\Big)
\hskip 12mm \hbox{by~\eqref{Hsum} and~\eqref{H0}}\\
&=Q_{i+1}^{(b-1)}+H_i^{(b)}-\frac1{t}H_{i+2}^{(b)}+\sum_{r\geq 0}z_r(r+1)H_{i+r+2}^{(b)}\hskip 10mm \hbox{by~\eqref{H2}}.
\end{align*}
By convention, $Q_{i+1}^{(-1)}=0$. Recall that
$Q^{(0,b)}(x,0)=\sum_{i\geq 0}x^iQ_i^{(b)}$. Recall the definition of $H^{(b)}(x)$ in~\eqref{HY}, and let
$A(x):= 1-\frac1{tx^2}+\sum_{r\geq 0}z_r(r+1)\bx^{r+2}$. Multiplying
the above identity by $x^i$, and summing over $i\ge 0$, we get
\begin{align*}
  Q^{(0,b)}(x,0)&=[x^{\geq}]\left(\bx
                  Q^{(0,b-1)}(x,0)+A(x) H^{(b)}(x) \right)
\\
&=[x^{\geq}]\left(\bx
                  Q^{(0,b-1)}(x,0)+ A(x)\frac{Y_1^{b+1}}{tx}
  \right).
\end{align*}
The case $y=0$ of {Theorem~\ref{thm:Q0}} then follows by induction on $b\ge 0$, due to the property, already used in Section~\ref{sec:counting}, that $[x^\geq](\bx[x^{\geq}] G(x))
=[x^{\geq}](\bx G(x))$ for any series $G$.
\end{proof}

\begin{remark}
There is an analogy between the combinatorial proof
  given above and an argument used by Bouttier
and Guitter in~\cite[Sec.~3.3]{bouttier2012planar}. Their aim is 
to determine the \gf\ $M_i$ of rooted planar maps with outer degree~$i$, with $t$ recording the number of edges and $z_r$ the number of inner faces of degree $r$ (the notation is ours). 
They consider the \gf\ $N_i$ of a larger class: rooted planar maps of
outer degree~$i$  with an additional marked
vertex $v$ (with the condition that the root-vertex minimizes the distance to $v$ among all outer   vertices).
The \gf\ $N_i$ is easy to compute using a bijection with
certain labelled trees (called \emm mobiles,). Then they
express $M_i$ as $N_i-(N_i-M_i)$, and determine 
$N_i-M_i$ using a local operation consisting
in ``opening'' the first edge on the leftmost geodesic path from the
root-vertex to the marked vertex (Figure~6 in~\cite{bouttier2012planar}). 

Similarly, to determine the \gf\ $Q_i^{(b)}$ of bipolar orientations, we consider the larger class of  \emm marked, bipolar orientations counted by $H_i^{(b)}$.
We express $Q_i^{(b)}$ as $H_i^{(b)}-(H_i^{(b)}-Q_i^{(b)})$. 
Then  $H_i^{(b)}$ is computable due to the bijection with tandem walks in the upper half-plane, 
while $H_i^{(b)}-Q_i^{(b)}$ is determined  via the local operation
consisting in ``sliding'' the lower left outer edge to the
  right boundary (Figure~\ref{fig:local}, left).
\end{remark}

\subsection{The expression for $\boldsymbol{Q^{(a,b)}(1,1)}$}
\label{sec:prop_alg_combi}
We will now give a combinatorial proof of {Theorem~\ref{thm:alg}}
using  the  involution $\sigma$   on marked
bipolar orientations defined in Definition~\ref{def:symmetries}, and 
 illustrated in Figure~\ref{fig:involutions}.  
We recall the expression of {Theorem~\ref{thm:alg}} here, but  for convenience 
we exchange the roles of the indices $(i,j)$ and $(a,b)$: for $i, j \ge 0$,
\beq\label{eq:Q11bis}
Q^{(i,j)}(1,1)=\frac{W}{t}\cdot\sum_{a=0}^i\A_a\cdot\sum_{b=0}^jW^b,
\eeq
where $W=Y_1(1)$ satisfies~\eqref{W-eq} 
and $\A_a$ is the series in $W$ and the $z_r$'s defined 
by~\eqref{Ai-def}.

Let $\cQ^{(i,j)}$ be the family of tandem walks 
starting at $(i,j)$ and staying in the quadrant. 
For $a,b\geq 0$, let $\cH^{b\rightarrow a}$ be the 
family of tandem walks starting at $(0,b)$, staying in the upper half-plane $\{y\geq 0\}$, reaching the $x$-axis at least once, and ending on the line $\{y=a\}$.

Recall from Section~\ref{sec:KMSW} that for a walk $w$, the signature
of the marked bipolar orientation $\Phi(w)$, where $\Phi$ is the KMSW
bijection, is given by~\eqref{sign}.
Recall also the definition of the involution $\sigma$  on marked
bipolar orientations (Definition~\ref{def:symmetries}).

\begin{Proposition}\label{prop:invol} 
 The mapping {$\Phi^{-1}\circ\sigma\circ \Phi$} is an involution on 
non-embedded tandem walks (seen as sequences of   steps),
which exchanges $a=\xstart-\xmin$ and  $d=\yend-\ymin$, while preserving  $b=\ystart-\ymin$ and $c=\xend-\xmin$. It also preserves
the length, the number of SE steps, and the number of 
face steps of each level $r$.

Upon embedding walks appropriately, it induces a bijection between 
$\cQ^{(i,j)}$ and\break $\bigcup_{0\leq a\leq i}\bigcup_{0\leq b\leq
  j}\cH^{b\rightarrow a}$,  preserving the same statistics.
\end{Proposition}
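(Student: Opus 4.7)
The plan is to reduce the proposition to the properties of $\sigma$ already recorded in Definition~\ref{def:symmetries} together with the KMSW bijection $\Phi$ of Theorem~\ref{thm:KMSW}. First, since $\Phi$ is a bijection on non-embedded tandem walks and $\sigma$ is an involution on marked bipolar orientations, the composition $\psi:=\Phi^{-1}\circ\sigma\circ\Phi$ is automatically an involution on non-embedded tandem walks. The signature effect $(a,b;c,d)\mapsto(d,b;c,a)$ on orientations is exactly the content of Definition~\ref{def:symmetries}, so via the dictionary~\eqref{sign} between signatures and the quantities $(\xstart-\xmin,\ystart-\ymin,\xend-\xmin,\yend-\ymin)$ of an embedded walk, $\psi$ swaps the first and fourth of these quantities and preserves the other two.

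Next I would check preservation of the step statistics by $\sigma$, and transport them through $\Phi$ using Theorem~\ref{thm:KMSW}. The involution $\sigma$ reflects the drawing and reverses only plain edges; in particular plain edges stay plain, dashed edges stay dashed, vertices go to vertices (with their ``plain'' status preserved, since it is determined by which boundary arcs a vertex lies on, and $\sigma$ permutes those arcs in the predictable way visible in Figure~\ref{fig:involutions}), and inner faces go to inner faces of the same degree. By Theorem~\ref{thm:KMSW}, the length of the walk equals the number of plain edges minus one, SE steps correspond to plain vertices, and face steps of level $r$ correspond to inner faces of degree $r+2$, so $\psi$ preserves length, the number of SE steps, and the number of face steps of each level $r$.

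Finally I would translate these statements into the claimed bijection by matching embedding conventions. Given a non-embedded walk $w$ with signature $(a,b,c,d)$, formula~\eqref{sign} gives it a canonical embedding as a quadrant walk from $(a,b)$ to $(c,d)$ touching both axes, and also a canonical embedding starting at $(0,b)$ with $\ymin=0$, which is then a walk in $\cH^{b\to d}$. A walk in $\cQ^{(i,j)}$ is exactly a non-embedded walk whose signature $(a,b,c,d)$ satisfies $a\le i$ and $b\le j$ (so that shifting the start from $(a,b)$ to $(i,j)$ keeps the walk in the quadrant), while $\bigcup_{0\le a\le i}\bigcup_{0\le b\le j}\cH^{b\to a}$ corresponds bijectively to non-embedded walks whose signature satisfies $d\le i$ and $b\le j$. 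Since $\psi$ swaps the first and fourth entries of the signature and fixes the second, it maps the first family onto the second, yielding the stated bijection. The main point requiring a little care, rather than a true obstacle, is verifying that the plain/dashed status of edges and vertices is genuinely preserved by $\sigma$ (including for the outer boundary arcs that get permuted by the reflection), which I would do by explicitly tracking what happens to the four boundary paths of a marked orientation under $\sigma$, using the rectangular drawing convention of Figure~\ref{fig:involutions}.
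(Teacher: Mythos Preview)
Your proposal is correct and follows essentially the same route as the paper's own proof, which simply says that the first part ``directly follows from the properties of $\Phi$ and $\sigma$'' (citing Theorem~\ref{thm:KMSW}, Eq.~\eqref{sign}, and Definition~\ref{def:symmetries}) and then handles the second part by the same re-embedding argument you describe. Your version is somewhat more explicit in spelling out why $\sigma$ preserves plain edges, plain vertices, and inner face degrees, but this is exactly the content the paper leaves implicit in its one-line appeal to those references.
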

\begin{proof}
The first part directly follows from the properties of $\Phi$ and
$\sigma$ (see Theorem~\ref{thm:KMSW}, Eq.~\eqref{sign} and Definition~\ref{def:symmetries}).

For the second part, we simply fix $i$ and $j$ and restrict
$\Phi^{-1}\circ\sigma\circ \Phi$ to paths $w$ such that
$a=\xstart-\xmin\le i$ and $b=\ystart-\ymin\le j$. We embed them so
that they start at $(i,j)$: then they are exactly the walks of
$\cQ^{(i,j)}$. Then we embed the walks obtained by applying
$\Phi^{-1}\circ\sigma\circ \Phi$ so that they start at $(0,b)$. This
gives the announced result, illustrated by
Figure~\ref{fig:sigma_walks}.
\end{proof}

\begin{figure}[htb]
\begin{center}
\includegraphics[width=12cm]{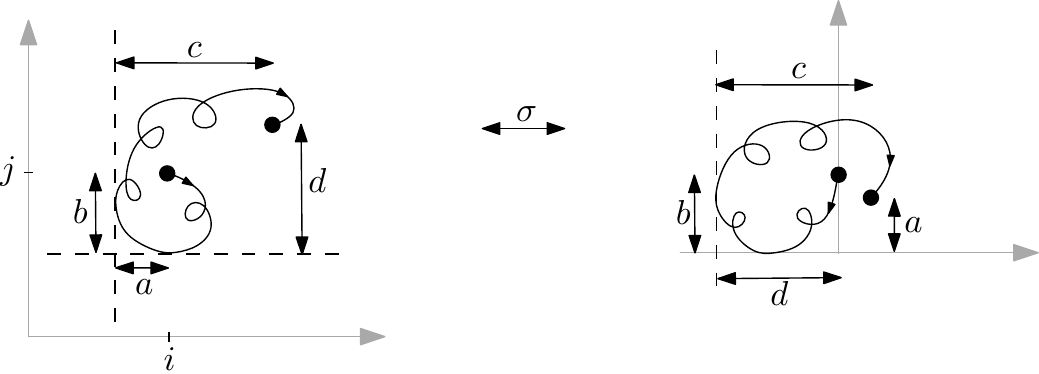}
\end{center}
\caption{The bijection of Proposition~\ref{prop:invol}
  (derived from $\sigma$) between $\cQ^{(i,j)}$ and $\bigcup_{0\leq a\leq i}\bigcup_{0\leq b\leq     j}\cH^{b\rightarrow a}$.
}
\label{fig:sigma_walks}
\end{figure}

Due to the above proposition, in order to prove~\eqref{eq:Q11bis} it now suffices to prove a half-plane result dealing with $\cH^{b\rightarrow
  a}$. But half-plane problems are in essence problems of walks
  on a half-line, and hence much simpler and perfectly understood.

\begin{Lemma}\label{lem:interpretation}
  For $a,b\geq 0$ the \gf\
  of $\cH^{b\rightarrow a}$ (with $t$ recording the   length and~$z_r$ 
  the number of face steps of level $r$) is $\frac{W}{t}\A_aW^b$, with $\A_a$ defined by~\eqref{Ai-def}.  
\end{Lemma}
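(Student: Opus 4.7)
The plan is to cut each walk in $\cH^{b\to a}$ at its last visit to the $x$-axis and compute the two resulting pieces separately. Any walk $w \in \cH^{b\to a}$ factors uniquely as $w = w_1 w_2$, where $w_1$ runs from $(0,b)$ to the $x$-axis while staying in $\{y\ge 0\}$, and $w_2$ runs from the $x$-axis (at the last visit) to height $a$, strictly above the $x$-axis after its starting point (and empty when $a=0$). Since the step set is translation invariant in $x$, the generating function of $\cH^{b\to a}$ factors as $P^{(b)}\,\tilde R^{(a)}$, where $P^{(b)}$ counts walks of the first type (not tracking the final $x$-coordinate) and $\tilde R^{(a)}$ counts walks of the second type, with $\tilde R^{(0)}=1$ by convention.

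For $P^{(b)}$, setting $x=1$ in the identity $Y_1^{b+1} = tx H^{(b)}(x)$ from~(\ref{HY}) gives $W^{b+1}=tP^{(b)}$, so $P^{(b)}=W^{b+1}/t$ and the lemma reduces to the claim $\tilde R^{(a)}=A_a$. For $a\ge 1$ I would decompose $\tilde R^{(a)}$ by its first step, which must strictly increase the height and hence is a face step $(j-r,j)$ of some level $r\ge 1$ with $1\le j\le r$, followed by a walk from height $j$ to height $a$ in $\{y\ge 1\}$; shifting down by $1$ and forgetting the $x$-coordinate, the second piece becomes a one-dimensional walk from height $j-1$ to height $a-1$ in $\{y\ge 0\}$. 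Writing $L^{(s,e)}$ for the generating function of such one-dimensional walks and $Z_{>s}:=\sum_{r>s}z_r$, summation yields
\[
\tilde R^{(a)} = t\sum_{s\ge 0} Z_{>s}\, L^{(s,a-1)}.
\]

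I would next compute $\cL_s(u):=\sum_{e\ge 0} L^{(s,e)} u^e$ by the standard kernel method for walks on a half-line. A last-step decomposition produces
\[
\cL_s(u)\,K(u) = u^{s+1} - t\cL_s(0),\qquad K(u):=u-t-tu\phi(u),\ \ \phi(u):=\sum_{j\ge 0}u^j Z_{\ge j},
\]
and~(\ref{W-eq}) is equivalent to the equation $K(W)=0$. Cancelling the root gives $\cL_s(0)=W^{s+1}/t$ and $\cL_s(u)=(u^{s+1}-W^{s+1})/K(u)$. A direct polynomial division yields $K(u)=(u-W)P(u)$ with $P(u)=1-t\sum_{i,k\ge 0} u^i W^k Z_{\ge i+k}$. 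Plugging this into the expression for $\tilde R^{(a)}$ and re-summing the geometric factors via the change of indices $(s,k)\mapsto(s-k,k)$ identifies
\[
\tilde R^{(a)} = \frac{t}{W}[u^a]\frac{T(u)}{P(u)}, \qquad T(u):=uW\sum_{i,k\ge 0}u^iW^kZ_{>i+k},
\]
with $T(u)$ matching exactly the denominator in~(\ref{Ai-def}).

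It then suffices to prove the algebraic identity $WP(u)+tT(u)=t$. Expanding both products and using $Z_{\ge m}-Z_{\ge m+1}=z_m$, the $u^i$-coefficients for $i\ge 1$ cancel in a telescoping fashion, while the constant term in $u$ reduces via~(\ref{W-eq}) to $W - t\sum_{r\ge 0}z_r(W+\cdots+W^{r+1})=t$. This identity rewrites as $t/(WP(u))=1/(1-T(u))$, whence $\tilde R^{(a)}=[u^a]T(u)/(1-T(u))=A_a$ for $a\ge 1$, completing the proof. I expect the main obstacle to be precisely this identity $WP(u)+tT(u)=t$; it is the linchpin tying the half-line walk computation to the compact form $A_a=[u^a]\,1/(1-T(u))$, but it fortunately reduces in a transparent manner to the defining equation of $W$.
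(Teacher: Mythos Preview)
Your proof is correct. It differs from the paper's in the choice of factorization point: you cut each walk at its \emph{last} visit to the $x$-axis, obtaining $P^{(b)}=W^{b+1}/t$ and the strictly positive excursion $\tilde R^{(a)}$, whereas the paper cuts at the \emph{first} visit, obtaining $W^b$ (walks hitting the axis exactly once) and the half-plane series $H^{0\to a}$. The paper then computes $\sum_a H^{0\to a}u^a$ in one shot by the kernel method and simplifies $K(1,u)$ using the factorization~\eqref{Sdiff} to land directly on~\eqref{1D-res-bis}. You instead peel off the first step of $\tilde R^{(a)}$, which forces you to compute the half-line series $\cL_s(u)$ for all starting heights~$s$ and then resum; your identity $WP(u)+tT(u)=t$ plays exactly the role that the factorization of $K(1,u)$ plays in the paper. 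The two arguments are equivalent in spirit and length, though the paper's route is a bit more direct since it avoids the extra first-step decomposition and the associated resummation. Your approach has the minor advantage that the identity $WP(u)+tT(u)=t$ reduces transparently to the defining equation of~$W$, making that step self-contained.
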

\begin{proof}
Each walk of $\cH^{b\rightarrow a}$ can be uniquely factored into a
walk of $\cH^{b\rightarrow 0}$ hitting the $x$-axis \emm only once,,
followed by a walk of $\cH^{0\rightarrow a}$. With the notation
  used at the beginning of Section~\ref{sec:bij_proofs_0b},
walks of $\cH^{b\rightarrow 0}$ hitting the $x$-axis only once are counted by
$txH^{(b-1)}(x)=Y_1(x)^{b}$ if we keep track of the abscissa of the endpoint (with the variable $x$), and thus
by $W^b= Y_1(1)^b$ if we do not. It thus suffices to prove that
half-plane walks going from the origin to ordinate $a$ are counted by
$\frac{W}{t} \A_a$. However, it is a classical one-dimensional
result~\cite{banderier-flajolet,bousquet-petkovsek-recurrences,gessel-factorization},
obtained in one line using the so-called \emm kernel method,, that their \gf \ is
\beq\label{1D-res}
 \sum_{a\ge 0} H^{0\rightarrow a} u^a= \frac{1-\bu W}{K(1,u)}.
\eeq
However, we have 
\begin{align*}
  K(1,u)&= K(1,u)-K(1,W)
\\ &= t( S(1,W)-S(1,u))
\\ &= t (u-W)\left( \frac{\bu}{W} -\sum_{i, j, k \ge 0} z_{i+j+k+1} W^j u^k\right)
\hskip 10mm \hbox{by~\eqref{Sdiff}}.
\end{align*}
Combined with~\eqref{1D-res}, this gives
\beq\label{1D-res-bis}
  \sum_{a\ge 0} H^{0\rightarrow a} u^a=\frac W t \cdot \frac 1 {1-uW
    \sum_{j,k\ge 0} W^j u^k\sum_{r>j+k}z_r},
\eeq
which concludes the proof of the lemma and of {Theorem~\ref{thm:alg}}. We have relied on a classical one-dimensional result to obtain~\eqref{1D-res-bis}, but we also give a purely combinatorial proof of the latter identity in Section~\ref{sec:app1}.
\end{proof}

\begin{remark}
Let us  return to the \emm double-tandem walks, defined just before Section~\ref{sec:main_asymp}. We believe  that the first statement of Proposition~\ref{prop:invol} also holds for these walks, or more precisely, that there exists a length-preserving involution on (non-embedded) double-tandem walks 
that exchanges  $a=\xstart-\xmin$ and $d=\yend-\ymin$, 
while preserving $b=\ystart-\ymin$ and $c=\xend-\xmin$. Moreover, this involution would preserve the total number of steps in $\{N,W,SE\}$.

We have tested this conjecture numerically as follows. First, we embed walks canonically so as to have $x_{\min}=y_{\min}=0$. That is,  walks are now confined to the quadrant and touch both boundaries. We let $\widetilde{D}[a,b,c,d;\ell,m]$ be the number of such walks, with double-tandem steps, that go   from $(a,b)$ to $(c,d)$, have $\ell$ steps in $\{N,W,SE\}$ and $m$ steps in $\{S,E,NW\}$. Our conjecture  translates into
\begin{equation}\label{eq:symmetry}
\widetilde{D}[a,b,c,d;\ell,m]=\widetilde{D}[d,b,c,a;\ell,m].
\end{equation}
In order to compute these numbers, we let $D[a,b,c,d;\ell,m]$ be the corresponding numbers for  quadrant walks that do not necessarily touch both boundaries. Then
  \begin{multline*}
    \widetilde{D}[a,b,c,d;\ell,m]= D[a,b,c,d;\ell,m]-D[a-1,b,c-1,d;\ell,m]
\\-D[a,b-1,c,d-1;\ell,m]+D[a-1,b-1,c-1,d-1;\ell,m].
\end{multline*}
The reflection principle~\cite{gessel-zeilberger}, or equivalently  the approach of~\cite{mbm-mishna} for quadrant walks with small steps (see in particular Prop.~10), gives
\begin{multline*}
  D[a,b,c,d;\ell,m] = \\
  [x^c y^d s^\ell t^m ]
  \frac{x^{a+1} y^{b+1} -\bx^{a+1} y^{a+b+2} +\bx^{a+b+2}y^{a+1}-\bx^{b+1}\by^{a+1} +x^{a+1} \by^{a+b+2}-x^{a+b+2}\by ^{b+1}}{ 1-s(\bx+y+x\by) -t(x+\by +\bx y)},
\end{multline*}
 which yields   an efficient computation method
 to test~\eqref{eq:symmetry}. It also yields a formula for $\widetilde{D}[a,b,c,d;\ell,m]$, as a  sum involving multinomial coefficients. We have not tried to derive~\eqref{eq:symmetry} from it,  but this may  be possible.

For $a=b=0$, this involution would give a new bijective proof for the equi-enumeration of double-tandem walks in the quadrant ending anywhere and double-tandem walks in the upper half-plane ending on the $x$-axis (see the final remark in Section~\ref{sec:anywhere}). It would also 
extend the involution  of Proposition~\ref{prop:invol} for $p=1$ (by taking all steps in  $\{N,W,SE\}$).

We have not found any counterpart of this conjecture  for $p\geq 2$.
\end{remark}

\section{Asymptotic enumeration}
\label{sec:asymptotic}

This section is devoted to the proof of Theorem~\ref{thm:asympt}
on the asymptotic enumeration of $p$-tandem walks with prescribed starting point $(a,b)$ and endpoint $(c,d)$, with a  weight $z_r$ for face steps of level $r$ (for $0\leq r\leq p$). 
We proceed by a reduction to a random walk model with zero drift. We
first compute (due to the expression for $Q^{(a,b)}(1,1)$ given in Theorem~\ref{thm:alg}) 
an asymptotic estimate of the probability that a random walk starting from $(a,b)$ stays in   
the quadrant at least up to time $n$.   
Then we adapt recent results of Denisov and
Wachtel~\cite{denisov2015random} to our setting 
to derive an asymptotic estimate 
of the probability that a random walk starting at $(a,b)$ 
stays in the quadrant at least up to time $n$ {\emph{and}} ends at a prescribed
point $(c,d)$. The  reason why an adaptation is required is that the
results of Denisov and Wachtel {require} an aperiodicity condition
which does not hold for $p$-tandem walks.

An alternative approach would be  to apply analytic combinatorics in several variables (ACSV)~\cite{pemantle2013analytic} to the explicit expression for $Q^{(a,b)}(x,y)$ given in Proposition~\ref{prop:Qab} as the nonnegative part of an algebraic series; this would yield, in theory,  full asymptotic expansions, but these techniques are highly involved, especially with the complicated algebraic expression that we have. If $a=0$, a more convenient starting point would be Corollary~\ref{cor:Qthird}, where $Q^{(0,b)}(x,y)$ is expressed in terms of a \emph{rational}  function. For instance, starting from the
  expression~\eqref{Q0000} for $Q^{(0,0)}(0,0)$, Marni Mishna was able to work out the asymptotic number of
  excursions when  all face steps have level $p$, for $p=3$ and $p=4$, and it is possible that this could be
  extended to arbitrary $p$   (personal communication). We refer to~\cite{BoBoMe18,MelczerMishna2016,melczer-wilson} for recent applications of ACSV to the enumeration of walks confined to cones.

In Section~\ref{sec:weighted_asymp} we {state}  our probabilistic
results. {We then derive from them asymptotic estimates} for the  weighted number of tandem walks in Section~\ref{sec:weighted}.
{We finally prove the results of Section~\ref{sec:weighted_asymp}} in Sections~\ref{subsec:computing_V(a,b)} and~\ref{subsec:LLT}.

\subsection{Random tandem walks and discrete harmonic functions}
\label{sec:weighted_asymp}
In this section, we fix a $(p+2)$-tuple  $(z,z_0,\ldots,z_p)$ of nonnegative reals (with $z_p>0$) such that
\beq\label{eq:dist}
z+\sum_{r=0}^p(r+1)z_r=1. 
\eeq
We define  a step distribution in $\zs^2$ by 
\beq\label{distri}
\PP((X,Y)=(i,j)) = 
\begin{cases}
  z, & \hbox{if } (i,j)=(1,-1),
\\
z_r, & \hbox{if } j=i+r \hbox{ and } 0\le j \le r,
\\
0, & \hbox{otherwise.}
\end{cases}
\eeq
To avoid trivialities, we assume throughout the section that $p\ge 1$.
We then consider the \emph{random tandem  walk} that starts at $(a,b)$ and
takes each step independently under the above distribution. The point
reached by the walk after $n$ steps is denoted by $S^{(a,b)}(n)$,
and we let $\tau^{(a,b)}\in\mathbb{N}\cup\{\infty\}$ denote the first time  
{that} the random walk exits the quadrant.   
The \emm drift, $(\EE(X), \EE(Y))$ of this walk is given by
\[
\EE(X)=-\EE(Y)=z-\sum_{r=0}^pz_r\sum_{i=0}^ri=z-\sum_{r=0}^pz_r\binom{r+1}{2}.
\]
Hence the {drift vanishes} if and only if
\beq\label{eq:drift}
z=\sum_{r=1}^pz_r\binom{r+1}{2}.
\eeq

\begin{Lemma}\label{lem:covari}
Under the zero-drift assumption~\eqref{eq:drift}, the covariance
matrix of the step distribution is  
\begin{equation}
\label{eq:covariance_matrix}
M=\left(\begin{array}{cc}\EE (X^2)&\EE (XY)\\\EE (XY)&\EE (Y^2)\end{array}\right)=\sigma^2\left(\begin{array}{cc}2&-1\\-1&2\end{array}\right),
\end{equation}
where 
\beq\label{eq:sigma}
\sigma^2=\sum_{r=1}^pz_r\binom{r+2}{3}.
\eeq
\end{Lemma}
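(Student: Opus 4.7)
The plan is to compute the three entries of the covariance matrix directly from the distribution~\eqref{distri}, and then apply the zero-drift relation~\eqref{eq:drift} to cast the result in the claimed form. An obvious symmetry shortens the work: a face step of level $r$ equals $(j-r,j)$ with probability $z_r$ for $0\le j\le r$, and the substitution $j\mapsto r-j$ swaps its two coordinates; together with the SE step, for which $X^2=Y^2=1$, this immediately gives $\EE(X^2)=\EE(Y^2)$, matching the diagonal symmetry of~\eqref{eq:covariance_matrix}. So only $\EE(X^2)$ and $\EE(XY)$ need to be computed.

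The explicit sums, using $\sum_{j=0}^r j = r(r+1)/2$ and $\sum_{j=0}^r j^2 = r(r+1)(2r+1)/6$ (and hence $\sum_{j=0}^r j(j-r) = -(r-1)r(r+1)/6$), give
\[
\EE(X^2)=z+\sum_{r=1}^p z_r\,\frac{r(r+1)(2r+1)}{6},\qquad \EE(XY)=-z-\sum_{r=1}^p z_r\,\frac{(r-1)r(r+1)}{6}.
\]

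I would then substitute $z=\sum_{r\ge 1} z_r\binom{r+1}{2}=\sum_{r\ge 1} z_r r(r+1)/2$ coming from~\eqref{eq:drift} and factor $r(r+1)$ out of each summand. For $\EE(X^2)$, the resulting bracket is $\tfrac12+\tfrac{2r+1}{6}=\tfrac{r+2}{3}$, yielding $\sum_r z_r\, r(r+1)(r+2)/3 = 2\sigma^2$; for $-\EE(XY)$, the bracket is $\tfrac12+\tfrac{r-1}{6}=\tfrac{r+2}{6}$, yielding $\sum_r z_r\, r(r+1)(r+2)/6 = \sigma^2$. This is exactly~\eqref{eq:covariance_matrix}, with $\sigma^2=\sum_r z_r \binom{r+2}{3}$ as in~\eqref{eq:sigma}. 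The argument is pure elementary calculation, and I anticipate no conceptual obstacle; the only care needed is in the bookkeeping when combining the SE contribution with the face-step sums through~\eqref{eq:drift}.
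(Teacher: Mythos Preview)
Your proposal is correct and follows essentially the same approach as the paper: a direct evaluation of the moments from~\eqref{distri}, followed by the substitution of~\eqref{eq:drift}. The only difference is bookkeeping: the paper leaves the inner sums unevaluated and writes $z+\sum_r z_r\sum_i i^2=\sum_r z_r\sum_i(i+i^2)=2\sum_r z_r\binom{r+2}{3}$ and $z+\sum_r z_r\sum_i i(r-i)=\sum_r z_r\sum_i i(r+1-i)=\sum_r z_r\binom{r+2}{3}$, landing on the binomial coefficients in one step rather than passing through the closed forms $r(r+1)(2r+1)/6$ and $(r-1)r(r+1)/6$.
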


\begin{proof}
Using~\eqref{eq:drift}, we compute
\[
\EE (X^2)=\EE (Y^2)=z+\sum_{r=0}^pz_r\sum_{i=0}^ri^2=\sum_{r=0}^pz_r\sum_{i=0}^r(i+i^2)=2\sum_{r=0}^pz_r\binom{r+2}{3}=2\sigma^2,
\]
and
\[
-\EE (XY)=z+\sum_{r=0}^pz_r\sum_{i=0}^ri(r-i)=
\sum_{r=0}^pz_r\sum_{i=0}^ri(r+1-i)=\sum_{r=0}^pz_r\binom{r+2}{3}=\sigma^2.
\]
\end{proof}

Our first result is an estimate for the probability that the walk   
remains in the quadrant until time $n$ at least.
\begin{Proposition}\label{prop:asympz_enum}
Let $a, b \ge 0$. Under the zero-drift assumption~\eqref{eq:drift}, 
\[
\mathbb{P}\left(\tau^{(a,b)}>n\right)\sim\frac1{4\sqrt{\pi}}\, V(a,b)\,n^{-3/2}
\qquad \hbox{ as }  n\to\infty,
\]
where the constants $V(a,b)$ have \gf \ 
\beq
\label{eq:expression_H}
{\mathcal V(u,v):=}\sum_{a,b\geq 0}V(a,b)u^av^b=\frac{2}{\sigma}\cdot\frac{1-uv}{(1-u)^3(1-v)^3\Lambda(u)},
\eeq
with
\[
 \Lambda(u)=\sum_{k=0}^{p-1}u^k\sum_{r=k+1}^pz_r\binom{r-k+1}{2}
\]
and $\sigma >0$ defined by~\eqref{eq:sigma}.
\end{Proposition}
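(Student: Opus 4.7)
The plan is to perform singularity analysis on the generating function $F_{a,b}(u) := \sum_{n\geq 0}\PP(\tau^{(a,b)}>n)\,u^n$. Since a walk of length $n$ with $n_{\mathrm{SE}}$ SE-steps and $n_r$ level-$r$ face steps has probability $z^{n_{\mathrm{SE}}}\prod_r z_r^{n_r}$, matching the weight $t^n\prod_r z_r^{n_r}$ assigned to that walk in $Q^{(a,b)}(1,1)$ requires the substitution $t\mapsto uz$, $z_r\mapsto z_r/z$. Applying this to Theorem~\ref{thm:alg} gives
\[
F_{a,b}(u) \;=\; \frac{W}{uz}\,G^{(a)}(W)\,H^{(b)}(W),
\]
where $G^{(a)}(W) := \sum_{i=0}^a\mathcal A_i(W)$, $H^{(b)}(W) := \sum_{j=0}^b W^j$, and $W$ is the solution of $W = uf(W)$ with $f(W) := z + \sum_r z_r(W + \cdots + W^{r+1})$; the series $A(\xi,W) := \sum_{i\geq 0}\xi^i\mathcal A_i(W)$ takes the explicit form $A(\xi,W) = 1/(1-\Psi(\xi,W)/z)$ with $\Psi(\xi,W) := \xi W\sum_{j,k\geq 0}\xi^jW^k\sum_{r>j+k}z_r$.

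First I analyze the behaviour of $W$ near $u=1$. The normalization~\eqref{eq:dist} gives $f(1)=1$, the zero-drift condition~\eqref{eq:drift} gives $f'(1)=1$, and direct computation yields $f''(1) = 2\sigma^2$ with $\sigma$ as in~\eqref{eq:sigma}. Inverting $u = W/f(W)$ near $W=1$ produces a square-root singularity $W(u) = 1 - \sqrt{1-u}/\sigma + O(1-u)$. Flajolet--Sedgewick's smooth implicit-function schema applies --- aperiodicity follows from $[u^1]W = z > 0$ together with $[u^2]W = z(1-z) > 0$ --- so $W$ continues analytically to a $\Delta$-domain at $u=1$, which is the unique singularity on $|u|=1$. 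Setting $\eta := 1-W \sim \sqrt{1-u}/\sigma$ and Taylor-expanding each factor of $F_{a,b}(u)$ to first order in $\eta$ yields
\[
F_{a,b}(u) \;=\; F_{a,b}(1) - \frac{\sqrt{1-u}}{z\sigma}\Bigl[(G+G')(b+1) + G\tbinom{b+1}{2}\Bigr] + O(1-u),
\]
where $G := G^{(a)}(1)$ and $G' := G^{(a)\prime}(1)$. The standard transfer theorem $[u^n]\sqrt{1-u}\sim -n^{-3/2}/(2\sqrt\pi)$ then gives $\PP(\tau^{(a,b)}>n) \sim \widetilde V(a,b)\,n^{-3/2}/(4\sqrt\pi)$ with
\[
\widetilde V(a,b) \;:=\; \frac{2}{z\sigma}\Bigl[(G+G')(b+1) + G\tbinom{b+1}{2}\Bigr].
\]

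Finally I identify $\widetilde V$ with the $V$ of the statement. Summing over $a$ uses $\sum_a u^a G^{(a)}(1) = A(u,1)/(1-u)$ and $\sum_a u^a G^{(a)\prime}(1) = A_W(u,1)/(1-u)$; summing in $b$ then produces
\[
\sum_{a,b\geq 0}\widetilde V(a,b)\,u^av^b \;=\; \frac{2}{z\sigma(1-u)(1-v)^3}\bigl[A(u,1) + (1-v)A_W(u,1)\bigr].
\]
Comparing with~\eqref{eq:expression_H}, it suffices to prove the two identities $A(u,1) = z/((1-u)\Lambda(u))$ and $A_W(u,1) = zu/((1-u)^2\Lambda(u))$. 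The first is equivalent to $\Psi(u,1) = z - (1-u)\Lambda(u)$: reindexing yields $\Psi(u,1) = \sum_{m\geq 1}u^m\sum_{r\geq m}(r-m+1)z_r$, and telescoping $(1-u)\Lambda(u)$ via $\binom{r-k+2}{2} - \binom{r-k+1}{2} = r-k+1$ leaves the constant term $\lambda_0 = \sum_{r\geq 1}\binom{r+1}{2}z_r = z$, supplied by zero drift. The second identity reduces to $\partial_W\Psi(u,W)|_{W=1} = u\Lambda(u)$ and follows from the parallel identity $(r-k) + \binom{r-k}{2} = \binom{r-k+1}{2}$. Substituting these back reproduces~\eqref{eq:expression_H} exactly. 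The \textbf{main technical point} is carefully tracking the factor $1/z$ introduced by the substitution $z_r\mapsto z_r/z$ in the definition of $A(u,W)$, and carrying out the two telescoping identities with their boundary terms consistently; the remainder of the argument is routine singularity analysis.
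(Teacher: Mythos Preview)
Your proof is correct and follows essentially the same route as the paper: singularity analysis of the algebraic series $W$ via the smooth inverse-function schema (with $t_c=1$ and a square-root singularity $W=1-\sqrt{1-u}/\sigma+O(1-u)$), followed by the computation of $A(u,1)$ and $\partial_W A(u,1)$ to identify the generating function of the constants --- your telescoping identities for $\Psi(u,1)$ and $\Psi_W(u,1)$ are exactly the calculations the paper performs. One inconsequential slip: $[u^2]W=z\sum_r z_r$, not $z(1-z)$, but positivity (and hence aperiodicity of $W$) holds either way.
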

We will prove Proposition~\ref{prop:asympz_enum} in
Section~\ref{subsec:computing_V(a,b)}. The proof consists in applying
Flajolet and Odlyzko's \emm singularity analysis,~\cite{flajolet-odlyzko,flajolet-sedgewick} to  the  expression for $Q^{(a,b)}(1,1)$ given in Theorem~\ref{thm:alg}. 

\begin{remarks}
{\bf 1.}  We admit that the factor $2$ in the expression for
    $V(a,b)$ looks strange in sight of the denominator $4$ occurring
    in the estimate of $\mathbb{P}\left(\tau^{(a,b)}>n\right)$. However, this is the    right convention in terms of the limit behaviour of $V(a,b)$. See the discussion in Section~\ref{sec:proba-harmonic}.

\medskip
\noindent
{\bf 2.} By considering the first step of a tandem walk
  starting at $(a,b)$, we see that the function $V(a,b)$ of Proposition~\ref{prop:asympz_enum} has to satisfy 
\beq
\label{eq:recurrence_relation_V}
     V(a,b)=z\ \!V(a+1,b-1)+\sum_{r=0}^pz_r\sum_{i=0}^rV(a-i,b+ r-i),
\eeq
 with the convention that $V(a,b)=0$ if $a<0$ or $b<0$.  In other words,
$V(a,b)$ is equal to the expected value of $V(\cdot, \cdot)$ over
the neighbours of $(a,b)$ after one random step, i.e.,
\begin{equation*}
     V(a,b)=\EE \left(V((a,b)+(X,Y) )\mathbbm 1_{\tau^{(a,b)}>1}\right).
\end{equation*}
Such a function is called a \emph{discrete harmonic function} for the walk model
(here  random $p$-tandem walks in the quadrant). We discuss more
aspects of the above property in Section~\ref{sec:Tutte}.
\medskip

\noindent{\bf 3.} Except in a few particular cases, it is very rare to obtain an explicit expression for a discrete harmonic function (or for its generating function). The most remarkable features of the above result
are the following:
\begin{itemize}
\item it deals with a random walk with large steps, thus going beyond the results of~\cite{Ra-14}  which only apply to walks with steps in $\{-1,0,1\}^2$,
\item the generating function~\eqref{eq:expression_H} is rational, and moreover its denominator factors as a product of two univariate polynomials: this implies  that $V(a,b)$ admits  a polynomial-exponential expression.
\end{itemize}
 Let us give two examples, with $z_r=0$ unless $r=p$, for some fixed $p$. Then the conditions~\eqref{eq:dist} and~\eqref{eq:drift} force
\[
  z_p= \frac{2}{(p+1)(p+2)} \qquad \hbox{and} \qquad z= \frac p{p+2},
\]
so that $\sigma^2=p/3$. If $p=1$, then $z=z_1=1/3$, and 
\[
  \mathcal V(u,v) =  \frac{6\sqrt 3\ (1-uv)}{(1-u)^3(1-v)^3},
\]
which gives 
\beq\label{Vab1}
  V(a,b)=3\sqrt 3 \ (a+1)(b+1)(a+b+2).
  \eeq
If $p=2$, then $z_2={1}/{6}$, $z=1/2$ and
\[
  \mathcal V(u,v) =  \frac{2\sqrt 6\ (1-uv)}{(1-u)^3(1-v)^3(1+u/3)},
\]
which gives
\beq
\label{eq:expression_V(a,b)_p=2}
V(a,b)=\frac {3\sqrt 6}4 (b+1) \left(
  (a+1)(a+b+2)+\frac a 2 +\frac b 4 + \frac 5 8
  -\frac {2b+1} 8 \left(-\frac{1}{3}\right)^{a+1}
\right).
\eeq
\\
{\bf 4.}  The form of the estimate in Proposition~\ref{prop:asympz_enum} follows from a general result of Denisov and
Wachtel~\cite[Thm.~1]{denisov2015random}, which implies that
for a zero-drift quadrant walk,  $\mathbb{P}(\tau^{(a,b)}>n)\sim \widetilde V(a,b)n^{-q/2}$ 
for some  discrete harmonic function $\widetilde V(a,b)$.
The exponent $q$ is ${\pi}/\arccos(-\rho)$, where $\rho$ is
 the \emm correlation factor,
\[
\rho:=\frac{\EE(XY)}{\sqrt{\EE(X^2)\EE(Y^2)}}.
\]
From Lemma~\ref{lem:covari} we see that $\rho=-1/2$ and $q=3$ for any
tandem step distribution. However, the results of~\cite{denisov2015random} do not seem
to yield any explicit expression for $\widetilde V(a,b)$ (nor of its \gf).\medskip
\\
{\bf 5.} Note that $\Lambda(u)$ has nonnegative coefficients,
  and $\Lambda(1)=\sigma^2$. Hence, as $a,b\to\infty$, $V(a,b)$ admits the asymptotic estimate
\beq\label{eq:asymptotic_V(a,b)}
V(a,b)\sim \frac{2}{\sigma}\cdot\frac1{\Lambda(1)}[u^a][v^b]\frac{1-uv}{(1-u)^3(1-v)^3}=\frac1{\sigma^3}(a+1)(b+1)(a+b+2)\sim\frac{ab(a+b)}{\sigma^3}.
\eeq
The asymptotic behaviour of $V(a,b)$ can be interpreted as a scaling limit of the discrete harmonic function; the limit function $V_\infty(a,b):=ab(a+b)$
is continuous harmonic in the classical sense:
\begin{equation*}
     \EE(X^2)\frac{\partial^{2}V_\infty}{\partial a^2}+2\EE(XY)\frac{\partial^{2}V_\infty}{\partial a\partial b}+\EE(Y^2)\frac{\partial^{2}V_\infty}{\partial b^2}=2\sigma^2\left(\frac{\partial^{2}V_\infty}{\partial a^2}-\frac{\partial^{2}V_\infty}{\partial a\partial b}+\frac{\partial^{2}V_\infty}{\partial b^2}\right)=0,
\end{equation*}
where we have used~\eqref{eq:covariance_matrix}. 
In Section~\ref{sec:proba-harmonic}, we discuss 
the connections between $V$ and  $V_\infty$ further.
\end{remarks}

We now turn to the probability that  $S^{(a,b)}(n)$ reaches the point
$(c,d)$ at time $n$, without having ever left the quadrant.
Recall that this is only possible if $2n\equiv (c-d)-(a-b)$~mod~$\iota$, where  the \emph{periodicity index}
$\iota$ is defined by~\eqref{def:iota}. 

\begin{Proposition}\label{prop:local_limit}
Let $a,b,c,d\in\mathbb{N}$.  
Under the zero-drift assumption~\eqref{eq:drift}, for 
$2n\equiv (c-d)-(a-b)$~{\em mod}~$\iota$ we have
\[
\PP\left(S^{(a,b)}(n)=(c,d),\ \tau^{(a,b)}> n\right) \sim
\frac{\iota}{4\sqrt{3}\pi\sigma^2}\frac{V(a,b)V(d,c)}{n^{4}}
\qquad \hbox{as } n\to\infty,
\]
where $\sigma^2$ is given by~\eqref{eq:sigma}, 
$\iota$ is the periodicity index, and 
 $V(a,b)$ is the discrete harmonic function of Proposition~\ref{prop:asympz_enum}. 
\end{Proposition}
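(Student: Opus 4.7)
The plan is to derive Proposition~\ref{prop:local_limit} from a local limit theorem for random walks in cones with zero drift, in the spirit of Denisov and Wachtel~\cite{denisov2015random}, adapted to the lattice periodicity of our tandem step set. Combined with the tail estimate of Proposition~\ref{prop:asympz_enum}, the natural target is an estimate of the form
\[
\PP\bigl(S^{(a,b)}(n)=(c,d),\ \tau^{(a,b)}>n\bigr)\ \sim\ \frac{c_0\,V(a,b)\,V^*(c,d)}{n^{1+p}},
\]
where $V$ is the discrete harmonic function of Proposition~\ref{prop:asympz_enum}, $V^*$ is the analogous function for the time-reversed walk, $p=\pi/\beta$ is determined by the opening angle $\beta$ of the cone obtained from the quadrant after whitening the covariance matrix, and $c_0$ is an explicit constant.

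The first step is to identify $V^*$. The time-reversed tandem walk, whose step distribution is $-(X,Y)$, becomes the original tandem walk after swapping the two coordinates; this is the walk-level shadow of the involution $\rho$ of Definition~\ref{def:symmetries}, which acts on signatures by $(a,b;c,d)\mapsto(d,c;b,a)$. Consequently $V^*(c,d)=V(d,c)$, which accounts for the symmetric occurrence of $V(a,b)$ and $V(d,c)$ in the target formula.

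Next, Lemma~\ref{lem:covari} provides the geometric input: $\det M=3\sigma^4$ and $\rho=-1/2$. A direct computation shows that whitening by $M^{-1/2}$ sends the quadrant to a wedge of opening angle $\beta=\arccos(-\rho)=\pi/3$, so that $p=\pi/\beta=3$ and the exponent $1+p=4$ already matches the proposition. The prefactor $c_0$ is built from the first Dirichlet eigenfunction $\sin(p\theta)$ on the wedge of angle $\beta$, the Jacobian $\sqrt{\det M}=\sqrt 3\,\sigma^2$ of the whitening, and the normalization of $V$ fixed by Proposition~\ref{prop:asympz_enum}; tracking these three ingredients should yield $c_0=1/(4\sqrt 3\,\pi\,\sigma^2)$ in the aperiodic case.

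Finally, the periodicity of $\cS_p$ (Lemma~\ref{lem:per}) must be handled, since the standard Denisov--Wachtel theorem assumes strong aperiodicity. I would perform a Fourier decomposition on $\zs/\iota\zs$ of the one-step characteristic function: among the $\iota$ Fourier modes, only the trivial one contributes at leading order, and restricting to the allowed residue class $c-d\equiv (a-b)+2n\pmod{\iota}$ concentrates all the mass on a $1/\iota$-fraction of the lattice, producing the extra factor $\iota$ in the constant. The main obstacle I anticipate is precisely this step: re-running the key exit-time and coupling estimates of~\cite{denisov2015random} in a lattice-periodic setting with arbitrarily large tandem steps, while checking that the combinatorial normalization of $V$ fixed by Proposition~\ref{prop:asympz_enum} agrees with the one produced by the probabilistic construction of~\cite{denisov2015random}.
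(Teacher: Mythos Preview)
Your proposal is correct and follows essentially the same route as the paper: adapt the Denisov--Wachtel framework to the periodic tandem setting, use the coordinate-swap symmetry of the step set to identify $V^*(c,d)=V(d,c)$, and extract the exponent $4$ from the wedge angle $\pi/3$. The paper's execution makes the constant explicit by splitting the walk at time $\lfloor n/2\rfloor$, applying the cone-constrained local limit theorem (with the $\iota$ factor) to each half, and then evaluating the resulting Riemann sum $\int_{\mathbb R_+^2} g(x,y)^2\,\mathrm dx\,\mathrm dy=1/(4\sqrt 3)$; this is equivalent to your plan of tracking the eigenfunction, Jacobian, and normalization separately.
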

Again, the form of the above estimate resembles a general formula of
Denisov and Wachtel~\cite[Thm.~6]{denisov2015random}. However, our
result is more precise  because all constants are explicit, and moreover
Theorem~6 in~\cite{denisov2015random} requires a \emm strong
aperiodicity, assumption, which does not hold in general for tandem walks (Lemma~\ref{lem:per}). We will explain how to adapt the main arguments of~\cite{denisov2015random} to our periodic walk in Section~\ref{subsec:computing_V(a,b)}. This is also briefly discussed in~\cite[p.~3]{duraj-wachtel}.

\subsection{Asymptotic enumeration of weighted tandem walks}
\label{sec:weighted}
Our aim here is to derive the asymptotic result of Theorem~\ref{thm:asympt}
from the above probabilistic results, and to make all constants in
this proposition explicit. For convenience, we change the
  notation, and replace
the weights $z_0, \ldots, z_p$ of Theorem~\ref{thm:asympt}
by $w_0,\ldots,w_p$ (SE steps have weight~1).
The \emph{weight} of a $p$-tandem walk having $n_r$
face steps of level $r$ for $0\leq r\leq p$ 
is defined as $\prod_{r=0}^p w_r^{n_r}$; furthermore
we denote the weighted ``number`` of walks of length $n$ staying in the quadrant, 
starting at $(a,b)$ and ending at $(c,d)$, by $q_n(a,b;c,d)$. 
With the notation
introduced at the beginning of Section~\ref{sec:counting},
\[
q_n(a,b;c,d)= [t^n x^c y^d]Q^{(a,b)}(x,y)= [t^n ]Q^{(a,b)}_{c,d}.
\]
By a suitable normalization, we will now 
relate $q_n(a,b;c,d)$ to a probability of the form $P(S_{n}^{(a,b)}=(c,d),\ \tau^{(a,b)}>n)$, 
as considered in Proposition~\ref{prop:local_limit}. 
For two positive parameters~$\alpha,\gamma$ {(to be fixed later)}, we let 
\beq\label{eq:normalize}
z= \alpha^2/\gamma,\ \ z_r=w_r\alpha^{-r}/\gamma\quad \mathrm{for }\ 0\leq r\leq p.
\eeq
As in Section~\ref{sec:weighted_asymp}, we want these values
   to describe the step distribution of a random walk with zero drift.
Then  the probability of a step $(i,j)$ will be its weight, multiplied by $\alpha^{i-j}/\gamma$. 
The zero-drift condition~\eqref{eq:drift} is satisfied if and
only if 
\beq\label{eq:alpha}
\alpha^{2}=\sum_{r=1}^p\binom{r+1}{2}w_r\alpha^{-r}. 
\eeq
This equation {in $\alpha$} has a unique solution;   
indeed, as $\alpha$ increases from $0$ to $+\infty$, the left-hand side
increases from $0$ to $+\infty$, while the right-hand side decreases from $+\infty$ to $0$. 
Once $\alpha$ has been adjusted,  the normalization condition~\eqref{eq:dist} forces 
\beq\label{eq:gamma} 
\gamma=\alpha^{2}+\sum_{r=0}^p(r+1)w_r\alpha^{-r}=\sum_{r=0}^p\binom{r+2}{2}w_r\alpha^{-r}. 
\eeq
With this choice of $\alpha$ and $\gamma$,
the values $z,z_0,\ldots,z_p$ define {indeed} a probability distribution on
tandem steps, having zero drift. Now the probability of a quadrant walk
of length $n$ going  from $(a,b)$ to $(c,d)$ is equal to its weight,
multiplied by $\alpha^{(c-a)-(d-b)}/\gamma^n$. 
Hence,
\[
\mathbb{P}\big(S^{(a,b)}(n)=(c,d),\ \tau^{(a,b)}>
n\big)=\frac{\alpha^{(c-a)-(d-b)}}{\gamma^n}\, q_n(a,b;c,d).
\]
Theorem~\ref{thm:asympt} now follows from Proposition~\ref{prop:local_limit}.

\begin{Theorem*}
Let $a,b,c,d$ be nonnegative integers and let $w_0,\ldots,w_p$ be
nonnegative weights with $w_p>0$. Define $\alpha >0$ and $\gamma$ by
\eqref{eq:alpha} and~\eqref{eq:gamma}. Let
\[
D=\{r\in\llbracket 0, p\rrbracket,\ w_r>0\} \qquad \hbox{ and } \qquad\iota=\gcd(r+2,\
r\in D\}.
\] 
Then, as $n\to\infty$ conditioned on $c-d\equiv a-b +2n$~{\em mod}~$\iota$, we have 
\beq\label{eq:asympt}
q_n(a,b;c,d)\sim \kappa\ \!\gamma^nn^{-4},
\eeq
where \[
\kappa:=\frac{\iota}{4\sqrt{3}\pi\sigma^2}V(a,b)V(d,c)\alpha^{(d-b)-(c-a)}, 
\]
with  $V(\cdot, \cdot)$  the harmonic function of
 Proposition~\ref{prop:asympz_enum} and $\sigma^2$ given
 by~\eqref{eq:sigma}, both taken with $z_r=w_r\alpha^{-r}/\gamma$.
\end{Theorem*}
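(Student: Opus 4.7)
The plan is to deduce the theorem directly from the probabilistic local limit theorem of Proposition~\ref{prop:local_limit}, via the weight-to-probability renormalization already sketched in the paragraph preceding the statement. Concretely, I would introduce a probability distribution on tandem steps by setting
\[
z=\alpha^{2}/\gamma,\qquad z_r=w_r\alpha^{-r}/\gamma\quad \text{for }0\le r\le p,
\]
and first verify the two compatibility conditions. The zero-drift condition~\eqref{eq:drift} unwinds to $\alpha^{2}=\sum_{r\ge 1}\binom{r+1}{2}w_r\alpha^{-r}$, which is exactly~\eqref{eq:alpha}; since the left-hand side is increasing from $0$ to $\infty$ and the right-hand side strictly decreasing from $\infty$ to $0$, there is a unique positive solution $\alpha$. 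The normalization~\eqref{eq:dist} then forces $\gamma=\alpha^{2}+\sum_{r=0}^p(r+1)w_r\alpha^{-r}=\sum_{r=0}^p\binom{r+2}{2}w_r\alpha^{-r}$, which is~\eqref{eq:gamma}. Thus the normalized weights~\eqref{eq:normalize} define a bona fide zero-drift probability distribution, to which the results of Section~\ref{sec:weighted_asymp} apply.

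Next I would convert the combinatorial count into a probability. Under the renormalization, the probability of a given tandem walk of length $n$ going from $(a,b)$ to $(c,d)$ is its $w$-weight divided by $\gamma^n$ and multiplied by the step-by-step factor $\alpha^{i-j}$ (where $(i,j)$ is the displacement of each step); these factors telescope to $\alpha^{(c-a)-(d-b)}$ along the walk. Summing over all such quadrant walks yields the key identity
\[
\mathbb{P}\bigl(S^{(a,b)}(n)=(c,d),\ \tau^{(a,b)}>n\bigr)=\frac{\alpha^{(c-a)-(d-b)}}{\gamma^{n}}\,q_{n}(a,b;c,d).
\]
The periodicity index $\iota$ associated with the step set of the weighted walk is unchanged by the renormalization, since it depends only on the \emph{support} $D=\{r:w_r>0\}=\{r:z_r>0\}$; in particular the congruence condition $c-d\equiv a-b+2n$ mod $\iota$ coming from Lemma~\ref{lem:per} is the same in both settings.

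I would then apply Proposition~\ref{prop:local_limit} to the right-hand side. Under the congruence condition, it gives
\[
\mathbb{P}\bigl(S^{(a,b)}(n)=(c,d),\ \tau^{(a,b)}>n\bigr)\sim \frac{\iota}{4\sqrt{3}\pi\sigma^{2}}\,\frac{V(a,b)V(d,c)}{n^{4}},
\]
where $\sigma^{2}$ and $V(\cdot,\cdot)$ are computed with the normalized weights $z_r=w_r\alpha^{-r}/\gamma$. Rearranging through the identity above gives exactly $q_{n}(a,b;c,d)\sim\kappa\,\gamma^{n}n^{-4}$ with the stated constant~$\kappa$.

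The whole proof is therefore a one-line reduction; the substantive obstacle is not here but upstream, in establishing Proposition~\ref{prop:local_limit} itself (the local limit theorem with an explicit discrete harmonic function $V$ and the explicit constant $\iota/(4\sqrt{3}\pi\sigma^{2})$). That step requires both the singularity-analysis input of Proposition~\ref{prop:asympz_enum} — extracting $V(a,b)$ from the algebraic formula for $Q^{(a,b)}(1,1)$ in Theorem~\ref{thm:alg} — and a version of Denisov--Wachtel's local limit theorem adapted to our periodic setting (since Lemma~\ref{lem:per} shows that tandem step sets are rarely strongly aperiodic). The clean, explicit appearance of $\kappa$ with the factor $\alpha^{(d-b)-(c-a)}$ is then just the bookkeeping of the exponential change of measure above.
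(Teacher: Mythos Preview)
Your proposal is correct and follows essentially the same route as the paper: introduce the normalized step distribution $z=\alpha^2/\gamma$, $z_r=w_r\alpha^{-r}/\gamma$, check that~\eqref{eq:alpha} and~\eqref{eq:gamma} encode precisely the zero-drift and normalization conditions (with the same monotonicity argument for the uniqueness of~$\alpha$), use the telescoping of the $\alpha^{i-j}$ factors to write $\mathbb{P}\bigl(S^{(a,b)}(n)=(c,d),\ \tau^{(a,b)}>n\bigr)=\alpha^{(c-a)-(d-b)}\gamma^{-n}q_n(a,b;c,d)$, and conclude by Proposition~\ref{prop:local_limit}. Your observation that $\iota$ is unchanged under the renormalization (since it depends only on the support~$D$) is implicit in the paper but worth making explicit.
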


We can now go back to the number of bipolar orientations with
prescribed face degrees.
\noindent\begin{proof}[Proof of Corollary~\ref{coro:bipolar}]
It follows from  the KMSW  bijection that
 $B_n^{(\Omega)}(b,c)=q_n(0,b;c,0)$, taken for $w_r=1$ if $r+2\in
 \Omega$, and $w_r=0$ otherwise.  With the notation of  the above
 proposition, this gives $D=\Omega-2$,
\[
1= \sum_{s\in \Omega} {\binom {s-1} 2} \alpha^{-s}
\qquad\hbox{and}\qquad \gamma= \sum_{s\in \Omega} {\binom s 2}
  \alpha^{-s+2},
\]
which fits with the values of $\alpha$ and $\gamma$ given in
  Corollary~\ref{coro:bipolar}. One easily checks that the
  value~\eqref{eq:sigma} of $\sigma^2$ is also in agreement with the corollary. 
Then we only need to determine the values $V(0,b)$ and $V(0,c)$. By Proposition~\ref{prop:asympz_enum},
\[
\mathcal V(0,v)=\sum_{i\ge 0} V(0,i)v^i= \frac 2 \sigma \frac 1 {(1-v)^3 \Lambda(0)},
\]
and, by~\eqref{eq:drift} and then~\eqref{eq:normalize},
\[
\Lambda(0)= \sum_{r=1}^ pz_r{\binom {r+1} 2}=z=\frac{\alpha^2}{\gamma}.
\] 
Hence
\[
  V(0,i)=\frac{1}{\sigma\Lambda(0)}(i+1)(i+2)=
   \frac \gamma{\sigma \alpha^2}  (i+1)(i+2).
\]
The proof is then completed by putting all pieces together.
\end{proof}

 \subsection{The probability to stay in the quadrant till time
   $\boldsymbol n$: proof}
\label{subsec:computing_V(a,b)}
We return to the probabilistic setting of
Proposition~\ref{prop:asympz_enum}, where we consider a zero-drift random
tandem walk with step distribution given by~\eqref{distri}.  The
probability that the walk $S^{(a,b)}$ remains in the quadrant till
time $n$ is closely related to the coefficient of $t^n$ in the series
$Q^{(a,b)}(1,1)$ given in Theorem~\ref{thm:alg}.  However,  we
need to incorporate 
a positive weight~$z$ for each SE step: this amounts to replacing $t$ by $zt$
and $z_r$ by $z_r/z$ for $0\leq r\leq p$. Let us emphasize the
dependence of $Q^{(a,b)}$ in the variables $t$ and $z_r$ by writing
\[
  Q^{(a,b)}(x,y)\equiv Q^{(a,b)}(t, \gz; x,y),
\]
with $\gz=(z_0, \ldots, z_p)$.
Then 
\[
\PP\left(\tau^{(a,b)}>n\right)=[t^n]Q^{(a,b)}(tz, \gz/z; 1,1).
\]
  It now follows from Theorem~\ref{thm:alg} that
\beq\label{eq:QabW}
\sum_{n\ge 0}\PP\left(\tau^{(a,b)}>n\right)t^n
=\frac{\Wb}{t}\sum_{i=0}^a \Ab_i\sum_{j=0}^b\Wb^j,
\end{equation}
where $\Wb=W(tz, \gz/z)$ (see~\eqref{W-eq})
is the unique series in $t$ satisfying  $\Wb=t\phi(\Wb)$, with 
\beq\label{phi-def}
\phi(w)=z+\sum_{r=0}^pz_r(w+\cdots+w^{r+1}),
\eeq
and $\Ab_i= \frac 1 z A_i(tz, \gz/z)$ is a polynomial in $\Wb$:
\[
  \Ab_i= [u^i]\frac1
  {z -u \Wb \sum_{i+k<r\le p} u^i \Wb ^k z_r}.
\]

Starting from~\eqref{eq:QabW}, we observe that
Proposition~\ref{prop:asympz_enum} follows by linearity if we can prove
the following lemma.
\begin{Lemma}
  For $i, j \ge 0$, we have, as $n$ tends to infinity,
\[ 
[t^n] \frac{\Wb}{t}\Ab_i\Wb^j\sim \frac 1  {4\sqrt{\pi}}\, U(i,j) n^{-3/2},
\] 
where
\[
\sum_{i,j\ge 0}U(i,j) u^i v^j = \frac 2 \sigma \cdot
\frac{1-uv}{(1-u)^2(1-v)^2 \Lambda(u)},
\]
with $\sigma$ and $\Lambda(u)$ as in Proposition~\ref{prop:asympz_enum}.
\end{Lemma}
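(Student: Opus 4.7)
The plan is to apply classical singularity analysis to the explicit expression for $\Wb(t)$ and the polynomials $\Ab_i$. First I would analyze the defining equation $\Wb=t\phi(\Wb)$, with $\phi$ given by~\eqref{phi-def}. The normalization~\eqref{eq:dist} gives $\phi(1)=1$, and combining~\eqref{eq:dist} with the zero-drift condition~\eqref{eq:drift} yields $\phi'(1)=\sum_rz_r\binom{r+2}{2}=1$; a direct computation shows $\phi''(1)=2\sigma^2$. Since $\phi$ is a polynomial with nonnegative coefficients whose support contains both $w^0$ (with coefficient $z=\sum_rz_r\binom{r+1}{2}>0$, using $z_p>0$ and $p\geq1$) and $w^1$, it is aperiodic, so the standard smooth implicit-function theory for $\Wb=t\phi(\Wb)$ (Flajolet--Sedgewick VI.6/VII.3) guarantees that $t_c=1/\phi'(1)=1$ is the unique dominant singularity, that $\Wb$ extends analytically to a $\Delta$-domain at $1$, and that
\[
\Wb(t)=1-\frac{1}{\sigma}\sqrt{1-t}+O(1-t)\qquad\text{as }t\to 1.
\]

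Next I would observe that $\Ab_i$ is a polynomial in $\Wb$, so $P_{i,j}(w):=\Ab_i(w)\,w^{j+1}$ is a polynomial, and $\frac{\Wb}{t}\Ab_i\Wb^j=\frac{1}{t}P_{i,j}(\Wb)$. Substituting the above Puiseux expansion gives
\[
P_{i,j}(\Wb)=P_{i,j}(1)-\frac{P'_{i,j}(1)}{\sigma}\sqrt{1-t}+O(1-t),
\]
and the transfer theorem then yields
\[
[t^n]\,\frac{\Wb}{t}\Ab_i\Wb^j\;\sim\;\frac{P'_{i,j}(1)}{2\sigma\sqrt{\pi}}\,n^{-3/2},
\]
so $U(i,j)=\frac{2}{\sigma}P'_{i,j}(1)=\frac{2}{\sigma}\bigl(\Ab'_i(1)+(j+1)\Ab_i(1)\bigr)$, where primes denote differentiation in $\Wb$.

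Finally I would identify the bivariate generating function. Summing the two contributions over $j$ gives
\[
\sum_{i,j\ge0}U(i,j)u^iv^j=\frac{2}{\sigma}\!\left(\frac{1}{1-v}\sum_i\Ab'_i(1)u^i+\frac{1}{(1-v)^2}\sum_i\Ab_i(1)u^i\right)\!,
\]
so I need the two univariate series $\sum_i\Ab_i(1)u^i$ and $\sum_i\Ab'_i(1)u^i$. The weighted closed form gives
\[
\sum_i\Ab_i(w)u^i=\frac{1}{z-uw\,\psi(u,w)},\qquad\psi(u,w):=\!\!\sum_{\substack{j,k\ge 0\\ j+k<r\le p}}\!\!z_r\,u^jw^k.
\]
The zero-drift condition precisely says that $z=\psi(1,1)$, so $z-u\psi(u,1)$ vanishes at $u=1$ and factors as $(1-u)\Lambda(u)$; the binomial identity $(r-j)+\binom{r-j}{2}=\binom{r-j+1}{2}$ shows more generally that $\psi(u,1)+\psi'_w(u,1)=\Lambda(u)$. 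Differentiating the closed form at $w=1$ and using this identity then yields
\[
\sum_i\Ab_i(1)u^i=\frac{1}{(1-u)\Lambda(u)},\qquad\sum_i\Ab'_i(1)u^i=\frac{u}{(1-u)^2\Lambda(u)}.
\]
Plugging these back and simplifying $(1-v)u+(1-u)=1-uv$ gives exactly $\frac{2}{\sigma}\cdot\frac{1-uv}{(1-u)^2(1-v)^2\Lambda(u)}$, as required.

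The main obstacle is step one: one must check that the single square-root singularity dominates strictly (no other singularity on $|t|=1$) so that the transfer theorem applies, which is where the aperiodicity of the support of $\phi$ is essential---note that this aperiodicity for $\Wb$ does not contradict the $\iota$-periodicity of the two-dimensional walk, since $\Wb$ forgets the $x$-coordinate. The algebraic identity $\psi(u,1)+\psi'_w(u,1)=\Lambda(u)$, though elementary, is the combinatorial heart of the matching with the generating function $\mathcal V(u,v)$ in Proposition~\ref{prop:asympz_enum}.
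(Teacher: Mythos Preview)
Your proposal is correct and follows essentially the same route as the paper: singularity analysis of $\Wb=t\phi(\Wb)$ via the smooth inverse-function schema, transfer for the polynomial $P_{i,j}(\Wb)=\Ab_i(\Wb)\Wb^{j+1}$, and computation of the bivariate generating function of $P'_{i,j}(1)$ through the identities $z-u\psi(u,1)=(1-u)\Lambda(u)$ and $\psi(u,1)+\psi'_w(u,1)=\Lambda(u)$. The only cosmetic difference is that the paper packages the computation as the $w$-derivative of the product $A(u,w)B(v,w)$ with $B(v,w)=w/(1-vw)$, whereas you split $P'_{i,j}(1)$ into $\Ab'_i(1)+(j+1)\Ab_i(1)$ and sum in $j$ first; the two organizations yield the same four evaluations $A(u,1)$, $A'_2(u,1)$, $B(v,1)$, $B'_2(v,1)$.
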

\begin{proof}
  We will prove this lemma   using Flajolet and Odlyzko's 
\emm singularity analysis,~\cite{flajolet-odlyzko,flajolet-sedgewick}.

Our first task  is to determine
the dominant singularities, and the singular behaviour of $\Wb\equiv \Wb(t)$. The equation $\Wb=t\phi(\Wb)$,
with $\phi$ defined by~\eqref{phi-def}, fits in the \emm smooth
aperiodic inverse function schema, of~\cite[Thm.~VII.2,
p.~453]{flajolet-sedgewick} (see also Thm.~IV.6 on p.~404 in the same
reference; aperiodicity comes for instance from the term $z_pw$ in
 the transformation~$\phi$). Consequently, $\Wb$ has a unique
singularity  $\tsing >0$ on its circle of convergence, and $\tsing$ is
the unique positive solution of $\phi(\tsing)= \tsing
\phi'(\tsing)$. Recall that $z$ and the $z_i$'s satisfy the
normalization condition~\eqref{eq:dist}, and that we  assume that
the zero-drift condition~\eqref{eq:drift} holds. This means that
$\phi(1)=\phi'(1)=1$, so that the radius of $\Wb$ is
$\tsing=1$. Moreover, $\Wb(\tsing)=t_c=1$ as well. Still using the above
cited results
of~\cite{flajolet-sedgewick}, we conclude that $\Wb$ admits  a square-root singular expansion around $t_c=1$, namely
\[
\Wb=1-d\sqrt{1-t}+O(1-t),
\]
where $d=\sqrt{2/\phi''(1)}=1/\sigma$, 
with $\sigma^2$ given by~\eqref{eq:sigma}. Consequently, for any
$k\geq 1$, the series $\Wb^k$ also has a unique singularity on its
circle of convergence, and
\[
\Wb^k=1-\frac k{\sigma}\ \sqrt{1-t}+O(1-t),
\]  
and more generally, for any  polynomial $P(w)$ having
nonnegative coefficients,
\beq\label{Pgen}
P(\Wb)=1-\frac1{\sigma}P'(1)\sqrt{1-t}+O(1-t).
\eeq
Recall that  the series $\Ab_i \Wb^{j+1}$ can be expressed as a polynomial in
  $\Wb$. More precisely, $\Ab_i \Wb^{j+1}= P_{i,j}(\Wb)$, where
\[
  P_{i,j}(w):=[u^i v^j] \left(A(u,w) B(v,w)\right),
\]
with
\[
A(u,w):=\frac{1}{z-uw\sum_{i+k<r\le p}u^i w^k z_r}\qquad
\hbox{and } \qquad B(v,w):=\frac{w}{1-vw}.
\]
It then follows from~\eqref{Pgen}  that
\[
  \Ab_i \Wb^{j+1}  =1-\frac1{\sigma}\ \! P_{i,j}'(1) \sqrt{1-t}+O(1-t),
\]
with
\beq\label{Pij}
  P_{i,j}'(1) =[u^i v^j]\left. \frac{\partial \left(A(u,w) B(v,w)\right)}{\partial     w} \right|_{w=1}.
\eeq
Since  $\tsing=1$, we have the same singular expansion for $\Ab_i \Wb^{j+1}/t$, and thus,  using the transfer theorem of~\cite[Cor.~VI.1,
p.~392]{flajolet-sedgewick}, we find
\beq\label{ij-asympt}
[t^n]\frac 1 t \Ab_i \Wb^{j+1}\sim
\frac1{2\sigma\sqrt{\pi}}\,  P_{i,j}'(1) n^{-3/2}
\sim \frac1{4\sqrt{\pi}}\, U(i,j)n^{-3/2}, 
\eeq  
where we define
\beq\label{Uijdef}
U(i,j):=2\,P_{i,j}'(1)/\sigma.
\eeq
It remains to express $P_{i,j}'(1)$, that is, the derivative
in~\eqref{Pij}. Clearly, we have
\[
B(v,1)=\frac{1}{1-v}\qquad \hbox{and} \qquad
B'_2(v,1)=\frac{1}{(1-v)^2}.
\]
Moreover,
\[
  A(u,1)=\frac{1}  {z-u\sum_{0\le i<r\le p}u^i z_r (r-i)}.
\]
Upon rewriting $z$ as in~\eqref{eq:drift}, this gives
\[
   A(u,1)=\frac{1}{\sum_{0\le i<r\le p}(1-u^ {i+1}) z_r(r-i)}
 =\frac 1{(1-u)\Lambda(u)} ,
  \]
where $\Lambda(u)$ is defined as in Proposition~\ref{prop:asympz_enum}. 
Finally
\[
  A'_2(u,1)=A(u,1)^2\, u \left(\sum_{i+k <r \le p} u^i(k+1) z_r\right)
    =A(u,1)^2u \Lambda(u)=\frac{u }{(1-u)^2\Lambda(u)}.
\]
Hence, getting back to~\eqref{Pij} and~\eqref{Uijdef}, we have
\begin{align*}
  P'_{i,j}(1)= \frac \sigma 2 \cdot U(i,j)
  &= [u^i v^j]   \left( A(u,1) B'_2(v,1) + A'_2(u,1) B(v,1)\right)\\
&  = [u^i v^j]
  \frac{1-uv}{(1-u)^2(1-v)^2 \Lambda(u)},
\end{align*}
as claimed in the lemma.
\end{proof}

\subsection{The local limit theorem: proof}
\label{subsec:LLT}
We now  explain how to go from Proposition~\ref{prop:asympz_enum} to
Proposition~\ref{prop:local_limit}. We follow the steps
in~\cite{denisov2015random}, with which some familiarity is assumed,
and explain how to deal with periodicity issues. We refer to Spitzer's
book~\cite{Spitzer} for several classical results on random walks.

Let  $S(n)$ denote the point reached after $n$ steps of the random tandem walk (starting at the origin). Recall that $\Sab(n):=(a,b)+S(n)$ denotes
the point attained after $n$ steps, when starting from $(a,b)$.

\subsubsection{Local limit theorem for unconstrained walks} 
Lemma~\ref{lem:covari} gives  the covariance matrix of the step distribution. By the central limit theorem (in its
vectorial formulation), the random variable
$\frac1{\sigma\sqrt{n}}S(n)$ converges in law  
to the random variable on $\mathbb{R}^2$ of 
density 
$$
f(x,y)=\frac1 {2\pi \sqrt 3}
\exp\big(-\tfrac1{3}(x^2+y^2+xy)\big).
$$ 
 Obviously, the same limit holds for the random variable $\frac1{\sigma\sqrt{n}}\Sab(n)$,
for any fixed starting point $(a,b)$.  
Let us now state the corresponding Gnedenko local limit theorem.
Let $\Rab(n)$ be the sublattice 
\beq\label{Rab}
\Rab(n)=\{ (i,j)\in\mathbb{Z}^2: i-j\equiv 2n+(a-b) \!\mod \iota\}.
\eeq
Note that it only depends on $n$ through $n$~mod~$\iota$. According to Lemma~\ref{lem:per},  any point reachable from $(a,b)$ in $n$ steps lies in  $\Rab(n)$, 
and any  point in $\Rab(n)$ can be reached in $n$ steps starting from $(a,b)$, for $n$ large enough.
 Then, with $(a,b)$ fixed and $n$ tending to infinity,
\beq\label{eq:local_limit}
\underset{(i,j)\in \Rab(n)}{\sup}\left\vert n\cdot
\boldP\bigl(\Sab(n)
=(i,j)\bigr)-\frac{\iota}{\sigma^2}\,f\!\left(\frac{i}{\sigma\sqrt{n}},\frac{j}{\sigma\sqrt{n}}\right) \right\vert\to 0.
\eeq
The proof is classically done by a saddle-point argument,
after diagonalization of the covariance matrix 
(we refer to Proposition~P9 in~\cite[pp.~75--77]{Spitzer}, and the
remark that follows it); 
moreover, as discussed in Example~2 in~\cite[pp.~78--79]{Spitzer},   
 a periodicity $\iota\geq 2$ of the step set results in $\iota$
saddle-points, each giving the same asymptotic contribution, with the effect that the asymptotic constant is multiplied by~$\iota$
for points of the reachable sublattice.

\subsubsection{Local limit theorem for  walks confined in the quadrant} 
We begin with the following central limit theorem under the quadrant constraint.
 
\begin{Proposition}\label{prop:Sabconve}
The random variable $\frac1{\sigma\sqrt{n}}\Sab(n)$
conditioned on $\{\tau^{(a,b)}>n\}$  converges in law
to the random variable on $\mathbb{R}_+^2$ of density
\beq\label{eq:g}
g(x,y)=\frac1{\sqrt{3\pi}}\,xy(x+y)\exp\big(-\tfrac1{3}(x^2+y^2+xy)\big).
\eeq
\end{Proposition}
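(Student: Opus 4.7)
The plan is to adapt to our weakly periodic setting the strategy of Denisov and Wachtel~\cite{denisov2015random}, whose key inputs are the tail estimate of Proposition~\ref{prop:asympz_enum}, Donsker's functional invariance principle (which gives weak convergence of $(\frac{1}{\sigma\sqrt{n}}\Sab(\lfloor nt\rfloor))_{t\in[0,1]}$ in Skorokhod space to a two-dimensional Brownian motion $B$ with covariance matrix $\Sigma=\bigl(\begin{smallmatrix}2&-1\\-1&2\end{smallmatrix}\bigr)$), and the discrete harmonicity of $V$ noted in the remarks following Proposition~\ref{prop:asympz_enum}. A preliminary direct computation reveals that the target density~\eqref{eq:g} factors as
\[
g(x,y)=2\sqrt{\pi}\,V_\infty(x,y)\,f(x,y),
\]
where $V_\infty(x,y):=xy(x+y)$ and $f$ is the Gaussian density appearing in~\eqref{eq:local_limit}. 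Since $V_\infty$ is continuous harmonic on $\mathbb{R}_+^2$ (vanishing on $\partial\mathbb{R}_+^2$) for the generator of $B$, and equals the pointwise limit of $\sigma^3 V$ by~\eqref{eq:asymptotic_V(a,b)}, this factorization identifies $g$ as the density at time~$1$ of $B$ started at the corner and conditioned to stay in $\mathbb{R}_+^2$ on $[0,1]$; this is the continuous object we aim to match.

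For a bounded continuous $h\colon\mathbb{R}_+^2\to\mathbb{R}$, using the Markov property at an intermediate time $m_n$ with $m_n\to\infty$ and $m_n/n\to 0$,
\[
\EE\bigl[h(\tfrac{\Sab(n)}{\sigma\sqrt n})\mathbf 1_{\tau^{(a,b)}>n}\bigr]=\sum_{(c,d)}\PP(\Sab(m_n)=(c,d),\tau^{(a,b)}>m_n)\,\EE\bigl[h(\tfrac{S^{(c,d)}(n-m_n)}{\sigma\sqrt n})\mathbf 1_{\tau^{(c,d)}>n-m_n}\bigr].
\]
For the first (``seed'') factor, the martingale identity $\EE[V(\Sab(m_n))\mathbf 1_{\tau^{(a,b)}>m_n}]=V(a,b)$ lets us re-weight the absorbed distribution by $V$ into a probability measure, under which $\Sab(m_n)/\sqrt{m_n}$ converges (by classical arguments for walks in cones, combining Proposition~\ref{prop:asympz_enum} with the invariance principle) to the law with density proportional to $V_\infty(x,y)\tilde f(x,y)$, with $\tilde f$ the Gaussian density of $B_1$. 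For the second factor, $(c,d)=O(\sqrt{m_n})=o(\sqrt n)$, so the starting point is negligible at scale $\sigma\sqrt n$, and the invariance principle for the last $n-m_n\sim n$ steps, together with a.s.~continuity of the Brownian exit time, yields
\[
\EE\bigl[h(\tfrac{S^{(c,d)}(n-m_n)}{\sigma\sqrt n})\mathbf 1_{\tau^{(c,d)}>n-m_n}\bigr]\longrightarrow \EE^{0}_{\mathrm{BM}}\bigl[h(B_1)\mathbf 1_{\{B_{[0,1]}\subset \mathbb{R}_+^2\}}\bigr].
\]
Assembling the two halves, using the factorization of $g$ and the asymptotic $V\sim V_\infty/\sigma^3$, gives $\EE[h(\Sab(n)/(\sigma\sqrt n))\mathbf 1_{\tau^{(a,b)}>n}]\sim V(a,b)n^{-3/2}\int h\,g/(4\sqrt\pi)$; dividing by Proposition~\ref{prop:asympz_enum} yields the claim.

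The hard part will be the \emph{uniform} control of the inner expectation in $(c,d)$, in particular preventing pathological contributions from points close to $\partial\mathbb{R}_+^2$. This is handled by crude Denisov--Wachtel-type bounds of the form $\PP(\tau^{(c,d)}>k)\le C(1+\min(c,d))k^{-1/2}$, which themselves can be derived from one-dimensional ballot estimates applied to the projections of the walk on the axes and on the anti-diagonal $\{x+y=\mathrm{const}\}$. The periodicity $\iota>1$ of our step set (which violates the strong aperiodicity hypothesis of~\cite{denisov2015random}) requires only cosmetic adjustments: the factor $\iota$ carried by~\eqref{eq:local_limit} is exactly compensated by the density $1/\iota$ of the reachable sublattice $\Rab(n)\subset\zs^2$, so that all Riemann-sum limits against smooth test functions are insensitive to the residue class $n\bmod\iota$.
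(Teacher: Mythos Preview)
The paper's proof is far shorter than your sketch: it applies a fixed linear map $\cL$ that sends the quadrant to the cone $K_{\pi/3}$ and normalizes the covariance matrix of $(X,Y)$ to the identity, then cites \cite[Thm.~3]{denisov2015random} as a black box for the transformed walk. The limiting density is read off from the explicit harmonic function $u(r,\theta)=r^3\sin(3\theta)$ in $K_{\pi/3}$ (obtained by solving the one-dimensional eigenvalue problem $m''=-\lambda m$ with $m(0)=m(\pi/3)=0$) and pulled back through $\cL^{-1}$. No periodicity discussion is needed at this stage, since Theorem~3 of \cite{denisov2015random}, unlike their local-limit Theorems~5--6, does not assume strong aperiodicity.

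Your attempt to re-derive the Denisov--Wachtel conclusion from scratch via a two-time Markov split has a genuine gap at the second factor. You claim that since $(c,d)=O(\sqrt{m_n})=o(\sqrt n)$, the invariance principle together with ``a.s.\ continuity of the Brownian exit time'' gives
\[
\EE\bigl[h(\tfrac{S^{(c,d)}(n-m_n)}{\sigma\sqrt n})\,\mathbbm 1_{\tau^{(c,d)}>n-m_n}\bigr]\longrightarrow \EE^{0}_{\mathrm{BM}}\bigl[h(B_1)\,\mathbbm 1_{\{B_{[0,1]}\subset \rs_+^2\}}\bigr].
\]
But the right-hand side is identically~$0$: Brownian motion started at the corner exits $\rs_+^2$ instantaneously a.s., so the indicator functional is \emph{discontinuous} at every such path and the continuous-mapping theorem does not apply. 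The survival over $[m_n,n]$ comes precisely from $(c,d)$ being at a positive (though $o(\sqrt n)$) distance from the boundary, contributing a factor $\sim V(c,d)\,n^{-3/2}$; collapsing the rescaled starting point to the origin discards exactly this factor, and your ``assembling'' step then cannot recover the nonzero limit $V(a,b)n^{-3/2}\int hg/(4\sqrt\pi)$. If instead you feed Proposition~\ref{prop:asympz_enum} into the second factor, you still need the conditional law of the endpoint given survival from $(c,d)$ --- which is the proposition itself, now with a varying starting point, so the argument is circular. A correct implementation of the splitting idea (essentially what \cite{denisov2015random} do to prove their Theorem~3) requires either first constructing the limiting Doob $h$-process, or taking $m_n=\epsilon n$ with fixed $\epsilon>0$, applying continuous mapping from a genuinely interior starting point, and only then letting $\epsilon\downarrow 0$ under uniform bounds; your sketch supplies neither.
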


 There is a natural link between the function  $xy(x+y)$
   occurring in the density above, and the limit $V_\infty$ of the
   discrete harmonic function (see~\eqref{eq:asymptotic_V(a,b)}), which we discuss further    in Section~\ref{sec:proba-harmonic}.

\begin{proof} We use a normalization that transforms the random walk
  $S^{(a,b)}(n)$ into a walk with uncorrelated  $x$- and
  $y$-projections. This also transforms the quarter plane into a
  different cone, to which we then apply 
  \cite[Thm.~3]{denisov2015random}. This classical argument has  been used recently
  in a similar context in~\cite[Sec.~1.5]{denisov2015random}
  and~\cite[Thm.~4]{BoRaSa14}.

Let us now give details. Let $\cL$ be the linear mapping of matrix
\begin{equation*}
  \frac{\sqrt{2}}{\sigma\sqrt{3}}\left(
    \begin{array}{rr} 1&\frac{1}{2}\smallskip\\0&\frac{\sqrt{3}}{2}
    \end{array}\right).
\end{equation*}
 Then it is easy to check that if $(X,Y)$ is the step distribution of
our random tandem walk, given by~\eqref{distri}, then the covariance matrix
of $\cL(X,Y)$ is the identity.
Moreover,~$\cL$ maps the quadrant $\{re^{i\theta}: r\geq0 \text{ and }\theta\in
[0,\pi/2]\}$ to the cone $K_{\pi/3}=\{re^{i\theta}: r\geq0 \text{ and
}\theta\in [0,\pi/3]\}$. Let $\tiSab:=\cL(\Sab)$ denote the transformed walk.
By~\cite[Thm.~3]{denisov2015random}, 
$\tiSab(n)/\sqrt n$,  conditioned on $\{\tau^{(a,b)}>n\}$,
  converges in law to the random   variable $\widetilde{S}$
  having the following density (expressed in polar coordinates): 
\begin{equation*}
     \rho(r,\theta) = H_{0}\cdot \mathbbm{1}_{\theta\in[0,\pi/3]}\cdot  u(r,\theta)
  \cdot r   \exp(-r^2/2),
\end{equation*}
where $H_0$ is the normalizing constant and the function $u(r,\theta)$ is given by \cite[Eq.~(3)]{denisov2015random}. This function  may be
interpreted as the unique harmonic function that is nonnegative
in the cone $K_{\pi/3}$ and vanishes on the boundary.

Let us now show that $u(r,\theta)=r^3 \sin(3\theta)$. By Eq.~(3) in~\cite{denisov2015random}, we need to solve the eigenvalue problem stated in Eq.~(2) of that same paper. In dimension $d=2$, the Laplace--Beltrami
operator $L_{\mathbb S^{d-1}}$ involved in this problem  is simply
$\frac{\partial^2}{\partial\theta^2}$, and the eigenvalue problem
becomes $\frac{\partial^2m_j}{\partial\theta^2}=-\lambda_jm_j$. This
is easily solved in
$m_j(\theta)=a\cos(\sqrt{\lambda_j}\theta)+b\sin(\sqrt{\lambda_j}\theta)$
for arbitrary constants $a$ and $b$. The boundary conditions
$m_j(0)=m_j(\pi/3)=0$ yield $a=0$ and force $\lambda_j=(3j)^2$ (as in~\cite{denisov2015random}, we take $0<\lambda_1 < \lambda_2 \le \cdots$). In
particular, $\lambda_1=9$, the function $m_1$ reads $m_1(\theta)=b\sin(3\theta)$, which together with [28,
Eq.~(3)] proves that (up to a multiplicative constant)
$u(r,\theta)=r^3 \sin(3\theta)$. The value of the normalizing constant
$H_0$ in the above expression for $\rho(r, \theta)$ is then  found to be $1/{\sqrt{2\pi}}$, since $\rho$ must be a density.

In $(x,y)$-coordinates, we have
\[
  u(r,\theta)=r^3\sin(3\theta)  =y(3x^2-y^2),
\] 
hence the density of $\widetilde S$, expressed in Cartesian coordinates, is
\[
\widetilde{g}(x,y)=\frac{1}{\sqrt{2\pi}}\cdot\mathbbm{1}_{(x,y)\in K_{\pi/3}}\cdot y(3x^2-y^2)\cdot\exp\bigl(-\tfrac{1}{2}(x^2+y^2)\bigr).
\]
Getting back to $S^{(a,b)}(n)$ and  the quadrant,
we conclude that $\frac1{\sigma\sqrt{n}}\Sab(n)$ conditioned on $\{\tau^{a,b}>n\}$ converges to the random variable $\frac1{\sigma}\cL^{-1}(\widetilde{S})$, and by
a routine change of variable we find that it has density $g(x,y)$.
\end{proof} 

Let us now state the corresponding  Gnedenko
 local limit theorem, that is, the counterpart of~\eqref{eq:local_limit}. In the aperiodic case this would be Theorem~5  in~\cite{denisov2015random}.

 \begin{Proposition}\label{prop:local-cone}
Let $\Rpab(n):=\Rab(n)\cap \mathbb{N}^2$.   For $a,b$ fixed and $n$ tending to infinity, we have
\[
\underset{(i,j)\in \Rpab(n)}{\sup}\left| n^{5/2}\cdot\boldP\big(\Sab(n)=(i,j),\ \tau^{(a,b)}>n\big)-\frac{\iota V(a,b)}{4\sqrt{\pi}\sigma^2}\,g\!\left(\frac{i}{\sigma\sqrt{n}},\frac{j}{\sigma\sqrt{n}}\right) \right|\to 0.
\]
\end{Proposition}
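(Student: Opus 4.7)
The proof will adapt the strategy of Denisov--Wachtel~\cite[Thm.~6]{denisov2015random}, which treats strongly aperiodic walks, to our periodic tandem-walk setting. The three main inputs are already available: Proposition~\ref{prop:asympz_enum} for the survival probability $\PP(\tau^{(a,b)}>n)\sim V(a,b)/(4\sqrt{\pi})\cdot n^{-3/2}$, Proposition~\ref{prop:Sabconve} for the CLT in the cone conditioned on survival, and the unconstrained Gnedenko local limit theorem~\eqref{eq:local_limit}, which already carries the periodicity factor $\iota$.

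First I would split time at $n_1:=n-m$, where $m=m(n)$ is chosen with $m\to\infty$, $m/n\to 0$ (say $m=\lfloor n^{1/3}\rfloor$) and compatible with $\iota$. The Markov property gives
\begin{equation*}
\boldP\!\bigl(\Sab(n)=(i,j),\,\tau>n\bigr) = \sum_{(k,l)\in\mathbb{N}^2} \boldP\!\bigl(\Sab(n_1)=(k,l),\,\tau>n_1\bigr)\cdot \boldP_{(k,l)}\!\bigl(S(m)=(i-k,j-l),\,\tau>m\bigr).
\end{equation*}
In the main regime, where $(k,l)$ lies at distance $\Theta(\sqrt{n})\gg\sqrt{m}$ from the axes, the constraint $\tau>m$ in the second factor is asymptotically redundant, so by~\eqref{eq:local_limit} (applied uniformly in lattice-compatible $(k,l)$) it reduces to
\begin{equation*}
\boldP_{(k,l)}\!\bigl(S(m)=(i-k,j-l),\,\tau>m\bigr) = \frac{\iota}{\sigma^2 m}\,f\!\left(\frac{i-k}{\sigma\sqrt{m}},\,\frac{j-l}{\sigma\sqrt{m}}\right) + o(m^{-1}).
\end{equation*}

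The first factor is then handled via the scaling limit: Proposition~\ref{prop:Sabconve} combined with Proposition~\ref{prop:asympz_enum} yields, for any bounded continuous $\phi$ on $\mathbb{R}_+^2$,
\begin{equation*}
\sum_{(k,l)} \boldP\!\bigl(\Sab(n_1)=(k,l),\,\tau>n_1\bigr)\,\phi\!\left(\tfrac{k}{\sigma\sqrt{n_1}},\,\tfrac{l}{\sigma\sqrt{n_1}}\right) \sim \frac{V(a,b)}{4\sqrt{\pi}\,n_1^{3/2}}\int_{\mathbb{R}_+^2}\phi(x,y)\,g(x,y)\,dx\,dy.
\end{equation*}
Since $\sqrt{m}/\sqrt{n}\to 0$, the Gaussian density $f((i-\cdot)/(\sigma\sqrt{m}),\cdot)$ concentrates around $(k,l)=(i,j)$ on a scale negligible compared with $\sqrt{n}$, and thus acts as a $\delta$-function when integrated against the slowly varying density $g$. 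Combining the two estimates one obtains
\begin{equation*}
\boldP\!\bigl(\Sab(n)=(i,j),\,\tau>n\bigr) \sim \frac{V(a,b)}{4\sqrt{\pi}\,n^{3/2}}\cdot\frac{\iota}{\sigma^2 n}\,g\!\left(\frac{i}{\sigma\sqrt{n}},\,\frac{j}{\sigma\sqrt{n}}\right),
\end{equation*}
which rearranges to the claimed asymptotic. Uniformity in $(i,j)\in\Rpab(n)$ follows because each step can be made uniform in the endpoint.

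The main obstacles are twofold. First, the contributions of intermediate points $(k,l)$ that are either close to the quadrant boundary (distance $\leq\sqrt{m}\log m$) or far in the tail (beyond $\sqrt{n}\log n$) must be shown to be negligible; these estimates rely on standard tail and maximal inequalities for zero-drift random walks with bounded step support, as in~\cite{denisov2015random}. Second---and this is the chief difference with the strongly aperiodic framework of~\cite{denisov2015random}---one must track the periodic structure consistently throughout: the compatibility between the sublattice $\Rab(n_1)$ accessible from $(a,b)$ in $n_1$ steps and the one reaching $(i,j)$ in the remaining $m$ steps is exactly encoded by the hypothesis $(i,j)\in\Rpab(n)$, and the factor $\iota$ enters only once, through~\eqref{eq:local_limit} applied to the final short portion, thereby producing the $\iota$ appearing in the final constant.
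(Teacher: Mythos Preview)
Your proposal is correct and follows essentially the same route as the paper: split time into a long first portion and a short final portion, apply the cone-conditioned CLT (Proposition~\ref{prop:Sabconve}) together with the survival estimate (Proposition~\ref{prop:asympz_enum}) to the first portion, and the unconstrained periodic local limit theorem~\eqref{eq:local_limit} to the second, so that the factor~$\iota$ enters exactly once. One small slip: the Denisov--Wachtel result you are adapting here is their Theorem~5 (local limit theorem in a cone with free endpoint), not Theorem~6, which corresponds instead to Proposition~\ref{prop:local_limit}.
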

\begin{proof}[Proof (sketch)]
 Given the estimate of $\PP(\tau^{(a,b)} >n)$ given in  Proposition~\ref{prop:asympz_enum}, the above proposition is equivalent to
\[
\underset{(i,j)\in \Rpab(n)}{\sup}\left|
  n\cdot\boldP\big(\Sab(n)=(i,j)\ |\ \tau^{(a,b)}>n\big)-
\frac{\iota}{\sigma^2}\, g\!\left(\frac{i}{\sigma\sqrt{n}},\frac{j}{\sigma\sqrt{n}}\right) \right|\to 0.
\]
The proof mimics the proof
  of~\cite[Thm.~5]{denisov2015random}. In the typical case where $i$ and $j$ are of the order of $\sqrt n$, it  proceeds
by a well chosen splitting of quadrant walks from $(a,b)$ to $(i,j)$
into two parts;
to the first part, we apply  
Proposition~\ref{prop:Sabconve} (convergence in law of the constrained  
walk to the density~\eqref{eq:g}), and to the second part, we apply
the domain-unconstrained local limit theorem~\eqref{eq:local_limit}.
The  factor  $\iota$  propagates from~\eqref{eq:local_limit} to the final result.
\end{proof}

\subsubsection{Conclusion}
We now derive  Proposition~\ref{prop:local_limit}
from Proposition~\ref{prop:local-cone}, as Theorem~6 is derived
from Theorem~5 in~\cite{denisov2015random}. For $n\geq 0$ let 
$n_1:=\lfloor n/2\rfloor$ and $n_2:=\lceil n/2 \rceil$, so that $n=n_1+n_2$. 
For two points $(a,b)$ and $(i,j)$ in the quadrant, let $\cE^{(a,b)}_{i,j}(n)$ 
denote the event $\{\Sab(n)=(i,j),\ \tau^{(a,b)}>n\}$. 
Observe that the step distribution~\eqref{distri} is such that the laws
of $(-X,-Y)$ and $(Y,X)$ coincide. In other words,  time-reversal  has
the same effect as exchanging the roles of $x$ and $y$. Hence, for any
quadrant points $(a,b)$, $(c,d)$ and $(i,j)$, we have
\begin{align*}
 \PP\big(\cE^{(a,b)}_{c,d}(n),\
  \Sab(n_1)=(i,j)\big)&=\PP\big(\cE^{(a,b)}_{i,j}(n_1)\big)\cdot\PP\big(\cE^{(i,j)}_{c,d}(n_2)\big)
  \\
  &=
\PP\big(\cE^{(a,b)}_{i,j}(n_1)\big)\cdot\PP\big(\cE^{(d,c)}_{j,i}(n_2)\big).
\end{align*}
We now assume that $(c,d)\in R^{(a,b)}_\ge(n)$, and sum the above identity
  over all $(i,j)$ in $R^{(a,b)}_\ge(n_1)$. In the first line below, we
  use Proposition~\ref{prop:local-cone}  to estimate the
  probability of the two $\cE$-events, and in the second line,  we use
  the symmetry of $g(x,y)$ in $x$ and $y$. This yields
\begin{align}
\PP\left(\cE^{(a,b)}_{c,d}(n)\right)&\sim \underset{(i,j)\in
                                     \Rpab({n_1})}{\sum}\frac{\iota
                                     V(a,b)}{4\sqrt{\pi}n_1^{5/2}\sigma^2}g\left(\frac{i}{\sigma\sqrt{n_1}},\frac{j}{\sigma\sqrt{n_1}}
                                     \right)\nonumber
\cdot \frac{\iota
  V(d,c)}{4\sqrt{\pi}n_2^{5/2}\sigma^2}\, g\left(\frac{j}{\sigma\sqrt{n_2}},\frac{i}{\sigma\sqrt{n_2}}\right)\\[.2cm]
&\sim V(a,b)V(d,c)\frac{2\iota^2}{\pi n^5\sigma^4}\underset{(i,j)\in \Rpab({n_1})}{\sum}g\left(\frac{i}{\sigma\sqrt{n/2}},\frac{j}{\sigma\sqrt{n/2}}\right)^2,\label{Est}
\end{align}
since $n_1\sim n_2 \sim n/2$.
Then we classically approximate the sum by an integral, using the
description~\eqref{Rab} of $\Rab(n)$,
$$
\frac{\iota}{(\sigma\sqrt{n/2})^2}\underset{(i,j)\in \Rpab(n_1)}{\sum}g\left(\frac{i}{\sigma\sqrt{n/2}},\frac{j}{\sigma\sqrt{n/2}}\right)^2\sim \int\!\!\!\int_{(x,y)\in\mathbb{R}_+^2}g(x,y)^2 \dd x\ \!\dd y=\frac1{4\sqrt{3}}.
$$
Returning to~\eqref{Est}, this gives
$$
\PP\left(\cE^{(a,b)}_{c,d}(n)\right)= \PP\left(\Sab(n)\!=\!(c,d),\ \tau^{(a,b)}\!>\!n\right)\sim \frac{\iota}{4\sqrt{3}\pi\sigma^2}\frac{V(a,b)V(d,c)}{n^4},
$$ 
as stated in Proposition~\ref{prop:local_limit}. \qed

\section{Complements and final comments}
\label{sec:final}

\subsection{Link with non-intersecting triples of directed walks}
 \label{sec:Baxter}

It is  known that plane bipolar orientations can be 
encoded by certain non-intersecting
 triples of \emm directed, walks, that is, walks on the square
 lattice consisting of north and east steps (Figure~\ref{fig:triple}, right)~\cite{AlPo15,bonichon-mbm-fusy,fusy-bipolar,felsner2011bijections}. 
We explain here how such a bijection can be obtained from 
the KMSW  bijection with tandem walks.  In fact, our construction
is more general, as it extends to \emm marked, bipolar orientations. 
In particular, it gives a closed form expression for the number of marked bipolar orientations with prescribed signature,  number of (plain) edges, and  number of inner faces (see Proposition~\ref{prop:count_marked} and the paragraph just following it). However, this bijection does not allow us to record face degrees.

Consider a tandem walk $w$ of length $n$ going from $(a,b)$ to $(c,d)$,
staying in the quadrant, with successive steps $s_1,\ldots,s_n$.
We associate with $w$ a triple of directed walks $D_1,D_2,D_3$ 
as follows. We initialize $D_1,D_2,D_3$ to be the empty walks 
starting (and ending) at $(0,-a-1)$, $(0,0)$, and $(0,b+1)$,  respectively. Then we  let $D_1,D_2,D_3$ grow by reading the successive steps of $w$. 
To be precise, for $m$ from $1$ to $n$:
\begin{itemize}
\item
If $s_m$ is a SE  step, we add a north step to $D_2$, and leave
$D_1$ and $D_3$ unchanged.
\item
If $s_m$ is a face step $(-i,j)$ we append an east step to $D_2$,
and we append the walk $EN^i$ to $D_1$ and append the walk $N^jE$
to $D_3$. 
\end{itemize}

\begin{figure}[b]
\begin{center}
\includegraphics[width=8cm]{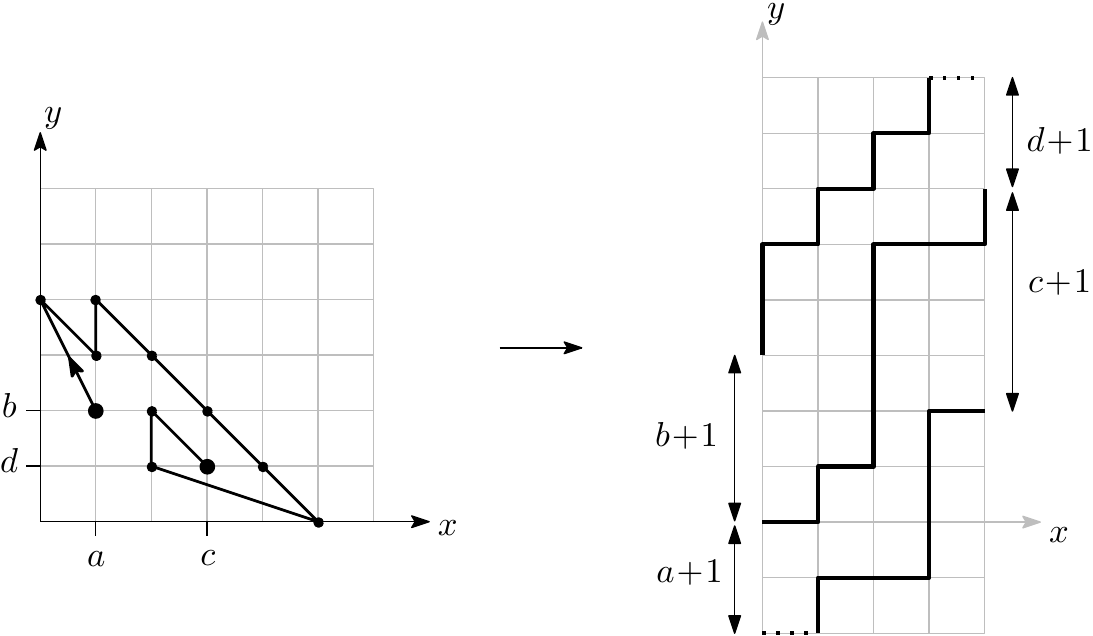}
\end{center}
\caption{A tandem walk in the quadrant, and 
the corresponding non-intersecting triple of directed lattice walks.}
\label{fig:triple}
\end{figure}

\noindent See Figure~\ref{fig:triple} for an example.
One can see that at stage $m$  the walks $D_1,D_2,D_3$
have the same number of east steps (which is 
the number of face steps among 
$s_1,\ldots,s_m$), and $D_2$ has exactly $m$ steps. 
In addition, if we denote by $(i_m,j_m)$ 
the point of $w$ reached after $m$ steps (that is,
  $(i_m,j_m)=s_1+\cdots + s_m$),  
and by $y_1^{{(m)}},y_2^{{(m)}},y_3^{{(m)}}$  the ordinates of the endpoints of $D_1,D_2,D_3$ at stage $m$, then $y_2^{{(m)}}-y_1^{{(m)}}=i_m+1$ and
$y_3^{{(m)}}-y_2^{{(m)}}=j_m+1$. In particular,
  saying that $w$ stays in the quadrant is equivalent to saying that
  the three walks $D_1, D_2, D_3$ do not intersect.
By construction, the first step of $D_1$ (respectively the last
step of $D_3$) is necessarily an east step, hence can be deleted
without loss of information. We denote by $(\hat D_1, D_2, \hat
  D_3)$ the resulting triple of walks. This gives the following proposition.

\begin{Proposition}\label{prop:count_marked}
The above mapping is a bijection $\Psi$ between tandem walks
of length $n$ with~$k$ face steps, staying in the quadrant,  
starting at $(a,b)$ and ending at $(c,d)$, 
and non-intersecting triples of directed walks from $A_1=(1,-a-1)$,
$A_2=(0,0)$, $A_3=(0,b+1)$ to $B_1=(k,n-k-c-1)$, $B_2=(k,n-k)$, $B_3=(k-1,n-k+d+1)$. 

Therefore, by the Lindstr\"om--Gessel--Viennot lemma~\cite{gessel-viennot}, 
the number of such walks is given by 
\[
q_{n,k}(a,b,c,d)=\left|
\begin{array}{ccc}
\binom{n+a-c-1}{k-1}&\binom{n+a}{k-1}&\binom{n+a+d}{k-2}\\[.2cm]
\binom{n-c-1}{k}&\binom{n}{k}&\binom{n+d}{k-1}\\[.2cm]
\binom{n-b-c-2}{k}&\binom{n-b-1}{k}&\binom{n-b+d-1}{k-1}
 \end{array} \right|.
\] 
\end{Proposition}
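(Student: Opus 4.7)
My plan is to carry out the proof in three stages: verify that the endpoints of the triple are as stated, establish the equivalence between non-intersection of the triple and the quadrant condition on $w$, and then apply the Lindström--Gessel--Viennot lemma to produce the determinantal count. The reversibility of $\Psi$ comes essentially for free from the structure of the construction.

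First I would verify the endpoints by direct step counting. Let $w$ have face steps $s_{m_q}=(-\alpha_q,\beta_q)$ at positions $m_1<\cdots<m_k$, with the $n-k$ remaining steps being SE. From the total displacement $(c-a,d-b)$ one reads off $\sum_q\alpha_q=n-k+a-c$ and $\sum_q\beta_q=n-k+d-b$. Tracing the construction, $D_2$ acquires $n-k$ north and $k$ east steps and ends at $(k,n-k)=B_2$; $D_1$ starts at $(0,-a-1)$ and ends at $(k,n-k-c-1)$, so erasing its forced initial east step yields $\hat D_1\colon A_1=(1,-a-1)\to B_1$; the symmetric computation gives $\hat D_3\colon A_3=(0,b+1)\to B_3=(k-1,n-k+d+1)$. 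For the reversibility, $D_2$'s sequence of N/E steps tells us the type of each step $s_m$ (N $\leftrightarrow$ SE, E $\leftrightarrow$ face), and after restoring the deleted east steps $D_1$ parses uniquely as $E\,N^{\alpha_1}\,E\,N^{\alpha_2}\cdots E\,N^{\alpha_k}$ and $D_3$ as $N^{\beta_1}\,E\cdots N^{\beta_k}\,E$, so $(-\alpha_q,\beta_q)$ is read off. This makes $\Psi$ a bijection between quadrant tandem walks and a larger a priori class of triples with the prescribed endpoints.

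The crux is the equivalence with non-intersection. Writing $y_l^{(m)}$ for the ordinate of $D_l$ after the first $m$ steps of $w$ have been processed, I would prove by induction on $m$ (treating the SE and face cases separately as in the definition of $\Psi$) the invariants
\[
y_2^{(m)}-y_1^{(m)}=i_m+1,\qquad y_3^{(m)}-y_2^{(m)}=j_m+1,
\]
which hold at $m=0$ by the initial vertical offsets $a+1$ and $b+1$. Since $D_1,D_2,D_3$ are monotone, non-intersection reduces to a column-by-column comparison of $y$-ranges. At each column $q\in\{1,\ldots,k\}$ the three paths enter simultaneously at stage $m_q$; using the invariant one checks that $\max_{D_1}(q)=y_1^{(m_q)}$, $\min_{D_2}(q)=y_2^{(m_q)}$, $\max_{D_2}(q)=y_2^{(m_{q+1}-1)}$, $\min_{D_3}(q)=y_3^{(m_q)}$, so the two strict inequalities become $i_{m_q}\ge 0$ and $j_{m_{q+1}-1}\ge 0$. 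The analogous boundary computations handle columns $0$ and $k$. Since $i_m$ decreases only at face steps and $j_m$ decreases only at SE steps, these finitely many inequalities characterize exactly the walks that stay in the quadrant.

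Once the bijection is in hand, the enumeration is mechanical. The orderings $y_{A_1}<y_{A_2}<y_{A_3}$ and $y_{B_1}<y_{B_2}<y_{B_3}$ force any triple of monotone paths indexed by a non-identity permutation of $\{1,2,3\}$ to intersect, so the Lindström--Gessel--Viennot lemma expresses the count as the determinant of the $3\times3$ matrix whose $(i,j)$ entry is the number of N/E-walks from $A_i$ to $B_j$, namely the binomial $\binom{\Delta x+\Delta y}{\Delta x}$ for the corresponding endpoint difference. A direct computation of the nine entries reproduces the matrix in the statement. The main obstacle in the whole proof is the careful column-by-column bookkeeping in the non-intersection check, in particular handling the two boundary columns $0$ and $k$ where one of $\hat D_1,\hat D_3$ is absent; once the invariant $(y_2-y_1,y_3-y_2)=(i+1,j+1)$ is isolated, everything else is either routine step counting or a direct application of LGV.
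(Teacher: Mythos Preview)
Your proposal is correct and follows essentially the same approach as the paper. The paper's argument is exactly the invariant $y_2^{(m)}-y_1^{(m)}=i_m+1$ and $y_3^{(m)}-y_2^{(m)}=j_m+1$ (stated in the text preceding the proposition), from which the equivalence between the quadrant condition and non-intersection is drawn, followed by LGV; you supply the column-by-column details, the endpoint and reversibility checks, and the permutation-crossing verification that the paper leaves implicit.
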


As discussed at the beginning of Section~\ref{sec:counting}, the number
\begin{multline*}
\tq_{n,k}(a,b,c,d):=q_{n,k}(a,b,c,d)-q_{n,k}(a-1,b,c-1,d)\\
-q_{n,k}(a,b-1,c,d-1)+q_{n,k}(a-1,b-1,c-1,d-1) 
\end{multline*}
counts marked bipolar orientations
with $n+1$ plain edges, $k$ inner faces, and signature $(a,b;c,d)$.

Coming back to the involutions $\rho$ and $\sigma$ of
Definition~\ref{def:symmetries}, we see that $\Psi$ behaves simply with respect to $\rho$ (it amounts to rotate $\Psi(w)$ by a half-turn), but that the
transformation induced by $\sigma$ is not simple. Accordingly, the
underlying  symmetry $\tq_{n,k}(a,b,c,d)=\tq_{n,k}(d,b,c,a)$
is not clear from the above determinant.

When  $a=d=0$, we have $q_{n,k}(0,b,c,0)=\tq_{n,k}(0,b,c,0)$, and the
map $\Psi$, composed with (the reverse of) the KMSW bijection $\Phi$,
sends bipolar orientations with $n+1$ edges, $k$ inner faces, 
left outer boundary of length $b+1$, right outer boundary of length $c+1$, 
bijectively onto non-intersecting triples of directed walks joining 
$A_1=(1,-1)$, $A_2=(0,0)$, $A_3=(0,b+1)$ 
to $B_1=(k,n-k-c-1)$, $B_2=(k,n-k)$, $B_3=(k-1,n-k+1)$.   
(Another bijection between these two families can also be deduced
from the correspondence between plane bipolar orientations
and twin pairs of binary trees given in~\cite{felsner2011bijections},
and from the encoding of twin binary trees given
in~\cite{dulucq1998baxter}, upon taking the mirror encoding for the second
binary tree.)

The further specialization $a=b=c=d=0$ gives a bijection for bipolar
orientations of a digon having $n+1$ edges and $k$ inner faces.
Upon deleting  the two outer edges (for $n\geq 3$), we obtain a  bipolar orientation having $n-1$ edges and $k-2$ inner faces. In the corresponding tandem walk, the
  first step is necessarily of the form $(0,j)$, and the last one of
  the form $(-i,0)$. Hence, in  the associated non-intersecting 
triple $\hat D_1,D_2,\hat D_3$, the first steps of $\hat D_1$ and $D_2$, 
and the last steps of $D_2$ and $\hat D_3$, are always east, hence  these
four steps can be deleted.
We thus recover the fact that plane bipolar orientations
with $n-1$ edges and $k-2$ inner faces are in bijection with non-intersecting
triples of lattice walks from $(2,-1),(1,0),(0,1)$ to $(k,n-k-1),(k-1,n-k),(k-2,n-k+1)$, which are counted by the Baxter 
summand (see~\eqref{baxter-bip}): 
\[
q_{n,k}(0,0,0,0)=
\frac{2}{n^2(n-1)}\binom{n}{k-2}\binom{n}{k-1}\binom{n}{k}.
\]

\subsection{Random generation of tandem walks in the quadrant} 

For $p\geq 1$, let $z$ and $z_0,\ldots,z_p$ be step probabilities 
satisfying~\eqref{eq:dist} and~\eqref{eq:drift}.
We let $\cQ_n$ denote the family of $p$-tandem walks of length $n$ in the quadrant starting at the origin,  and  $\cE_n$ the subfamily of those that end at the
origin (\emm excursions,). We consider the problem of generating a random walk in $\cQ_n$ (or $\cE_n$) such that each walk $w$ occurs with 
probability proportional to $z^k\prod_r z_r^{n_r}$,
with $k$ the number of SE steps in $w$ and $n_r$ the number of face steps of level $r$. 
With $\boldsymbol z=(z,z_0,\ldots,z_p)$, we refer to such random walks as \emm $\boldsymbol z$-distributed,.

Regarding $\cQ_n$, the bijection of {Proposition~\ref{prop:invol}}
reduces this problem to the random generation of {$\boldsymbol z$-distributed}
 tandem walks in the upper half-plane, starting at the origin and 
ending on the $x$-axis. Projected to the $y$-axis, these walks correspond to a model of  critical Galton--Watson trees, and can thus be randomly generated in linear time~\cite{devroye2012simulating}.

We thus focus on  $\cE_n$. We begin with the special case
  $p=1$ and $z_0=0$. Then $z=z_1=1/3$, and the $\boldsymbol z$-distribution is simply   uniform.
This case is particularly simple due to the existence of closed
  form expressions. Indeed, the number   of walks of length $n=3m+2i+j$ starting at the origin and ending at $(i,j)$ is (see \cite[Prop.~9]{mbm-mishna})
\[
  q_n(0,0;i,j)=\frac{(i+1)(j+1)(i+j+2)(3m+2i+j)!}{m!\,(m+i+1)!\,(m+i+j+2)!}.
  \]
This makes it easy to draw a uniform  excursion in $\cE_n$ step by step in time $O(n)$ (in reverse order, that is, from  the last one to the first one). This
is how we generated the tandem walk encoding the random bipolar orientation with
triangular faces of Figure~\ref{fig:random}.
  
For the general case $p\geq 1$, 
 we propose an {almost linear} algorithm under two relaxations:
\begin{enumerate}
\item 
[(i)] the length is not exactly prescribed, but lies in a linear-size 
window $\llbracket 2n, 3n\rrbracket$,
\item  [(ii)] the distribution conditioned to a given size $m\in\llbracket 2n,
3n\rrbracket$ coincides with
  the desired   distribution only   asymptotically (meaning that the
total variation distance between the actual distribution and the
$\boldsymbol z$-distribution is $o(1)$).
\end{enumerate}

To obtain a random excursion $w$ we generate two random walks $w_1,w_2$ in the quadrant, of respective lengths $n$ and $2n$, using the above random generators for $\cQ_n$ and $\cQ_{2n}$.
Let $(a,b)$ be the ending point of $w_1$.
If $w_2$ does not visit $(b,a)$ between times $n$ and $2n$, then we declare
a failure situation and restart generating $w_2$ until we obtain 
a walk visiting $(b,a)$ between times $n$ and $2n$.  
Then let $n'\in\llbracket n, 2n\rrbracket$ be the index of the last
visit of $w_2$ to $(b,a)$. Let $w_3$ be the
    prefix of length $n'$ of $w_2$, and let $\ow_3$ be obtained by
    reversing time in $w_3$, and applying an $x/y$-symmetry. That is,
  if the $r$th step of $w_3$ is $(i,j)$ then 
the $(n'-r+1)$th step of $\ow_3$ is $(-j,-i)$. As already used in
Section~\ref{subsec:LLT}, $\ow_3$ is also a tandem walk.
Then  the concatenation $w$ of $w_1$ and $\ow_3$ is a random excursion
in $\cE_m$, where $m=n+n' \in \llbracket 2n, 3n\rrbracket$.

A random excursion in $\cE_m$ is called  
{\emph{$n$-twisted}} if it is {$\boldsymbol z$-distributed} \emm once conditioned on the point, $(X_n,Y_n)$ \emm
visited after $n$ steps,. Clearly
the random excursion $w$ constructed by the above procedure
is $n$-twisted.  Moreover, by Proposition~\ref{prop:Sabconve}, as $n\to\infty$ the rescaled
random point $\frac1{\sigma\sqrt{n}}(X_n,Y_n)$ converges to the law of density 
\[
g(x,y)=\frac1{\sqrt{3\pi}}\,xy(x+y)\exp\big(-\tfrac1{3}(x^2+y^2+xy)\big),
\]
and a local limit statement also holds, following from Proposition~\ref{prop:local-cone}. Let us compare this behaviour to the limit density of 
  $\frac1{\sigma\sqrt{n}}(X_n,Y_n)$ in a $\boldsymbol z$-distributed excursion of $\cE_m$.  Let $q_n(i,j):= \PP( S^{(0,0)}(n)=(i,j), \tau^{(0,0)}>n)$ denote the $\boldsymbol z$-weighted number of quadrant walks 
of length $n$ starting at the origin and ending at $(i,j)$.
Then the probability that a $\boldsymbol z$-distributed excursion of $\cE_m$ is at $(i,j)$ after $n$ steps is proportional to $q_n(i,j)q_{m-n}(j,i)$. This, combined with Proposition~\ref{prop:local-cone}, implies that  the rescaled random point $\frac1{\sigma\sqrt{n}}(X_n,Y_n)$ in a  $\boldsymbol z$-distributed excursion of $\cE_m$  asymptotically follows the law of density 
\[
c_\alpha\,  g(x,y)g(x/\sqrt{\alpha},y/\sqrt{\alpha}),
\]
where $\alpha:=(m-n)/n\in[1,2]$, for some constant $c_\alpha$. Moreover a local limit statement also holds. 

We would now like to twist the way we produce the excursion  $w$ so that the distribution of its $n$th point approaches the above distribution.
This is classically done by adding a rejection step (see, e.g., \cite[II.3]{Devr86}). To be precise, let $g_0\approx 0.267$
be the maximal value of $g(x,y)$ (attained at $(x,y)=(\sqrt{6}/2,\sqrt{6}/2)$ and at two other points). Then we repeat calling the random sampler above, at each attempt producing 
an $n$-twisted random excursion of length $m\in \llbracket 2n, 3n\rrbracket$ and then flipping a coin with success probability
$\frac1{g_0}g(\frac{X_n}{\sigma\sqrt{\alpha
    n}},\frac{Y_n}{\sigma\sqrt{\alpha n}})$, where
{$\alpha=(m-n)/n$, and $(X_n,Y_n)$ is the point reached after $n$ steps.} 
 We return the excursion for the first successful attempt. With this additional rejection step we obtain a random sampler that is asymptotically $\boldsymbol z$-distributed.

 Let us discuss (heuristically)   the time complexity of the sampler. The coin-flipping   probability of success is $\Theta(1)$ (uniformly over
 $\alpha\in[1,2]$), hence the time complexity is of the same order as the one of the  $\boldsymbol z$-twisted random excursion sampler.  
The generation of $w_1$ takes time $O(n)$ as already mentioned. Moreover it is well known~\cite{erdos51} that, in   
the simple random 2-dimensional walk of length $n$, the number of
distinct points visited by the walk is of the order of
$n/\log(n)$. Hence we can expect that the number of attempts needed
to generate $w_2$ should be of the order of $\log(n)$. We can thus expect the overall time complexity of the sampler to be of the order of $n\log(n)$. 

\subsection{Some remarks on the discrete harmonic function  $\boldsymbol{V(a,b)}$}
The harmonic function $V(a,b)$ of $p$-tandem walks in the
quadrant, given  by~\eqref{eq:expression_H}, is ubiquitous throughout
Section~\ref{sec:asymptotic}, as it expresses the dependence of
various asymptotic behaviours in terms of the starting point $(a,b)$ of
the random walk; see Propositions~\ref{prop:asympz_enum}
and~\ref{prop:local_limit}.
The notion of discrete harmonic function is  intrinsically
interesting, as it is related to many probabilistic problems (Doob's $h$-transform and (non-)uniqueness problem, to quote a few of them). In this section we present some key features of $V(a,b)$.

\subsubsection{The discrete harmonic function and
   Tutte's   invariants}
\label{sec:Tutte}

Recall that the values $V(a,b)$ must be related by the
identity~\eqref{eq:recurrence_relation_V}, which follows from a first step
  decomposition of random quadrant walks. This translates into a functional
  equation for the associated \gf\ $\mathcal V(u,v)$ defined
  by~\eqref{eq:expression_H},
  \beq\label{eqfunc-h}
  \left(\oS(u,v)-1\right)\mathcal V(u,v)=z\bu v\mathcal V(0,v)+
 \sum_{0<j\le r\le p} z_r u^{r-j} \bv^j \sum_{k=0}^ {j-1} v^k\mathcal
 V_k(u),
\eeq
where
\[
  \oS(u,v)= z\bu v + \sum_{0\le j \le r \le p} z_r u^{r-j} \bv^j,
\]
and
$\mathcal V_k(u)$ is the coefficient of $v^k$ in $\mathcal
V(u,v)$. Observe that $\oS(u,v)$ is closely related to the step
polynomial $S(x,y)$ defined by~\eqref{S:def}. In fact, $\oS(u,v)$  is
$S(\bu, \bv)$, taken at
$t=1$ and with a weight $z$ on the first step. This comes from the
fact that the  recurrence relation~\eqref{eq:recurrence_relation_V}
satisfied by the harmonic function is, in some sense, dual to the
recurrence corresponding to the enumeration of walks.

Let $U_0, U_1, \ldots, U_p$ be the $p+1$  solutions to $\oS(u,v)=1$
(when solved for $u$). They are algebraic functions of $v$, $z$ and
the $z_r$'s. Upon writing
\begin{equation}
\label{eq:S-weighted}
\oS(u,v)=z\bar{u}v+\sum_{r=0}^{p}z_{r}\, \frac {u^{r+1}-\bv^{r+1}}{u-\bv},
\end{equation}
we see that the equation $\oS(u,v)=1$ implies 
\beq\label{I0}
  I_0(u)=I_0(\bv), \qquad \hbox{with} \qquad I_0(u)= u+z\bu
  -\sum_{r=0}^p z_r u^{r+1}.
\eeq
In particular, all the functions $I_0(U_i)$ are equal: using the
terminology of Tutte~\cite{tutte-chromatic-revisited}, we say that
the rational function $I_0(u)$ is an \emm
invariant,. We refer to~\cite{BeBMRa-16,FrRa17}   for recent applications of invariants to quadrant problems.

It is shown in~\cite{Ra-14} how, in the case of small steps, the notion
of invariants can be used to determine discrete harmonic functions in
the quadrant. Let us illustrate this for the case $p=1$
of tandem
walks (we take moreover $z_0=0$, so that $z=z_1=1/3$ as discussed in
the examples following Proposition~\ref{prop:asympz_enum}). In this case, the
functional equation~\eqref{eqfunc-h} reads
\[
 3 uv\left( \oS(u,v)-1\right) \mathcal V(u,v)= v^2 \mathcal V(0,v)+
  u \mathcal V(u,0).
\]
In~\cite{Ra-14}, it is explained how one can give an analytic meaning to
$\mathcal V(u,v)$ (even without knowing that it is a simple rational
function), such that $u$ can be specialized to $U_0$ and $U_1=U_p$ in
the above equation. Then the left hand-side vanishes, and it follows that
\[
  0= v^2 \mathcal V(0,v)+ U_0 \mathcal V(U_0,0)= v^2 \mathcal V(0,v)+
  U_1 \mathcal V(U_1,0),
\]
so that the function $I(u):=u\mathcal V(u,0)$ is another invariant (if
$p=1$). Remarkably,  this property, together with some properties peculiar to harmonic functions, eventually characterizes $I(u)$, and gives an expression for
it in terms of the rational invariant $I_0(u)$ defined by~\eqref{I0}:
\[
  I(u)= u\mathcal V(u,0)= \frac{C}{I_0(u)-I_0(1)},
  \]
  for some normalizing constant $C$ that can be determined (and is
  $2\sqrt 3$ here).

We now   return to general values of $p$. No counterpart of the
analytic framework of~\cite{Ra-14} has been developed for large steps,
and the role of invariants remains to be worked out in this setting. However, it follows from the exact value of $\mathcal V(u,v)$, given in
Proposition~\ref{prop:asympz_enum}, that the above identity still holds.

\begin{Proposition}
  For $p$-tandem walks in the quadrant, the \gf\ $\mathcal V(u,v)$ of Proposition~\ref{prop:asympz_enum}  satisfies
  \[
    u\mathcal V(u,0)= \frac{2}\sigma \cdot \frac 1{I_0(u)-I_0(1)},
    \]
    where $I_0(u)$ is the rational invariant defined by~\eqref{I0} and
    $\sigma$ is given by~\eqref{eq:sigma}. In particular, $u\mathcal V(u,0)$ is
      also an invariant.
  \end{Proposition}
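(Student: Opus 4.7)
The plan is to prove the identity directly from the explicit rational expression for $\mathcal V(u,v)$ given in Proposition~\ref{prop:asympz_enum}, since by specialization
\[
u\mathcal V(u,0)=\frac{2}{\sigma}\cdot\frac{u}{(1-u)^3\Lambda(u)}.
\]
Thus the first claim reduces to the polynomial identity
\[
u\bigl(I_0(u)-I_0(1)\bigr)=(1-u)^3\Lambda(u),
\]
after which the second claim is free: $\frac{2/\sigma}{I_0(u)-I_0(1)}$ is a rational function of the invariant $I_0(u)$, hence takes a common value at $u=U_0,\ldots,U_p$, and is therefore itself an invariant.

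To prepare the identity, I would use the normalization~\eqref{eq:dist} to expand
\[
u\bigl(I_0(u)-I_0(1)\bigr)=(u-1)\Bigl[(u-z)-L(u)\Bigr],\qquad L(u):=u\sum_{r=0}^p z_r\sum_{j=0}^r u^j,
\]
so that the whole question boils down to proving
\[
L(u)-(u-z)=(u-1)^2\Lambda(u).
\]
This will be the main (and essentially only) computation.

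The key trick is a Taylor expansion around $u=1$ of each inner sum. Writing $\sum_{j=1}^{r+1}u^j=\sum_{j=1}^{r+1}\bigl(1+(u-1)+\cdots+(u-1)u^{j-2}\cdot\text{etc.}\bigr)$ and iterating the factorization $u^j-1=(u-1)(1+u+\cdots+u^{j-1})$ twice, one obtains the clean identity
\[
\sum_{j=1}^{r+1}u^j=(r+1)+\binom{r+2}{2}(u-1)+(u-1)^2\sum_{k=0}^{r-1}u^k\binom{r-k+1}{2}.
\]
Summing this over $r$ with weights $z_r$ gives
\[
L(u)=\sum_{r}z_r(r+1)+(u-1)\sum_{r}z_r\binom{r+2}{2}+(u-1)^2\Lambda(u).
\]
The normalization~\eqref{eq:dist} yields $\sum_r z_r(r+1)=1-z$, and the Pascal decomposition $\binom{r+2}{2}=(r+1)+\binom{r+1}{2}$ together with the zero-drift condition~\eqref{eq:drift} gives $\sum_{r}z_r\binom{r+2}{2}=(1-z)+z=1$. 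These two simplifications collapse the expression to $L(u)=u-z+(u-1)^2\Lambda(u)$, which is exactly the sought identity.

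The only mildly delicate point will be keeping track of which piece of the normalization/zero-drift data is being used where; both~\eqref{eq:dist} and~\eqref{eq:drift} appear exactly once, and so the identity is genuinely equivalent to the fact that the walk has zero drift. All other steps are routine manipulation of finite sums, and no deep tool or further combinatorics is needed beyond the explicit formula of Proposition~\ref{prop:asympz_enum}.
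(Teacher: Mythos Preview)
Your proof is correct and follows exactly the route the paper intends: the paper's own proof is merely the one-line remark that this is ``a simple calculation, using the normalizing identity~\eqref{eq:dist} and the zero-drift identity~\eqref{eq:drift}'', which is precisely the computation you carry out in full. Your twice-iterated factorization of $u^j-1$ to extract the $(u-1)^2\Lambda(u)$ term is a clean way to organize that calculation, and your observation that~\eqref{eq:dist} and~\eqref{eq:drift} each appear exactly once is a nice bonus.
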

 The proof is a simple calculation, using the normalizing
 identity~\eqref{eq:dist} and the zero-drift identity~\eqref{eq:drift}. We leave it to the reader.\qed

\subsubsection{Probabilistic features}
\label{sec:proba-harmonic}

The aim of this  section is twofold: we first comment on the links between the discrete harmonic function $V(a,b)$ and its continuous counterpart
$V_\infty(a,b)$ (see \eqref{eq:asymptotic_V(a,b)}); then we give an estimation of $S^{(a,b)}(\tau^{(a,b)})$, the location of the random walk at its first exit (Proposition~\ref{prop:estimate_exit}).

The asymptotic result \eqref{eq:asymptotic_V(a,b)} as both $a$ and $b$ tend to infinity,
\begin{equation*}
     V(a,b)\sim \frac{V_\infty(a,b)}{\sigma^3},
\end{equation*}
can be derived from purely probabilistic arguments: it is proved in
\cite[Lem.~13]{denisov2015random} that, when the covariance
  matrix is the identity, the discrete harmonic function $V(a,b)$ is
asymptotically equivalent to the continuous harmonic function
$V_\infty(a,b)=ab(a+b)$.
The above factor $\frac{1}{\sigma^3}$ is thus due to a different
normalization.
It is remarkable that this factor captures all
the  dependence in the model  in the parameters $z$ and $z_r$.
The function $V_\infty(a,b)=ab(a+b)$ can thus be viewed as a
universal harmonic function for our class of models. This
  universality is also visible in the expression for the covariance
  matrix in Lemma~\ref{lem:covari}.

In fact we  can go further and state an exact formula relating $V$ and
$V_\infty$.

\begin{Proposition}
  \label{prop:estimate_exit}
  Let us define the following shifted version of $V_\infty$:
  \[
    V_\infty^{\rm s}(a,b):=V_\infty(a+1,b+1)=(a+1)(b+1)(a+b+2).\] Then
\begin{equation}
\label{eq:estimate_exit}
V^{\rm s}_\infty (a,b)-\sigma^3V(a,b)=     \EE \left(V^{\rm s}_\infty (S^{(a,b)}(\tau^{(a,b)}))\right),
\end{equation}
where $S^{(a,b)}(\tau^{(a,b)})$ is the position where the random
tandem walk started at $(a,b)$ leaves the quadrant for the first time.
\end{Proposition}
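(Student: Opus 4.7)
The plan is to prove \eqref{eq:estimate_exit} by observing that $V_\infty^{\rm s}$, though not a harmonic function for the walk killed outside the quadrant (it does not vanish on the coordinate axes), is a genuine martingale for the \emph{unrestricted} random walk on $\zs^2$. One then applies the optional stopping theorem at $\tau^{(a,b)}$ and identifies the two resulting contributions by combining Propositions~\ref{prop:asympz_enum} and~\ref{prop:Sabconve}.

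The first step is to verify that $\EE(V_\infty^{\rm s}(a+X,b+Y))=V_\infty^{\rm s}(a,b)$ for every $(a,b)\in\zs^2$. Setting $(a',b'):=(a+1,b+1)$ and expanding the cubic polynomial $V_\infty(a'+X,b'+Y)=(a'+X)(b'+Y)(a'+b'+X+Y)$: the constant term is $V_\infty(a',b')$; the linear term in $(X,Y)$ has expectation $0$ by the zero-drift condition~\eqref{eq:drift}; the quadratic term has expectation
\[
b'\,\EE(X^2)+2(a'+b')\,\EE(XY)+a'\,\EE(Y^2),
\]
which vanishes thanks to the covariances of Lemma~\ref{lem:covari} (this is precisely the continuous harmonicity of $V_\infty$ for the diffusion, already noted after~\eqref{eq:asymptotic_V(a,b)}); finally the cubic term is $\EE(XY(X+Y))$. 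The latter vanishes step-by-step: the $SE$ step $(1,-1)$ gives $X+Y=0$, and for each face step of level $r$ the contribution is $z_r\sum_{j=0}^r(j-r)j(2j-r)$, which is zero because after the substitution $m=j-r/2$ one has $(j-r)j(2j-r)=2m(m^2-r^2/4)$, an odd function of $m$ summed over values symmetric about~$0$.

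Given the martingale property, optional stopping at the bounded time $\tau^{(a,b)}\wedge n$ yields
\[
V_\infty^{\rm s}(a,b)=\EE\bigl(V_\infty^{\rm s}(S^{(a,b)}(\tau^{(a,b)}))\mathbbm{1}_{\tau^{(a,b)}\le n}\bigr)+\EE\bigl(V_\infty^{\rm s}(S^{(a,b)}(n))\mathbbm{1}_{\tau^{(a,b)}>n}\bigr).
\]
As $n\to\infty$, the first summand converges to $\EE(V_\infty^{\rm s}(S^{(a,b)}(\tau^{(a,b)})))$ (using $\tau^{(a,b)}<\infty$ almost surely, a standard consequence of the zero drift in dimension two, together with an integrability bound on the exit position, whose coordinates are bounded by those of $S^{(a,b)}(\tau^{(a,b)}-1)$ plus~$p$). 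For the second summand, Proposition~\ref{prop:asympz_enum} gives $\PP(\tau^{(a,b)}>n)\sim V(a,b)\,n^{-3/2}/(4\sqrt{\pi})$, and Proposition~\ref{prop:Sabconve} identifies the conditional limit law of $S^{(a,b)}(n)/(\sigma\sqrt{n})$ on $\{\tau^{(a,b)}>n\}$ as the density $g(x,y)$; since $V_\infty^{\rm s}(i,j)\sim\sigma^3 n^{3/2}V_\infty(i/(\sigma\sqrt{n}),j/(\sigma\sqrt{n}))$ when $i,j$ are of order $\sqrt{n}$, the second summand tends to
\[
\frac{\sigma^3 V(a,b)}{4\sqrt{\pi}}\int_{\mathbb{R}_+^2}xy(x+y)\,g(x,y)\,dx\,dy.
\]
The integral evaluates to $4\sqrt{\pi}$: after the linear change of variables $u=x+y/2$, $v=y\sqrt{3}/2$ (which diagonalises the quadratic form $x^2+y^2+xy$ and sends the quadrant onto the cone $K_{\pi/3}$, as in the proof of Proposition~\ref{prop:Sabconve}), polar coordinates $(R,\phi)$ give $xy(x+y)=\tfrac{2}{3\sqrt{3}}R^3\sin(3\phi)$, and the integral separates into the product of $\int_0^{\pi/3}\sin^2(3\phi)\,d\phi=\pi/6$ and an elementary Gamma integral in $R$. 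This yields the desired $\sigma^3 V(a,b)$, which concludes the proof.

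The main technical obstacle is the passage from the weak convergence of Proposition~\ref{prop:Sabconve} to convergence of the (rescaled) third moment of $V_\infty^{\rm s}(S^{(a,b)}(n))$ on $\{\tau^{(a,b)}>n\}$: it requires uniform integrability of $V_\infty^{\rm s}(S^{(a,b)}(n))/n^{3/2}$ conditional on this event. This can be obtained via a higher-moment bound, exploiting the boundedness of the steps (which yields sub-Gaussian tails for $S^{(a,b)}(n)/\sqrt{n}$) together with the uniform local estimate of Proposition~\ref{prop:local-cone}.
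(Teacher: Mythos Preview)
Your approach is correct and genuinely different from the paper's. Both proofs begin by establishing that $V_\infty^{\rm s}$ is harmonic for the \emph{unrestricted} walk (this is Lemma~\ref{lem:u_martingale} in the paper; your computation of the vanishing of $\EE(XY(X+Y))$ via the odd-function argument is a nice variant). After that point the two proofs diverge: the paper simply invokes~\cite[Eq.~(5)]{denisov2015random}, which expresses $\sigma^3 V(a,b)$ as $V_\infty^{\rm s}(a,b)-\EE\bigl(V_\infty^{\rm s}(S^{(a,b)}(\tau^{(a,b)}))\bigr)+\EE\bigl(\sum_{m<\tau}f(S^{(a,b)}(m))\bigr)$ with $f$ the harmonicity defect, and then observes that $f\equiv 0$ kills the last sum. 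You instead re-derive this identity from scratch, via optional stopping at $\tau^{(a,b)}\wedge n$ and identification of the limit of $\EE\bigl(V_\infty^{\rm s}(S^{(a,b)}(n))\mathbbm{1}_{\tau>n}\bigr)$ using Propositions~\ref{prop:asympz_enum} and~\ref{prop:Sabconve} together with the explicit evaluation $\int_{\mathbb{R}_+^2} xy(x+y)\,g(x,y)\,dx\,dy=4\sqrt{\pi}$. Your route is thus self-contained within the paper (it uses only results already proved there), whereas the paper's route is shorter but outsources the analytic work to~\cite{denisov2015random}; in effect you are reproducing, in this special case, part of the argument behind~\cite[Eq.~(5)]{denisov2015random}.

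You are right to flag uniform integrability of $V_\infty^{\rm s}(S^{(a,b)}(n))/n^{3/2}$ on $\{\tau>n\}$ as the main technical point, and your sketch (splitting into a bounded window handled by Proposition~\ref{prop:local-cone} and a tail handled by sub-Gaussian concentration for the unconditioned bounded-step walk, via $\PP(S^{(a,b)}(n)=(i,j),\tau>n)\le\PP(S^{(a,b)}(n)=(i,j))$) is the standard and correct way to close this gap. Note also that integrability of $V_\infty^{\rm s}(S^{(a,b)}(\tau^{(a,b)}))$ follows \emph{a posteriori}: since $V_\infty^{\rm s}$ is bounded above on the exit set (it vanishes when $y=-1$ and is eventually nonpositive when $x<0$), once the second summand is shown to converge, convergence of the first follows from the optional stopping identity itself.
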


\begin{remarks}
{\bf 1.} If $p=1$ then $S^{(a,b)}(\tau^{(a,b)}))$ is necessarily
of the form $(-1,j)$ or $(i,-1)$, for some integers $i,j\geq
{0}$. Since $V^{\rm s}_\infty$ vanishes at these points,
the expectation in \eqref{eq:estimate_exit} is zero, which is
consistent with~\eqref{Vab1}.

\medskip

\noindent{\bf 2.} If $p\geq2$,  there are again two possibilities. The random walk may exit through the horizontal boundary, in which case $S^{(a,b)}(\tau^{(a,b)})$ is still of the form $(i,-1)$, where $V^{\rm s}_\infty$
vanishes. It may also exit through the vertical boundary, and then
$S^{(a,b)}(\tau^{(a,b)})=(-i,j)$, for $1\le i \le p$
  and $j\ge 0$ and $V^{\rm s}_\infty{(-i,j)=(1-i)(1+j)(j-i+2)}$ does not
  {necessarily} vanish.
  Hence   we can interpret~\eqref{eq:estimate_exit}
   as an estimation of the {square of the}  vertical
  coordinate, when an exit occurs {through the vertical boundary.}
\end{remarks}

\begin{proof}[Proof of Proposition~\ref{prop:estimate_exit}]
  Let us define the function $f$ by
  \beq
\label{eq:formula_(4)_DeWa15}
     f(a,b)=\EE \left(V^{\rm s}_\infty ((a,b)+(X,Y))\right)-V^{\rm s}_\infty (a,b).
\eeq
Then it follows from~\cite[Eq.~(5)]{denisov2015random} that
\[
     \sigma^3 V(a,b)=V^{\rm s}_\infty (a,b)-\EE \left(V^{\rm s}_\infty (S^{(a,b)}(\tau^{(a,b)}))\right)+\EE \left(\sum_{m=0}^{\tau^{(a,b)}-1}f(S^{(a,b)}(m))\right).
\]
There are two slight differences between this formula
and Eq.~(5) in~\cite{denisov2015random}: first, for the same reasons
as above, we need the factor $\sigma^3$; second, the right-hand side of our
formula involves $V^{\rm s}_\infty$ rather than  $V_\infty$ because
the boundary axes of~\cite{denisov2015random} are shifted by $(1,1)$
(more precisely, the walk is killed on the coordinate axes).

The function $f$ defined by~\eqref{eq:formula_(4)_DeWa15} can be
  computed explicitly, and is found to be $0$. This completes the
  proof of the  proposition. We compute $f$ below  in an
  independent lemma, which shows again the rich structure of   tandem walks.
\end{proof}

\begin{Lemma}
\label{lem:u_martingale}
The functions $V_\infty$ (and $V_\infty^{\rm s}$) are discrete harmonic functions for the random walk in $\mathbb{Z}^2$ with increments $(X,Y)$ satisfying~\eqref{distri} (with no killing), in the sense that for all $(a,b)\in\mathbb{Z}^2$,
\begin{align*}
\label{eq:global_harmonic}
V_\infty(a,b)&=\EE \left(V_\infty ((a,b)+(X,Y))\right)
\\
&=zV_\infty(a+1,b-1)+\sum_{r=0}^{p}z_{r} \sum_{i+j=r}V_\infty(a-i,b+j),
\end{align*}
and similarly for $V_\infty^{\rm s}$.
\end{Lemma}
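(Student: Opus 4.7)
The plan is to exploit the fact that $V_\infty(a,b) = a^2b + ab^2$ is a cubic polynomial in $a$ and $b$, so a Taylor expansion around $(a,b)$ in the increment $(X,Y)$ is \emph{exact} after degree $3$. Writing
\[
V_\infty(a+X,b+Y)-V_\infty(a,b)=\bigl(V_\infty\bigr)'_1\!(a,b)\,X+\bigl(V_\infty\bigr)'_2\!(a,b)\,Y+\tfrac12\sum_{i,j}\bigl(V_\infty\bigr)''_{ij}\!(a,b)\,X_iX_j+V_\infty(X,Y),
\]
since $V_\infty$ is homogeneous of degree $3$, the purely cubic remainder is just $V_\infty(X,Y)=XY(X+Y)$. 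Taking expectations, the harmonicity identity that I need to verify reduces to three separate vanishing statements: (i) the linear terms vanish by the zero-drift assumption~\eqref{eq:drift}, (ii) the quadratic terms vanish, and (iii) $\EE[XY(X+Y)]=0$.

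For (ii), the relevant second partial derivatives are $(V_\infty)''_{11}=2b$, $(V_\infty)''_{22}=2a$, and $(V_\infty)''_{12}=2(a+b)$, so the quadratic contribution equals
\[
b\,\EE(X^2)+a\,\EE(Y^2)+(a+b)\,2\EE(XY)=\sigma^2\bigl(2b+2a-2(a+b)\bigr)=0,
\]
using the explicit values of the covariance matrix given by Lemma~\ref{lem:covari}. For (iii) I split the expectation according to step type: the SE step contributes $z\cdot(1)(-1)(0)=0$, and a face step $(-i,r-i)$ contributes $(-i)(r-i)(r-2i)$. Hence
\[
\EE\bigl(XY(X+Y)\bigr)=\sum_{r=0}^p z_r\sum_{i=0}^r (-i)(r-i)(r-2i).
\]
The key observation is that the inner sum is antisymmetric under the involution $i\mapsto r-i$: indeed, replacing $i$ by $r-i$ sends $(-i)(r-i)(r-2i)$ to $-(r-i)i(r-2i)\cdot(-1)\cdot(-1)=-(-i)(r-i)(r-2i)$. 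Since the sum equals its own negative, it vanishes.

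Assembling the three pieces gives $\EE\bigl(V_\infty((a,b)+(X,Y))\bigr)=V_\infty(a,b)$ for every $(a,b)\in\zs^2$, which is exactly the claimed harmonicity. The statement for $V_\infty^{\rm s}$ is immediate: if $V_\infty$ is harmonic, then so is any translate of it, and in particular $V_\infty^{\rm s}(a,b):=V_\infty(a+1,b+1)$. There is no genuine obstacle here; the only slightly delicate step is spotting the $i\mapsto r-i$ symmetry that kills the third moment, which is really just a reflection of the balance between the three step families encoded in the definition of a tandem step.
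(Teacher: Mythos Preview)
Your proof is correct. The paper's proof is a direct expansion: it writes out $\EE\left(V_\infty((a,b)+(X,Y))\right)$ explicitly, sums $(a-i)(b+r-i)(a+b+r-2i)$ over $i$, and regroups into a part equal to $ab(a+b)$ times $z+\sum_r z_r(r+1)$ (which is $1$ by the normalization~\eqref{eq:dist}) plus a lower-order remainder that cancels by the drift condition~\eqref{eq:drift}. Your Taylor-expansion approach is a cleaner reorganization of the same computation: separating by degree in $(X,Y)$ lets you invoke the zero drift for degree~$1$, the covariance matrix of Lemma~\ref{lem:covari} for degree~$2$, and the neat $i\mapsto r-i$ antisymmetry for degree~$3$. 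The paper's grouping hides the third-moment vanishing inside its algebra, whereas your argument isolates it and explains it structurally; conversely, the paper uses only~\eqref{eq:dist} and~\eqref{eq:drift} directly, while you route through Lemma~\ref{lem:covari}.
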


\begin{proof}
  Recall that $V_\infty(a,b)=ab(a+b)$. Hence
  \begin{equation*}
  \EE \left(V_\infty ((a,b)+(X,Y))\right)=
  z(a+1)(b-1)(a+b)+\sum_{r=0}^{p}z_{r} \sum_{i+j=r}(a-i)(b+j)(a-i+b+j).
\end{equation*}
We write $j=r-i$, and sum over $i=0, \ldots, r$ for $r$
  fixed. Upon putting apart the cubic terms, this transforms the above expression into 
\begin{equation*}
  zab(a+b)+\sum_{r=0}^{p}z_{r} (r+1)ab(a+b)+
  z(b-a-1)(a+b)+
    \sum_{r=0}^{p}z_{r}\frac{r(r+1)}{2}(a+b)(a-b+1).
\end{equation*}
Because of the drift condition~\eqref{eq:drift}, the sum of the
third and fourth term vanishes. Moreover, the sum of the first two
terms equals $ab(a+b)=V_\infty(a,b)$, due to the normalization
condition~\eqref{eq:dist}. This completes the proof for
$V_\infty$. Finally, since $V_\infty^{\rm    s}(a,b)=V_\infty(a+1,b+1)$, this function is also harmonic.
\end{proof}

\subsection{More combinatorics for 1D walks} 
In this section, by a \emm 1D walk, we mean any walk with steps in $\{-1\}\cup\mathbb{N}=\{-1,0,1, \ldots\}$. Such walks are classically represented as directed 2D walks, upon drawing each step $i$ as 
$(1,i)$. 
For $\cF$ a family of 1D walks, we 
denote by $F$ the associated \gf \ in $\mathbb{Q}[w_{-1},w_0,w_1,\ldots][[t]]$, 
where the variable $t$ records the length, and $w_{i}$ the number of steps~$i$, for $i\geq -1$. For such walks, we give here a combinatorial proof of two enumerative results that we have used in the paper. The first one deals with  nonnegative walks going from height~$0$ to height $a$, and this result has been used in Section~\ref{sec:prop_alg_combi}.
The only published proofs that we know for it are algebraic~\cite{banderier-flajolet,bousquet-petkovsek-recurrences,gessel-factorization}. The second one deals with nonnegative walks going from height $k$ to height $0$, and has been established algebraically when proving Corollary~\ref{cor:Qthird}. (In practise we translate these walks so that they start at $0$, end at $-k$, and never visit any vertex of height less than $-k$.) 

We begin with preliminary arguments that have already been used in Section~\ref{sec:bij_proofs}. For $a\geq 0$ we let $\cH_a$ be the family of 1D walks going from height $0$ to height $a$ with nonnegative height all along.  
Furthermore, for $k\geq 0$ we let $\cD_k$ be the family of 1D walks going from height $0$ to height $-k$ and with height at least $-k$ all along (note that $\cD_0=\cH_0$), 
and we let $\cL_k$ be the subfamily of those that reach height $-k$ only at their ending point.  
By considering the first step $i$ of a walk, we see that $Y:= L_1$ is the unique power series in $t$ satisfying
\[ 
Y=t\sum_{i\geq-1}w_iY^{i+1}.
\] 
Moreover, for $k\geq 1$ a decomposition of walks of $\mathcal L_k$ at their first visits to heights $-1,\ldots,-k+1$ shows that $L_k=Y^{k}$. Since, clearly, $L_k=tw_{-1}D_{k-1}$, this gives, for $k\geq 0$, 
\[
D_k=\frac1{tw_{-1}}{Y^{k+1}}.
\]
This is the analogue of~\eqref{HY}.

\subsubsection{Nonnegative walks from $\boldsymbol 0$ to $\boldsymbol a$}
\label{sec:app1}

The classical \emm kernel method,~\cite{banderier-flajolet,bousquet-petkovsek-recurrences} shows that 
\[
\sum_{a\ge 0} H_a u^a= \frac{1-\bu Y}{1-tS(u)},
\]
where $\bu=1/u$ and $S(u)=\sum_{i\ge -1}w_i u^i$ is the \gf\ of the
steps. As argued  in the proof of
  Lemma~\ref{lem:interpretation}, this can be rewritten as
\begin{equation}\label{eq:Ha}
H_a=\frac{Y}{tw_{-1}}[u^a]\frac{1}{1-\frac{uY}{w_{-1}}\sum_{j,k\geq 0}w_{j+k+1}u^kY^j}.
\end{equation}
We now prove this identity combinatorially via standard path decompositions.
As we will show,  it implies the expression~\eqref{1D-res-bis} for the series $H^{0\rightarrow a}$ counting tandem walks in the upper half-plane ending at ordinate $a$.

In a 1D walk a \emph{record} is a point whose height is strictly smaller than the height
of all subsequent points (by convention the endpoint is 
always considered as a record).  For a walk in $\cH_a$ 
the heights of the successive records form an increasing sequence 
 $0,i_1,i_1+i_2,\ldots,i_1+\cdots+i_e$, where  $i_1,\ldots,i_e$ are positive
and $i_1+\cdots+i_e=a$; the sequence $(i_1,\ldots,i_e)$ is called
the \emph{record-sequence} of the walk (note that the record-sequence is empty if and only if $a=0$).   
For $k\geq 1$ we let $U_k$ be the \gf\ of those walks
in $\cH_k$ whose
only records are the endpoints. This means that all points, except the starting point, have height at least $k$.  Clearly, the \gf\ of walks
in $\cH_a$ that have record-sequence $(i_1,\ldots,i_e)$ is then equal to $H_0\prod_{s=1}^eU_{i_s}$.
Hence, by summing over all possible record-sequences, and using $H_0=\frac1{tw_{-1}}Y$, we find:
\[
H_a=\frac{Y}{tw_{-1}}[u^a]\frac1{1-\sum_{k\geq 1}U_ku^k}.
\] 
We finally express the series $U_k$ for $k\geq 1$. A  walk  counted by $U_k$ consists of a first step  $h>k$, followed by a (translated) walk of $\mathcal D_{h-k}$. The associated \gf\ is thus $tw_h D_{h-k}=\frac{w_h}{w_{-1}}Y^{h-k+1}$. Hence we have  
\[
U_k=\frac{Y}{w_{-1}}\sum_{h\geq k\ge 1}w_{h}Y^{h-k},
\]
so that
\[
\sum_{k\geq 1}U_ku^k=\frac{uY}{w_{-1}}\sum_{j,k\geq 0}w_{j+k+1}u^kY^j,
\]
which gives~\eqref{eq:Ha}. 

This result can now be applied to tandem walks.  
With the notation of Section~\ref{sec:prop_alg_combi},  a tandem walk in $\cH^{0\rightarrow a}$, once projected on the vertical axis, becomes a 1D walk in $\cH_a$: every SE step projects onto a $-1$ step, and for $r\geq 0$, a step $r$ in the 1D walk can arise from any step $(-i,r)$ with $i\geq 0$.
Hence, under the specialization $\{w_{-1}=1, w_s=\sum_{r\geq s}z_r\ \mathrm{for}\ s\geq 0\}$,
we have $Y=W$ and $H_a=H^{0\rightarrow a}$. We thus recover~\eqref{1D-res-bis}.   

\subsubsection{Nonnegative walks from $\boldsymbol k$ to $\boldsymbol 0$}
\label{sec:proof_eq_const}
  
Let $k\geq 1$. In this section we give a combinatorial proof of the following constant term expression for $L_k$:
\begin{equation}\label{eq:const_Y}
L_k=Y^{k}=-t[y^0]y^{1+k}\frac{S'(y)}{K(y)},
\end{equation}
where $S(y)=\sum_{i\geq -1}w_iy^i$ and 
$K(y):=1-tS(y)$. Note that it directly yields the identity~\eqref{eq11} 
used to prove Corollary~\ref{cor:Qthird}, by taking the specialization for tandem walks: 
$\{w_{-1}=x, w_s=\sum_{r\geq s}\bx^{r-s}z_r\ \mathrm{for}\ s\geq 0\}$. 

\begin{figure}
\begin{center}
\includegraphics[width=\linewidth]{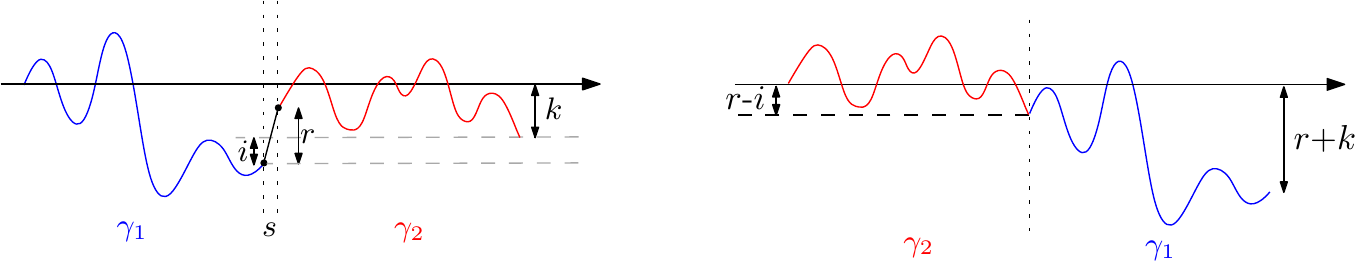}
\end{center}
\caption{The bijection between $\cN_{k,r,i}$ (for any fixed $i\le r$) and $\cW_{k+r}$.} 
\label{fig:luka}
\end{figure}

Our combinatorial argument is borrowed from~\cite{renault2008lost} (see also the interesting discussion therein regarding the 
fact that, for walks with steps $\pm 1$, the argument is originally 
due to D\'esir\'e Andr\'e, to whom the reflection principle was wrongly attributed).

Let $\cW_k$ be the family of 1D walks ending at height $-k$, and 
let $\cN_k=\cW_k\backslash \cL_k$ be the subset of those walks that visit height $-k$ before their final point. 
For $\gamma\in\cN_k$, we define the \emph{marked step} of $\gamma$ as the last step that starts at height $\leq -k$. For $0\le i \le r$, 
let $\cN_{k,r,i}$ be the subfamily of $\cN_k$ where the marked step   is a step $+r$ that 
starts at height $-k-i$ (if $i=r$ then the marked step is always the last step).We refer to Figure~\ref{fig:luka} for an illustration.
We claim that $\cN_{k,r,i}$ and $\cW_{k+r}$ are in bijection.
To $\gamma\in\cN_{k,r,i}$,  
written as $\gamma_1s\gamma_2$ with $s$ the marked step,  we associate
 $\gamma_2\gamma_1\in\cW_{k+r}$. Conversely, for $\gamma\in\cW_{k+r}$, we let $\gamma_2$ be the prefix of 
$\gamma$ ending at the first visit to height $i-r$ (if $i=r$ then $\gamma_2$ is empty), and we take $\gamma_1$ as the corresponding suffix of $\gamma$; 
we associate to $\gamma$ the walk $\gamma_1s\gamma_2$ where $s$ is a $+r$ step.   
In terms of generating functions this gives
$N_{k,r,i}=tw_rW_{k+r}$. Hence
$$
N_k=\sum_{r\geq 0}\sum_{i=0}^{r}N_{k,r,i}=\sum_{r\geq 0}\sum_{i=0}^{r}tw_rW_{k+r}=t\sum_{r\geq 0}(r+1)w_rW_{k+r}.
$$
Moreover, for all $j\geq 1$, we clearly have
$$
W_j(t)=[y^{-j}]\frac{1}{K(y)}=[y^0]\frac{y^j}{K(y)}.
$$
Since $L_k=W_k-N_k$, for $k\ge 1$ we obtain
\begin{align*}
L_k&=[y^0]\frac{y^k}{K(y)}\cdot\Big(1-t\sum_{r\geq 0}(r+1)w_ry^{r}\Big)\\
&=[y^0]\frac{y^k}{K(y)}\cdot\Big(K(t,y)+tw_{-1}y^{-1}-t\sum_{r\geq 0}rw_ry^{r}\Big)\\ 
&=t[y^0]\frac{y^k}{K(y)}\cdot\Big(w_{-1}y^{-1}-\sum_{r\geq 0}rw_ry^{r}\Big) .
\end{align*}
Since $yS'(y)=-y^{-1}w_{-1}+\sum_{r\geq 0}rw_ry^{r}$, this gives the expression~\eqref{eq:const_Y} for $L_{k}$.

\bigskip
\noindent{\bf Acknowledgements.} We thank warmly both
anonymous referees for their thorough reading, their useful comments and suggestions. We are grateful to Fr\'ed\'eric Chyzak and Marni Mishna for interesting discussions, to Pierre Lairez who computed recurrence relations from our constant term expressions in Section~\ref{sec:main_result_start0b}, and to J\'er\'emie Bettinelli who produced the geometric representation of the random bipolar orientation shown in Figure~\ref{fig:random}.

\bibliographystyle{plain}

\end{document}